\newcommand{\vphi}{\ensuremath{\varphi}}
\newcommand{\al}{\ensuremath{\alpha}}
\newcommand{\be}{\ensuremath{\beta}}
\newcommand{\ga}{\ensuremath{\gamma}}
\newcommand{\Ga}{\ensuremath{\Gamma}}
\newcommand{\de}{\ensuremath{\delta}}
\newcommand{\De}{\ensuremath{\Delta}}
\newcommand{\eps}{\ensuremath{\varepsilon}}
\newcommand{\ze}{\ensuremath{\zeta}}
\newcommand{\ka}{\ensuremath{\kappa}}
\newcommand{\la}{\ensuremath{\lambda}}
\newcommand{\La}{\ensuremath{\Lambda}}
\newcommand{\sig}{\ensuremath{\sigma}}
\newcommand{\Sig}{\ensuremath{\Sigma}}
\newcommand{\om}{\ensuremath{\omega}}
\newcommand{\Om}{\ensuremath{\Omega}}
\newcommand{\N}{\ensuremath{\mathbf{N}}}
\newcommand{\R}{\ensuremath{\mathbf{R}}}
\newcommand{\Z}{\ensuremath{\mathbf{Z}}}
\newcommand{\Hh}{\ensuremath{\mathbf{H}}}
\newcommand{\Ss}{\ensuremath{\mathbf{S}}}
\newcommand{\D}{\ensuremath{\mathbf{D}}}
\newcommand{\RP}{\ensuremath{\mathbf{RP}}}
\newcommand{\SO}{\ensuremath{\mathbf{SO}}}
\newcommand{\tdef}{\textit}
\newcommand{\tbf}{\textbf}
\newcommand{\tup}{\textup}
\newcommand{\tit}{\textit}
\newcommand{\tsc}{\textsc}
\newcommand{\ttt}{\texttt}
\newcommand*{\mbf}[1]{\boldsymbol{#1}}
\newcommand{\fr}{\mathfrak}
\newcommand*{\sr}[1]{\ensuremath{\mathscr{#1}}}
\newcommand{\?}{\ensuremath{\mspace{2mu}}}
\newcommand{\sz}{\ensuremath{\mspace{2mu}}}
\newcommand{\ol}{\overline}
\newcommand{\te}{\tilde}
\newcommand{\ce}{\check}
\newcommand*{\abs}[1]{\left\vert#1\right\vert}
\newcommand*{\norm}[1]{\left\Vert#1\right\Vert}
\newcommand*{\fl}[1]{\left\lfloor#1\right\rfloor}
\newcommand{\gen}[1]{\left\langle#1\right\rangle}
\newcommand{\ggen}[1]{\pmb\langle#1\pmb\rangle}
\newcommand{\se}[1]{\left\{#1\right\}}
\newcommand{\set}[2]{\big\{#1 \ \text{\large $:$}\ #2 \big\}}
\DeclareMathOperator{\Int}{Int}
\DeclareMathOperator{\arccot}{arccot}
\newcommand{\ring}{\mathring}
\DeclareMathOperator{\imag}{Im}
\renewcommand{\Im}{\imag}
\DeclareMathOperator{\id}{id}
\DeclareMathOperator{\pr}{pr}
\DeclareMathOperator{\Free}{Free}
\newcommand{\bd}{\partial}
\newcommand{\nin}{\notin}
\newcommand{\iso}{\simeq}
\newcommand{\home}{\approx}
\newcommand{\dar}{\ensuremath{\leftrightarrow}}
\newcommand{\incr}{\text{\small$\mspace{1mu}\nearrow\mspace{1mu}$}}
\newcommand{\decr}{\text{\small$\mspace{1mu}\searrow\mspace{2mu}$}}
\newcommand{\Rar}{\ensuremath{\Rightarrow}}
\newcommand{\subs}{\subset}
\newcommand{\nsubs}{\not\subset}
\newcommand{\sups}{\supset}
\newcommand{\ssm}{\smallsetminus}
\newcommand{\bcup}{\bigcup}
\newcommand{\bcap}{\bigcap}
\newcommand{\du}{\sqcup}
\newcommand{\Du}{\bigsqcup}
\newcommand{\inc}{\hookrightarrow}
\newcommand{\tri}{\triangle}
\newcommand{\san}{\sphericalangle}
\newcounter{cthm}
\newtheoremstyle{cmain}
	{}
	{}
	{\itshape}
	{}
	{\bfseries}
	{.}
	{ }
	{(\thmnumber{#2})\:\addtocounter{cthm}{1}\thmname{#1} \Alph{cthm}\thmnote{ (#3)}}
\newtheoremstyle{cplain}
	{}
	{}
	{\itshape}
	{}
	{\bfseries}
	{.}
	{ }
	{(\thmnumber{#2})\:\thmname{#1}\thmnote{ (#3)}}
\newtheoremstyle{cdefinition}
	{}
	{}
	{}
	{}
	{\bfseries}
	{.}
	{ }
	{(\thmnumber{#2})\:\thmname{#1}\thmnote{ (#3)}}
\newtheoremstyle{cremark}
	{}
	{}
	{}
	{}
	{}
	{.}
	{ }
	{{\bfseries(\thmnumber{#2})}\:{\itshape \thmname{#1}}\thmnote{ (#3)}}
\newtheoremstyle{uremark}
	{}
	{}
	{}
	{}
	{}
	{.}
	{ }
	{{\itshape \thmname{#1}}\thmnote{ (#3)}}
\theoremstyle{cplain}
\newtheorem{thm}{Theorem}[section]
	\newtheorem{prop}[thm]{Proposition}
	\newtheorem{cor}[thm]{Corollary}
	\newtheorem{lemma}[thm]{Lemma}
{
	\theoremstyle{cremark}
	
}
{
\theoremstyle{cremark}
\newtheorem{rem}[thm]{Remark}
\newtheorem{rems}[thm]{Remarks}
}
{
\theoremstyle{uremark}
\newtheorem*{exm}{Example}
\newtheorem*{exms}{Examples}
\newtheorem*{urem}{Remark}

}
{
\theoremstyle{cdefinition}
\newtheorem{defn}[thm]{Definition}
\newtheorem{defns}[thm]{Definitions}

}
{
\theoremstyle{cmain}
\newtheorem{mthm}[thm]{Theorem}
}
\renewcommand{\L}{\sr{L}}
\newcommand{\E}{\mathbf{E}}
\newcommand{\co}[1]{\hat{#1}}
\newcommand{\no}{\mathbf{n}}
\newcommand{\ta}{\mathbf{t}}
\DeclareMathOperator{\tot}{tot}
\newcommand{\gr}{\preccurlyeq}
\begin{document}

\title[On the Components of Spaces of Curves on the 2-Sphere]{On the Components of Spaces of Curves on the 2-Sphere \\ with Geodesic Curvature in a Prescribed Interval}
\author{Nicolau C.~Saldanha}
\author{Pedro Z\"{u}hlke}
\subjclass[2010]{Primary: 53C42. Secondary: 57N20.}
\keywords{Curve; Geodesic curvature; Homotopy; Topology of infinite-dimensional manifolds}
\maketitle

\begin{abstract}Let $\sr C_{\ka_1}^{\ka_2}$ denote the set of all closed curves of class $C^r$ on the sphere $\Ss^2$ whose geodesic curvatures are constrained to lie in $(\ka_1,\ka_2)$, furnished with the $C^r$ topology (for some $r\geq 2$ and possibly infinite $\ka_1<\ka_2$). In 1970, J.~Little proved that the space $\sr C_0^{+\infty}$ of closed curves having positive geodesic curvature has three connected components. Let $\rho_i=\arccot \ka_i$ ($i=1,2$). We show that $\sr C_{\ka_1}^{\ka_2}$ has $n$ connected components $\sr C_1,\dots,\sr C_n$, where
\begin{equation*}
	n=\left\lfloor{\frac{\pi}{\rho_1-\rho_2}}\right\rfloor+1
\end{equation*} 
and $\sr C_j$ contains circles traversed $j$ times ($1\leq j\leq n$). The component $\sr C_{n-1}$ also contains circles traversed $(n-1)+2k$ times, and $\sr C_n$ also contains circles traversed $n+2k$ times, for any $k\in \N$. Further, each of $\sr C_1,\dots,\sr C_{n-2}$ ($n\geq 3$) is homeomorphic to $\SO_3\times \E$, where $\E$ is the separable Hilbert space. We also obtain a simple characterization of the components in terms of the properties of a curve and prove that $\sr C_{\ka_1}^{\ka_2}$ is homeomorphic to $\sr C_{\bar\ka_1}^{\bar\ka_2}$ whenever $\rho_1-\rho_2=\bar\rho_1-\bar\rho_2$ ($\bar\rho_i=\arccot \bar\ka_i$). 
\end{abstract}

\setcounter{tocdepth}{1}
\tableofcontents


\setcounter{section}{-1}\section{Introduction}\label{S:introduction}

\subsection*{History of the problem} Consider the set $\sr W$ of all $C^r$ regular closed curves in the plane $\R^2$ (i.e., $C^r$ immersions $\Ss^1\to \R^2$), furnished with the $C^r$ topology ($r\geq 1$). The Whitney-Graustein theorem (\cite{WhiGra}, thm.~1) states that two such curves are homotopic through regular closed curves if and only if they have the same rotation number (where the latter is the number of full turns of the tangent vector to the curve). Thus, the space $\sr W$ has an infinite number of connected components $\sr W_n$, one for each rotation number $n\in \Z$. A typical element of $\sr W_n$ ($n\neq 0$) is a circle traversed $\abs{n}$ times, with the direction depending on the sign of $n$; $\sr W_0$ contains a figure eight curve.

For curves on the unit sphere $\Ss^2\subs \R^3$, there is no natural notion of rotation number. Indeed, the corresponding space $\sr I$ of $C^r$ immersions $\Ss^1\to \Ss^2$ (i.e., regular closed curves on $\Ss^2$) has only two connected components, $\sr I_+$ and $\sr I_-$; this is an immediate consequence of a much more general result of S.~Smale (\cite{Sma56}, thm.~A). The component $\sr I_-$ contains all circles traversed an odd number of times, and the component $\sr I_+$ contains all circles traversed an even number of times. Actually, Smale's theorem implies that $\sr I_{\pm}\iso \SO_3\times \Om\Ss^3$,  where $\Om\Ss^3$ denotes the set of all continuous closed curves on $\Ss^3$, with the compact-open topology; the properties of the latter space are well understood (see \cite{BottTu}, \S 16).\footnote{The notation $X\iso Y$ (resp.~$X\home Y$) means that $X$ is homotopy equivalent (resp.~homeomorphic) to $Y$.}

 In 1970, J.~A.~Little formulated and solved the following problem: Let $\sr C$ denote the set of all $C^r$ closed curves on $\Ss^2$ which have nonvanishing geodesic curvature, with the $C^r$ topology $(r\geq 2)$; what are the connected components of $\sr C$?  Although his motivation to investigate $\sr C$ appears to have been  purely geometric, this space arises naturally in the study of a certain class of linear ordinary differential equations (see \cite{Sal4} for a discussion of this class and further references). In another notation, $\sr C$ is the space $\Free(\Ss^1,\Ss^2)$ of free closed spherical curves. A map $f:M \to N$ is called (second-order) free if the osculating space of order 2 is nondegenerate; for $M = \Ss^1$ and $N = \Ss^2$, this is equivalent to saying that the curve $f$ has nonvanishing geodesic curvature (cf.~\cite{EliashbergMishachev}, \cite{GromovPR}).

Little was able to show (see \cite{Little}, thm.~1) that $\sr C$ has six connected components, $\sr C_{\pm 1}$, $\sr C_{\pm 2}$ and $\sr C_{\pm 3}$, where the sign indicates the sign of the geodesic curvature of a curve in the corresponding component. A homeomorphism between $\sr C_i$ and $\sr C_{-i}$ is obtained by reversing the orientation of the curves in $\sr C_i$.
 \begin{figure}[ht]
	\begin{center}
		\includegraphics[scale=.30]{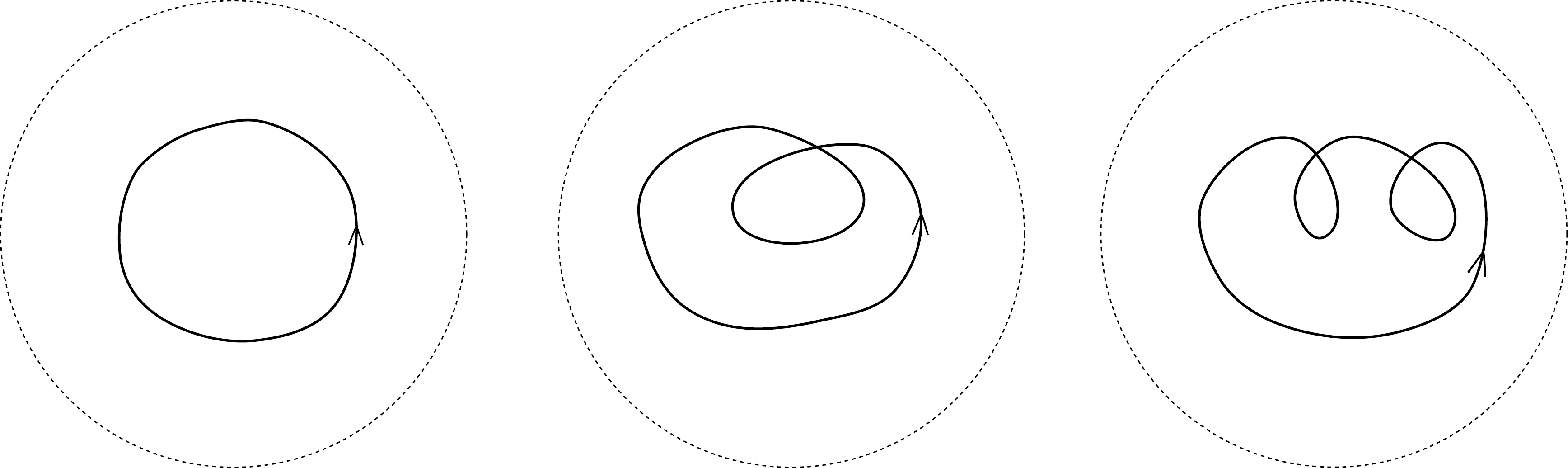}
		\caption{The curves depicted above provide representatives of the components $\sr C_{1}$, $\sr C_{2}$ and $\sr C_{3}$, respectively. All three are contained in the upper hemisphere of $\Ss^2$; the dashed line represents the equator seen from above.}
		\label{F:Little}
	\end{center}
\end{figure}

The topology of the space $\sr C$ has been investigated by quite a few other people since Little. We mention here only B.~Khesin, B.~Shapiro and M.~Shapiro, who studied $\sr C$ and similar spaces in the 1990's (cf.~\cite{KheSha}, \cite{KheSha2}, \cite{ShaSha} and \cite{ShaTop}). They showed that $\sr C_{\pm1}$  are homotopy equivalent to $\SO_3$, and also determined the number of connected components of the spaces analogous to $\sr C$ in $\R^n$, $\Ss^n$ and $\RP^n$, for arbitrary $n$.

The first pieces of information about the homotopy and cohomology groups $\pi_k(\sr C)$ and $H^k(\sr C)$ for $k\geq 1$ were obtained a decade later by the first author in \cite{Sal1} and \cite{Sal2}. Finally, in the recent work \cite{Sal3}, a description of the homotopy type of spaces of locally convex (but not necessarily closed) curves on $\Ss^2$ is presented. It is proved in particular that
\begin{alignat*}{9}
	\sr C_{\pm 2}&\iso \SO_3\times \big(\Om\Ss^3\vee \Ss^2\vee \Ss^6\vee \Ss^{10}\vee\dots\big) \text{\quad and \quad}
	\\\sr C_{\pm 3} &\iso \SO_3\times \big(\Om\Ss^3\vee \Ss^4\vee \Ss^8\vee \Ss^{12}\vee\dots\big).
\end{alignat*}

The reason for the appearance of an $\SO_3$ factor in all of these results is that (unlike in \cite{Sal3}) we have not chosen a basepoint for the unit tangent bundle $UT\Ss^2\home \SO_3$; a careful discussion of this is given in \S 1. 

\subsection*{Brief overview of this work} The main purpose of this work is to generalize Little's theorem to other spaces of closed curves on $\Ss^2$. Let $-\infty\leq\ka_1<\ka_2\leq+\infty$ be given and let $\sr C_{\ka_1}^{\ka_2}$ be the set of all $C^r$ closed curves on $\Ss^2$ whose geodesic curvatures are constrained to lie in the interval $(\ka_1,\ka_2)$, furnished with the $C^r$ topology (for some $r\geq 2$). In this notation, the spaces $\sr C$ and $\sr I$ discussed above become $\sr C_{-\infty}^0 \du \sr C_0^{+\infty}$ and $\sr C_{-\infty}^{+\infty}$, respectively. Naturally, a space of curves on $\Ss^2$ whose curvatures are constrained to lie in an open subset of $\R$ is the disjoint union of spaces of this type.

We present a direct characterization of the connected components of $\sr C_{\ka_1}^{\ka_2}$ in terms of the pair $\ka_1<\ka_2$ and of the properties of curves in $\sr C_{\ka_1}^{\ka_2}$. This yields a simple procedure to decide whether two given curves in $\sr C_{\ka_1}^{\ka_2}$ lie in the same component (that is, whether they are homotopic through closed curves whose geodesic curvatures are constrained to $(\ka_1,\ka_2)$). Another consequence is that the number of components of $\sr C_{\ka_1}^{\ka_2}$ is always finite, and given by a simple formula involving $\ka_1$, $\ka_2$.

More precisely, let $\rho_i=\arccot(\ka_i)$, $i=1,2$, where we adopt the convention that $\arccot$ takes values in $[0,\pi]$, with $\arccot(+\infty)=0$ and $\arccot(-\infty)=\pi$. Also,  let $\lfloor x\rfloor $ denote the greatest integer smaller than or equal to $x$. Then $\sr C_{\ka_1}^{\ka_2}$ has $n$ connected components $\sr C_1,\dots,\sr C_n$, where
\begin{equation}\label{E:number}
	n=\left\lfloor{\frac{\pi}{\rho_1-\rho_2}}\right\rfloor+1
\end{equation} 
and  $\sr C_j$ contains circles traversed $j$ times ($1\leq j\leq n$). The component $\sr C_{n-1}$ also contains circles traversed $(n-1)+2k$ times, and $\sr C_n$ contains circles traversed $n+2k$ times, for $k\in \N$. Moreover, for $n\geq 3$, each of $\sr C_1,\dots,\sr C_{n-2}$ is homeomorphic to $\SO_3\times \E$, where $\E$ is the separable Hilbert space.

This result could be considered a first step towards the determination of the homotopy type of $\sr C_{\ka_1}^{\ka_2}$ in terms of $\ka_1$ and $\ka_2$. In this context, it is natural to ask whether the inclusion $\sr C_{\ka_1}^{\ka_2}\inc \sr C_{-\infty}^{+\infty}=\sr I$ is a homotopy equivalence; as we have already mentioned, the topology of the latter space is well understood. It is shown in \S 10 of \cite{tese} that the answer is negative when $\rho_1-\rho_2\leq \frac{2\pi}{3}$ (note that $\rho_1-\rho_2\in (0,\pi]$), and we expect it to be negative except when $\sr C_{\ka_1}^{\ka_2}=\sr C_{-\infty}^{+\infty}$ (i.e., when $\rho_1-\rho_2=\pi$).  

Actually, we conjecture that $\sr C_{\ka_1}^{\ka_2}$ and $\sr C_{\bar\ka_1}^{\bar\ka_2}$ have different homotopy types if and only if $\rho_1-\rho_2\neq \bar\rho_1-\bar\rho_2$, but here it will only be proved that $\sr C_{\ka_1}^{\ka_2}$ is homeomorphic to $\sr C_{\bar\ka_1}^{\bar\ka_2}$ if $\rho_1-\rho_2=\bar\rho_1-\bar\rho_2$ (where $\rho_i=\arccot \ka_i$ and $\bar\rho_i=\arccot \bar\ka_i$). Our guess is that the homotopy type of the ``large'' components $\sr C_{n-1}$ and $\sr C_{n}$ of $\sr C_{\ka_1}^{\ka_2}$ (with $n$ as in \eqref{E:number}) is that of a space of the form
\[
\SO_3\times \big(\Om\Ss^3\vee \Ss^{2n_1}\vee \Ss^{2n_2}\vee \Ss^{2n_3}\vee\dots\big), 
\]
$n_1 \le n_2 \le n_3 \le \cdots$ being positive integers which can be obtained in terms of $\ka_1$ and $\ka_2$ by formulas similar to \eqref{E:number}.

\subsection*{Outline of the sections} It turns out that it is more convenient, but not essential, to work with curves which need not be $C^2$. The curves that we consider possess continuously varying unit tangent vectors at all points, but their geodesic curvatures are defined only almost everywhere. This class of curves is described in \S 1, where we also relate the resulting spaces $ \sr L_{\ka_1}^{\ka_2}$ to the more familiar spaces $\sr C_{\ka_1}^{\ka_2}$ of $C^r$ curves: The inclusion $\sr C_{\ka_1}^{\ka_2}\inc \sr L_{\ka_1}^{\ka_2}$ is a homotopy equivalence and has dense image. In this section we take the first steps toward the main theorem by proving that the topology of $\sr L_{\ka_1}^{\ka_2}$ depends only on $\rho_1-\rho_2$. A corollary of this result is that any space $\sr L_{\ka_1}^{\ka_2}$ is homeomorphic to a space of type $\sr L_{\ka_0}^{+\infty}$; the latter class is usually more convenient to work with. Some variations of our definition are also investigated. In particular, in this section we consider spaces of non-closed curves.

The main tools in the paper are introduced in \S2. Given a curve $\ga\in \sr L_{\ka_0}^{+\infty}$, we assign to $\ga$ certain maps $B_\ga$ and $C_\ga$, called the regular and caustic bands spanned by $\ga$, respectively. These are ``fat'' versions of the curve, which carry in geometric form important information on the curve. They are obtained by considering translations of $\ga$ in the direction determined by its normal vector. We separate our curves into two main classes: If the image of the caustic band of a curve is contained in a hemisphere, the curve is called condensed; if this image contains two antipodal points, the curve is diffuse. Vaguely speaking, a condensed curve is one which does not wander too much about $\Ss^2$.  

In \S 3, the grafting construction is introduced. Grafting a curve consists in cutting it at well chosen points, moving the resulting arcs and inserting new arcs in the gaps that arise (see fig.~\ref{F:enxerto}). If the curve is diffuse, then we can graft arbitrarily long arcs of circles onto it. These are converted to loops, which are then spread along the curve (see fig.~\ref{F:adding2}), so that it becomes loose in the sense that the constraints on the curvature become irrelevant. As a result, it is possible to deform it into a circle traversed a certain number of times, which is the canonical curve in our spaces. 

We reach the same conclusion for condensed curves in \S 4, where a notion of rotation number for curves of this type is also introduced: If $\ga$ is condensed, then we consider the set of all hemispheres which contain the image of its caustic band. Any such hemisphere is uniquely determined by a unit vector. Taking the average, we obtain a distinguished hemisphere, corresponding to some $h_\ga\in \Ss^2$, containing $\Im(C_\ga)$. This $h_\ga$ depends continuously on $\ga$. The rotation number $\nu(\ga)$ of $\ga$ is defined as the opposite of the usual rotation number of the image of $\ga$ under stereographic projection through $-h_\ga$. For $\ka_0<0$, the proof that $\ga\in \sr L_{\ka_0}^{\infty}$ may be deformed into a circle involves a generalization of the regular band of a curve, and for $\ka_0\geq 0$ the tools are M\"obius transformations and a version of the Whitney-Graustein theorem. Actually, it will be seen that the set of condensed curves in $\sr L_{\ka_0}^{+\infty}$ having a fixed rotation number is weakly homotopy equivalent to $\SO_3$, for any $\ka_0$ (but it may not be a connected component).

Even though there exist curves which are neither condensed nor diffuse, any such curve is homotopic within $\sr L_{\ka_0}^{+\infty}$ to a curve of one of these two types. The results used to establish this are contained in \S 5. There, a more abstract version of rotation number for non-diffuse curves is introduced, and a bound on the total curvature of a non-diffuse curve in $\sr L_{\ka_0}^{+\infty}$ which depends only on $\ka_0$ and its rotation number is obtained. This is used to deduce that, by grafting the curve indefinitely, we must obtain either a condensed or a diffuse curve.

In \S 6 we decide when it is possible to deform a circle traversed $k$ times into a circle traversed $k+2$ times in $\sr L_{\ka_0}^{+\infty}$. It is seen that this is possible if and only if $k\geq n-1=\fl{\frac{\pi}{\rho_0}}$ (where $\rho_0=\arccot \ka_0$), and an explicit homotopy when this is the case is presented. It is also shown that the set of condensed curves in $\sr L_{\ka_0}^{+\infty}$ with fixed rotation number $k< n-1$ is a connected component of this space ($n\geq 3$).

 
The proofs of the main theorems are given in \S 7, after most of the work has been done. We also obtain the following direct characterization of the components. 
Let $\rho_0=\arccot \ka_0$,  $n=\left\lfloor{\frac{\pi}{\rho_0}}\right\rfloor+1$ and $\sig$ be the sign of $(-1)^n$. Then $\ga\in \sr L_{\ka_0}^{+\infty}$ lies in:
	\begin{enumerate}
	\item [(i)] $\sr L_j$ if and only if it is condensed and has rotation number $j$ $(1\leq j\leq n-2)$.
	\item [(ii)] $\sr L_{n-1}$ if and only if $\ga\in \sr I_{-\sig}$ and either it is non-condensed or condensed with rotation number $\nu(\ga)\geq n-1$.
	\item [(iii)] $\sr L_{n}$ if and only if $\ga\in \sr I_{\sig}$ and either it is non-condensed or condensed with rotation number $\nu(\ga)\geq n-1$.
\end{enumerate}
Here $\sr I_{\pm}$ are the two components of $\sr I=\sr C_{-\infty}^{+\infty}$, and $\sr L_j$ is the connected component of $\sr L_{\ka_0}^{+\infty}$ which contains circles traversed $j$ times, $j=1,\dots,n$. In particular, this yields a simple criterion to decide whether two curves $\ga_1,\,\ga_2\in \sr L_{\ka_1}^{\ka_2}$ are homotopic within this space, provided only that we have parametrizations of $\ga_1$ and $\ga_2$. 

Finally, some basic results on convexity in $\Ss^n$ (none of which is new) that are used throughout the work are collected in an appendix.

\subsection*{Acknowledgements}The content of this paper is essentially contained in the Ph.D.~thesis \cite{tese} of the second author, who was advised by the first. Both authors gratefully acknowledge the financial support of \tsc{cnpq}, \tsc{capes} and \tsc{faperj}, particularly during the second author's graduate studies. We thank the members of the Ph.D.~committee for several interesting suggestions. Very special thanks go to Boris Shapiro for helpful conversations with the first author during his visit to Stockholm University, which  inspired the main problem considered in this work. Finally, we thank the anonymous referee and Y.~Eliashberg for their useful suggestions and comments.


\section{Spaces of Curves on the Sphere with Constrained Geodesic Curvature}\label{S:belowandabove}

\subsection*{Basic definitions and notation}Let $M$ denote either the euclidean space $\R^{n+1}$ or the unit sphere $\Ss^n\subs \R^{n+1}$, for some $n\geq 1$. By a \tdef{curve} $\ga$ in $M$ we mean a continuous map $\ga\colon [a,b]\to M$. A curve will be called \tdef{regular} when it has a continuous and nonvanishing derivative; in other words, a regular curve is a $C^1$ immersion of $[a,b]$ into $M$. For simplicity, the interval where $\ga$ is defined will usually be $[0,1]$.

Let $\ga\colon [0,1]\to \Ss^2$ be a regular curve and let $\abs{\ }$ denote the usual Euclidean norm. The \tdef{arc-length parameter} $s$ of $\ga$ is defined by
\begin{equation*}
s(t)=\int_0^t \abs{\dot{\ga}(t)}\,dt,
\end{equation*}
and $L=\int_0^1 \abs{\dot \ga(t)}\,dt$ is called the \tdef{length} of $\ga$. Derivatives with respect to $t$ and $s$ will be systematically denoted by a $\dot{\vphantom{\ga}}$ and a $\vphantom{\ga}'$, respectively; this convention extends to higher-order derivatives as well. 


Up to homotopy, we can always assume that a family of curves is parametrized proportionally to arc-length.
\begin{lemma}\label{L:reparametrize}
	Let $A$ be a topological space and let $a\mapsto \ga_a$ be a continuous map from $A$ to the set of all $C^r$ regular curves $\ga\colon [0,1]\to M$ \tup($r\geq 1$\tup) with the $C^r$ topology. Then there exists a homotopy $\ga_a^u\colon [0,1] \to M$, $u\in [0,1]$, such that for any $a\in A$:
	\begin{enumerate}
		\item [(i)] $\ga_a^0=\ga_a$ and $\ga_a^1$ is parametrized so that $\vert\dot{\ga}_a^1(t)\vert$ is independent of $t$.
		\item [(ii)]  $\ga_a^u$ is an orientation-preserving reparametrization of $\ga_a$, for all $u\in [0,1]$. 
	\end{enumerate} 
\end{lemma}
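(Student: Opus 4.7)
The natural strategy is to interpolate between the identity parametrization of $\ga_a$ and its arc-length reparametrization. First, for each $a\in A$, I would set
\[ L_a=\int_0^1\abs{\dot\ga_a(\tau)}\,d\tau, \qquad \psi_a(t)=L_a^{-1}\int_0^t\abs{\dot\ga_a(\tau)}\,d\tau. \]
Since $\ga_a$ is a $C^r$ immersion, $\abs{\dot\ga_a}$ is of class $C^{r-1}$ and strictly positive, so $\psi_a$ is an orientation-preserving $C^r$ diffeomorphism of $[0,1]$; hence so is $\psi_a^{-1}$. Then I would define the homotopy by
\[ \phi_a^u(t)=(1-u)\,t+u\,\psi_a^{-1}(t), \qquad \ga_a^u(t)=\ga_a\bigl(\phi_a^u(t)\bigr), \qquad u\in[0,1]. \]

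The key observation is that the convex combination of any two orientation-preserving $C^r$ diffeomorphisms of $[0,1]$ is again one: the derivative $(1-u)+u(\psi_a^{-1})'(t)$ of $\phi_a^u$ is strictly positive, and $\phi_a^u$ fixes the endpoints $0$ and $1$. Consequently each $\ga_a^u$ is an orientation-preserving reparametrization of $\ga_a$, proving (ii). Moreover, $\phi_a^0=\id$ gives $\ga_a^0=\ga_a$, while
\[ \abs{\dot\ga_a^1(t)}=\abs{\dot\ga_a(\psi_a^{-1}(t))}\,(\psi_a^{-1})'(t)=L_a \]
is independent of $t$, proving (i).

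The remaining task is to verify that $(a,u)\mapsto \ga_a^u$ is continuous into the set of $C^r$ regular curves with the $C^r$ topology. Continuity of $a\mapsto L_a$ and $a\mapsto \psi_a$ in the $C^r$ topology follows at once from continuous dependence of the integral on its integrand. For the $C^r$-continuity of $a\mapsto \psi_a^{-1}$, I would invoke the parameter-dependent inverse function theorem, using that $\psi_a'=\abs{\dot\ga_a}/L_a$ is bounded away from zero locally in $a$. Combining these facts with the chain rule yields $C^r$-continuity of $(a,u)\mapsto \phi_a^u$ and hence of $(a,u)\mapsto \ga_a\circ \phi_a^u$. The only point requiring any care is the $C^r$-continuity of inversion on diffeomorphisms, but this is a standard application of the implicit function theorem with parameters; the rest of the argument is a direct computation.
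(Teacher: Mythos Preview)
Your proof is correct and is essentially identical to the paper's: your $\psi_a^{-1}(t)$ is precisely the paper's $\tau_a(L_a t)$ (where $\tau_a$ is the inverse of the arc-length function $s_a$), so the two homotopy formulas coincide. You supply more detail than the paper does---in particular the convex-combination argument for (ii) and the discussion of $C^r$-continuity---whereas the paper simply writes down the formula and asserts that it works.
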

\begin{proof}
	Let $s_a(t)=\int_0^t\abs{\dot\ga_a(\tau)}\,d\tau$ be the arc-length parameter of $\ga_a$, $L_a$ its length and $\tau_a\colon [0,L_a]\to [0,1]$ the inverse function of $s_a$. Define $\ga_a^u\colon [0,1]\to M$ by:
	\begin{equation*}
\qquad		\ga_a^u(t)=\ga_a\big((1-u)t+u\tau_a(L_at)\big) \qquad (u,t\in [0,1],~a\in A).
	\end{equation*}
Then $\ga_a^u$ is the desired homotopy.
\end{proof}

The unit tangent vector to $\ga$ at $\ga(t)$ will always be denoted by $\ta(t)$. 
Set $M=\Ss^2$ for the rest of this section, and define the \tit{unit normal vector} $\no$ to $\ga$ by 
\begin{equation*}
	\no(t)=\ga(t)\times \ta(t),
\end{equation*} where $\times$ denotes the vector product in $\R^3$. 

Assume now that $\ga$ has a second derivative. By definition, the \tdef{geodesic curvature} $\ka(s)$ of $\ga$ at $\ga(s)$ is given by 
\begin{equation}\label{E:geodesiccurvature}
\ka(s)=\gen{\ta'(s),\no(s)}.
\end{equation}
Note that the geodesic curvature is not altered by an orientation-preserving reparametrization of the curve, but its sign is changed if we use an orientation-reversing reparametrization. 
Since the principal curvatures at any point of the sphere equal 1, the normal curvature of $\ga$ is also identically equal to 1. In particular, its \tdef{Euclidean curvature} $K$,  
\[
K(s)=\sqrt{1+\ka(s)^2},
\] never vanishes. 

Closely related to the geodesic curvature of a curve $\ga\colon [0,1]\to \Ss^2$ is the \tdef{radius of curvature} of $\ga$ at $\ga(t)$, which we define as the unique number $\rho(t)\in (0,\pi)$ satisfying
\begin{equation*}
\cot \rho(t)=\ka(t).
\end{equation*}
Note that the sign of $\ka(t)$ is equal to the sign of  $\frac{\pi}{2}-\rho(t)$. 

\begin{exm}
	A parallel circle of colatitude $\al$, for $0<\al<\pi$, has geodesic curvature $\pm \cot \al$ (the sign depends on the orientation), and radius of curvature $\al$ or $\pi-\al$ at each point. (Recall that the colatitude of a point measures its distance from the north pole along $\Ss^2$.) The radius of curvature $\rho(t)$  of an arbitrary curve $\ga$ gives the size of the radius of the osculating circle to $\ga$ at $\ga(t)$, measured along $\Ss^2$ and taking the orientation of $\ga$ into account.
\end{exm} 
\begin{figure}[ht]
	\begin{center}
		\includegraphics[scale=.30]{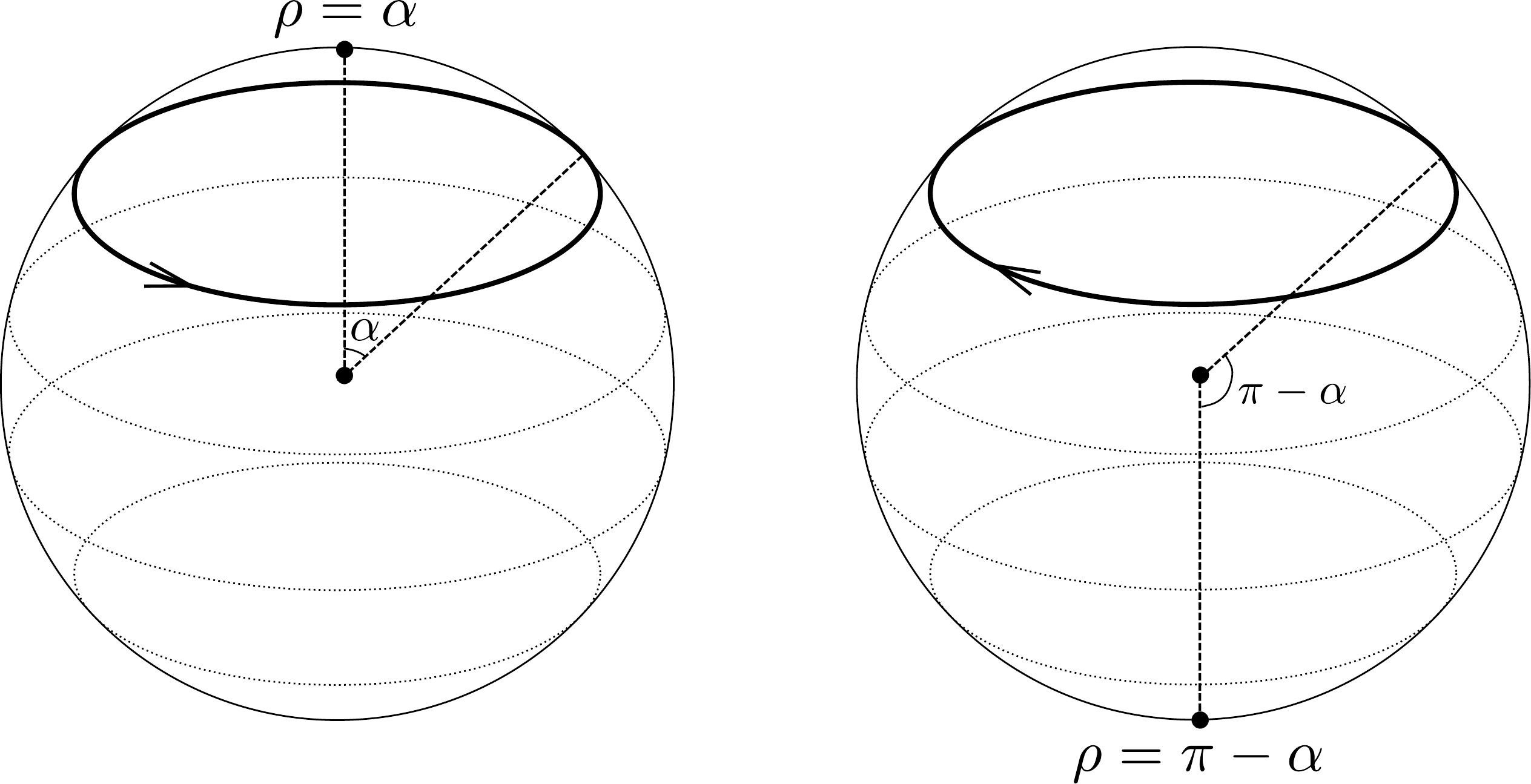}
		\caption{A parallel circle of colatitude $\al$ has radius of curvature $\al$ or $\pi-\al$, depending on its orientation. We adopt the convention that the center (on $\Ss^2$) of a circle lies to its left, hence in the first figure the center is taken to be the north pole, and in the second, the south pole.}
		\label{F:parallel}
	\end{center}
\end{figure}

If we consider $\ga$ as a curve in $\R^3$, then its ``usual'' radius of curvature $R$ is defined by $R(t)=\frac{1}{K(t)}=\sin \rho(t)$. We will rarely mention $R$ or $K$ again, preferring instead to work with $\rho$ and $\ka$, which are their natural intrinsic analogues in the sphere.

\subsection*{Spaces of curves}

Given $p\in \Ss^2$ and $v\in T_p\Ss^2$ of norm 1, there exists a unique $Q\in \SO_3$ having $p\in \R^3$ as first column and $v\in \R^3$ as second column. We obtain thus a diffeomorphism between $\SO_3$ and the unit tangent bundle $UT\Ss^2$ of $\Ss^2$. 

\begin{defn}\label{D:frame}For a regular curve $\ga\colon [0,1]\to \Ss^2$, its \tdef{frame} $\Phi_\ga\colon [0,1]\to \SO_3$ is the map given by
\begin{equation*}\label{E:frame}
\Phi_\ga(t)= \begin{pmatrix}
| & |& | \\
\ga(t) & \ta(t) & \no(t) \\
| & | & |
\end{pmatrix}.\footnote{In previous works of the first author, this is denoted by $\fr F_\ga$ and called the \tdef{Frenet frame} of $\ga$. We will not use this terminology to avoid any confusion with the usual Frenet frame of $\ga$ when it is considered as a curve in $\R^3$.} 
\end{equation*}
In other words, $\Phi_\ga$ is the curve in $UT\Ss^2$ associated with $\ga$, under the identification of $UT\Ss^2$ with $\SO_3$. We emphasize that it is not necessary that $\ga$ have a second derivative for $\Phi_\ga$ to be defined.
\end{defn}

Now let $-\infty\leq \ka_1<\ka_2\leq +\infty$ and $Q\in \SO_3$. We would like to study the space $\sr L_{\ka_1}^{\ka_2}(Q)$ of all regular curves $\ga\colon [0,1]\to \Ss^2$ satisfying:
\begin{enumerate}\label{D:conditions}
\item [(i)] $\Phi_\ga(0)=I$ and $\Phi_\ga(1)=Q$;
\item [(ii)] $\ka_1<\ka(t)<\ka_2$ for each $t\in [0,1]$.
\end{enumerate}
Here $I$ is the $3\!\times\!3$ identity matrix and $\ka$ is the geodesic curvature of $\ga$. Condition (i) says that $\ga$ starts at $e_1$ in the direction $e_2$ and ends at $Qe_1$ in the direction $Qe_2$. 

This definition is incomplete because we have not described the topology of $\sr L_{\ka_1}^{\ka_2}(Q)$. The most natural choice would be to require that the curves in this space be of class $C^2$, and to give it the $C^2$ topology. The foremost reason why we will not follow this course is that we would like to be able to perform some constructions which yield curves that are not $C^2$. 
We shall adopt a more complicated definition in order to avoid using convolutions or other tools all the time to smoothen a curve.


\begin{defn}
A function $f\colon [a,b]\to \R$ is said to be of class $H^1$ if it is an indefinite integral of some $g\in L^2[a,b]$. We extend this definition to maps $F\colon [a,b]\to \R^n$ by saying that $F$ is of class $H^1$ if and only if each of its component functions is of class $H^1$.
\end{defn}
Since $L^2[a,b]\subs L^1[a,b]$, an $H^1$ function is absolutely continuous (and differentiable almost everywhere). 

We shall now present an explicit description of a topology on $\sr L_{\ka_1}^{\ka_2}(Q)$ which turns it into a Hilbert manifold. The definition is unfortunately not very natural. However, we shall prove the following two results relating this space to more familiar concepts: First, for any $r\in \N$, $r\geq 2$, the subset of $\sr L_{\ka_1}^{\ka_2}(Q)$ consisting of $C^r$ curves will be shown to be dense in $\sr L_{\ka_1}^{\ka_2}(Q)$. Second, we will see that the space of $C^r$ regular curves satisfying conditions (i) and (ii) above, with the $C^r$ topology, is  homeomorphic to $\sr L_{\ka_1}^{\ka_2}(Q)$.\footnote{The definitions given here are straightforward adaptations of the ones in \cite{SalSha}, where they are used to study spaces of locally convex curves in $\Ss^n$ (which correspond to the spaces $\sr L_{0}^{+\infty}(Q)$ when $n=2$).}

Consider first a smooth regular curve $\ga\colon [0,1]\to \Ss^2$. From the definition of $\Phi_\ga$ we deduce that
\begin{equation}\label{E:frenet1}
	\dot{\Phi}_\ga(t)=\Phi_\ga(t)\La(t), \text{\ \ where\ \ }\La(t)=\begin{pmatrix}
		 0  & -\abs{\dot{\ga}(t)} & 0  \\
		 \abs{\dot{\ga}(t)}  & 0 & -\abs{\dot{\ga}(t)}\ka(t) \\
		    0 & \abs{\dot{\ga}(t)}\?\ka(t) & 0
	\end{pmatrix}\in \fr{so}_3
\end{equation}
is called the \tdef{logarithmic derivative} of $\Phi_\ga$ and $\ka$ is the geodesic curvature of $\ga$. 

Conversely, given $Q_0\in \SO_3$ and a smooth map $\La\colon [0,1]\to \fr{so}_3$ of the form
\begin{equation}\label{E:frenet2}
	\La(t)=\begin{pmatrix}
		0   & -v(t) & 0  \\
		 v(t)  &   0 & -w(t) \\
		 0  & w(t)  & 0
	\end{pmatrix},
\end{equation} let $\Phi\colon [0,1]\to \SO_3$ be the unique solution to the initial value problem 
\begin{equation}\label{E:ivp}
	\dot{\Phi}(t)=\Phi(t)\La(t),\quad \Phi(0)=Q_0.
\end{equation}
Define $\ga\colon [0,1]\to \Ss^2$ to be the smooth curve given by $\ga(t)=\Phi(t)e_1$. Then $\ga$ is regular if and only if $v(t)\neq 0$ for all $t\in [0,1]$, and it satisfies $\Phi_\ga=\Phi$ if and only if $v(t)>0$ for all $t$. (If $v(t)<0$ for all $t$ then $\ga$ is regular, but $\Phi_\ga$ is obtained from $\Phi$ by changing the sign of the entries in the second and third columns.) 

Equation \eqref{E:ivp} still has a unique solution if we only require that $v,w\in L^2[0,1]$ (cf.~\cite{CodLev}, p.~67). With this in mind, let $\E=L^2[0,1]\times L^2[0,1]$ and let $h\colon (0,+\infty)\to \R$ be the smooth diffeomorphism
\begin{equation}\label{E:thereason}
	h(t)=t-t^{-1}.
\end{equation} 
For each pair $\ka_1<\ka_2\in \R$, let $h_{\ka_1,\,\ka_2} \colon (\ka_1,\ka_2)\to \R$ be the smooth diffeomorphism
\begin{equation*}
	h_{\ka_1,\,\ka_2}(t)=(\ka_1-t)^{-1}+(\ka_2-t)^{-1}
\end{equation*}
and, similarly, set
\begin{alignat*}{10}
&h_{-\infty,+\infty}\colon \R\to \R\qquad  & &h_{-\infty,+\infty}(t)=t \\
 &h_{-\infty,\ka_2}\colon (-\infty,\ka_2)\to \R\qquad &  &h_{-\infty,\ka_2}(t)=t+(\ka_2-t)^{-1} \\
 &h_{\ka_1,+\infty}\colon (\ka_1,+\infty)\to \R \qquad & &h_{\ka_1,+\infty}(t)=t+(\ka_1-t)^{-1}.
\end{alignat*}

\begin{defn}
Let $\ka_1,\ka_2$ satisfy $-\infty\leq \ka_1<\ka_2\leq +\infty$. A curve $\ga\colon [0,1]\to \Ss^2$ will be called $(\ka_1,\ka_2)$-\tdef{admissible} if there exist $Q_0\in \SO_3$ and a pair $(\hat v,\hat w)\in \E$ such that $\ga(t)=\Phi(t)e_1$ for all $t\in [0,1]$, where $\Phi$ is the unique solution to equation \eqref{E:ivp}, with $v,w$ given by 
\begin{equation}\label{E:Sobolev}
	v(t)=h^{-1}(\hat{v}(t)),\text{\quad }w(t)=v(t)\?h^{-1}_{\ka_1,\,\ka_2}(\hat{w}(t)).
\end{equation}
When it is not important to keep track of the bounds $\ka_1,\ka_2$, we shall say more simply that $\ga$ is \tdef{admissible}.
\end{defn}

 In vague but more suggestive language, an admissible curve $\ga$ is essentially an $H^1$ frame $\Phi\colon [0,1]\to \SO_3$ such that $\ga=\Phi e_1\colon [0,1]\to \Ss^2$ has geodesic curvature in the interval $(\ka_1,\ka_2)$. The unit tangent (resp.~normal) vector $\ta(t)=\Phi(t)e_2$ (resp.~$\no(t)=\Phi(t)e_3$) of $\ga$ is thus defined everywhere on $[0,1]$, and it is absolutely continuous as a function of $t$. The curve $\ga$ itself is, like $\Phi$, of class $H^1$. However, the coordinates of its velocity vector $\dot\ga(t)=v(t)\?\Phi(t)e_2$ lie in $L^2[0,1]$, so the latter is only defined almost everywhere. The geodesic curvature of $\ga$, which is also defined a.e., is given by 
\[
\ka(t)=\frac{1}{v(t)}\gen{\dot\ta(t),\no(t)}=\frac{w(t)}{v(t)}=h^{-1}_{\ka_1,\,\ka_2}(\hat{w}(t))\in (\ka_1,\ka_2)
\]
(cf.~\eqref{E:frenet1}, \eqref{E:frenet2} and \eqref{E:Sobolev}). Note also that if we parametrize $\ga$ by (a multiple of) its arc-length parameter instead, then it becomes a $C^1$ curve, for then $\ga'=\ta$ is absolutely continuous.

\begin{urem}
	The reason for the choice of the specific diffeomorphism $h\colon (0,+\infty)\to \R$ in \eqref{E:thereason} (instead of, say, $h(t)=\log t$) is that we need $h^{-1}(t)$ to diverge linearly to $\pm \infty$ as $t\to 0,+\infty$ in order to guarantee that $v=h^{-1}\circ \hat{v}\in L^2[0,1]$ whenever $\hat{v}\in L^2[0,1]$. The reason for the choice of the other diffeomorphisms is analogous.
\end{urem}

\begin{defn}\label{D:Little0}
Let $-\infty\leq \ka_1<\ka_2\leq +\infty$, $Q_0\in \SO_3$. Define $\sr L_{\ka_1}^{\ka_2}(Q_0,\cdot)$ to be the set of all $(\ka_1,\ka_2)$-admissible curves $\ga$ such that
\[
\Phi_\ga(0)=Q_0,
\]
where $\Phi_\ga$ is the frame of $\ga$. This set is identified with $\E$ via the correspondence $\ga\leftrightarrow (\hat v,\hat w)$, and this defines a (trivial) Hilbert manifold structure on $\sr L_{\ka_1}^{\ka_2}(Q_0,\cdot)$.
\end{defn}

In particular, this space is contractible by definition. We are now ready to define the spaces $\sr L_{\ka_1}^{\ka_2}(Q)$, which constitute the main object of study of this work.

\begin{defn}\label{D:Little}
Let $-\infty\leq \ka_1<\ka_2\leq +\infty$, $Q\in \SO_3$. We define $\sr L_{\ka_1}^{\ka_2}(Q)$ to be the subspace of $\sr L_{\ka_1}^{\ka_2}(I,\cdot)$ consisting of all curves $\ga$ in the latter space satisfying 
\begin{equation*}
	\Phi_\ga(0)=I\text{\quad and \quad}\Phi_\ga(1)=Q.\tag{i}
\end{equation*}
Here $\Phi_\ga$ is the frame of $\ga$ and $I$ is the $3\!\times\!3$ identity matrix.\footnote{The letter `L' in $\sr L_{\ka_1}^{\ka_2}(Q)$ is a reference to John A.~Little, who determined the connected components of $\sr L_{0}^{+\infty}(I)$ in \cite{Little}.}
\end{defn}

Because $\SO_3$ has dimension 3, the condition $\Phi_\ga(1)=Q$ implies that $\sr L_{\ka_1}^{\ka_2}(Q)$ is a closed submanifold of codimension 3 in $\E\equiv \sr L_{\ka_1}^{\ka_2}(I,\cdot)$. (Here we are using the fact that the map which sends the pair $(\hat v,\hat w)\in \tbf{E}$ to $\Phi(1)$ is a submersion; a proof of this when $\ka_1=0$ and $\ka_2=+\infty$ can be found in $\S$\?3 of \cite{Sal3}, and the proof of the general case is analogous.) The space $\sr L_{\ka_1}^{\ka_2}(Q)$ consists of closed curves only when $Q=I$. Also, when $\ka_1=-\infty$ and $\ka_2=+\infty$ simultaneously, no restrictions are placed on the geodesic curvature. The resulting space (for arbitrary $Q\in \SO_3$) is known to be homotopy equivalent to $\Om\Ss^3\du \Om\Ss^3$; see the discussion after (\ref{L:bit}). 

Note that we have natural inclusions $\sr L_{\ka_1}^{\ka_2}(Q)\inc \sr L_{\bar\ka_1}^{\bar\ka_2}(Q)$ whenever $\bar\ka_1\leq \ka_1<\ka_2\leq \bar\ka_2$. More explicitly, this map is given by:
\begin{equation*}
	\ga\equiv(\hat v,\hat w)\mapsto \big(\hat v, h_{\bar \ka_1,\bar\ka_2}\circ h_{\ka_1,\ka_2}^{-1}(\hat w)\big);
\end{equation*}
it is easy to check that the actual curve associated with the pair of functions in $\sr L_{\bar\ka_1}^{\bar\ka_2}(Q)$ on the right side (via \eqref{E:frenet2}, \eqref{E:ivp} and \eqref{E:Sobolev}) is the original curve $\ga$, so that the use of the term ``inclusion'' is justified. We remark that although it is a continuous injection, it is not a topological embedding unless $\bar\ka_1=\ka_1$ and $\bar \ka_2=\ka_2$.


\begin{lemma} Let $\sr M, \sr N$ be Banach manifolds. Then:\label{L:Hilbert}
	\begin{enumerate}
		\item [(a)] $\sr M$ is locally path-connected. In particular, its connected components and path components coincide. 
		\item [(b)] If $\sr M,\,\sr N$ are weakly homotopy equivalent, then they are in fact homeomorphic (diffeomorphic if $\sr M,\,\sr N$ are Hilbert manifolds). In particular, if $\sr M$ is weakly contractible, then it is contractible.
		\item [(c)] Let $\E$ and $\mathbf{F}$ be separable Banach spaces. Suppose $i\colon \mathbf{F}\to \E$ is a bounded, injective linear map with dense image and $\sr M\subs \E$ is a smooth closed submanifold of finite codimension.  Then $\sr N=i^{-1}(\sr M)$ is a smooth closed submanifold of\, $\mathbf{F}$ and $i\colon (\mathbf{F},\sr N)\to (\E,\sr M)$ is a homotopy equivalence of pairs.
	\end{enumerate}
\end{lemma}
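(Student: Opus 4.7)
For (a), observe that a Banach manifold is by definition locally homeomorphic to an open subset of a Banach space, which is locally path-connected via straight-line segments in small open balls. Hence the manifold is locally path-connected, and a standard topological argument then shows that its connected components coincide with its path components.

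For (b), I would cite the classical body of results in infinite-dimensional topology. Henderson's theorem (\emph{Topology} 9 (1970), 25--33) asserts that any separable infinite-dimensional Hilbert manifold is homeomorphic to an open subset of the model Hilbert space and that its homotopy type is a complete invariant up to diffeomorphism; combined with the fact that Banach manifolds are ANRs---so that weak equivalence implies homotopy equivalence by Whitehead's theorem---this settles the Hilbert case. The corresponding statements for separable Banach manifolds are due to Burghelea--Kuiper and Eells--Elworthy. The contractibility assertion is the special case in which one of the manifolds is a point.

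For (c), I would first verify the submanifold structure. Given $q \in \sr N$, set $p = i(q) \in \sr M$ and use the finite-codimension hypothesis to choose a smooth submersion $\phi \colon U \to \mathbf{R}^k$ on an open neighborhood $U \subs \mathbf{E}$ of $p$ with $\sr M \cap U = \phi^{-1}(0)$. Setting $\psi = \phi \circ i$, one has $\sr N \cap i^{-1}(U) = \psi^{-1}(0)$ and $d\psi_q = d\phi_{i(q)} \circ i$. The continuous surjection $d\phi_{i(q)} \colon \mathbf{E} \to \mathbf{R}^k$ is an open map, and so carries the dense subspace $i(\mathbf{F}) \subs \mathbf{E}$ onto a dense linear subspace of $\mathbf{R}^k$. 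Since every dense linear subspace of a finite-dimensional space is the whole space, $d\psi_q$ is surjective, so $\psi$ is a submersion and $\sr N$ is a closed submanifold of codimension $k$ in $\mathbf{F}$.

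For the second claim of (c), both $\sr M \inc \mathbf{E}$ and $\sr N \inc \mathbf{F}$ are closed cofibrations (via their tubular neighborhoods), so by the standard criterion for pair homotopy equivalence it suffices to show that each of $i \colon \mathbf{F} \to \mathbf{E}$ and $i|_{\sr N} \colon \sr N \to \sr M$ is a homotopy equivalence. The former is trivial since both spaces are contractible. For the latter, by part (b) it suffices to establish a weak equivalence, i.e.\ that $i_* \colon \pi_k(\sr N) \to \pi_k(\sr M)$ is a bijection for every $k$. Given $f \colon S^k \to \sr M$, the plan is to triangulate $S^k$ so finely that each closed simplex is carried by $f$ into a single submersion chart $(U_\alpha, \phi_\alpha)$ with tubular retraction $r_\alpha \colon U_\alpha \to \sr M$; density of $i(\mathbf{F})$ in $\mathbf{E}$ combined with the retractions gives density of $\sr N$ in $\sr M$ in the $\mathbf{E}$-topology, so each vertex value $f(v)$ may be replaced by a nearby point of $\sr N$ and pulled back via $i^{-1}$ to $\mathbf{F}$. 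Linear interpolation on each simplex in $\mathbf{F}$ yields $g \colon S^k \to \mathbf{F}$; for a fine enough triangulation $g$ lies in a tubular neighborhood of $\sr N$ in $\mathbf{F}$, and retracting produces $\tilde g \colon S^k \to \sr N$ with $i \circ \tilde g$ homotopic to $f$ via the straight-line homotopy in the tubular neighborhood of $\sr M$. Injectivity on $\pi_k$ is handled by the same approximation scheme applied to a nullhomotopy. The principal technical obstacle is coordinating two incomparable topologies---the coarser $\mathbf{E}$-topology, in which the approximations are close to $\sr M$, and the finer $\mathbf{F}$-topology, in which linear interpolation between vertex values may be badly behaved; this is resolved by exploiting the finite codimension of $\sr M$, which confines the tubular correction to a finite-dimensional complement where the two topologies agree.
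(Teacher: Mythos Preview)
Your treatment of (a) and (b) is essentially what the paper does: both are handled by appeal to the standard infinite-dimensional topology literature (the paper cites Palais and Henderson; you cite Henderson, Burghelea--Kuiper, Eells--Elworthy), and the arguments are interchangeable. One small correction: deducing ``weakly contractible $\Rightarrow$ contractible'' by taking $\sr N$ to be a point is not quite right, since a point is not a manifold modeled on the same Banach space and the homeomorphism theorems you cite require that; the conclusion follows instead from the ANR-plus-Whitehead argument you already invoked.

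For (c) the paper simply cites the result (thm.~2 in Burghelea--Saldanha--Tomei), whereas you attempt to reconstruct its proof. Your submanifold argument is correct and clean. Your outline of the weak-equivalence step is in the right spirit, and you correctly identify the genuine difficulty: vertex approximations are close in the $\mathbf{E}$-topology, but linear interpolation is performed in $\mathbf{F}$, and there is no reason the interpolated map should land near $\sr N$ in the $\mathbf{F}$-topology. Your proposed resolution---that finite codimension confines the tubular correction to a finite-dimensional normal complement where the topologies agree---is the right intuition but is not yet a proof: you have not explained how to arrange that the interpolated simplex actually meets $\sr N$, nor how to control the tangential drift. The actual argument (compare the paper's subsequent Lemma on dense subspaces and submanifolds) does not simply interpolate between vertex values; instead, at each vertex one builds an $n$-simplex spanned by perturbations in carefully chosen \emph{normal} directions to $\sr M$, so that the defining submersion $\phi$ restricts to a diffeomorphism on each such simplex and the unique preimage of $0$ picks out a point of $\sr M$ (or $\sr N$). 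This ``good simplex'' construction is what makes the finite-codimension hypothesis do real work, and it is the missing ingredient in your sketch.
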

\begin{proof} Part (a) is obvious. Part (b) follows from thm.~15 in \cite{PalHom} and cor.~3 in \cite{Henderson1}. Part (c) is thm.~2 in \cite{bst}. 
\end{proof}

\begin{lemma}\label{L:net}
	Let $\E$ be a separable Hilbert space, $D\subs \E$ a dense vector subspace,  $L\subs \E$ a submanifold of finite codimension and $U$ an open subset of $L$.  Then the set inclusion $D\cap U\to U$ induces surjections on homotopy groups.
	\end{lemma}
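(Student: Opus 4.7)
Given a continuous $f\colon S^k\to U$, the plan is to homotope $f$ within $U$ to a map with image in $D\cap U$. The key tool will be a family of local charts on $L$ adapted to $D$. For each $p\in L$, write $H=T_pL$, a closed subspace of $\E$ of finite codimension $n$. Because $\E/H$ is finite-dimensional and the image of $D$ in $\E/H$ is dense, that image equals all of $\E/H$; lifting any basis of $\E/H$ to vectors in $D$ produces a finite-dimensional subspace $N\subs D$ with $\E=H\oplus N$. By the implicit function theorem, $L$ near $p$ is the graph $\{p+h+\psi(h):h\in\Omega\}$ of a smooth map $\psi\colon\Omega\to N$ on some open neighborhood $\Omega$ of $0$ in $H$. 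The virtue of this chart is that, since $N\subs D$, a point $p+h+\psi(h)\in L$ lies in $D$ precisely when $p+h\in D$.

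By the same surjectivity argument $(p+H)\cap D$ is nonempty, say equal to $p+h_0+(D\cap H)$; and an analogous approximation-and-correction step (approximate $v\in H$ by some $d\in D$, then subtract an element of $D$ matching its $N$-component, available because $\pi_N|_D$ is surjective) shows that $D\cap H$ is dense in $H$. Thus in chart coordinates the set $D\cap L$ corresponds to $\Omega\cap(h_0+D\cap H)$, the intersection of $\Omega$ with a dense affine subspace of the Hilbert space $H$. The inclusion $\Omega\cap(h_0+D\cap H)\inc\Omega$ is then $\pi_k$-surjective by a standard partition-of-unity argument: cover the compact set $f(S^k)\subs\Omega$ by $\eps$-balls whose $2\eps$-enlargements lie in $\Omega$, choose $d_\al\in h_0+(D\cap H)$ within $\eps$ of each ball center, take a subordinate partition of unity $\{\la_\al\}$ on $S^k$, and set $g(x)=\sum_\al\la_\al(x)d_\al$. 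Since $h_0+(D\cap H)$ is closed under affine combinations and $\sum_\al\la_\al(x)=1$, $g$ lands there; being within $2\eps$ of $f$, it is joined to $f$ by a straight-line homotopy in $\Omega$.

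For the general $f\colon S^k\to U$, cover the compact set $f(S^k)$ by finitely many good chart neighborhoods $V_1,\dots,V_N$ with $V_j\subs U$, and subdivide $S^k$ finely enough that each closed simplex $\sig$ satisfies $f(\sig)\subs V_{j(\sig)}$. I would then deform $f$ simplex by simplex by induction on dimension, applying the construction above rel boundary within the chart $V_{j(\sig)}$; compatibility across shared boundaries of simplices mapped to different charts is automatic because $D\cap L$ is an intrinsic subset of $L$, so boundary values already in $D\cap L$ are recognized as lying in the dense affine subspace of whichever chart is being used. The main obstacle I anticipate is the bookkeeping required to choose the subdivision and the radii $\eps$ uniformly at the outset so that all local straight-line homotopies remain in $U$; this is arranged via the Lebesgue number of the chart cover $\{V_j\}$ together with compactness of $f(S^k)$.
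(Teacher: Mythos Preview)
Your local model is sound: choosing the complementary direction $N \subs D$ so that $D \cap L$ corresponds in chart coordinates to a dense affine subspace $h_0 + (D \cap H)$ of $H$ is a nice idea, and the single-chart partition-of-unity approximation is correct.

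The gap is in the patching. The phrase ``applying the construction above rel boundary'' hides a genuine difficulty: the formula $g(x) = \sum_\al \la_\al(x) d_\al$ does not respect prescribed boundary values. When you reach an $m$-simplex $\sigma$ whose boundary has already been pushed into $D \cap L$, you need an extension $\tilde{g}\colon \sigma \to D \cap L$ with $\tilde{g}|_{\partial\sigma} = g|_{\partial\sigma}$ and $\tilde{g}$ still close to $f$. One fix, in your chart: first build $\phi\colon \sigma \to A$ close to $f$ via the partition of unity, observe that $g - \phi$ maps $\partial\sigma$ into the \emph{vector} space $D \cap H$, extend this small map radially from the barycenter of $\sigma$ (staying in $D \cap H$ since that space is convex), and set $\tilde{g} = \phi + (\text{extension})$. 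This works, but the error grows additively at each skeletal step, so the approximation quality must be chosen with this in mind at the outset. You should spell this out rather than leave it implicit.

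The paper organizes the argument differently and avoids the skeleton induction. Working with a single submersion $h\colon V \to \R^n$ cutting out $L$, it builds at each $p$ a small $n$-simplex \emph{transverse} to $L$, with vertices $p + r w_j(p)$ where $w_j(p) = (dh_p|_{N_p^\perp})^{-1}(e_j)$. After triangulating $K$ finely with vertices $a_i$, it approximates $f(a_i) + r w_j(f(a_i))$ by points $\tilde{v}_{ij} \in D$, takes barycentric averages over simplices of $K$ to obtain a transverse simplex $[z_0(s),\dots,z_n(s)]$ depending continuously on $a \in K$, and sets $H(s,a)$ equal to the unique point of $L = h^{-1}(0)$ inside that simplex. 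At $s=2$ this point is a convex combination of the $\tilde{v}_{ij}$, hence lies in $D$ (using that $D$ is a linear subspace). Because the formula is global in $a$, no rel-boundary bookkeeping arises; the trade-off is that the paper writes out only the case of a global submersion and gestures at a partition-of-unity argument for the general statement.
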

\begin{proof}
	We shall prove the lemma when $L=h^{-1}(0)$ for some submersion $h\colon V\to \R^n$, where $V$ is an open subset of $\E$. This is sufficient for our purposes and the general assertion can be deduced from this by using a partition of unity subordinate to a suitable cover of $L$ by convex open sets. 
	
	Let $T$ be a tubular neighborhood of $U$ in $V$. Let $K$ be a compact simplicial complex and $f\colon K\to U$ a continuous map. We shall obtain a continuous $H\colon [0,2]\times K\to U$ such that $H(0,a)=f(a)$ for every $a\in K$ and $H(\se{2}\times K)\subs D\cap U$. Let $e_j$ denote the $j$-th vector in the canonical basis for $\R^n$, $e_0=-\sum_{j=1}^ne_j$ and let $\De\subs \R^n$ denote the $n$-simplex $[e_0,\dots,e_n]$. Let  $[x_0,x_1,\dots,x_n]\subs T$ be an $n$-simplex and $\vphi\colon \De\to [x_0,x_1,\dots,x_n]$ be given by 
	\begin{equation*}
		\vphi\bigg(\sum_{j=0}^n s_je_j\bigg)=\sum_{j=0}^n s_j x_j, \text{\ \ where\ \ $\sum_{j=0}^n s_j=1$\ \ and\ \ $s_j\geq 0$\ \ for all\ \ $j=0,\dots,n$}.
	\end{equation*}
	We shall say that $[x_0,x_1,\dots,x_n]$ is \tdef{good} if $h\circ \vphi\colon \De\to T$ is a diffeomorphism and $0\in (h\circ \vphi)(\Int \De)$. 
	
	Given $p\in T$, let $dh_p$ denote the derivative of $h$ at $p$ and $N_p=\ker(dh_p)$. Define $w_j\colon T\to \E$ by:
	\begin{equation}\label{E:w_j}
		w_j(p)=\big(dh_p|_{N_p^\perp}\big)^{-1}(e_j)\quad (p\in T,~j=0,\dots,n).
	\end{equation}
	Notice that when $\abs{\la_j}$ is small for each $j$, we have $h(p+\sum_j\la_j w_j(p))\approx \sum_j\la_j e_j$. Using compactness of $K$, we can find $r,\eps>0$ such that:
	\begin{enumerate}
		\item [(i)] For any $p\in f(K)$, $[p+rw_0(p),\dots,p+rw_n(p)]\subs T$ and it is good;
		\item [(ii)] If $p\in f(K)$ and $\abs{q_j-(p+rw_j(p))}<\eps$ for each $j$, then $[q_0,\dots,q_n]\subs T$ and it is good.
	\end{enumerate}
	Let $a_i$ ($i=1,\dots,m$) be the vertices of the triangulation of $K$. Set $v_i=f(a_i)$ and
	\begin{equation*}
		v_{ij}=v_i+rw_j(v_i)\quad (i=1,\dots,m,~j=0,\dots,n).
	\end{equation*}
	For each such $i,j$, choose $\te{v}_{ij}\in D\cap T$ with $\abs{\te{v}_{ij}-v_{ij}}<\frac{\eps}{2}$. Let
	\begin{alignat}{9}
		&v_{ij}(s)=(2-s)v_{ij}+(s-1)\te{v}_{ij},\ \text{ so that } \notag \\ \label{E:simest}
		&\abs{v_{ij}(s)-v_{ij}}<\frac{\eps}{2}\ \ (s\in [1,2],~i=1,\dots,m,~j=0,\dots,n).
	\end{alignat}
	For any $i,i'\in \se{1,\dots,m}$ and $j=0,\dots,n$, we have
	\begin{equation*}
		\abs{v_{ij}-v_{i'j}}\leq \abs{f(a_{i})-f(a_{i'})}+r\abs{w_j\circ f(a_{i})-w_j\circ f(a_{i'})}.
	\end{equation*}
	Since $f$ and the $w_j$ are continuous functions, we can suppose that the triangulation of $K$ is so fine that $\abs{v_{ij}-v_{i'j}}<\frac{\eps}{2}$ for each $j=0,\dots,n$ whenever there exists a simplex having $a_{i}$, $a_{i'}$ as two of its vertices. Let $a\in K$ lie in some $d$-simplex of this triangulation, say,  $a=\sum_{i=1}^{d+1}t_ia_i$ (where each $t_i>0$ and $\sum_i t_i=1$). Set
	\begin{equation*}
		z_{j}(s)=\sum_{i=1}^{d+1}t_iv_{ij}(s)\quad (s\in [1,2],~j=0,\dots,n).
	\end{equation*}
	Then $[z_0(s),\dots,z_n(s)]$ is a good simplex because condition (ii) is satisfied (with $p=v_1$):
	\begin{equation*}
		\bigg\vert \sum_{i=1}^{d+1}t_iv_{ij}(s)-v_{1j}\bigg\vert\leq \sum_{i=1}^{d+1}t_i\big(\abs{v_{ij}(s)-v_{ij}}+\abs{v_{ij}-v_{1j}}\big)<\eps,
	\end{equation*}
	where the strict inequality comes from \eqref{E:simest} and our hypothesis on the triangulation.
	Define $H(s,a)$ as the unique element of $h^{-1}(0)\cap [z_0(s),\dots,z_n(s)]$ ($s\in [1,2]$). Note that $H(2,a)$ is some convex combination of the $\te{v}_{ij}\in D$, hence $H(2,a)\in D\cap U$ for all $a\in K$.
	
	By reducing $r,\eps>0$ (and refining the triangulation of $K$) if necessary, we can ensure that
	\begin{equation*}
		(1-s)f(a)+sH(1,a)\in T\quad \text{for all $s\in [0,1]$ and $a\in K$.}
	\end{equation*}
	Let $\pr\colon T\to U$ be the associated retraction. Complete the definition of $H$ by setting:
	\begin{equation*}
		H(s,a)=\pr \big((1-s)f(a)+sH(1,a)\big)\in T\quad \text{($s\in [0,1]$,~$a\in K$).}
	\end{equation*}
	The existence of $H$ shows that $f$ is homotopic within $U$ to a map whose image is contained in $D\cap U$. Taking $K=\Ss^k$, we conclude that the set inclusion $D\cap U\to U$ induces surjective maps $\pi_k(D\cap U)\to \pi_k(U)$ for all $k\in \N$.
\end{proof}

\begin{lemma}\label{L:dense}
	Let $r\in \se{2,3,\dots,\infty}$. Then the subset of all $\ga\colon [0,1]\to \Ss^2$ of class $C^r$ is dense in $\sr L_{\ka_1}^{\ka_2}(Q)$.
\end{lemma}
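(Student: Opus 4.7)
The plan is a direct application of Lemma \ref{L:net}. Take the vector subspace
\[
D = C^\infty[0,1] \times C^\infty[0,1] \subset L^2[0,1] \times L^2[0,1] = \E,
\]
which is dense in $\E$ because smooth functions are dense in $L^2[0,1]$. Under the identification $\ga \leftrightarrow (\hat v, \hat w)$ from Definition \ref{D:Little0}, I first verify that every element of $D\cap \sr L_{\ka_1}^{\ka_2}(I,\cdot)$ corresponds to a smooth curve. Indeed, since $h\colon (0,+\infty)\to \R$ and $h_{\ka_1,\ka_2}\colon (\ka_1,\ka_2)\to\R$ are smooth diffeomorphisms, $(\hat v,\hat w)\in D$ implies that $v = h^{-1}\circ \hat v$ and $\ka = h^{-1}_{\ka_1,\ka_2}\circ \hat w$ are smooth, and so is $w = v\ka$. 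Then the matrix $\La(t)$ appearing in \eqref{E:frenet2} is smooth, the unique solution $\Phi$ of the initial value problem \eqref{E:ivp} is $C^\infty$ by standard ODE theory, and $\ga = \Phi e_1$ is of class $C^\infty$, in particular of class $C^r$.

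The second step is to extract density from Lemma \ref{L:net}. As noted in the paragraph following Definition \ref{D:Little}, $L := \sr L_{\ka_1}^{\ka_2}(Q)$ is a closed submanifold of codimension $3$ in the separable Hilbert space $\E$, so the hypotheses of Lemma \ref{L:net} are met. Given any $\ga_0 \in L$ and any neighborhood $V$ of $\ga_0$ in $L$, pick a path-connected open $U\subset V$ containing $\ga_0$; such a $U$ exists by Lemma \ref{L:Hilbert}(a). Applying Lemma \ref{L:net} with this $U$ yields that $\pi_0(D \cap U) \to \pi_0(U)$ is surjective, and since $U$ is nonempty and path-connected this forces $D \cap U$ to be nonempty. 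By the first step, any element of $D \cap U$ is a $C^\infty$ (hence $C^r$) curve in $V$, which proves density.

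There is essentially no obstacle to this approach: the technical work has been packaged into Lemma \ref{L:net}, and once that lemma is available the proof reduces to the straightforward verification that smooth data $(\hat v,\hat w)$ yield smooth admissible curves. The only point requiring any care is ensuring that the endpoint condition $\Phi_\ga(1)=Q$ can be enforced after approximation; this is precisely what Lemma \ref{L:net} guarantees, as it addresses approximation \emph{within} a finite-codimension submanifold rather than in the ambient Hilbert space.
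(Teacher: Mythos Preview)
Your proof is correct and follows essentially the same approach as the paper: apply Lemma~\ref{L:net} with $\E=L^2[0,1]\times L^2[0,1]$, $D=C^\infty[0,1]\times C^\infty[0,1]$, and $L=\sr L_{\ka_1}^{\ka_2}(Q)$. You have supplied more detail than the paper (verifying that smooth $(\hat v,\hat w)$ yield smooth curves, and spelling out how surjectivity on $\pi_0$ for small path-connected $U$ gives density), but the underlying strategy is identical.
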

\begin{proof}
	This follows from the previous lemma by taking $\E=L^2[0,1]\times L^2[0,1]$, $D=C^\infty[0,1]\times C^\infty[0,1]$ and $U$ any open subset of $L=\sr L_{\kappa_1}^{\kappa_2}(u,v)$.
\end{proof}

\begin{defn}\label{D:Cittle}
Let $-\infty\leq \ka_1<\ka_2\leq +\infty$, $Q\in \SO_3$ and $r\in \N$, $r\geq 2$. Define $\sr C_{\ka_1}^{\ka_2}(Q)$ to be the set, furnished with the $C^r$ topology, of all $C^r$ regular curves $\ga\colon [0,1]\to \Ss^2$ such that:
\begin{enumerate}
	\item [(i)] $\Phi_\ga(0)=I$ and $\Phi_\ga(1)=Q$;
	\item [(ii)]  $\ka_1<\ka(t)<\ka_2$ for each $t\in [0,1]$.
\end{enumerate}
\end{defn}
Notice that these spaces are Banach manifolds. The value of $r$ is not important, in the sense that different values of $r$ yield homeomorphic spaces. Because of this, after the next lemma, when we speak of $\sr C_{\ka_1}^{\ka_2}(Q)$, we will implicitly assume that $r=2$.

\begin{lemma}\label{L:C^2}
	Let $r\in \N$ $(r\geq 2)$, $Q\in \SO_3$ and $-\infty\leq \ka_1<\ka_2\leq +\infty$. Then the set inclusion $i\colon \sr C_{\ka_1}^{\ka_2}(Q)\inc \sr L_{\ka_1}^{\ka_2}(Q)$ is a  homotopy equivalence. Therefore, $\sr C_{\ka_1}^{\ka_2}(Q)$ and $\sr L_{\ka_1}^{\ka_2}(Q)$ are homeomorphic.
\end{lemma}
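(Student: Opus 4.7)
The plan is to show that $i$ is a weak homotopy equivalence and then invoke Lemma~\ref{L:Hilbert}(b) to upgrade this to a homeomorphism. Both spaces are separable Banach manifolds: $\sr L_{\ka_1}^{\ka_2}(Q)$ is a Hilbert manifold by Definition~\ref{D:Little}, and $\sr C_{\ka_1}^{\ka_2}(Q)$ is noted to be a Banach manifold right after Definition~\ref{D:Cittle}.

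The main technical step uses Lemma~\ref{L:Hilbert}(c). Take $\mathbf{F}=C^{r-1}[0,1]\times C^{r-1}[0,1]$ with its $C^{r-1}$-norm, a separable Banach space, and let $j\colon\mathbf{F}\to\E=L^2[0,1]\times L^2[0,1]$ be the obvious bounded linear inclusion, whose image is dense. Since $\sr L_{\ka_1}^{\ka_2}(Q)\subseteq\E$ is a closed submanifold of codimension $3$, Lemma~\ref{L:Hilbert}(c) implies that $\sr N:=j^{-1}\bigl(\sr L_{\ka_1}^{\ka_2}(Q)\bigr)$ is a Banach submanifold of $\mathbf{F}$ and that $j|_{\sr N}\colon\sr N\to\sr L_{\ka_1}^{\ka_2}(Q)$ is a homotopy equivalence. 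Now every $(\hat v,\hat w)\in\sr N$ yields via \eqref{E:frenet2}, \eqref{E:ivp} and \eqref{E:Sobolev} a frame $\Phi$ solving $\dot\Phi=\Phi\La$ with $\La\in C^{r-1}$; a standard bootstrap upgrades $\Phi$ to $C^r$, so the associated curve $\ga=\Phi e_1$ lies in $\sr C_{\ka_1}^{\ka_2}(Q)$. This defines a continuous map $\Psi\colon\sr N\to\sr C_{\ka_1}^{\ka_2}(Q)$ with $i\circ\Psi=j|_{\sr N}$.

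The remaining task is to check that $\Psi$ is itself a weak homotopy equivalence; by the two-out-of-three property $i$ will then be one as well. For this I adapt the proof of Lemma~\ref{L:net} to the $C^r$ topology. Smooth pairs $(\hat v,\hat w)\in C^\infty[0,1]\times C^\infty[0,1]$ automatically belong to $\sr N$, and the corresponding smooth curves are dense in $\sr C_{\ka_1}^{\ka_2}(Q)$ in the $C^r$ topology (the submersion property of the endpoint map $(\hat v,\hat w)\mapsto\Phi(1)$ survives in the $C^r$ setting, so the same tubular-neighborhood / convex-combination construction applies). Thus any continuous map from a compact polyhedron into $\sr C_{\ka_1}^{\ka_2}(Q)$ is homotopic in $\sr C_{\ka_1}^{\ka_2}(Q)$ to a map factoring through $\sr N$, proving surjectivity on every $\pi_k$; applying the same reasoning to homotopies rel boundary gives injectivity. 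With $\Psi$ a weak equivalence, $i$ is a weak equivalence of Banach manifolds, and Lemma~\ref{L:Hilbert}(b) concludes that the two manifolds are homeomorphic.

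\textbf{Main obstacle.} The correspondence $\ga\leftrightarrow(\hat v,\hat w)$ is not symmetric in smoothness: a $C^r$ regular curve produces $\hat v\in C^{r-1}$ but only $\hat w\in C^{r-2}$, because the geodesic curvature $\ka=w/v$ is merely $C^{r-2}$ when $\ga\in C^r$. As a consequence $\sr N$ lies \emph{strictly} inside $\sr C_{\ka_1}^{\ka_2}(Q)$ rather than coinciding with it, so one cannot conclude directly from Lemma~\ref{L:Hilbert}(c). The delicate part of the argument is the $C^r$-topology version of Lemma~\ref{L:net} that bridges $\sr N$ and $\sr C_{\ka_1}^{\ka_2}(Q)$, particularly its relative form required for injectivity of $\pi_k(\sr N)\to\pi_k(\sr C_{\ka_1}^{\ka_2}(Q))$.
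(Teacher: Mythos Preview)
The obstacle you identify is self-inflicted. You chose $\mathbf{F}=C^{r-1}[0,1]\times C^{r-1}[0,1]$, which forces $\hat w$ to be one degree smoother than it needs to be, and this is precisely what makes $\sr N$ a proper subspace of $\sr C_{\ka_1}^{\ka_2}(Q)$. The paper instead takes
\[
\mathbf{F}=C^{r-1}[0,1]\times C^{r-2}[0,1],
\]
matching the actual regularity: for $\ga\in C^r$ one has $\abs{\dot\ga}\in C^{r-1}$ but only $\ka\in C^{r-2}$, hence $\hat v\in C^{r-1}$ and $\hat w\in C^{r-2}$. Conversely, if $(\hat v,\hat w)\in C^{r-1}\times C^{r-2}$ then the entries of $\La$ are at least $C^{r-2}$, the bootstrap $\dot\Phi=\Phi\La$ gives $\Phi\in C^{r-1}$, and $\dot\ga=v\,\Phi e_2\in C^{r-1}$ shows $\ga\in C^r$. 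With this choice of $\mathbf{F}$, the correspondence $(\hat v,\hat w)\leftrightarrow\ga$ is a homeomorphism between $\sr N=j^{-1}\big(\sr L_{\ka_1}^{\ka_2}(Q)\big)$ and $\sr C_{\ka_1}^{\ka_2}(Q)$, so Lemma~\ref{L:Hilbert}(c) finishes the proof immediately.

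Your detour through a $C^r$-topology analogue of Lemma~\ref{L:net} is therefore unnecessary, and as written it is also incomplete: you assert that the tubular-neighborhood construction carries over to the $C^r$ setting and that a relative version handles injectivity on $\pi_k$, but neither step is actually carried out. Even granting that such an argument could be made rigorous, it is a substantial amount of extra work to repair a problem that disappears once $\mathbf{F}$ is chosen correctly.
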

\begin{proof}In this proof we will highlight the differentiability class by denoting $\sr C_{\ka_1}^{\ka_2}(Q)$ by $\sr C_{\ka_1}^{\ka_2}(Q)^r$. Let $\E=L^2[0,1]\times L^2[0,1]$, let $\tbf{F}=C^{r-1}[0,1]\times C^{r-2}[0,1]$ (where $C^k[0,1]$ denotes the set of all $C^k$ functions $[0,1]\to \R$, with the $C^k$ norm) and let $i\colon \tbf{F}\to \E$ be set inclusion. Setting $\sr M=\sr L_{\ka_1}^{\ka_2}(Q)$, we conclude from (\ref{L:Hilbert}\?(c)) that $i\colon \sr N=i^{-1}(\sr M)\inc \sr M$ is a homotopy equivalence. We claim that $\sr N$ is homeomorphic to $\sr C_{\ka_1}^{\ka_2}(Q)^r$, where the homeomorphism is obtained by associating a pair $(\hat v,\hat w)\in \sr N$ to the curve $\ga$ obtained by solving \eqref{E:ivp} (with $\La$ defined by \eqref{E:frenet2} and \eqref{E:Sobolev} and $Q_0=I$), and vice-versa.
	
	Suppose first that $\ga\in \sr C_{\ka_1}^{\ka_2}(Q)^r$. Then $\abs{\dot\ga}$ (resp.~$\ka$) is a function $[0,1]\to \R$ of class $C^{r-1}$ (resp.~$C^{r-2}$). Hence, so are $\hat{v}=h\circ \abs{\dot\ga}$ and $\hat w=h_{\ka_1}^{\ka_2}\circ \ka$, since $h$ and $h_{\ka_1}^{\ka_2}$ are smooth. Conversely, if $(\hat{v},\hat{w})\in N$, then $v=h^{-1}\circ \hat v$ is of class $C^{r-1}$ and $w=(h_{\ka_1}^{\ka_2})^{-1}\circ \hat w$ of class $C^{r-2}$, and the frame $\Phi$ of the curve $\ga$ corresponding to that pair satisfies
	\begin{equation*}
		\dot \Phi=\Phi\La,\quad \La=\begin{pmatrix}
		0 & -\abs{\dot\ga} & 0 \\
		\abs{\dot\ga} & 0 & -\abs{\dot\ga} \ka \\
		0 & \abs{\dot\ga} \ka & 0
		\end{pmatrix}=\begin{pmatrix}
		0 & -v & 0 \\
		v & 0 & -w \\
		0 & w & 0
		\end{pmatrix}.
	\end{equation*}
	Since the entries of $\La$ are of class (at least) $C^{r-2}$, the entries of $\Phi$ are functions of class $C^{r-1}$. Moreover, $\ga=\Phi e_1$, hence
	\begin{equation*}
		\dot\ga=\dot\Phi e_1=\Phi\La e_1=v\?\Phi e_2,
	\end{equation*}
	and the velocity vector of $\ga$ is seen to be of class $C^{r-1}$. It follows that $\ga$ is a curve of class $C^r$. Finally, it is easy to check that the correspondence $(\hat v,\hat w)\dar \ga$ is continuous in both directions. 

	The last assertion follows from (\ref{L:Hilbert}\?(b)).
\end{proof}

\subsection*{Lifted frames} The (two-sheeted) universal covering space of $\SO_3$ is $\Ss^3$. Let us briefly recall the definition of the covering map $\pi\colon \Ss^3\to \SO_3$.\footnote{See \cite{CoxeterNEG} for more details and further information on quaternions and rotations.} We start by identifying $\R^4$ with the algebra $\Hh$ of quaternions, and $\Ss^3$ with the subgroup of unit quaternions. Given $z\in \Ss^3$, $v\in \R^4$, define a transformation $T_z\colon \R^4\to \R^4$ by $T_z(v)=zvz^{-1}=zv\ol{z}$. One checks easily that $T_z$ preserves the sum, multiplication and conjugation operations. It follows that, for any $v,w\in \R^4$,
\begin{alignat*}{10}
	4\gen{T_z(v),T_z(w)}=&\abs{T_z(v)+T_z(w)}^2-\abs{T_z(v)-T_z(w)}^2 \\
	=&\abs{v+w}^2-\abs{v-w}^2=4\gen{v,w},
\end{alignat*} 
where $\gen{\,,\,}$ denotes the usual inner product in $\R^4$. Thus $T_z$ is an orthogonal linear transformation of $\R^4$. Moreover, $T_z(\mbf{1})=\mbf{1}$ (where $\mbf{1}$ is the unit of $\Hh$), hence the three-dimensional vector subspace $\{0\}\times \R^3\subs \R^4$ consisting of  the purely imaginary quaternions is invariant under $T_z$. The element $\pi(z)\in \SO_3$ is the restriction of $T_z$ to this subspace, where $(a,b,c)\in \R^3$ is identified with the quaternion $a\mbf{i}+b\mbf{j}+c\mbf{k}$.

In what follows we adopt the convention that $\Ss^3$ (resp.~$\SO_3$) is furnished with the Riemannian metric inherited from $\R^4$ (resp.~$\R^9$).

\begin{lemma}\label{L:2sqrt2}
	Let $\gen{\?,\?}$ denote the metric in $\Ss^3$ and $\ggen{\?,\?}$ the metric in $\SO_3$. Then $\pi^*\ggen{\?,\?}=8\gen{\?,\?}$, where $\pi^*\ggen{\?,\?}$ denotes the pull-back of $\ggen{\?,\?}$ by $\pi$.
\end{lemma}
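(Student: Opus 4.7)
My plan is to use the bi-invariance of both metrics under left translation to reduce the claim to a pointwise calculation at the identity $\mbf{1}\in\Ss^3$, and then compute the differential $d\pi_{\mbf{1}}$ explicitly via the quaternionic product.

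First I would check that both metrics are left-invariant in the appropriate sense. On $\Ss^3$, left multiplication $L_z$ by $z\in\Ss^3$ is an $\R$-linear isometry of $\Hh\cong\R^4$, hence of $(\Ss^3,\gen{\,,\,})$. On $\SO_3\subs\R^9$, left multiplication by $R\in\SO_3$ preserves the Frobenius product via $\tr((RA)^\top(RB))=\tr(A^\top R^\top RB)=\tr(A^\top B)$, so it is an isometry of $(\SO_3,\ggen{\,,\,})$. Since $\pi$ is a group homomorphism, $\pi\circ L_z=L_{\pi(z)}\circ\pi$. Differentiating this identity and using that $dL_z$ and $dL_{\pi(z)}$ are linear isometries, one sees that $\pi^*\ggen{\,,\,}_z=8\gen{\,,\,}_z$ at an arbitrary $z\in\Ss^3$ is equivalent to the same identity at $\mbf{1}$.

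Next I would compute $d\pi_{\mbf{1}}$. The tangent space $T_{\mbf{1}}\Ss^3$ is the space $\Im(\Hh)=\R\bi+\R\bj+\R\bk$ of purely imaginary quaternions, and $(\bi,\bj,\bk)$ is an orthonormal basis for $\gen{\,,\,}_{\mbf{1}}$. For $v\in\Im(\Hh)$ and any smooth curve $z\colon(-\eps,\eps)\to\Ss^3$ with $z(0)=\mbf{1}$ and $\dot z(0)=v$, expanding $z(t)\,w\,\overline{z(t)}$ to first order in $t$ (and using $\bar v=-v$) yields
\[
\frac{d}{dt}\bigg|_{t=0}z(t)\,w\,\overline{z(t)}\;=\;vw-wv\;=\;[v,w]\qquad(w\in\Im(\Hh)),
\]
so $d\pi_{\mbf{1}}(v)\in\fr{so}_3$ is the operator $w\mapsto[v,w]$. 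Using $\bi\bj=-\bj\bi=\bk$ and its cyclic analogues, the three matrices $d\pi_{\mbf{1}}(\bi)$, $d\pi_{\mbf{1}}(\bj)$, $d\pi_{\mbf{1}}(\bk)$ each have two off-diagonal entries equal to $\pm 2$ in complementary positions. They are therefore pairwise orthogonal in the Frobenius metric, and each has squared norm $8$. Combined with the first step, this gives $\pi^*\ggen{\,,\,}=8\gen{\,,\,}$ on all of $\Ss^3$.

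The main potential obstacle is bookkeeping the two distinct notions of bi-invariance and pinning down the constant (which one might naively guess to be $4$ or $2$ since $\pi$ is a double cover). Once the left-invariance of the Frobenius metric on $\SO_3$ is made explicit and the passage from the exponential form to the infinitesimal formula $d\pi_{\mbf{1}}(v)w=[v,w]$ is carefully recorded, the verification of the factor $8$ is a direct quaternion calculation.
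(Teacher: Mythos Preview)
Your argument is correct and is exactly the ``straightforward calculation'' the paper alludes to (the details are deferred to the thesis \cite{tese}): reduce to the identity by left-invariance, compute $d\pi_{\mbf{1}}(v)w=[v,w]$, and read off that $d\pi_{\mbf{1}}(\bi),d\pi_{\mbf{1}}(\bj),d\pi_{\mbf{1}}(\bk)$ are Frobenius-orthogonal with squared norm $8$.
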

\begin{proof} The proof is a straightforward calculation. The details may be found in \cite{tese}, (2.11).\end{proof}

 \begin{defn}\label{D:liftedframe}
 Let $\Phi_\ga\colon [0,1]\to \SO_3$ be the frame of an admissible curve $\ga$ and let $z\in \Ss^3$ satisfy $\pi(z)=\Phi_\ga(0)$. We define the \tdef{lifted frame} $\te{\Phi}_\ga^z\colon [0,1]\to \Ss^3$ to  be the lift of $\Phi_\ga$ to $\Ss^3$, starting at $z$. When $\Phi_\ga(0)=I$ we adopt the convention that $z=\mbf{1}$, and we denote the lifted frame simply by $\te{\Phi}_\ga$. 
 \end{defn}
 
 Here is a simple but important application of this concept.

\begin{lemma}\label{L:bit}
	Let $\ga_0,\ga_1\in \sr L_{\ka_1}^{\ka_2}(Q)$, for some $Q\in \SO_3$, and suppose that $\ga_0,\ga_1$ lie in the same connected component of this space. Then $\te{\Phi}_{\ga_0}(1)=\te{\Phi}_{\ga_1}(1)$.
\end{lemma}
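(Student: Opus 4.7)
The plan is to lift a path from $\ga_0$ to $\ga_1$ in $\sr L_{\ka_1}^{\ka_2}(Q)$ to a path of lifted frames, and then use discreteness of the fiber of $\pi\colon \Ss^3 \to \SO_3$.

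First, by (\ref{L:Hilbert}\?(a)), the Hilbert manifold $\sr L_{\ka_1}^{\ka_2}(Q)$ is locally path-connected, so $\ga_0$ and $\ga_1$ lie in the same path component; choose a continuous path $s\mapsto \ga_s\in \sr L_{\ka_1}^{\ka_2}(Q)$ ($s\in [0,1]$) joining them. Let $(\hat v_s,\hat w_s)\in \E$ be the corresponding pair of $L^2$ functions, and let $\La_s$ be the associated matrix-valued map given by \eqref{E:frenet2} and \eqref{E:Sobolev}. The assignment $s\mapsto \La_s\in L^1([0,1],\fr{so}_3)$ is continuous, because the diffeomorphisms $h^{-1}$ and $h_{\ka_1,\,\ka_2}^{-1}$ are bounded and $\hat v_s$, $\hat w_s$ depend continuously on $s$ in $L^2$.

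Next, by continuous dependence of solutions to linear ODE's on the $L^1$ norm of the coefficients (cf.~\cite{CodLev}, p.~67), the frame $\Phi_{\ga_s}$, which is the unique solution of \eqref{E:ivp} with $Q_0=I$, depends continuously on $s$ in the topology of uniform convergence on $[0,1]$. In particular, the map $F\colon [0,1]\x [0,1]\to \SO_3$ defined by $F(s,t)=\Phi_{\ga_s}(t)$ is continuous and satisfies $F(s,0)=I$, $F(s,1)=Q$ for every $s\in [0,1]$.

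Now apply the homotopy lifting property of the covering map $\pi\colon \Ss^3\to \SO_3$ to the continuous map $F$, with the constant lift $\mbf{1}$ of the initial condition $F(s,0)=I$. This produces a unique continuous lift $\te F\colon [0,1]\x [0,1]\to \Ss^3$ with $\te F(s,0)=\mbf{1}$, and by uniqueness of lifts we have $\te F(s,\cdot)=\te\Phi_{\ga_s}$ for every $s$. Evaluation at $t=1$ then gives a continuous map
\[
	s\mapsto \te\Phi_{\ga_s}(1)\in \pi^{-1}(Q),
\]
and since $\pi^{-1}(Q)$ consists of just two antipodal points in $\Ss^3$, this map must be constant on the connected interval $[0,1]$. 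In particular $\te\Phi_{\ga_0}(1)=\te\Phi_{\ga_1}(1)$, as claimed.

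The only subtle point is the continuity of the frame map $\ga\mapsto \Phi_\ga$ from $\sr L_{\ka_1}^{\ka_2}(Q)$ to $C^0([0,1],\SO_3)$; once that is in hand, the rest is a direct appeal to covering space theory. No essential obstacle is anticipated beyond invoking the standard continuous-dependence theorem for linear systems with $L^1$ coefficients.
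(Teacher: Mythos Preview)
Your proof is correct and follows essentially the same route as the paper's: pass from connected to path-connected via (\ref{L:Hilbert}\?(a)), obtain a homotopy of frames $F(s,t)=\Phi_{\ga_s}(t)$, lift via the covering $\pi\colon \Ss^3\to\SO_3$, and conclude by discreteness of the fiber. One small slip: $h^{-1}$ is not bounded (nor is $h_{\ka_1,\ka_2}^{-1}$ when one bound is infinite); what you need, and what actually holds, is that these inverses are $1$-Lipschitz (since $h'\ge 1$ and similarly for the others), which is enough to push $L^2$-convergence of $(\hat v_s,\hat w_s)$ to $L^1$-convergence of $\La_s$.
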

\begin{proof}
	Since $\sr L_{\ka_1}^{\ka_2}(Q)$ is a Hilbert manifold, its path and connected components coincide. Therefore, to say that $\ga_0,\ga_1$ lie in the same connected component of $\sr L_{\ka_1}^{\ka_2}(Q)$ is the same as to say that there exists a continuous family of curves $\ga_s\in \sr L_{\ka_1}^{\ka_2}(Q)$ joining $\ga_0$ and $\ga_1$, $s\in [0,1]$. The family $\Phi_{\ga_s}$ yields a homotopy between the paths $\Phi_{\ga_0}$ and $\Phi_{\ga_1}$ in $\SO_3$. (Recall that each of the frames $\Phi_{\ga_s}$ is (absolutely) continuous.) By the homotopy lifting property of covering spaces, the paths $\te{\Phi}_{\ga_0}$ and $\te{\Phi}_{\ga_1}$ are also homotopic in $\Ss^3$ (fixing the endpoints).
\end{proof}

\subsection*{The role of the initial and final frames}We will now study how the topology of $\sr L_{\ka_1}^{\ka_2}(Q)$ changes if we consider variations of condition (i) in (\ref{D:Little}); by the end of the section it should be clear that our original definition is sufficiently general. A summary of all the definitions considered here is given in table form on p.~\pageref{Ta:spaces}. 

\label{omegao}For fixed $z\in \Ss^3$, let $\Om_z\Ss^3$ denote the set of all continuous paths $\om\colon [0,1]\to \Ss^3$ such that $\om(0)=\mbf{1}$ and $\om(1)=z$, furnished with the compact-open topology. It can be shown (see \cite{BottTu}, p.~198) that $\Om_z\Ss^3\iso \Om\Ss^3$ for any $z\in \Ss^3$, where $\Om\Ss^3$ is the space of paths in $\Ss^3$ which start and end at $\mbf{1}\in \Ss^3$.\footnote{The notation $X\iso Y$ (resp.~$X\home Y$) means that $X$ is homotopy equivalent (resp.~homeomorphic) to $Y$.} The topology of this space is well understood; we refer the reader to \cite{BottTu}, \S 16, for more information. 

Now let $\ka_1<\ka_ 2$, $z\in \Ss^3$ be arbitrary and $Q=\pi(z)$. Define
\begin{equation}\label{E:Hirsch}
	F\colon \sr L_{\ka_1}^{\ka_2}(Q)\to \Om_{z}\Ss^3\cup \Om_{-z}\Ss^3 \iso \Om\Ss^3\du \Om\Ss^3\text{\ \ by\ \ } F(\ga)=\te{\Phi}_{\ga}.
\end{equation}
In the special case $\ka_1=-\infty$, $\ka_2=+\infty$, it follows from the Hirsch-Smale theorem (see \cite{Sma56}, thm.~C) that this map is a weak homotopy equivalence. In the general case this is false, however. For instance, $\Om \Ss^3\du \Om \Ss^3$ has two connected components, while Little has proved (\cite{Little}, thm.~1) that $\sr L_{0}^{+\infty}(I)$ has three connected components. We take this opportunity to recall the precise statement of Little's theorem and to introduce a new class of spaces.

\begin{defn}\label{D:arbitrary}
Let $-\infty\leq \ka_1<\ka_2\leq +\infty$. Define $\sr L_{\ka_1}^{\ka_2}$ to be the space of all $(\ka_1,\ka_2)$-admissible curves $\ga\colon [0,1]\to \Ss^2$ such that 
\begin{equation*}
	\Phi_\ga(0)=\Phi_\ga(1).
\end{equation*} 
\end{defn}

Note that the only difference between $\sr L_{\ka_1}^{\ka_2}(I)$ and $\sr L_{\ka_1}^{\ka_2}$ is that curves in the latter space may have arbitrary initial and final frames, as long as they coincide. An argument analogous to the one given for the spaces $\sr L_{\ka_1}^{\ka_2}(Q)$ shows that $\sr L_{\ka_1}^{\ka_2}$ is also a Hilbert manifold. In fact, we have the following relationship between the two classes. 

\begin{prop}\label{P:arbitrary}
	The space  $\sr L_{\ka_1}^{\ka_2}$ is homeomorphic to $\SO_3\times \sr L_{\ka_1}^{\ka_2}(I)$.
\end{prop}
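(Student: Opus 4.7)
The plan is to exhibit an explicit mutually inverse pair of continuous maps
\begin{equation*}
\psi\colon \sr L_{\ka_1}^{\ka_2} \to \SO_3\times \sr L_{\ka_1}^{\ka_2}(I),\qquad \phi\colon \SO_3\times \sr L_{\ka_1}^{\ka_2}(I)\to \sr L_{\ka_1}^{\ka_2}.
\end{equation*}
Given $\ga\in \sr L_{\ka_1}^{\ka_2}$, set $Q_0=\Phi_\ga(0)=\Phi_\ga(1)$ and define $\psi(\ga)=\bigl(Q_0,\?Q_0^{-1}\cdot\ga\bigr)$, where $Q_0^{-1}\cdot\ga$ denotes the rotated curve $t\mapsto Q_0^{-1}\ga(t)$. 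Define $\phi(Q,\al)=Q\cdot\al$. The identities $\phi\circ\psi=\id$ and $\psi\circ\phi=\id$ are then immediate, so the whole proof reduces to verifying that $\psi$ and $\phi$ are continuous in the Hilbert manifold topology of (\ref{D:Little0})--(\ref{D:Little}).

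The key structural observation is that the $\SO_3$-action on admissible curves preserves the parameters $(\hat v,\hat w)\in \E$ introduced in (\ref{D:Little0}): since $Q\in \SO_3$ is an orientation-preserving isometry of $\Ss^2$, the functions $\abs{\dot\ga}$ and $\ka$ are unchanged under $\ga\mapsto Q\cdot\ga$, and the frame transforms by left multiplication, $\Phi_{Q\ga}=Q\Phi_\ga$. In particular, the rotated curve $Q_0^{-1}\cdot\ga$ has frame $Q_0^{-1}\Phi_\ga$, which starts and ends at $I$, so indeed $Q_0^{-1}\cdot\ga\in \sr L_{\ka_1}^{\ka_2}(I)$.

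The second key observation decouples the closed-frame condition from the initial frame. Given $(\hat v,\hat w)\in \E$, let $\Phi^I$ denote the solution of \eqref{E:ivp} with initial condition $\Phi^I(0)=I$ (and $\La$ built from $(\hat v,\hat w)$ via \eqref{E:frenet2}, \eqref{E:Sobolev}); then the solution with any other initial condition $Q_0$ is simply $Q_0\Phi^I$. Therefore $\Phi_\ga(1)=\Phi_\ga(0)$ is equivalent to $\Phi^I(1)=I$, a condition on $(\hat v,\hat w)$ alone. Parametrizing admissible curves by $(Q_0,\hat v,\hat w)\in \SO_3\times \E$ through their initial frame together with their Sobolev data, this shows that
\begin{equation*}
\sr L_{\ka_1}^{\ka_2}=\SO_3\times \bigl\{(\hat v,\hat w)\in \E : \Phi^I(1)=I\bigr\}=\SO_3\times \sr L_{\ka_1}^{\ka_2}(I)
\end{equation*}
as subsets of $\SO_3\times \E$, and the map $\psi$ is nothing other than this identification. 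Continuity of $\phi$ then follows because the $\SO_3$-action is smooth on $\SO_3\times \E$ (it acts on the first factor only, leaving $(\hat v,\hat w)$ fixed), and continuity of $\psi$ follows from continuity of $\ga\mapsto \Phi_\ga(0)$.

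The only real bookkeeping obstacle is specifying the natural Hilbert manifold structure on $\sr L_{\ka_1}^{\ka_2}$ so that the above identification is a homeomorphism (in fact a diffeomorphism). This is done in exact parallel with (\ref{D:Little0}) and (\ref{D:Little}): one takes the codimension-$3$ closed submanifold of $\SO_3\times \E$ cut out by the three scalar equations $\Phi^I(1)=I$ (using that this map is a submersion, just as in the discussion following (\ref{D:Little})). Once this is in place, the conclusion is immediate.
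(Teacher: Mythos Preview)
Your proof is correct and follows exactly the same approach as the paper: the paper defines $F(Q,\ga)=Q\ga$ with inverse $\eta\mapsto(\Phi_\eta(0),\Phi_\eta(0)^{-1}\eta)$, which are precisely your $\phi$ and $\psi$. You supply more detail than the paper (which simply asserts continuity) by observing that the $\SO_3$-action fixes the Sobolev data $(\hat v,\hat w)$ and acts only on the initial frame, making the product decomposition transparent at the level of charts.
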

\begin{proof}
	For $Q\in \SO_3$ and $\ga\in \sr L_{\ka_1}^{\ka_2}(I)$, let $Q\ga$ be the curve defined by $(Q\ga)(t)=Q(\ga(t))$. Because $Q$ is an isometry, the geodesic curvatures of $Q\ga$ at $(Q\ga)(t)$ and of $\ga$ at $\ga(t)$ coincide. Define $F\colon \SO_3\times \sr L_{\ka_1}^{\ka_2}(I)\to \sr L_{\ka_1}^{\ka_2}$ by $F(Q,\ga)=Q\ga$; clearly, $F$ is continuous. Since it has the continuous inverse $\eta\mapsto (\Phi_\eta(0),\Phi_\eta(0)^{-1}\eta)$, $F$ is a homeomorphism.
\end{proof}

Let us temporarily denote by $\sr L$ the space $\sr L_{-\infty}^0\du \sr L_{0}^{+\infty}$ studied by Little. We have $\sr L_{-\infty}^0 \home \sr L_0^{+\infty}$, since the map which takes a curve in $\sr L$ to the same curve with reversed orientation is a (self-inverse) homeomorphism mapping $\sr L_{-\infty}^0$ onto $\sr L_0^{+\infty}$. What is proved in \cite{Little} is that $\sr L$ has six connected components.\footnote{Little works with $C^2$ curves, but, as we have seen, this is not important.} Using prop.~(\ref{P:arbitrary}) and the fact that $\SO_3$ is connected, we see that Little's theorem is equivalent to the assertion that $\sr L_{0}^{+\infty}(I)$ has three connected components, as was claimed immediately above (\ref{D:arbitrary}).

A natural generalization of the spaces $\sr L_{\ka_1}^{\ka_2}(Q)$ is obtained by modifying condition (i) of (\ref{D:Little}) as follows. 
\begin{defn}\label{D:Little2}
Let $-\infty\leq \ka_1<\ka_2\leq +\infty$ and $Q_0,Q_1\in \SO_3$. Define $\sr L_{\ka_1}^{\ka_2}(Q_0,Q_1)$ to be the space of all $(\ka_1,\ka_2)$-admissible curves $\ga\colon [0,1]\to \Ss^2$  such that 
\begin{equation*}
	\Phi_\ga(0)=Q_0\text{\quad and\quad}\Phi_\ga(1)=Q_1.\tag{i$'$}
\end{equation*}
\end{defn}
Thus, the only difference between condition (i) on p.~\pageref{D:Little} and condition (i$'$) is that the latter allows arbitrary initial frames. 
\begin{prop}\label{T:initialfinal}
	Let $P,\,Q_0,\,Q_1\in \SO_3$. Then $\sr L_{\ka_1}^{\ka_2}(Q_0,Q_1)\home \sr L_{\ka_1}^{\ka_2}(PQ_0,PQ_1)$. In particular, $\sr L_{\ka_1}^{\ka_2}(Q_0,Q_1)\home \sr L_{\ka_1}^{\ka_2}(Q)$, where $Q=Q_0^{-1}Q_1$.
\end{prop}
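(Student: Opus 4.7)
The plan is to exploit the natural left action of $\SO_3$ on $\Ss^2$ by orientation-preserving isometries, exactly as in the proof of Proposition~\ref{P:arbitrary}.

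First, I would verify the basic equivariance property: for any $P\in\SO_3$ and any admissible curve $\ga$, the curve $P\ga\colon[0,1]\to\Ss^2$ defined by $(P\ga)(t)=P(\ga(t))$ is again $(\ka_1,\ka_2)$-admissible, and its frame satisfies $\Phi_{P\ga}(t)=P\?\Phi_\ga(t)$ for all $t\in[0,1]$. Indeed, $P$ preserves the velocity vector up to application of $P$, so $|\dot{(P\ga)}|=|\dot\ga|$; the unit tangent becomes $P\ta$; and since $P$ is orientation-preserving, the cross product in $\R^3$ satisfies $(Pp)\times(Pv)=P(p\times v)$, giving $\no_{P\ga}=P\no_\ga$. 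Because $P$ is an isometry of $\Ss^2$, the geodesic curvature is preserved pointwise, hence $(P\ga)$ lies in the same admissibility class as $\ga$. In terms of the parameters $(\hat v,\hat w)\in\E$ used in Definition~\ref{D:Little0}, the correspondence $\ga\mapsto P\ga$ simply changes the initial frame from $Q_0$ to $PQ_0$ and leaves $(\hat v,\hat w)$ untouched.

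Next, define $F\colon \sr L_{\ka_1}^{\ka_2}(Q_0,Q_1)\to \sr L_{\ka_1}^{\ka_2}(PQ_0,PQ_1)$ by $F(\ga)=P\ga$. By the previous paragraph, $F$ is well defined: if $\Phi_\ga(0)=Q_0$ and $\Phi_\ga(1)=Q_1$, then $\Phi_{P\ga}(0)=PQ_0$ and $\Phi_{P\ga}(1)=PQ_1$. Under the identification of each space with (a submanifold of) $\E$, $F$ is the identity on $\E$, so it is continuous (in fact smooth). Its inverse is $\eta\mapsto P^{-1}\eta$, which is of the same form and thus equally continuous. Hence $F$ is a homeomorphism, proving the first assertion.

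Finally, for the special case, apply the first assertion with $P=Q_0^{-1}$, which yields $\sr L_{\ka_1}^{\ka_2}(Q_0,Q_1)\home \sr L_{\ka_1}^{\ka_2}(I,Q_0^{-1}Q_1)=\sr L_{\ka_1}^{\ka_2}(Q)$, where the last equality is just the definition of $\sr L_{\ka_1}^{\ka_2}(Q)$ (cf.~Definition~\ref{D:Little}). There is no real obstacle here; the only point worth checking carefully is the equivariance of the frame, $\Phi_{P\ga}=P\?\Phi_\ga$, and the invariance of the geodesic curvature under the action of $\SO_3$, both of which are immediate from the isometric and orientation-preserving nature of $P$.
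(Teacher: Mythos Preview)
Your proof is correct and follows exactly the same approach as the paper: the map $\ga\mapsto P\ga$ with inverse $\ga\mapsto P^{-1}\ga$, together with the equivariance $\Phi_{P\ga}=P\Phi_\ga$, and then specializing to $P=Q_0^{-1}$. You have simply supplied more of the routine verifications (invariance of geodesic curvature, the effect on the $(\hat v,\hat w)$ coordinates) that the paper leaves implicit by referring back to the proof of Proposition~\ref{P:arbitrary}.
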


\begin{proof}
	The proof is similar to that of (\ref{P:arbitrary}). The map $\ga\mapsto P\ga$ takes $\L_{\ka_1}^{\ka_2}(Q_0,Q_1)$ into $\L_{\ka_1}^{\ka_2}(PQ_0,PQ_1)$ and is continuous. The map $\ga\mapsto P^{-1}\ga$, which is likewise continuous, is its inverse.
\end{proof}

Of course, we could also consider the spaces $\sr L_{\ka_1}^{\ka_2}(\cdot,Q)$, consisting of all $(\ka_1,\ka_2)$-admissible curves $\ga$ having final frame $\Phi_{\ga}(1)=Q\in \SO_3$ (but arbitrary initial frame).\label{D:Little00} Like $\sr L_{\ka_1}^{\ka_2}(Q,\cdot)$, this space is contractible. To see this, one can go through the definition to check that it is indeed homeomorphic to $\E$, or, alternatively, one can observe that the map $\ga\mapsto \bar{\ga}$, $\bar{\ga}(t)=\ga(1-t)$, establishes a homeomorphism 
\[
\sr L_{\ka_1}^{\ka_2}(\cdot,Q)\home \sr L_{\ka_1}^{\ka_2}(QR,\cdot),
\]
where 
\begin{equation*}
	R=\begin{pmatrix}
		1   & 0 &0   \\
		  0 & -1 & 0  \\
		  0 & 0 & -1 
	\end{pmatrix}.
\end{equation*}
Finally, we could study the space \label{D:solto}$\sr L_{\ka_1}^{\ka_2}(\cdot,\cdot)$ of all $(\ka_1,\ka_2)$-admissible curves, with no conditions placed on the frames. The argument given in the proof of (\ref{P:arbitrary}) shows that 
\begin{equation*}
	\sr L_{\ka_1}^{\ka_2}(\cdot,\cdot) \home \SO_3\times  \sr L_{\ka_1}^{\ka_2}(I,\cdot).
\end{equation*} 
Hence,  $\sr L_{\ka_1}^{\ka_2}(\cdot,\cdot)$ is homeomorphic to $\SO_3\times \E$, and has the homotopy type of $\SO_3$.

Thus, the topology of the spaces $\sr L_{\ka_1}^{\ka_2}(Q,\cdot)$, $\sr L_{\ka_1}^{\ka_2}(\cdot,Q)$ and $\sr L_{\ka_1}^{\ka_2}(\cdot,\cdot)$ is uninteresting. We will have nothing else to say about these spaces.

\subsection*{The role of the bounds on the curvature}

Having analyzed the significance of condition (i) on p.~\pageref{D:conditions}, let us examine next condition (ii). Notice that we have allowed the bounds $\ka_1$, $\ka_2$ on the curvature to be infinite. The definition of radius of curvature is extended accordingly by setting $\arccot(+\infty)=0$ and $\arccot(-\infty)=\pi$. We can then rephrase (ii) as:
\begin{enumerate}
	\item[(ii)] $\rho(t)\in (\rho_2,\rho_1)$ for each $t\in [0,1]$.
\end{enumerate} 
Here $\rho$ is the radius of curvature of $\ga$ and $\rho_i=\arccot \ka_i\in [0,\pi]$,  $i=1,2$. The main result of this section relates the topology of $\sr L_{\ka_1}^{\ka_2}(Q)$ to the size $\rho_1-\rho_2$ of the interval $(\rho_2,\rho_1)$. Its proof relies on the following construction. 

Given $-\pi<\theta<\pi$ and an admissible curve $\ga\colon [0,1]\to \Ss^2$, define the \tit{translation} $\ga_\theta\colon [0,1]\to \Ss^2$ of $\ga$ by $\theta$ to be the curve given by
\begin{equation}\label{E:translation}
\ga_\theta(t)=\cos \theta\, \ga(t)+\sin \theta\, \no(t)\qquad (t\in [0,1]).
\end{equation}

\begin{exm}
	Let $0<\al<\frac{\pi}{2}$ and let $C$ be the circle of colatitude $\al$. Depending on the orientation, the translation of $C$ by $\theta$, $0\leq \theta\leq \al$, is either the circle of colatitude $\al+\theta$ or the circle of colatitude $\al-\theta$. In particular, taking $\theta=\al$ and a suitable orientation of $C$, the translation degenerates to a single point (the north pole). 
\end{exm}	 
This example shows that some care must be taken in the choice of $\theta$ for the resulting curve to be admissible.

\begin{lemma}\label{L:essentialtranslation}
Let $\ga\colon [0,1]\to \Ss^2$ be an admissible curve and $\rho$ its radius of curvature. Suppose 
\begin{equation}\label{E:regulartranslation}
\rho_2<\rho(t)<\rho_1 \text{\ \,for a.e.~$t\in [0,1]$\ \ and\ \ } \rho_1-\pi\leq \theta\leq\rho_2.
\end{equation}
Then $\ga_\theta$ is an admissible curve and its frame is given by:
\begin{equation}\label{E:rtheta}
\Phi_{\ga_\theta}=\Phi_\ga\?R_\theta\?,\quad\text{where}\quad		R_\theta=\begin{pmatrix}
			\cos \theta   &   0   &   -\sin \theta \\
			 0  &   1 &   0 \\
			 \sin \theta   &   0   &   \cos \theta 
		\end{pmatrix}.
	\end{equation} 
\end{lemma}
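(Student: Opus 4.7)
The strategy is straightforward direct computation, organized so that everything reduces to the Frenet-like identity $\dot\Phi_\ga = \Phi_\ga\La$ of \eqref{E:frenet1}, plus the hypothesis on $\theta$.

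First I would differentiate $\ga_\theta = \cos\theta\,\ga + \sin\theta\,\no$ using the column identities $\dot\ga = v\ta$ and $\dot\no = -w\ta$ read off from \eqref{E:frenet1}. This gives
\[
\dot\ga_\theta = (v\cos\theta - w\sin\theta)\?\ta.
\]
Substituting $w = v\ka = v\cot\rho$ and using $\sin(\rho-\theta) = \sin\rho\cos\theta - \cos\rho\sin\theta$, the scalar factor becomes
\[
v_\theta := v\cos\theta - w\sin\theta = v\?\frac{\sin(\rho-\theta)}{\sin\rho}.
\]
The hypothesis $\rho_1-\pi\leq \theta\leq \rho_2$ together with $\rho\in(\rho_2,\rho_1)$ a.e.\ gives $\rho-\theta\in(0,\pi)$ a.e., hence $\sin(\rho-\theta)>0$ and therefore $v_\theta>0$ a.e. This proves $\ga_\theta$ is regular with unit tangent $\ta_{\ga_\theta} = \ta$.

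Next I would verify \eqref{E:rtheta} by a column-by-column check. The first column of $\Phi_\ga R_\theta$ is $\cos\theta\,\ga + \sin\theta\,\no = \ga_\theta$; the second is $\ta$, which we just identified as $\ta_{\ga_\theta}$; and the third is $-\sin\theta\,\ga+\cos\theta\,\no$, which equals $\ga_\theta\times\ta = \no_{\ga_\theta}$ by a one-line cross-product calculation using $\ga\times\ta = \no$ and $\no\times\ta = -\ga$. Thus $\Phi_{\ga_\theta} = \Phi_\ga R_\theta$.

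For admissibility, since $R_\theta$ is constant, differentiating the matrix identity yields $\dot\Phi_{\ga_\theta} = \Phi_{\ga_\theta}\?\La_\theta$ with $\La_\theta = R_\theta^{-1}\La R_\theta$; a direct conjugation reveals that $\La_\theta$ has the skew form \eqref{E:frenet2} with
\[
v_\theta = v\cos\theta - w\sin\theta,\qquad w_\theta = v\sin\theta + w\cos\theta.
\]
Both $v_\theta,w_\theta$ lie in $L^2[0,1]$ since they are linear combinations of $v,w\in L^2[0,1]$; moreover $v_\theta>0$ a.e.\ as above. Applying $h$ and the appropriate $h_{\bar\ka_1,\bar\ka_2}$ (e.g.\ $\bar\ka_1 = \cot(\rho_1-\theta)$, $\bar\ka_2 = \cot(\rho_2-\theta)$, with the convention $\cot 0 = +\infty$) produces the corresponding $(\hat v_\theta,\hat w_\theta)\in\E$ witnessing admissibility. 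From $w_\theta/v_\theta$ we also read off $\ka_\theta = \cot(\rho-\theta)$, i.e.\ $\rho_\theta = \rho-\theta$, which is a bonus fact useful in later sections.

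\textbf{Main obstacle.} The computations are mechanical; the only genuine point requiring care is ensuring $v_\theta>0$ a.e., which is precisely where the hypothesis $\rho_1-\pi\leq\theta\leq\rho_2$ is used. The boundary cases $\theta = \rho_2$ or $\theta = \rho_1-\pi$ still give $\sin(\rho-\theta)>0$ a.e.\ because the bounds $\rho_2<\rho<\rho_1$ are strict, but it is worth noting that the translated admissibility bounds $\bar\ka_i$ may be infinite in those limiting cases, so the choice of ambient space $\sr L_{\bar\ka_1}^{\bar\ka_2}$ must be made accordingly.
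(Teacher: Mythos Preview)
Your proposal is correct and follows essentially the same approach as the paper: both reduce to computing the conjugate $R_\theta^{-1}\La R_\theta$ and verifying that its $(2,1)$-entry $v\cos\theta - w\sin\theta = \frac{v}{\sin\rho}\sin(\rho-\theta)$ is positive under the hypothesis on $\theta$. You are somewhat more explicit than the paper (the column-by-column verification of $\Phi_{\ga_\theta}=\Phi_\ga R_\theta$, and the discussion of $(\hat v_\theta,\hat w_\theta)$), and you anticipate the content of the next corollary ($\rho_\theta=\rho-\theta$), but the core argument is the same.
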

\begin{proof}
Let $\Psi=\Phi_\ga\?R_\theta$. Since $\Phi_\ga$ satisfies the differential equation \eqref{E:frenet1}, $\Psi$ satisfies 
\begin{equation*}
	\dot\Psi=\Psi\?(R_\theta^{-1}\La\?R_\theta).
\end{equation*}
A direct calculation shows that 
\begin{equation}\label{E:Latranslation}
	R_\theta^{-1}\La\?R_\theta=\begin{pmatrix}
			0  &   -\big( \cos\theta\? v-\sin\theta\?w \big)   &  0 \\
			\cos\theta\? v-\sin\theta\?w  &   0 &   -\big( \cos\theta\?w+\sin\theta\?v \big) \\
			 0  &  \cos\theta\?w+\sin\theta\?v    &   0
		\end{pmatrix},
\end{equation}
where $v=v(t)=\abs{\dot\ga(t)}$ and $w=w(t)=v(t)\ka(t)$. Also, $\Psi e_1=\ga_\theta$ by construction. To show that $\ga_\theta$ is admissible, it is thus only necessary to show that 
\begin{equation*}
	\cos\theta\? v(t)-\sin\theta\?w(t)=v(t)\big(\cos\theta-\sin\theta\?\cot\rho(t)\big)=\frac{v(t)}{\sin \rho(t)}\sin(\rho(t)-\theta)>0
\end{equation*}
for almost every $t\in [0,1]$, and this is true by our choice of $\theta$ and the fact that $v>0$.
\end{proof}
Thus, for $\theta$ satisfying \eqref{E:regulartranslation}, we obtain from \eqref{E:rtheta} that  the unit tangent vector $\ta_\theta$ and unit normal vector $\no_\theta$ to the translation $\ga_\theta$ of $\ga$ are given by:
\begin{equation}\label{E:normaltranslation}
	\ta_\theta(t)=\ta(t)\text{\quad and \quad}\no_\theta(t)=-\sin \theta\, \ga(t)+\cos \theta\, \no(t)
\end{equation}
for almost every $t\in [0,1]$.
 
 \begin{cor}\label{C:radiusofcurvature}
Let $\ga\colon [0,1]\to \Ss^2$ be an admissible curve and let $\theta$ satisfy \eqref{E:regulartranslation}. Then the radius of curvature $\bar\rho$ of $\ga_\theta$ is given by $\bar\rho=\rho-\theta$.
\end{cor}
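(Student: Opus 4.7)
The plan is to read off the geodesic curvature of $\ga_\theta$ directly from the logarithmic derivative computed in the previous lemma, and then use the addition formula for cotangent.

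First, I would recall that by (\ref{L:essentialtranslation}), the frame of $\ga_\theta$ is $\Phi_{\ga_\theta}=\Phi_\ga R_\theta$, and its logarithmic derivative is given by \eqref{E:Latranslation}. Comparing this with the standard form \eqref{E:frenet2}, I read off that the ``$v$'' and ``$w$'' of $\ga_\theta$ are
\begin{equation*}
\bar v(t)=\cos\theta\, v(t)-\sin\theta\, w(t),\qquad \bar w(t)=\cos\theta\, w(t)+\sin\theta\, v(t),
\end{equation*}
where $v(t)=|\dot\ga(t)|$ and $w(t)=v(t)\ka(t)$. In particular, the geodesic curvature $\bar\ka$ of $\ga_\theta$ at $\ga_\theta(t)$ (defined a.e.) equals $\bar w(t)/\bar v(t)$.

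Next, I would substitute $w/v=\ka=\cot\rho$ to compute
\begin{equation*}
\bar\ka=\frac{\sin\theta\, v+\cos\theta\, w}{\cos\theta\, v-\sin\theta\, w}=\frac{\sin\theta\sin\rho+\cos\theta\cos\rho}{\cos\theta\sin\rho-\sin\theta\cos\rho}=\frac{\cos(\rho-\theta)}{\sin(\rho-\theta)}=\cot(\rho-\theta),
\end{equation*}
at every $t$ where $\ka$ is defined. This is the only genuine calculation in the proof and there is no real obstacle: the identity $\cot(\rho-\theta)=\cot\rho\cdot$(something) $+\cdots$ falls out immediately after multiplying numerator and denominator by $\sin\rho$.

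Finally, to conclude $\bar\rho=\rho-\theta$ rather than some other value of $\arccot(\bar\ka)$ in $(0,\pi)$, I need $\rho(t)-\theta\in(0,\pi)$ for a.e.\ $t$. This follows from the hypotheses \eqref{E:regulartranslation}: since $\rho(t)\in(\rho_2,\rho_1)$ and $\theta\in[\rho_1-\pi,\rho_2]$, we get
\begin{equation*}
\rho(t)-\theta>\rho_2-\rho_2=0\qquad\text{and}\qquad\rho(t)-\theta<\rho_1-(\rho_1-\pi)=\pi.
\end{equation*}
Hence $\bar\rho(t)=\rho(t)-\theta$ by the defining property of the radius of curvature, completing the proof.
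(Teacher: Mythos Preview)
Your proof is correct and follows essentially the same approach as the paper: both read off $\bar\ka$ as the ratio of the $(3,2)$- and $(2,1)$-entries of the logarithmic derivative computed in \eqref{E:Latranslation}, then simplify to $\cot(\rho-\theta)$ via a trigonometric addition formula (you use sine/cosine, the paper uses cotangent directly). Your final verification that $\rho(t)-\theta\in(0,\pi)$, so that $\arccot(\bar\ka)$ really equals $\rho-\theta$, is a point the paper leaves implicit.
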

\begin{proof}We already calculated the logarithmic derivative $\La_{\ga_\theta}$ of $\ga_\theta$ in \eqref{E:Latranslation}. The geodesic curvature $\bar\ka$ of $\ga_\theta$ is given by the quotient of the (3,2)-entry by the (2,1)-entry of this matrix (cf.~\eqref{E:frenet1}):
\begin{equation*}
	\bar\ka=\frac{\cos\theta w+\sin\theta v}{\cos\theta v-\sin\theta w}=\frac{v\sin\theta }{v\sin\theta}\frac{\cot \theta \?\frac{w}{v}+1}{\cot\theta-\frac{w}{v}}=\frac{\cot\theta\cot\rho+1}{\cot\theta-\cot\rho}=\cot(\rho-\theta),
\end{equation*}
where $v,w$ are the (2,1)- and (3,2)-entries of $\La_\ga$, respectively. Therefore, $\bar\rho=\rho-\theta$.
\end{proof}
\begin{urem}
	A different proof of (\ref{C:radiusofcurvature}) may be found in \cite{tese}. There we verify the formula for a circle, and then use the fact that the osculating circle to the translation $\ga_\theta$ at $\ga_\theta(t)$ is the translation of the osculating circle to $\ga$ at $\ga(t)$.
\end{urem}

\begin{lemma}\label{L:inversetranslation}
	Let $\ga\colon [0,1]\to \Ss^2$ be an admissible curve and suppose that \eqref{E:regulartranslation} holds. Then $(\ga_\theta)_{\vphi}=\ga_{\theta+\vphi}$ for any $\vphi\in (-\pi,\pi)$. In particular, $(\ga_{\theta})_{-\theta}=\ga$.
\end{lemma}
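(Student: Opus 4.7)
The plan is to prove this by a direct computation using the defining formula \eqref{E:translation} and the formula for the normal vector of a translated curve given in \eqref{E:normaltranslation}.

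First, I would observe that by hypothesis \eqref{E:regulartranslation}, Lemma \ref{L:essentialtranslation} applies to $\ga$, so that $\ga_\theta$ is itself an admissible curve, and in particular its unit normal vector $\no_\theta$ is well-defined almost everywhere. According to \eqref{E:normaltranslation}, we have
\begin{equation*}
\no_\theta(t) = -\sin\theta\, \ga(t) + \cos\theta\, \no(t).
\end{equation*}
This is the only nontrivial input; everything else is trigonometric manipulation.

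Next, I would apply the definition of translation \eqref{E:translation} to the curve $\ga_\theta$ with parameter $\vphi$ to get
\begin{equation*}
(\ga_\theta)_\vphi(t) = \cos\vphi\, \ga_\theta(t) + \sin\vphi\, \no_\theta(t),
\end{equation*}
then substitute the expressions for $\ga_\theta$ and $\no_\theta$ and collect coefficients of $\ga(t)$ and $\no(t)$. Using the addition formulas for sine and cosine, the coefficient of $\ga(t)$ becomes $\cos\vphi\cos\theta - \sin\vphi\sin\theta = \cos(\theta+\vphi)$ and the coefficient of $\no(t)$ becomes $\cos\vphi\sin\theta + \sin\vphi\cos\theta = \sin(\theta+\vphi)$, yielding $\ga_{\theta+\vphi}(t)$ by \eqref{E:translation}. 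The particular case $(\ga_\theta)_{-\theta} = \ga$ then follows by taking $\vphi = -\theta$ and using $\ga_0 = \ga$.

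There is no real obstacle here; the only subtlety is making sure the normal vector $\no_\theta$ is actually defined, which is guaranteed by the hypothesis \eqref{E:regulartranslation} through the preceding lemma. Alternatively, one could give a frame-theoretic proof using \eqref{E:rtheta}: since $\Phi_{\ga_\theta} = \Phi_\ga R_\theta$ and $R_\theta R_\vphi = R_{\theta+\vphi}$, one would get $\Phi_{(\ga_\theta)_\vphi} = \Phi_{\ga_\theta} R_\vphi = \Phi_\ga R_{\theta+\vphi} = \Phi_{\ga_{\theta+\vphi}}$, from which equality of the first columns gives the claim. However, this requires verifying the hypothesis of Lemma \ref{L:essentialtranslation} for the pair $(\ga_\theta, \vphi)$, which uses Corollary \ref{C:radiusofcurvature} and a short case analysis on the signs of $\theta$ and $\vphi$; the direct computational route is cleaner for arbitrary $\vphi \in (-\pi,\pi)$.
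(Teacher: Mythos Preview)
Your proposal is correct and follows essentially the same approach as the paper: the paper's proof simply notes that $(\ga_\theta)_\vphi$ is defined because $\ga_\theta$ is admissible, then substitutes \eqref{E:translation} and \eqref{E:normaltranslation} and applies the angle addition formulas in one line, exactly as you describe.
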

\begin{proof}
Note that $(\ga_{\theta})_\vphi$ is defined because $\ga_\theta$ is admissible, as we have just seen. Using \eqref{E:translation} and \eqref{E:normaltranslation} we obtain that 
\begin{equation*}
(\ga_\theta)_{\vphi}=\cos \vphi\?\big(\cos\theta\, \ga+\sin\theta\, \no\big)+\sin \vphi\?\big(-\sin \theta\, \ga+\cos \theta\, \no\big)=\ga_{\theta+\vphi}.\qedhere
\end{equation*}
\end{proof}

\begin{mthm}\label{T:size}
Let $Q\in \SO_3$, $\ka_1<\ka_2$, $\bar\ka_1<\bar\ka_2$, $\rho_i=\arccot \ka_i$, $\bar\rho_i=\arccot\bar\ka_i$. Suppose that  $\rho_1-\rho_2=\bar\rho_1-\bar\rho_2$. Then $\sr L_{\ka_1}^{\ka_2}(Q)\home \sr L_{\bar\ka_1}^{\bar\ka_2}(R_{-\theta}QR_\theta)$, where $\theta=\rho_2-\bar\rho_2$ and
\begin{equation*}
		R_\theta=\begin{pmatrix}
			\cos \theta   &   0   &   -\sin \theta \\
			 0  &   1 &   0 \\
			 \sin \theta  &   0   &   \cos \theta 
		\end{pmatrix}.
	\end{equation*} 
\end{mthm}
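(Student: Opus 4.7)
The plan is to write down an explicit homeomorphism using the translation construction of \eqref{E:translation} together with the rotation trick of Proposition \ref{T:initialfinal}. Set $\theta = \rho_2 - \bar\rho_2$, so that also $\theta = \rho_1 - \bar\rho_1$. Define
\[
 F\colon \sr L_{\ka_1}^{\ka_2}(Q) \longrightarrow \sr L_{\bar\ka_1}^{\bar\ka_2}(R_{-\theta}QR_\theta), \qquad F(\ga) = R_{-\theta}\?\ga_\theta,
\]
where $\ga_\theta$ is the translation of $\ga$ by $\theta$ and $R_{-\theta}\?\ga_\theta$ denotes the curve obtained by post\-composing with the isometry $R_{-\theta}\in \SO_3$.

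First I would check that $F$ is well defined. The bounds $\rho_2 < \rho(t) < \rho_1$ and $\bar\rho_i \in [0,\pi]$ with $\bar\rho_1-\bar\rho_2=\rho_1-\rho_2$ force $\rho_1-\pi \leq \theta \leq \rho_2$, so Lemma \ref{L:essentialtranslation} applies and $\ga_\theta$ is admissible with frame $\Phi_{\ga_\theta}=\Phi_\ga R_\theta$. Corollary \ref{C:radiusofcurvature} gives radius of curvature $\rho-\theta \in (\rho_2-\theta,\rho_1-\theta)=(\bar\rho_2,\bar\rho_1)$, so $\ga_\theta$ is $(\bar\ka_1,\bar\ka_2)$-admissible. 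Post\-composition with the isometry $R_{-\theta}$ preserves geodesic curvature and multiplies the frame on the left, so the frame of $F(\ga)$ is $R_{-\theta}\Phi_\ga R_\theta$, which equals $I$ at $t=0$ and $R_{-\theta}QR_\theta$ at $t=1$, exactly as required.

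Next I would exhibit the inverse. By Lemma \ref{L:inversetranslation}, translation by $-\theta$ undoes translation by $\theta$, and its hypothesis \eqref{E:regulartranslation} is satisfied in the new interval $(\bar\rho_2,\bar\rho_1)$ since $-\theta$ lies in $[\bar\rho_1-\pi,\bar\rho_2]$ by the symmetric version of the computation above. Thus
\[
 G\colon \sr L_{\bar\ka_1}^{\bar\ka_2}(R_{-\theta}QR_\theta) \longrightarrow \sr L_{\ka_1}^{\ka_2}(Q), \qquad G(\eta) = (R_\theta\?\eta)_{-\theta},
\]
is well defined by the same reasoning, and $G\circ F = \id$, $F\circ G=\id$ follow from $(\ga_\theta)_{-\theta}=\ga$ and the obvious identity $R_\theta R_{-\theta}=I$.

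Finally I would verify that $F$ and $G$ are continuous in the Hilbert manifold topology of Definition \ref{D:Little}. In the coordinates $(\hat v,\hat w)\in \E$, translation by $\theta$ sends $(v,w)$ to $(\cos\theta\, v - \sin\theta\, w,\ \cos\theta\, w + \sin\theta\, v)$ (see \eqref{E:Latranslation}), so in the $(\hat v,\hat w)$ variables it is the composition of the smooth pointwise diffeomorphisms $h$, $h^{-1}$, $h_{\ka_1,\ka_2}$, $h^{-1}_{\bar\ka_1,\bar\ka_2}$ with a bounded linear map of $L^2\x L^2$; by the dominated convergence theorem this is continuous, and post\-composition by a fixed element of $\SO_3$ is continuous by the same argument (it does not even affect $(\hat v,\hat w)$, only the initial frame, which is forced to be $I$ here). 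The main technical nuisance is keeping track of these $h$-conjugations to make sure the bounded linear piece really does land in $L^2\x L^2$ for every admissible input; the bound $\rho_1-\pi\leq\theta\leq\rho_2$ together with $(\ka_1,\ka_2)\subs \R$ versus $(\bar\ka_1,\bar\ka_2)\subs \R$ guarantees this uniformly on bounded sets of $\E$, so continuity of $F$ (and symmetrically of $G$) follows without further effort.
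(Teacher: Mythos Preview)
Your proof is correct and follows essentially the same approach as the paper: both use the translation $\ga\mapsto\ga_\theta$ (Lemmas \ref{L:essentialtranslation}, \ref{L:inversetranslation}, Corollary \ref{C:radiusofcurvature}) together with the rotation trick of Proposition \ref{T:initialfinal}. The only cosmetic difference is that the paper factors the homeomorphism as $\sr L_{\ka_1}^{\ka_2}(Q)\to \sr L_{\bar\ka_1}^{\bar\ka_2}(R_\theta,QR_\theta)\to \sr L_{\bar\ka_1}^{\bar\ka_2}(R_{-\theta}QR_\theta)$ (translation first, then apply \ref{T:initialfinal}), whereas you compose the two steps into the single map $\ga\mapsto R_{-\theta}\ga_\theta$; your added continuity check is a bonus the paper leaves implicit.
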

We recall that the bounds $\ka_i$, $\bar\ka_i$ may take on infinite values, and we adopt the conventions that $\arccot(+\infty)=0$ and $\arccot (-\infty)=\pi$.
\begin{proof}
	Let $\ga\in \sr L_{\ka_1}^{\ka_2}(Q)$ and let $\rho$ be its radius of curvature. We have:
\[
\rho_2<\rho(t)<\rho_1\text{ for a.e.~$t\in [0,1]$.}
\]
Set $\theta=\rho_2-\bar\rho_2$. Then \eqref{E:regulartranslation} is satisfied, so $\ga_\theta$ is and admissible curve. By (\ref{C:radiusofcurvature}), the radius of curvature $\bar{\rho}$ of $\ga_{\theta}$ is given by $\bar{\rho}=\rho-\theta$. Thus, 
\[
\bar\rho_2<\bar{\rho}(t)<\bar\rho_1 \text{ for a.e.~$t\in [0,1]$}.
\]
Together with (\ref{L:essentialtranslation}), this says that $F\colon \ga\mapsto \ga_{\theta}$ maps $\sr L_{\ka_1}^{\ka_2}(Q)$ into $\sr L_{\bar\ka_1}^{\bar\ka_2}(R_\theta,QR_\theta)$. Similarly,  translation by $-\theta$ is a map $G\colon \sr L_{\bar\ka_1}^{\bar\ka_2}(R_\theta,QR_\theta)\to \sr L_{\ka_1}^{\ka_2}(Q)$. By (\ref{L:inversetranslation}), the maps $F$ and $G$ are inverse to each other, hence 
\begin{equation*}
	\sr L_{\ka_1}^{\ka_2}(Q) \home \sr L_{\bar\ka_1}^{\bar\ka_2}(R_\theta,QR_\theta).
\end{equation*}
Finally, because $R_\theta^{-1}=R_{-\theta}$, (\ref{T:initialfinal}) guarantees that
\begin{equation*}
	\sr L_{\bar\ka_1}^{\bar\ka_2}(R_\theta,QR_\theta) \home \sr L_{\bar\ka_1}^{\bar\ka_2}(R_{-\theta}QR_\theta).\qedhere
\end{equation*}
\end{proof}

\begin{rem}\label{R:circletocircle}
	Taking $Q=I$ we obtain from (\ref{T:size}) that $\sr L_{\ka_1}^{\ka_2}(I)\home \sr L_{\bar\ka_1}^{\bar\ka_2}(I)$ ($\ka_i$, $\bar\ka_i$ as in the hypothesis of the theorem). It will also be important to us that under the homeomorphisms of (\ref{T:size}) and the following corollaries, the image of any circle traversed $k$ times is another circle traversed $k$ times. Indeed, the homeomorphism is obtained by translating (in the sense of (\ref{E:translation})) all the curves in a space by a fixed distance.
\end{rem}

\begin{cor}\label{C:symmetricinterval}
Let $Q\in \SO_3$ and $\ka_1<\ka_2$. Then $\sr L_{\ka_1}^{\ka_2}(Q)\home \sr L_{-\ka_0}^{+\ka_0}(P)$ for suitable $\ka_0>0$, $P\in \SO_3$. Moreover, if $Q=I$ then $P=I$ also.
\end{cor}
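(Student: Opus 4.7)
The plan is to apply the main theorem~(\ref{T:size}) with the specific choice of new curvature bounds $\bar\ka_1=-\ka_0$, $\bar\ka_2=+\ka_0$ chosen so that the interval $(\bar\rho_2,\bar\rho_1)$ has the same length as $(\rho_2,\rho_1)$ but is symmetric about $\pi/2$.

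Concretely, given $\ka_1<\ka_2$, set $\rho_i=\arccot\ka_i\in [0,\pi]$, so that $\rho_1-\rho_2\in (0,\pi]$. We seek $\ka_0\in (0,+\infty]$ such that $\bar\rho_1=\arccot(-\ka_0)$ and $\bar\rho_2=\arccot(\ka_0)$ satisfy $\bar\rho_1-\bar\rho_2=\rho_1-\rho_2$. Using the identity $\arccot(-x)=\pi-\arccot(x)$, we have $\bar\rho_1+\bar\rho_2=\pi$, so the requirement becomes
\begin{equation*}
    \bar\rho_2=\frac{\pi-(\rho_1-\rho_2)}{2}\in \Big[0,\frac{\pi}{2}\Big).
\end{equation*}
Hence the unique choice is $\ka_0=\cot\bar\rho_2=\cot\frac{\pi-(\rho_1-\rho_2)}{2}>0$ (with the convention $\ka_0=+\infty$ in the edge case $\rho_1-\rho_2=\pi$, i.e., $\ka_1=-\infty$, $\ka_2=+\infty$).

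With this choice, (\ref{T:size}) applies directly and yields $\sr L_{\ka_1}^{\ka_2}(Q)\home \sr L_{-\ka_0}^{+\ka_0}(R_{-\theta}QR_\theta)$, where $\theta=\rho_2-\bar\rho_2=\frac{\rho_1+\rho_2-\pi}{2}$. We therefore set $P=R_{-\theta}QR_\theta\in \SO_3$. For the final assertion, note that when $Q=I$ we get $P=R_{-\theta}R_\theta=I$, since $R_\theta$ is a one-parameter subgroup of $\SO_3$. There is essentially no obstacle here: everything is a direct unpacking of (\ref{T:size}) for the specific symmetric target interval, and the only small point to verify is that the required $\bar\rho_2$ always lies in the admissible range, which we checked above.
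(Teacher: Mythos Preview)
Your proof is correct and follows essentially the same approach as the paper's: both choose $\bar\rho_2=\frac{\pi}{2}-\frac{\rho_1-\rho_2}{2}$ (equivalently $\bar\rho_1=\frac{\pi}{2}+\frac{\rho_1-\rho_2}{2}$), apply (\ref{T:size}) directly, and obtain the same $\theta=\frac{\rho_1+\rho_2-\pi}{2}$ and $P=R_{-\theta}QR_\theta$. Your explicit remark on the edge case $\rho_1-\rho_2=\pi$ (yielding $\ka_0=+\infty$) is a small bonus not spelled out in the paper.
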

\begin{proof}
	Let $\rho_i=\arccot \ka_i$, $i=1,2$, and set 
	\begin{equation*}
		\bar\rho_1=\frac{\pi}{2}+\frac{\rho_1-\rho_2}{2},\quad \bar\rho_2=\frac{\pi}{2}-\frac{\rho_1-\rho_2}{2}\quad\text{and}\quad \ka_0=\cot(\bar\rho_2).
	\end{equation*}	
	The interval $(\bar\rho_2,\bar\rho_1)$ has the same size as $(\rho_2,\rho_1)$ by construction. Since $\cot (\bar\rho_1)=-\ka_0$, (\ref{T:size}) yields that $\sr L_{\ka_1}^{\ka_2}(Q)\home \sr L_{-\ka_0}^{+\ka_0}(R_{-\theta}QR_\theta)$, where $\theta=\frac{\rho_1+\rho_2-\pi}{2}$.
\end{proof}

\begin{cor}\label{C:belowandabove}
Let $Q\in \SO_3$ and $\ka_1<\ka_2$. Then $\sr L_{\ka_1}^{\ka_2}(Q)\home \sr L_{\ka_0}^{+\infty}(P)$ for suitable $\ka_0\in [-\infty,+\infty)$ and $P\in \SO_3$. Moreover, if $Q=I$ then $P=I$ also.
\end{cor}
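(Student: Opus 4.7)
My plan is to deduce this as an immediate specialization of Theorem~\ref{T:size}, by choosing the target interval $(\bar\ka_1,\bar\ka_2)$ to have upper bound $\bar\ka_2=+\infty$ while preserving the length $\rho_1-\rho_2$ of the interval of admissible radii of curvature.

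Set $\rho_i=\arccot\ka_i$ ($i=1,2$), so $0\leq \rho_2<\rho_1\leq \pi$ and $\rho_1-\rho_2\in (0,\pi]$. Choose
\[
\bar\rho_2=0,\qquad \bar\rho_1=\rho_1-\rho_2,
\]
and correspondingly $\bar\ka_2=\cot\bar\rho_2=+\infty$ and $\bar\ka_1=\cot\bar\rho_1=\cot(\rho_1-\rho_2)\in [-\infty,+\infty)$. Then by construction $\bar\rho_1-\bar\rho_2=\rho_1-\rho_2$, so the hypothesis of Theorem~\ref{T:size} is satisfied. Setting $\ka_0=\bar\ka_1$ and $\theta=\rho_2-\bar\rho_2=\rho_2$, the theorem yields
\[
\sr L_{\ka_1}^{\ka_2}(Q)\home \sr L_{\ka_0}^{+\infty}(R_{-\theta}QR_\theta),
\]
so we can take $P=R_{-\theta}QR_\theta\in \SO_3$, which proves the first assertion.

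For the final assertion, note that when $Q=I$ we have $P=R_{-\theta}\?I\?R_\theta=R_{-\theta}R_\theta=I$, since $R_\theta^{-1}=R_{-\theta}$. There is no real obstacle here; the only sanity check is that $\bar\rho_1=\rho_1-\rho_2$ lies in $(0,\pi]$ so that the choice $\bar\ka_1\in [-\infty,+\infty)$ makes sense (with $\bar\ka_1=-\infty$ occurring precisely in the degenerate case $\rho_1-\rho_2=\pi$, i.e., when no curvature bound is imposed at all), and this is automatic from $\rho_2<\rho_1$ and $\rho_i\in [0,\pi]$.
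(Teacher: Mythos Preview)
Your proof is correct and follows essentially the same approach as the paper: apply Theorem~\ref{T:size} with $\bar\rho_2=0$, $\bar\rho_1=\rho_1-\rho_2$ (hence $\theta=\rho_2$ and $\ka_0=\cot(\rho_1-\rho_2)$), then observe that $R_{-\theta}IR_\theta=I$. The paper's proof is slightly terser and also records the explicit formula $\ka_0=\frac{1+\ka_1\ka_2}{\ka_2-\ka_1}$, but otherwise the arguments coincide.
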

\begin{proof}
Let $\rho_i=\arccot \ka_i$, $i=1,2$. Then the interval $(\rho_2,\rho_1)$ has the same size as the interval $(0,\rho_1-\rho_2)$. Hence, by (\ref{T:size}), $\sr L_{\ka_1}^{\ka_2}(Q)\home \sr L^{+\infty}_{\ka_0}(R_{-\theta}QR_\theta)$, where 
\begin{equation*}
	\ka_0=\cot (\rho_1-\rho_2)=\frac{1+\ka_1\ka_2}{\ka_2-\ka_1}\text{\quad and\quad }\theta=\rho_2.\qedhere
\end{equation*}
\end{proof}

Corollaries (\ref{C:symmetricinterval}) and (\ref{C:belowandabove}) both express the fact that, for fixed $Q\in \SO_3$, the topology of the spaces $\sr L_{\ka_1}^{\ka_2}(Q)$ depends essentially on one parameter, not two. The spaces of type $\sr L_{-\ka_0}^{+\ka_0}(Q)$ and $\sr L_{\ka_0}^{+\infty}(Q)$ have been singled out merely because they are more convenient to work with. For spaces of closed curves we have the following result relating the two classes, which is another simple consequence of (\ref{C:symmetricinterval}).

\begin{cor}\label{C:twoviewpoints} Let $\ka_0\in [-\infty,+\infty)$, $\ka_1\in (0,+\infty]$ and $\rho_i=\arccot(\ka_i)$, $i=0,1$. If $\rho_0=\pi-2\rho_1$ then $\sr L_{-\ka_1}^{+\ka_1}(I)\home \sr L_{\ka_0}^{+\infty}(I)$.\qed
\end{cor}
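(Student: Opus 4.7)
The plan is to derive the statement as an immediate consequence of Corollary~(\ref{C:belowandabove}). For any curve $\ga \in \sr L_{-\ka_1}^{+\ka_1}(I)$, the radius of curvature satisfies $\rho(t)\in (\arccot \ka_1,\arccot(-\ka_1)) = (\rho_1,\pi-\rho_1)$ for almost every $t\in[0,1]$, so the allowed radius interval has length $\pi-2\rho_1$. Applying~(\ref{C:belowandabove}) to this space yields a homeomorphism $\sr L_{-\ka_1}^{+\ka_1}(I) \home \sr L_{\bar\ka_0}^{+\infty}(P)$, where the explicit formula in that corollary (specialized to bounds $-\ka_1<+\ka_1$) gives $\bar\ka_0 = \cot\bigl((\pi-\rho_1)-\rho_1\bigr) = \cot(\pi-2\rho_1)$, and $P=I$ by the final assertion of the corollary (since $Q=I$).

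It then only remains to match constants: the hypothesis $\rho_0 = \pi-2\rho_1$ gives $\bar\ka_0 = \cot\rho_0 = \ka_0$, so the desired homeomorphism $\sr L_{-\ka_1}^{+\ka_1}(I) \home \sr L_{\ka_0}^{+\infty}(I)$ follows at once. One can also verify the match algebraically via $\cot(\pi-2\rho_1) = (1-\ka_1^2)/(2\ka_1)$, which agrees with the closed-form expression $(1+\ka\ka')/(\ka'-\ka)$ from the proof of~(\ref{C:belowandabove}) evaluated at $\ka=-\ka_1$, $\ka'=+\ka_1$.

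There is essentially no obstacle: all of the geometric content — the translation map $\ga \mapsto \ga_\theta$ and the corresponding shift of the radius of curvature by $\theta$ — has already been packaged inside Theorem~(\ref{T:size}) and its two preceding corollaries. The present statement is only a convenient reformulation for the symmetric case, tailored for subsequent reference when we wish to pass freely between spaces $\sr L_{-\ka_1}^{+\ka_1}(I)$ and spaces $\sr L_{\ka_0}^{+\infty}(I)$.
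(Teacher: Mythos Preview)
Your argument is correct and essentially matches the paper's approach: both derive the statement as an immediate specialization of the interval-length theorem~(\ref{T:size}) via one of its corollaries. The paper cites (\ref{C:symmetricinterval}) rather than (\ref{C:belowandabove}), but since these two corollaries are just the two directions of the same homeomorphism, this is a cosmetic difference, not a substantive one.
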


\renewcommand{\thetable}{\Roman{table}}
\begin{table}[ht]
\begin{center}
\begin{tabular}{ c  c  c c}\hline
Space & Definition & Condition on Frames & Topology \rule[-8pt]{0pt}{22pt}  \\ \hline
$\L_{\ka_1}^{\ka_2}(Q)$ & p.\,\pageref{D:Little}, (\ref{D:Little}) & $\Phi(0)=I$, $\Phi(1)=Q$ & depends on $\rho_1-\rho_2$, $Q$ \rule{0pt}{14pt} \\
$\L_{\ka_1}^{\ka_2}$ & p.\,\pageref{D:arbitrary}, (\ref{D:arbitrary}) & $\Phi(0)=\Phi(1)$ arbitrary & $\home \SO_3\times \sr L_{\ka_1}^{\ka_2}(I)$ \rule{0pt}{14pt} \\
$\L_{\ka_1}^{\ka_2}(Q_0,Q_1)$ & p.\,\pageref{D:Little2}, (\ref{D:Little2}) & $\Phi(0)=Q_0$, $\Phi(1)=Q_1$ & $\home \sr L_{\ka_1}^{\ka_2}(Q_0^{-1}Q_1)$ \rule{0pt}{14pt} \\
$\L_{\ka_1}^{\ka_2}(Q,\cdot)$ & p.\,\pageref{D:Little0}, (\ref{D:Little0}) & $\Phi(0)=Q$, $\Phi(1)$ arbitrary & $\home \E$ \rule{0pt}{14pt} \\ 
$\L_{\ka_1}^{\ka_2}(\cdot,Q)$ & p.\,\pageref{D:Little00} & $\Phi(0)$ arbitrary, $\Phi(1)=Q$ & $\home \E$ \rule{0pt}{14pt} \\
$\L_{\ka_1}^{\ka_2}(\cdot,\cdot)$ & p.\,\pageref{D:solto} & none & $\home \SO_3\times \E$ \rule{0pt}{14pt} \\
\end{tabular}\vspace{10pt}
\caption{Spaces of spherical curves with constrained geodesic curvature. Here $Q\in \SO_3$, $-\infty\leq \ka_1<\ka_2\leq +\infty$ and  $\rho_i=\arccot(\ka_i)$. The notation $X\home Y$  means that $X$ is homeomorphic to $Y$, and $\E$ denotes the separable Hilbert space  (to be specific, $\E=L^2[0,1]\times L^2[0,1]$). As we have already remarked, the spaces $\sr L_{\ka_1}^{\ka_2}(\cdot,Q)$, $\sr L_{\ka_1}^{\ka_2}(Q,\cdot)$ and $\sr L_{\ka_1}^{\ka_2}(\cdot,\cdot)$ will not be mentioned again. 
}
\label{Ta:spaces}
\end{center}\vspace{-12pt}
\end{table}



\section{The Connected Components of $\sr L_{\ka_1}^{\ka_2}$}\label{S:connected}

The following theorem is the main result of this work. It presents a description of the components of $\sr L_{\ka_1}^{\ka_2}$ in terms of $\ka_1$ and $\ka_2$. 
\begin{mthm}\label{T:components}
	Let $-\infty\leq \ka_1<\ka_2\leq +\infty$, $\rho_i=\arccot \ka_i$ \tup{(}$i=1,2$\tup{)} and $\fl{x}$ denote the greatest integer smaller than or equal to $x$. Then $\sr L_{\ka_1}^{\ka_2}$ has exactly $n$ connected components $\sr L_1,\dots,\sr L_n$, where
	\begin{equation}\label{E:components}
\qquad		n=\fl{\frac{\pi}{\rho_1-\rho_2}}+1
	\end{equation} 
and  $\sr L_j$ contains circles traversed $j$ times $(1\leq j\leq n)$. The component $\sr L_{n-1}$ also contains circles traversed $(n-1)+2k$ times, and $\sr L_n$ contains circles traversed $n+2k$ times, for $k\in \N$. Moreover, each of $\sr L_1,\dots,\sr L_{n-2}$ $(n\geq 3)$ is homeomorphic to $\SO_3\times \E$, where $\E$ is the separable Hilbert space.
\end{mthm}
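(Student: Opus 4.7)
The plan is to reduce the theorem to the setting of $\sr L_{\ka_0}^{+\infty}(I)$ and then assemble the ingredients promised in the outline of $\S\S$3--6. By (\ref{C:belowandabove}) and (\ref{P:arbitrary}),
\[
\sr L_{\ka_1}^{\ka_2} \home \SO_3 \times \sr L_{\ka_1}^{\ka_2}(I) \home \SO_3 \times \sr L_{\ka_0}^{+\infty}(I),
\]
where $\ka_0 = \cot(\rho_1-\rho_2)$, so that $\rho_0 := \arccot(\ka_0) = \rho_1 - \rho_2$ and the formula \eqref{E:components} becomes $n=\fl{\pi/\rho_0}+1$. Since $\SO_3$ is connected, the components of $\sr L_{\ka_1}^{\ka_2}$ are in bijection with those of $\sr L_{\ka_0}^{+\infty}(I)$, and by (\ref{R:circletocircle}) this bijection sends $k$-fold circles to $k$-fold circles.

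Next, to show the number of components is $n$, I would first use the results of $\S 5$: any $\ga \in \sr L_{\ka_0}^{+\infty}(I)$ is homotopic through this space to a curve that is either condensed or diffuse. A diffuse curve, by the grafting construction of $\S 3$, can be deformed to a circle traversed some number of times; a condensed curve of rotation number $j$ can be deformed to a $j$-fold circle by $\S 4$. The distinction among the resulting canonical circles is controlled by two invariants: the lifted endpoint $\te\Phi_\ga(1)\in\Ss^3$ (Lemma \ref{L:bit}), which encodes parity and hence determines the component of $\sr I = \sr C_{-\infty}^{+\infty}$, and, for condensed curves, the rotation number $\nu(\ga)$. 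By $\S 6$, the $k$-fold circle is homotopic to the $(k+2)$-fold circle in $\sr L_{\ka_0}^{+\infty}(I)$ if and only if $k\ge n-1$; combined with the parity invariant, this shows the circles traversed $1,2,\dots,n$ times lie in $n$ distinct components $\sr L_1,\dots,\sr L_n$, that circles traversed $(n-1)+2k$ times all fall in $\sr L_{n-1}$, and that circles traversed $n+2k$ times all fall in $\sr L_n$. That condensed curves of rotation number $j\le n-2$ form the entire component $\sr L_j$ (rather than merely a subset of some larger component) is also supplied by $\S 6$; together with the deformation results above this shows that every curve lies in one of $\sr L_1,\dots,\sr L_n$ and that these are pairwise disjoint.

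Finally, for the homeomorphism assertion when $n\ge 3$: the component of $\sr L_{\ka_0}^{+\infty}(I)$ corresponding to $\sr L_j$, for $1\le j\le n-2$, consists precisely of the condensed curves in $\sr L_{\ka_0}^{+\infty}(I)$ with rotation number $j$. By the weak homotopy equivalence to $\SO_3$ of the set of condensed curves with fixed rotation number inside $\sr L_{\ka_0}^{+\infty}$ (proved in $\S 4$) and the splitting $\sr L_{\ka_0}^{+\infty}\home \SO_3\times \sr L_{\ka_0}^{+\infty}(I)$ of (\ref{P:arbitrary}), the corresponding subset of $\sr L_{\ka_0}^{+\infty}(I)$ is weakly contractible. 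It is an open (hence smooth) submanifold of the Hilbert manifold $\sr L_{\ka_0}^{+\infty}(I)$, so by (\ref{L:Hilbert}\?(b)) it is homeomorphic to $\E$, and consequently $\sr L_j\home \SO_3\times \E$.

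The main obstacle is everything packaged into $\S\S 3$--$6$: establishing that grafting can reduce diffuse curves to circles, that the rotation number defined via the averaged hemisphere $h_\ga$ of $\S 4$ is a well-defined homotopy invariant of condensed curves, and, most delicately, proving the sharp threshold $k\ge n-1$ for the existence of a homotopy between the $k$-fold and $(k+2)$-fold circles, which is exactly what forces the component count to be finite and equal to $\fl{\pi/\rho_0}+1$.
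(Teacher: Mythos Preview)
Your proposal is correct and follows essentially the same route as the paper: reduce to $\sr L_{\ka_0}^{+\infty}(I)$ via (\ref{P:arbitrary}) and (\ref{C:belowandabove}), use Theorems~C and~D (the content of \S\S3--6) to get the component count, and invoke (\ref{P:borderline}) together with weak contractibility of $\sr O_\nu$ and (\ref{L:Hilbert}\?(b)) for the $\SO_3\times\E$ assertion. One small correction of attribution: what \S4 actually proves (Theorem~E, i.e., (\ref{C:contrensed})) is the weak contractibility of $\sr O_\nu\subs \sr L_{\ka_0}^{+\infty}(I)$ directly; the statement that condensed curves of fixed rotation number in $\sr L_{\ka_0}^{+\infty}$ are weakly equivalent to $\SO_3$ is a consequence of this via the splitting, not the other way around, so your detour through $\SO_3$ is unnecessary.
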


\begin{figure}[ht]
	\begin{center}
		\includegraphics[scale=.56]{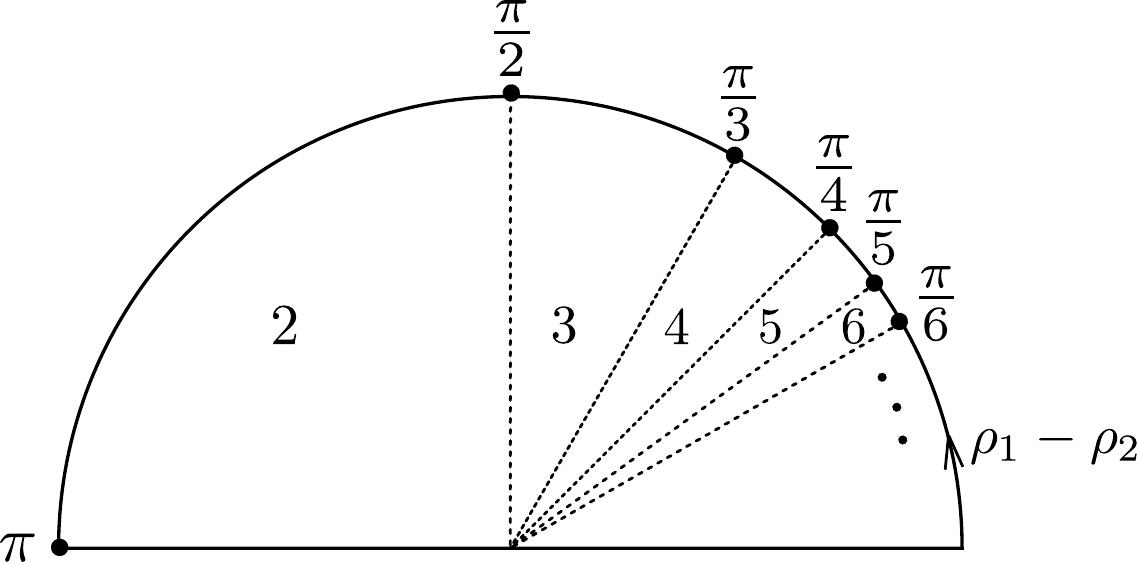}
		\caption{The number of connected components of $\sr L_{\ka_1}^{\ka_2}$, as $\rho_1-\rho_2$ varies in $(0,\pi]$ (where $\rho_i=\arccot \ka_i$). When $\rho_1-\rho_2=\frac{\pi}{n}$, $\sr L_{\ka_1}^{\ka_2}$ has $n+1$ components.}
		\label{F:components}
	\end{center}
\end{figure}


If we replace $\sr L_{\ka_1}^{\ka_2}$ with $\sr L_{\ka_1}^{\ka_2}(I)$ in the statement, then the conclusion is the same, except that the first $n-2$ components, $\sr L_1(I),\dots,\sr L_{n-2}(I)$ (where $\sr L_j(I)$ contains circles traversed $j$ times), are now homeomorphic to $\E$.. This is what will actually be proved; the theorem follows from this and the homeomorphism $\sr L_{\ka_1}^{\ka_2}\home \SO_3\times \sr L_{\ka_1}^{\ka_2}(I)$, which was established in (\ref{P:arbitrary}). We could also have replaced $\sr L_{\ka_1}^{\ka_2}$ by the space $\sr C_{\ka_1}^{\ka_2}$ of all $C^r$ closed curves ($r\geq 2$) whose geodesic curvatures lie in the interval $(\ka_1,\ka_2)$, with the $C^r$ topology, since this space is homeomorphic to the former, by (\ref{L:C^2}). 


We emphasize that the component $\sr L_j$ contains e\sz v\sz e\sz r\sz y parametrized circle in $\sr L_{\ka_1}^{\ka_2}$ traversed $j$ times (notation as in (\ref{T:components})); similarly, $\sr L_{n-1}$ (resp.~$\sr L_n$) contains all circles traversed $(n-1)+2k$ (resp.~$n+2k$) whose geodesic curvatures lie in $(\ka_1,\ka_2)$, for any $k\in \N$. A more direct characterization of the components in terms of the properties of a curve is given in (\ref{T:direct}).

\begin{exms}Let us first discuss some concrete cases of the theorem.

(a) We have already mentioned (on p.~\pageref{E:Hirsch}) that $\sr L_{-\infty}^{+\infty}=\sr I\iso \SO_3\times (\Om\Ss^3\du \Om\Ss^3)$ has two connected components $\sr I_+$ and $\sr I_-$, which are characterized by: $\ga\in \sr I_{+}$ if and only if $\te\Phi_\ga(1)=\te\Phi_\ga(0)$ and $\ga\in \sr I_{-}$ if and only if $\te\Phi_\ga(1)=-\te\Phi_\ga(0)$. This is consistent with (\ref{T:components}).

(b) Suppose $\ka_0<0$. Setting $\rho_2=0$ and $\rho_1=\arccot \ka_0$ in (\ref{T:components}), we find that $\sr L_{\ka_0}^{+\infty}$ also has two connected components. Since we have a continuous injection $\sr L_{\ka_0}^{+\infty}\to \sr L_{-\infty}^{+\infty}$, these components have the same characterization in terms of $\te{\Phi}$: two curves $\ga,\eta\in \sr L_{\ka_0}^{+\infty}$ are homotopic if and only if $\te{\Phi}_{\ga}(1)=\pm \te{\Phi}_\ga(0)$ and $\te{\Phi}_{\eta}(1)=\pm \te{\Phi}_\eta(0)$, with the same choice of sign for both curves.

(c) In contrast, $\sr L_{\ka_0}^{+\infty}$ has at least three connected components when $\ka_0\geq 0$. It has exactly three components in case 
\begin{equation*}
	0\leq \ka_0<\frac{1}{\sqrt{3}}.
\end{equation*}
The case $\ka_0=0$ is Little's theorem (\cite{Little}, thm.~1). If
\begin{equation*}
	\frac{1}{\sqrt{3}}\leq \ka_0< 1
\end{equation*}
it has four connected components and so forth.
\end{exms}

To sum up, as we impose starker restrictions on the geodesic curvatures, a homotopy which existed ``before'' may now be impossible to carry out. For instance, in any space $\sr L_{\ka_0}^{+\infty}$ with $\ka_0<0$, it is possible to deform a circle traversed once into a circle traversed three times. However, in $\sr L_0^{+\infty}$ this is not possible anymore, which gives rise to a new component.

The first part of theorem (\ref{T:components}) is an immediate consequence of the following results.  

\begin{mthm}\label{T:allisround}
		Let $-\infty\leq \ka_1<\ka_2\leq +\infty$. Every curve in $\sr L_{\ka_1}^{\ka_2}(I)$ (resp.~$\sr L_{\ka_1}^{\ka_2}$) lies in the same component as a circle traversed $k$ times, for some $k\in \N$ (depending on the curve).
\end{mthm}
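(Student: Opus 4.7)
The plan is to begin by reducing the statement to a more tractable class of spaces. By (\ref{C:belowandabove}) together with (\ref{R:circletocircle}), there is a homeomorphism $\sr L_{\ka_1}^{\ka_2}(I)\home\sr L_{\ka_0}^{+\infty}(P)$ for suitable $\ka_0\in[-\infty,+\infty)$ and $P\in\SO_3$; taking $Q=I$ gives $P=I$, and under this homeomorphism a circle traversed $k$ times is carried to a circle traversed $k$ times. The corresponding statement for the free space $\sr L_{\ka_1}^{\ka_2}$ follows from (\ref{P:arbitrary}). Hence it suffices to prove the theorem for $\sr L_{\ka_0}^{+\infty}(I)$.

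Next, given $\ga\in\sr L_{\ka_0}^{+\infty}(I)$, I would use the trichotomy provided by the caustic band $C_\ga$ of §2: either (i) $\Im(C_\ga)$ is contained in an open hemisphere (condensed case), (ii) $\Im(C_\ga)$ contains a pair of antipodal points (diffuse case), or (iii) neither. The intermediate case (iii) will be eliminated by the results of §5, which assign an abstract rotation number to every non-diffuse curve and bound its total curvature in terms of $\ka_0$ and this rotation number. Iterating the grafting construction of §3 on such a curve inserts longer and longer arcs without changing its connected component; the total-curvature bound then forces the grafted curve to eventually fall into case (i) or (ii). Thus every $\ga\in\sr L_{\ka_0}^{+\infty}(I)$ is homotopic within its component to a condensed or diffuse curve.

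For a diffuse $\ga$, the strategy of §3 applies: grafting allows one to insert arbitrarily long circular arcs into $\ga$, which can be rearranged into spread-out loops (see Figure~\ref{F:adding2}) so that the resulting curve becomes \emph{loose}, meaning the constraint $\ka>\ka_0$ no longer obstructs free homotopy. One may then apply the Hirsch--Smale classification (the $\sr I_\pm$ dichotomy) to deform $\ga$ into a circle traversed $k$ times for some $k\in\N$. For a condensed $\ga$, §4 yields a continuously varying distinguished hemisphere $h_\ga$ containing $\Im(C_\ga)$; stereographic projection from $-h_\ga$ gives a planar curve with a well-defined rotation number $\nu(\ga)$. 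One then deforms $\ga$ to a circle traversed $\nu(\ga)$ times using generalized regular bands when $\ka_0<0$, and M\"obius transformations together with a Whitney--Graustein type argument when $\ka_0\geq 0$.

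The main obstacle I anticipate is the reduction in §5 from the intermediate case (iii) to cases (i) or (ii): one must show that iterated grafting terminates, which requires a careful total-curvature estimate and a careful choice of grafting points so that each grafting step strictly enlarges the caustic band while still yielding an admissible curve. Once this is secured, the remaining arguments are a systematic assembly of the band, grafting, and rotation-number constructions of §§2--4, with the deformation into a circle being explicit in each of the two extreme cases.
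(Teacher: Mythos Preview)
Your outline is correct and matches the paper's proof essentially step for step: reduce to $\sr L_{\ka_0}^{+\infty}(I)$ via (\ref{P:arbitrary}) and (\ref{C:belowandabove}), handle the diffuse case by (\ref{P:diffuseloose}) and the condensed case by (\ref{C:contrensed}), and eliminate the intermediate case by combining the grafting construction (\ref{P:totting}) with the total-curvature bound (\ref{P:totbound}) for non-diffuse curves. Two small corrections: ``condensed'' means $\Im(C_\ga)$ lies in a \emph{closed} hemisphere, not an open one; and the quantity that grafting strictly increases is the total curvature (not the caustic band), so the contradiction in case~(iii) is obtained by taking a maximal chain of grafts (using (\ref{L:chainofgrafts}) to close half-open intervals) and invoking (\ref{P:totbound}) to bound its length.
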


\begin{mthm}\label{T:disconnects}
	Let $-\infty\leq \ka_1<\ka_2\leq +\infty$ and let $\sig_j\in \sr L_{\ka_1}^{\ka_2}(I)$ (resp.~$\sr L_{\ka_1}^{\ka_2}$) denote any circle traversed $j\geq 1$ times. Then $\sig_k$,~$\sig_{k+2}$ lie in the same component of $\sr L_{\ka_1}^{\ka_2}(I)$ (resp.~$\sr L_{\ka_1}^{\ka_2}$) if and only if 
	\begin{equation*}
\qquad		k\geq \left\lfloor{\frac{\pi}{\rho_1-\rho_2}}\right\rfloor\quad (\rho_i=\arccot\ka_i,~i=1,2).
	\end{equation*}
\end{mthm}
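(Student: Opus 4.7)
The plan is to reduce to a one-parameter family of spaces, prove sufficiency by an explicit construction, and prove necessity by means of a rotation-number invariant. First, by (\ref{C:belowandabove}) and (\ref{R:circletocircle}), it suffices to establish the theorem for $\sr L_{\ka_0}^{+\infty}(I)$ with $\ka_0=\cot(\rho_1-\rho_2)$ and $\rho_0:=\rho_1-\rho_2$, writing $n=\lfloor\pi/\rho_0\rfloor+1$ so that $(n-1)\rho_0\leq\pi<n\rho_0$. The statement for $\sr L_{\ka_1}^{\ka_2}$ then follows from (\ref{P:arbitrary}) and the connectedness of $\SO_3$. Observe also that $\te\Phi_{\sig_j}(1)=(-1)^j$, so by (\ref{L:bit}) the parity of the number of traversals is a homotopy invariant, which is consistent with the theorem comparing $\sig_k$ to $\sig_{k+2}$ (rather than to $\sig_{k+1}$).

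For sufficiency $(k\geq n-1)$, I would exhibit a concrete path from $\sig_k$ to $\sig_{k+2}$. The model is a ``rosette'' family: starting from a parallel circle of colatitude $\al\in(0,\rho_0)$ close to $\rho_0$ and traversed $k$ times, introduce in a rotating frame a small periodic perturbation whose amplitude grows until two new full loops appear; after the loops are formed, a further deformation absorbs them into a $(k+2)$-fold circle. The admissibility requirement $\rho(t)<\rho_0$ along the homotopy translates into a trigonometric inequality involving the amplitude, the rotation speed, and $k$; the hypothesis $k\geq n-1$, i.e.~$(k+1)\rho_0\geq\pi$, supplies exactly the angular slack needed for this inequality to hold uniformly. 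This explicit family is the homotopy produced in \S6.

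For necessity $(k<n-1)$, I would argue that $\sig_k$ and $\sig_{k+2}$ lie in different components by exhibiting a separating invariant. A parallel circle $\sig_j$ of colatitude $\al\in(0,\rho_0)$ has caustic band consisting of parallel circles of colatitudes in $[\al,\al+\pi-\rho_0]$, hence contained in a spherical disk of radius strictly less than $\pi$; therefore $\sig_j$ is condensed in the sense of \S2, and the rotation number $\nu$ of \S4 satisfies $\nu(\sig_j)=j$. The decisive fact, proved in \S6, is that for each integer $j$ with $1\leq j\leq n-2$ the set of condensed curves of rotation number $j$ is an entire connected component of $\sr L_{\ka_0}^{+\infty}(I)$. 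Applied to $j=k$, this yields $\sig_k\not\sim\sig_{k+2}$ in this space, and hence in $\sr L_{\ka_1}^{\ka_2}(I)$, proving the necessity.

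The principal obstacle is precisely the fact from \S6 invoked in the last paragraph: showing that the low-$\nu$ condensed strata are genuinely isolated. This requires two nontrivial ingredients. First, one must verify that the rotation number of a condensed curve cannot jump across the condensed/non-condensed boundary; this is where the quantitative bounds of \S5 on the total curvature of non-diffuse curves in $\sr L_{\ka_0}^{+\infty}$ (depending only on $\ka_0$ and the rotation number) are essential, since they forbid a condensed curve with $\nu<n-1$ from being approached by diffuse curves. Second, one needs that the averaged hemisphere $h_\ga$ constructed in \S4 varies continuously enough along a putative homotopy to prevent $\nu$ from changing discontinuously. Once both of these points are in place the proof is complete.
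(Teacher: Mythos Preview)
Your overall architecture matches the paper's: reduce via (\ref{C:belowandabove}) and (\ref{P:arbitrary}) to $\sr L_{\ka_0}^{+\infty}(I)$, prove sufficiency by an explicit homotopy, and prove necessity by showing that the rotation number of a condensed curve with $\nu\leq n-2$ is a homotopy invariant. The sufficiency construction in \S\ref{S:7} is not quite the smooth ``rosette'' you describe but rather the \emph{bending of the $k$-equator}: a family of piecewise-circular curves built from $2k+2$ arcs through fixed vertices on the equator, whose extremal radius of curvature is computed exactly and yields the threshold $k\geq\lfloor\pi/\rho_0\rfloor$. (Also, your description of the caustic band of a circle is garbled: for $\ka_0\geq 0$ and a circle of radius $\al\in(0,\rho_0)$, $\Im(C_\ga)$ is a spherical cap about the center of radius $\max(\al,\rho_0-\al)<\rho_0\leq\pi/2$, which is what gives condensedness.)

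The genuine gap is in your account of necessity. You attribute the fact that $\sr O_j$ ($1\leq j\leq n-2$) is a full component to the total-curvature bound of \S5, claiming it ``forbids a condensed curve with $\nu<n-1$ from being approached by diffuse curves''. This is not how the paper argues, and that route does not work: (\ref{P:totbound}) bounds $\tot(\ga)$ for \emph{non-diffuse} curves and is used solely in the proof of Theorem~C to terminate the grafting process; it says nothing about the condensed/non-condensed frontier. The actual mechanism is (\ref{P:borderline}), whose proof rests on the analysis of \emph{equatorial} curves (condensed curves whose caustic band lies in a closed but no open hemisphere). Lemma~(\ref{L:equatorial}), via a Morse-type count of critical points of height functions along $\ga$ and $\ce\ga$ against the equator $E_\ga$, shows that any equatorial curve satisfies $\nu\geq n-1$. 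Hence if a homotopy starting at a condensed curve with $\nu\leq n-2$ ever left the condensed regime, the boundary curve would be equatorial with $\nu\leq n-2$, a contradiction. No appeal to \S5 is needed, and the continuity of $h_\ga$ plays no role in this step either.
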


The following very simple result will be used implicitly in the sequel; it implies in particular that it does not matter which circle $\sig_k$ we choose in (\ref{T:allisround}) and (\ref{T:disconnects}).

\begin{lemma}\label{L:homocircles}
	Let $\sig,\te\sig\in \sr L_{\ka_1}^{\ka_2}(I)$ (resp.~$\sr L_{\ka_1}^{\ka_2}$) be parametrized circles traversed the same number of times. Then $\sig$ and $\te\sig$ lie in the same connected component of $\sr L_{\ka_1}^{\ka_2}(I)$ (resp.~$\sr L_{\ka_1}^{\ka_2}$).
\end{lemma}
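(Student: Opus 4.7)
The plan is to reduce to $\sr L_{\ka_1}^{\ka_2}(I)$ and, within it, construct an explicit linear interpolation of radii of curvature between constant-speed representatives.

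Under the homeomorphism $F\colon \SO_3\times \sr L_{\ka_1}^{\ka_2}(I)\to \sr L_{\ka_1}^{\ka_2}$, $F(Q,\ga)=Q\ga$, of (\ref{P:arbitrary}), a circle $\eta\in \sr L_{\ka_1}^{\ka_2}$ traversed $k$ times with initial frame $Q_\eta$ corresponds to the pair $(Q_\eta,\,Q_\eta^{-1}\eta)$, where $Q_\eta^{-1}\eta$ is again a circle traversed $k$ times, now lying in $\sr L_{\ka_1}^{\ka_2}(I)$. Since $\SO_3$ is path-connected, the component of $(Q_\eta,Q_\eta^{-1}\eta)$ in the product equals $\SO_3\times C$, with $C\subs \sr L_{\ka_1}^{\ka_2}(I)$ the component of $Q_\eta^{-1}\eta$. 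It thus suffices to prove the lemma in $\sr L_{\ka_1}^{\ka_2}(I)$.

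Now let $\sig,\te\sig\in \sr L_{\ka_1}^{\ka_2}(I)$ be circles traversed $k$ times. I would first apply (\ref{L:reparametrize}) to deform each to its reparametrization of constant speed. This reparametrization fixes the endpoints (so $\Phi(0)=\Phi(1)=I$ is preserved), leaves the frame at each image point unchanged, and leaves the geodesic curvature at each image point unchanged (being intrinsic to the oriented curve); hence the homotopy stays inside $\sr L_{\ka_1}^{\ka_2}(I)$, with continuity in the Hilbert manifold topology guaranteed by (\ref{L:C^2}). Assume therefore that $\sig,\te\sig$ have constant speed and constant radii of curvature $\rho,\bar\rho\in (\rho_2,\rho_1)$ respectively. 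A constant-speed curve with $\Phi(0)=I$, speed $v$ and radius of curvature $\rho$ has frame $t\mapsto \exp(tM)$ for the constant matrix $M\in \fr{so}_3$ of \eqref{E:frenet1}; the axis of $M$ is $(\cos\rho,0,\sin\rho)$ and $\exp(M)$ rotates by angle $v/\sin\rho$ about that axis. Hence $\Phi(1)=I$ on a circle traversed exactly $k$ times forces $v=2\pi k\sin\rho$.

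For $u\in [0,1]$ set $\rho_u=(1-u)\rho+u\bar\rho\in (\rho_2,\rho_1)$ and let $\sig_u$ be the constant-speed circle with $\Phi_{\sig_u}(0)=I$, radius of curvature $\rho_u$ and speed $2\pi k\sin\rho_u$. Then $\sig_0=\sig$, $\sig_1=\te\sig$, and $\Phi_{\sig_u}(1)=I$ by construction, so $\sig_u\in \sr L_{\ka_1}^{\ka_2}(I)$ for every $u$. In the global chart $\E=L^2[0,1]\times L^2[0,1]$ of (\ref{D:Little0}), the curve $\sig_u$ corresponds to the constant pair $\big(h(2\pi k\sin\rho_u),\,h_{\ka_1,\ka_2}(\cot\rho_u)\big)$, which varies continuously with $u$; this gives the desired path joining $\sig$ and $\te\sig$. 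The only non-routine bookkeeping is the continuity claim in the reparametrization step, which is handled by (\ref{L:C^2}); everything else is a direct computation.
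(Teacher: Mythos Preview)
Your argument is correct and is surely the intended one; the paper omits the proof entirely (referring to the thesis), so there is nothing to compare against beyond the fact that both consider it routine. One small point of care: lemma~(\ref{L:reparametrize}) is stated for $C^r$ curves with the $C^r$ topology, whereas an admissible circle in $\sr L_{\ka_1}^{\ka_2}(I)$ may have a merely $L^2$ speed function $v$, so the appeal to (\ref{L:C^2}) for continuity is not quite on target. The cleanest fix is to bypass (\ref{L:reparametrize}) and interpolate speeds directly: for a circle of radius $\rho$ traversed $k$ times, the curvature is the constant $\cot\rho$, so $\hat w$ is constant; setting $v_u=(1-u)v+u\,(2\pi k\sin\rho)$ preserves $\int_0^1 v_u=2\pi k\sin\rho$ and hence $\Phi(1)=I$, and a short dominated-convergence estimate (using $v,\,1/v\in L^2$) shows $u\mapsto h(v_u)$ is continuous into $L^2[0,1]$. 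After that, your linear interpolation of $\rho$ finishes the job exactly as you wrote.
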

\begin{proof}
	The proof is easy, and will be omitted. See \cite{tese}, lemma (4.4) for the details.
\end{proof}

Next we introduce the main concepts and tools used in the proofs of the theorems listed above. From now on we shall work almost exclusively with spaces of type $\sr L_{\ka_0}^{+\infty}$ and $\sr L_{\ka_0}^{+\infty}(I)$; we are allowed to do so by (\ref{C:belowandabove}).

\subsection*{The bands spanned by a curve}
Let $\ga\colon [0,1]\to \Ss^2$ be a $C^2$ regular curve. For $t\in [0,1]$, let $\chi(t)$ (or $\chi_\ga(t)$) be the center, on $\Ss^2$, of the osculating circle to $\ga$ at $\ga(t)$. (There are two possibilities for the center on $\Ss^2$ of a circle. To distinguish them we use the orientation of the circle, as in fig.~\ref{F:parallel}. The radius of curvature $\rho(t)$ is the distance from $\ga(t)$ to the center $\chi(t)$, measured along $\Ss^2$.)  The point $\chi(t)$ will be called the \tdef{center of curvature} of $\ga$ at $\ga(t)$, and the correspondence $t\mapsto \chi(t)$ defines a new curve $\chi\colon [0,1]\to \Ss^2$, the \tdef{caustic} of $\ga$. In symbols,
\begin{equation}\label{E:caustic}
	\chi(t)=\cos \rho(t)\?\ga(t)+\sin\rho (t)\?\no(t).
\end{equation}
Here, as always, $\rho=\arccot \ka$ is the radius of curvature and $\no$ the unit normal to $\ga$. Note that the caustic of a circle degenerates to a single point, its center. This is explained by the following result.

\begin{lemma}\label{L:caustic}
	Let $r\geq 2$, $\ga\colon [0,1]\to \Ss^2$ be a $C^r$ regular curve and $\chi$ its caustic. Then $\chi$ is a curve of class $C^{r-2}$. When $\chi$ is differentiable, $\dot\chi(t)=0$ if and only if $\dot\ka(t)=0$, where $\ka$ is the geodesic curvature of $\ga$.
\end{lemma}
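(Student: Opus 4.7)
The plan is in two steps, corresponding to the two assertions.

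For the regularity claim, I would simply trace the loss of derivatives through each factor in the defining formula $\chi=\cos\rho\,\ga+\sin\rho\,\no$. Since $\ga$ is $C^r$ with nonvanishing $\dot\ga$, the unit tangent $\ta=\dot\ga/|\dot\ga|$ is $C^{r-1}$, and hence so is $\no=\ga\times\ta$. The geodesic curvature $\ka$, which at an arc-length parameter is $\langle\ta',\no\rangle$, involves $\ta'$ and so is $C^{r-2}$; applying the smooth diffeomorphism $\arccot$ gives $\rho\in C^{r-2}$ as well. Since $r-2$ is the weakest regularity among the ingredients, the product/sum yielding $\chi$ is $C^{r-2}$.

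For the second claim, it suffices to work with the arc-length parameter $s$, because $\dot{(\cdot)}=|\dot\ga|\,(\cdot)'$ with $|\dot\ga|>0$, so $\dot\chi=0$ iff $\chi'=0$ and $\dot\ka=0$ iff $\ka'=0$. Using the Frenet-type identities on the sphere
\[
\ga'=\ta,\qquad \ta'=-\ga+\ka\no,\qquad \no'=(\ga\times\ta)'=\ga\times\ta'=-\ka\,\ta,
\]
I would differentiate $\chi=\cos\rho\,\ga+\sin\rho\,\no$ to get
\[
\chi'=-\sin\rho\,\rho'\,\ga+\cos\rho\,\ta+\cos\rho\,\rho'\,\no-\sin\rho\,\ka\,\ta.
\]
The key observation is that the coefficient of $\ta$, namely $\cos\rho-\ka\sin\rho$, vanishes identically because $\ka=\cot\rho$. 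Hence
\[
\chi'=\rho'\bigl(-\sin\rho\,\ga+\cos\rho\,\no\bigr).
\]
The vector in parentheses is a unit vector (orthogonal to $\ga$ with coefficients satisfying $\sin^2\rho+\cos^2\rho=1$), so it is never zero. Consequently $\chi'=0$ if and only if $\rho'=0$, which, since $\rho=\arccot\ka$ is a smooth strictly decreasing diffeomorphism, is equivalent to $\ka'=0$.

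There is no real obstacle here; the only nontrivial point is the exact cancellation of the $\ta$-component of $\chi'$, which is precisely the geometric content of the claim: the caustic moves in the direction perpendicular to $\ga$ inside the osculating disk, and it stands still exactly when the osculating circle does, i.e., when $\ka$ is momentarily constant.
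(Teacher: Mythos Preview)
Your argument is correct and is exactly the natural computation one would carry out; the paper itself omits the proof entirely, referring to the thesis for details, and your approach is presumably what appears there. One tiny slip in wording: the vector $-\sin\rho\,\ga+\cos\rho\,\no$ is not orthogonal to $\ga$ (it has a $\ga$-component), but this is irrelevant since all you need is that $\ga$ and $\no$ are orthonormal, which makes it a unit vector and hence nonzero.
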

\begin{proof}
	Again, the proof will be left to the reader. See \cite{tese}, (4.5) for the details.
\end{proof}

\begin{defns}\label{D:bands}
Let $\ka_0\in \R$, $\rho_0=\arccot \ka_0$ and $\ga\in \sr L_{\ka_0}^{+\infty}$. Define the \tdef{regular band} $B_\ga$ and the \tdef{caustic band} $C_\ga$ to be the maps
\begin{equation*}
B_\ga \colon [0,1]\times [\rho_0-\pi,0]\to \Ss^2 \quad\text{and\quad}C_\ga\colon [0,1]\times [0,\rho_0]\to \Ss^2 
\end{equation*} 
given by the same formula:
\begin{equation}\label{E:bands}
	(t,\theta)\mapsto \cos \theta\, \ga(t)+\sin \theta\, \no(t).
\end{equation}
The image of $C_\ga$ will be denoted by $C$, and the geodesic circle orthogonal to $\ga$ at $\ga(t)$ will be denoted by $\Ga_t$. As a set, 
\[
\Ga_t=\set{\cos \theta\, \ga(t)+\sin \theta\, \no(t)}{\theta\in [-\pi,\pi)}.
\]
\end{defns}
\begin{figure}[ht]
	\begin{center}
		\includegraphics[scale=.44]{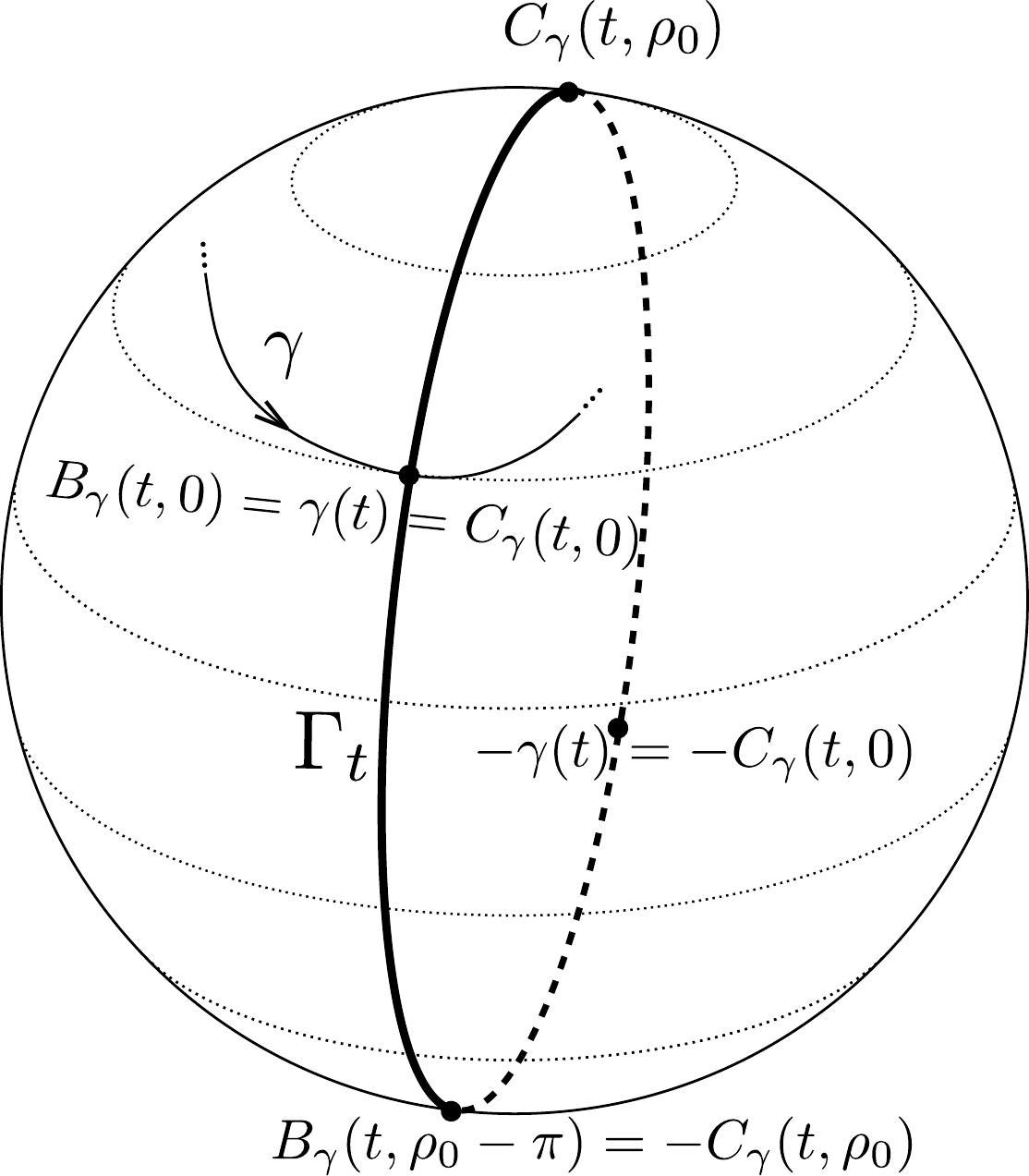}
		\caption{}
		\label{F:Gammat}
	\end{center}
\end{figure}

For fixed $t$, the images of $\pm B_\ga(t,\cdot)$ and $\pm C_\ga(t,\cdot)$ divide the circle $\Ga_t$ in four parts. Note also that $\chi_\ga(t)=C_\ga(t,\rho(t))$. 

\begin{lemma}\label{L:band}
	Let $\ga\in \sr L_{\ka_0}^{+\infty}$ and let $B_\ga\colon [0,1]\times [\rho_0-\pi,0]\to \Ss^2$ be the regular band spanned by $\ga$. Then:
	\begin{enumerate}
		\item The derivative of $B_\ga$ is an isomorphism at every point.
		\item $\frac{\bd B_{\ga}}{\bd \theta}(t,\theta)$ has norm 1 and is orthogonal to $\frac{\bd B_{\ga}}{\bd t}(t,\theta)$. Moreover, 
		\begin{equation*}
			\det \Big(B_\ga\,,\,\frac{\bd B_{\ga}}{\bd t}\, ,\, \frac{\bd B_{\ga}}{\bd \theta}  \Big)>0.
		\end{equation*}
		\item $C_\ga$ fails to be an immersion precisely at the points $(t,\rho(t))$ whose images form the caustic $\chi$.
	\end{enumerate} 
\end{lemma}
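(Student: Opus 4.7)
The plan is to compute the two partial derivatives of $B_\ga$ (and later $C_\ga$) explicitly from the frame equations $\dot\ga=v\ta$ and $\dot\no=-v\ka\ta$, which can be read off from the structure matrix $\La$ in \eqref{E:frenet1} (with $v=\abs{\dot\ga}$ and $\ka$ the geodesic curvature of $\ga$). Direct differentiation of \eqref{E:bands} yields
\[
\frac{\bd B_\ga}{\bd\theta}(t,\theta)=-\sin\theta\,\ga(t)+\cos\theta\,\no(t),\qquad
\frac{\bd B_\ga}{\bd t}(t,\theta)=v(t)\big(\cos\theta-\ka(t)\sin\theta\big)\ta(t),
\]
and rewriting $\ka=\cot\rho$ reduces the scalar factor in the second formula to $\mu(t,\theta):=v(t)\sin(\rho(t)-\theta)/\sin\rho(t)$. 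Since the columns of $\Phi_\ga(t)$ are orthonormal, $\bd B_\ga/\bd\theta$ is automatically a unit vector orthogonal to $\ta(t)$, hence to $\bd B_\ga/\bd t$, which gives the first two assertions of (b).

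Next I would handle the determinant in (b) by observing that $(B_\ga,\bd_\theta B_\ga)$ is the rotation of $(\ga,\no)$ by angle $\theta$ inside the two-plane they span, while $\bd_t B_\ga=\mu(t,\theta)\,\ta(t)$. Expanding along this decomposition gives
\[
\det\big(B_\ga,\,\bd_t B_\ga,\,\bd_\theta B_\ga\big)=\mu(t,\theta)\det(\ga,\ta,\no)=\mu(t,\theta),
\]
because $\Phi_\ga(t)\in\SO_3$. The hypothesis $\ga\in \sr L_{\ka_0}^{+\infty}$ forces $\rho(t)<\rho_0$ (since $\arccot$ is decreasing), and for $\theta\in[\rho_0-\pi,0]$ this places $\rho(t)-\theta$ in $(0,\pi)$, so $\mu>0$. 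This completes (b); and since $\bd_t B_\ga$ and $\bd_\theta B_\ga$ are then nonzero and orthogonal, $dB_\ga$ is injective, hence an isomorphism by dimension count, proving (a).

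For (c) the identical calculation applies to $C_\ga$ on $[0,1]\times[0,\rho_0]$. Because $\bd_\theta C_\ga$ still has norm one, the differential $dC_\ga$ fails to be injective precisely where $\bd_t C_\ga=0$, i.e.\ where $\sin(\rho(t)-\theta)=0$. For $\theta\in[0,\rho_0]$ and $\rho(t)\in(0,\rho_0)$ one has $\rho(t)-\theta\in(-\rho_0,\rho_0)\sube(-\pi,\pi)$, so the only solution is $\theta=\rho(t)$; and substituting back, $C_\ga(t,\rho(t))=\cos\rho(t)\,\ga(t)+\sin\rho(t)\,\no(t)=\chi(t)$ by \eqref{E:caustic}.

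I do not expect a genuine obstacle here; the only nuisance is regularity. For an arbitrary admissible $\ga$ the vectors $\dot\ga$ and $\dot\no$ exist only almost everywhere, and part (c) implicitly assumes $\ga$ is $C^2$ so that $\chi$ is defined (cf.~(\ref{L:caustic})). I would deal with this either by performing the pointwise computation wherever the derivatives exist, or, if a uniformly smooth statement is needed, by approximating via the density of $C^r$ curves established in (\ref{L:dense}).
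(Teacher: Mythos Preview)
Your proof is correct and follows essentially the same approach as the paper: both compute the partial derivatives explicitly, rewrite the $t$-derivative coefficient as $\frac{v(t)}{\sin\rho(t)}\sin(\rho(t)-\theta)$, and use $0<\rho(t)<\rho_0$ together with the range of $\theta$ to conclude. Your treatment of the determinant and of part (c) is slightly more explicit than the paper's (which simply says the computation ``implies (a) and (b)'' and that (c) follows from the same formula), and your remark on regularity is a reasonable additional observation, but the argument is the same.
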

\begin{proof} We have:
	\begin{equation}\label{E:partialtheta}
		\frac{\bd B_{\ga}}{\bd \theta}(t,\theta)=-\sin\theta\, \ga(t)+\cos\theta\, \no(t).
	\end{equation}
and
	\begin{alignat}{10}
		\frac{\bd B_{\ga}}{\bd t}(t,\theta)&=\abs{\dot{\ga}(t)}\big(\cos\theta-\ka(t)\?\sin\theta\big)\?\ta(t) \label{E:partialt1}\\
		&=\frac{\abs{\dot{\ga}(t)}}{\sin\rho(t)}\sin(\rho(t)-\theta)\?\ta(t)\label{E:partialt2},
	\end{alignat}
where $\rho(t)=\arccot \ka(t)$ is the radius of curvature of $\ga$ at $\ga(t)$. The inequality $\ka_0<\ka<+\infty$ translates into $0<\rho<\rho_0$, hence the factor multiplying $\ta(t)$ in \eqref{E:partialt2} is positive for $\theta$ satisfying $\rho_0-\pi\leq \theta\leq 0$, and this implies (a) and (b). Part (c) also follows directly from \eqref{E:partialt2}, because $C_\ga$ and $B_\ga$ are defined by the same formula.
\end{proof}
%

Thus, $B_\ga$ is an immersion (and a submersion) at every point of its domain. It is merely a way of collecting the regular translations of $\ga$ (as defined on p.~\pageref{E:translation}) in a single map. 

If we fix $t$ and let $\theta$ vary in $(0,\rho_0)$, the section $C_\ga(t,\theta)$ of $\Ga_t$ describes the set of ``valid'' centers of curvature for $\ga$ at $\ga(t)$, in the sense that the circle centered at $C_\ga(t,\theta)$ passing through $\ga(t)$, with the same orientation as $\ga$, has geodesic curvature greater than $\ka_0$. This interpretation is important because it motivates some of the constructions that we consider ahead.

\subsection*{Condensed and diffuse curves}

\begin{defn}
Let $\ka_0\in \R$ and $\ga\in \sr L_{\ka_0}^{+\infty}$. We shall say that $\ga$ is \tdef{condensed} if the image $C$ of $C_\ga$ is contained in a closed hemisphere, and \tdef{diffuse} if $C$ contains antipodal points (i.e., if $C\cap {-}C\neq \emptyset$). 
\end{defn}
\begin{exms}
	 A circle in $\sr L_{\ka_0}^{+\infty}$ is always condensed for $\ka_0\geq 0$, but when $\ka_0<0$ it may or may not be condensed, depending on its radius. If a curve contains antipodal points then it must be diffuse, since $C_\ga(t,0)=\ga(t)$. By the same reason, a condensed curve is itself contained in a closed hemisphere. 
	 
	 There exist curves which are condensed and diffuse at the same time; an example is a geodesic circle in $\sr L_{\ka_0}^{+\infty}$, with $\ka_0<0$. There also exist curves which are neither condensed nor diffuse. To see this, let $\Ss^1$ be identified with the equator of $\Ss^2$ and let $\ze\in \Ss^1$ be a primitive third root of unity. Choose small neighborhoods $U_i$ of $\ze^i$ ($i=0,1,2$) and $V$ of the north pole in $\Ss^2$. Then the set $G$ consisting of all geodesic segments joining points of $U_1\cup U_2\cup U_3$ to points of $V$ does not contain antipodal points, nor is it contained in a closed hemisphere, by (\ref{L:closedhemisphere}). By taking $\rho_0=\arccot \ka_0$ to be very small, we can construct a curve $\ga\in \sr L_{\ka_0}^{+\infty}$ for which $C=\Im(C_\ga)\subs G$, but $\ze^i\in C$ for each $i$, so that $\ga$ is neither condensed nor diffuse. 
	
		 To sum up, a curve may be condensed, diffuse, neither of the two, or both simultaneously, but this ambiguity is not as important as it seems.
\end{exms} 

There exists a two-way correspondence between the unit sphere $\Ss^2$ and the set consisting of its closed (or open) hemispheres; namely, with $h\in \Ss^2$ we can associate
\begin{equation*}
H=\set{p\in \Ss^2}{\gen{h,p}\geq 0}.
\end{equation*}

Let $\ga\colon [0,1]\to \Ss^n$ be a (continuous) curve contained in the interior of $H$. As a consequence of the compactness of $[0,1]$, if $\te h\in \Ss^n$ is sufficiently close to $h$, then $\ga$ is also contained in the hemisphere $\te H$ corresponding to $\te h$. It is desirable to be able to select, in a natural way, a distinguished hemisphere among those which contain $\ga$. 

\begin{lemma}\label{L:closedbarycenter}
	Let $\ka_0\in \R$ and $\sr H\subs \sr L_{\ka_0}^{+\infty}$ be the subspace consisting of all $\ga$ whose image is contained in some closed hemisphere \tup(depending on $\ga$\tup). Then the map $h\colon \sr H\to \Ss^2$, which associates to $\ga$ the barycenter $h_\ga$ on $\Ss^2$ of the set of closed hemispheres that contain $\ga$, is continuous.\qed
\end{lemma}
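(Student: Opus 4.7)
The plan is to write $h_\ga$ as the normalization of an explicit $\R^3$-valued integral and then appeal to Lebesgue's dominated convergence. Identify each closed hemisphere of $\Ss^2$ with its pole $h\in\Ss^2$ via $H_h=\{p\in\Ss^2:\gen{h,p}\geq 0\}$, and put
\[
A_\ga=\set{h\in\Ss^2}{\Im(\ga)\subs H_h}=\bigcap_{t\in[0,1]}H_{\ga(t)}.
\]
The set $A_\ga$ is nonempty by definition of $\sr H$, and as an intersection of closed hemispheres it is compact and spherically convex. Letting $\sig$ denote the spherical area measure, set
\[
b_\ga=\int_{A_\ga}x\,d\sig(x)\in\R^3 \qquad\text{and}\qquad h_\ga=\frac{b_\ga}{\abs{b_\ga}}.
\]

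First I would check that $b_\ga\neq 0$, so that $h_\ga$ is well defined. The continuous function $f_\ga\colon \Ss^2\to\R$ given by $f_\ga(h)=\min_{t\in[0,1]}\gen{h,\ga(t)}$ satisfies $A_\ga=f_\ga^{-1}[0,\infty)$. Setting aside the degenerate case in which $\Im(\ga)$ lies in a great circle (then $A_\ga$ reduces to a pair of antipodal points and must be treated separately, e.g.~by excision or by passing to a perturbed average), $A_\ga$ has nonempty interior in $\Ss^2$. For any interior point $h_*$ one then has $\gen{x,h_*}>0$ for $\sig$-almost every $x\in A_\ga$, whence $\gen{b_\ga,h_*}>0$ and $b_\ga\neq 0$.

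The main content is the continuity of $\ga\mapsto b_\ga$. Because $\sr L_{\ka_0}^{+\infty}$ carries the Hilbert-manifold topology of Definition \ref{D:Little0} and admissible curves depend continuously on the defining pair $(\hat v,\hat w)\in\E$ as $C^0$ maps into $\Ss^2$, convergence $\ga_n\to\ga$ in $\sr H$ entails $\ga_n\to\ga$ uniformly on $[0,1]$. The estimate
\[
\abs{f_{\ga_n}(h)-f_\ga(h)}\leq\sup_{t\in[0,1]}\abs{\ga_n(t)-\ga(t)}
\]
then shows $f_{\ga_n}\to f_\ga$ uniformly on $\Ss^2$, so $\mbf{1}_{A_{\ga_n}}(h)\to\mbf{1}_{A_\ga}(h)$ at every $h$ with $f_\ga(h)\neq 0$. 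Writing $A_\ga=K_\ga\cap\Ss^2$ with $K_\ga\subs\R^3$ the convex cone $\{x:\gen{x,\ga(t)}\geq 0\text{ for all }t\}$, the boundary of $A_\ga$ in $\Ss^2$ lies in $\bd K_\ga\cap\Ss^2$, a $1$-dimensional subset of $\Ss^2$ of $\sig$-measure zero. Bounded convergence then gives $b_{\ga_n}\to b_\ga$, and normalization yields $h_{\ga_n}\to h_\ga$. The main obstacle is the measure-zero boundary estimate together with the disposal of the degenerate great-circle case; both can be handled using standard facts about convex sets, such as those collected in the appendix referred to in the introduction.
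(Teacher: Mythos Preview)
Your overall strategy coincides with the paper's: write $h_\ga$ as the central projection of the Euclidean centroid $b_\ga=\int_{A_\ga}x\,d\sig$ and establish continuity via dominated convergence, using that $\ga_n\to\ga$ in $\sr L_{\ka_0}^{+\infty}$ forces uniform convergence of the curves and hence of $f_{\ga_n}\to f_\ga$. That part is sound.

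There is, however, a real error in your argument that $b_\ga\neq 0$. The claim ``for any interior point $h_*$ one has $\gen{x,h_*}>0$ for $\sig$-almost every $x\in A_\ga$'' is false. If $\ga$ is a circle of colatitude $\al<\tfrac{\pi}{4}$, then $A_\ga$ is the spherical cap of angular radius $\tfrac{\pi}{2}-\al>\tfrac{\pi}{4}$ about the north pole; choosing $h_*$ near the rim of this cap, the antipodal rim of $A_\ga$ has positive $\sig$-measure and lies in the open hemisphere $\{\gen{\,\cdot\,,h_*}<0\}$. The repair is immediate: replace $h_*$ by any point $\ga(t_0)$ of the curve itself. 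By definition of $A_\ga$ one has $\gen{x,\ga(t_0)}\geq 0$ for every $x\in A_\ga$, with equality only on the great circle $\bd H_{\ga(t_0)}$. If $A_\ga$ has positive area this forces $\gen{b_\ga,\ga(t_0)}>0$.

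Your treatment of the degenerate case is too casual. ``Excision or a perturbed average'' does not dispose of the problem: when $\ka_0<0$ and $\ga$ is a great circle traversed $k$ times, $A_\ga$ is exactly two antipodal points, $\sig(A_\ga)=0$, and $b_\ga=0$; no appeal to convex-set facts changes this. The map $h$ as you (and the paper) define it is genuinely undefined at such $\ga$, and continuity across these curves would require a different construction. In the paper the lemma is only applied in situations where this cannot occur (e.g.\ $\ka_0\geq 0$, where every curve has strictly positive geodesic curvature and hence cannot lie on a great circle), so the gap is harmless for the applications, but it should be acknowledged rather than absorbed into ``standard facts''.
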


More explicitly, the barycenter $h_\ga$ is obtained as follows: For fixed $\ga$, consider the set 
\begin{equation*}
	\sr S_\ga=\set{h\in \Ss^2}{\gen{\ga(t),h}\geq 0\ \text{for each $t\in [0,1]$}}.
\end{equation*}
It can be proved that the centroid of $\sr S_\ga$ in $\R^3$ is not the origin. The barycenter $h_\ga$ is taken to be the image on $\Ss^2$ of this centroid under gnomic (central) projection. We refer the reader to \S 3 of \cite{tese} for the proof of this lemma and also of the following one.

\begin{lemma}\label{L:causticbarycenter}
	Let $\ka_0\in \R$ and let $\sr O\subs \sr L_{\ka_0}^{+\infty}$ denote the subspace consisting of all condensed curves. Define a map $h\colon \sr O\to \Ss^2$ by $\ga\mapsto h_\ga$, where $h_\ga$ is the image under gnomic (central) projection of the centroid, in $\R^3$, of the set of closed hemispheres which contain $\Im(C_\ga)$. Then $h\colon \sr O \to \Ss^2$, $\ga\mapsto h_\ga$, is continuous.\qed
\end{lemma}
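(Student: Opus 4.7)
My strategy is to mimic the proof of (\ref{L:closedbarycenter}), which concerns the analogous continuous assignment for curves contained in a closed hemisphere; the only substantive change is that $\Im(\ga)$ gets replaced by $\Im(C_\ga)$. The first step is to note that the two-parameter map $C_\ga\colon [0,1]\times [0,\rho_0]\to \Ss^2$ from \eqref{E:bands} depends continuously (in the sup norm) on $\ga\in \sr L_{\ka_0}^{+\infty}$: this follows because the frame $\Phi_\ga$, and in particular the unit normal $\no_\ga(t)=\Phi_\ga(t)e_3$, varies continuously in $C^0$ with $\ga$ (it is an absolutely continuous function obtained by integrating the $L^2$ logarithmic derivative, which depends continuously on $\ga$ by Definition of the Hilbert manifold structure). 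Consequently, the compact image $K_\ga:=\Im(C_\ga)\subs \Ss^2$ varies continuously in Hausdorff distance as $\ga$ ranges over $\sr O$.

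Next, set
\[
\sr S_\ga\?=\?\big\{h\in \Ss^2\setsep \gen{h,p}\geq 0\ \text{for all}\ p\in K_\ga\big\}\?=\?\bcap_{p\in K_\ga}\big\{h\in \Ss^2:\gen{h,p}\geq 0\big\}.
\]
For $\ga\in \sr O$, the set $\sr S_\ga$ is nonempty, closed, and spherically convex. I would then verify that $\ga\mapsto \sr S_\ga$ is continuous in Hausdorff distance: upper semicontinuity is immediate from the inequality $\gen{h,p}\geq 0$ being closed and from $K_\ga$ depending upper semicontinuously on $\ga$; lower semicontinuity follows from the existence, near a fixed $\ga_0\in \sr O$, of an interior point of $\sr S_{\ga_0}$ that persists in $\sr S_\ga$ for nearby $\ga$, exploiting the fact that strict inequalities $\gen{h,p}>0$ are stable under small perturbations of both $h$ and $p$.

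With this in hand, the centroid
\[
c(\ga)\?=\?\int_{\sr S_\ga}x\,d\mu(x)\?\in\?\R^3
\]
(where $\mu$ denotes spherical surface measure) depends continuously on $\ga$: the integral of a continuous function against a measure whose support varies continuously in Hausdorff distance on a compact space is continuous. Finally, $c(\ga)\neq 0$ because $\sr S_\ga$ lies in the closed hemisphere $\{h:\gen{h,p_0}\geq 0\}$ for any chosen $p_0\in K_\ga$, so $\gen{c(\ga),p_0}\geq 0$ with equality only if $\sr S_\ga$ is entirely contained in the great circle $p_0^\perp$; this latter case forces $\Im(C_\ga)\subs h^\perp$ for some $h\neq \pm p_0$, and a direct inspection of the resulting (one-dimensional) geometry shows that $c(\ga)$ is still nonzero. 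The composition $h_\ga=c(\ga)/\abs{c(\ga)}$ is therefore continuous.

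The main obstacle is the borderline situation in which $\sr S_{\ga_0}$ has empty two-dimensional interior (i.e.\ $\Im(C_{\ga_0})$ is contained in a great circle), for then nearby $\sr S_\ga$ may have positive surface measure while $\mu(\sr S_{\ga_0})=0$, and one must either interpret the centroid using the Hausdorff measure of the correct dimension or regularize by an equivalent definition such as
\[
\te c(\ga)\?=\?\int_{\Ss^2}\max\!\Big(\min_{p\in K_\ga}\gen{h,p},\,0\Big)\?h\,d\mu(h),
\]
for which the nonvanishing and continuity arguments above go through uniformly across the degenerate stratum. Once the degeneration is handled, central projection is a diffeomorphism from $\R^3\setminus \{0\}$ to $\Ss^2$ (upon quotienting by positive scalars) and the assignment $\ga\mapsto h_\ga$ is continuous, as claimed.
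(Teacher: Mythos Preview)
The paper does not supply a proof here; it simply marks the lemma with \qed\ and refers to \S3 of \cite{tese}. Your plan is the natural one and almost certainly matches what is done there: show $C_\ga$ depends continuously on $\ga$ in the sup norm (immediate from the continuous dependence of $\Phi_\ga$), deduce Hausdorff continuity of $K_\ga=\Im(C_\ga)$ and then of the spherically convex dual set $\sr S_\ga$, and finally pass to the centroid. The overall architecture is fine.

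Two corrections are worth making. First, your parenthetical characterization of the degenerate stratum is wrong: $\sr S_{\ga_0}$ has empty interior not when $\Im(C_{\ga_0})$ sits in a great circle (this essentially never happens, since the caustic band is two--dimensional), but when $\ga_0$ is \emph{equatorial}, i.e., $\Im(C_{\ga_0})$ lies in a closed hemisphere but in no open one. Second, in that equatorial case $\sr S_{\ga_0}$ is in fact a single point, by the same lune argument used later in (\ref{L:cega})(a): the boundary of $\Im(C_\ga)$ is contained in the images of $\ga$ and $\check\ga$, both of which carry a unit tangent everywhere and so cannot pass through a corner of a lune $H_1\cap H_2$; hence two distinct containing hemispheres would force $\Im(C_\ga)$ into an open hemisphere, contradicting equatoriality. (This argument needs only that $\ga,\check\ga$ are admissible, not $C^2$, and works for all $\ka_0\in\R$.) Consequently your regularized $\te c$ is unnecessary and, more importantly, would define a \emph{different} map from the one in the statement: instead, use upper semicontinuity of $\ga\mapsto \sr S_\ga$ to see that $\sr S_\ga$ is contained in an arbitrarily small neighborhood of the single point $\sr S_{\ga_0}=\{h_0\}$ for $\ga$ near $\ga_0$, so the centroid of $\sr S_\ga$ (in any reasonable sense) tends to $h_0$. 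On the open set of non-equatorial condensed curves, $\sr S_\ga$ has positive surface measure and Hausdorff continuity of convex bodies gives continuity of both the area and the Euclidean centroid directly; your claim that ``Hausdorff continuity of the support implies continuity of the integral'' should be phrased this way rather than as a general measure-theoretic fact.
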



\section{Grafting}

 \begin{defn}\label{D:bycurvature}
 Let $\ga\colon [a,b]\to \Ss^2$ be an admissible curve. The \tdef{total curvature} $\tot(\ga)$ of $\ga$ is given by
 \begin{equation*}
 	\tot(\ga)=\int_a^bK(t)\abs{\dot\ga(t)}\,dt,
 \end{equation*}
 where 
 \begin{equation}\label{E:totalen}
 	K=\sqrt{1+\ka^2}=\csc\rho
 \end{equation}
 is the Euclidean curvature of $\ga$. We say that $\ga\colon [0,T]\to \Ss^2$, $u\mapsto \ga(u)$, is a \tdef{parametrization of $\ga$ by curvature} if
 \begin{equation*}
 	\big\vert\Phi_\ga'(u)\big\vert=\sqrt{2}\text{\ or, equivalently, \ }\big\vert\tilde{\Phi}_\ga'(u)\big\vert=\frac{1}{2}\text{\  for a.e.~$u\in [0,T]$}.
 \end{equation*}
 \end{defn}
 
 The equivalence of the two equalities comes from (\ref{L:2sqrt2}). The next result justifies our terminology.
 \begin{lemma}\label{L:parametrizedbycurvature}
 	Let $\ga\colon [0,T]\to \Ss^2$ be an admissible curve. Then:
 	\begin{enumerate}
 		\item [(a)] $\ga$ is parametrized by curvature if and only if 
 \begin{equation*}
\qquad 	\tot \big( \ga|_{[0,u]} \big)=u\text{ for every $u\in [0,T]$}.
 \end{equation*}
 		\item [(b)] If $\ga$ is parametrized by curvature then its logarithmic derivatives $\La=\Phi_\ga^{-1} \Phi_\ga'$ and $\te\La=\te\Phi_\ga^{-1}\te\Phi'$ are given by:
 		\begin{equation*}
 			\La(u)=\begin{pmatrix}
 				0   & -\sin \rho(u) & 0 \\
 				 \sin \rho(u)  &  0 & -\cos \rho(u) \\
 				 0 & \cos\rho(u) & 0   
 			\end{pmatrix},\quad \te{\La}(u)=\frac{1}{2}\big(\cos\rho(u)\mbf{i}+\sin\rho(u)\mbf{k}\big).
 		\end{equation*}
 	\end{enumerate}	
 \end{lemma}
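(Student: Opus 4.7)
The plan is to compute the three relevant ``speeds'' of an admissible curve $\ga$: the Euclidean speed $v=|\dot\ga|$ on $\Ss^2$, the speed $|\dot\Phi_\ga|$ of its frame in $\SO_3 \subs \R^9$ with the Frobenius metric, and the integrand $K|\dot\ga|$ of total curvature. Since $\dot\Phi_\ga = \Phi_\ga \La$ with $\Phi_\ga$ orthogonal and $\La$ of the shape given in \eqref{E:frenet1}, we have $|\dot\Phi_\ga|^2 = \tr(\La^\top\La) = 2(v^2+w^2)$, where $w = v\ka$. Factoring, $|\dot\Phi_\ga|^2 = 2v^2(1+\ka^2) = 2v^2 K^2$, so $|\dot\Phi_\ga| = \sqrt{2}\,K\,|\dot\ga|$ almost everywhere. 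By (\ref{L:2sqrt2}) this is equivalent to $|\dot{\te\Phi}_\ga| = \tfrac{1}{2}K\,|\dot\ga|$, which accounts for the equivalence of the two conditions defining ``parametrized by curvature'' in (\ref{D:bycurvature}).

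For part (a), set $\vphi(u) = \tot(\ga|_{[0,u]}) = \int_0^u K(t)|\dot\ga(t)|\,dt$; this is an absolutely continuous function with $\vphi(0) = 0$ and $\vphi'(u) = K(u)|\dot\ga(u)|$ a.e. By the preceding paragraph, ``$\ga$ is parametrized by curvature'' is equivalent to $\vphi'(u) = 1$ a.e., which, by absolute continuity, is in turn equivalent to $\vphi(u) = u$ for every $u\in[0,T]$.

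For part (b), substituting $vK=1$ (equivalently $v = K^{-1} = \sin\rho$, whence $w = v\ka = \sin\rho\,\cot\rho = \cos\rho$) into the expression for $\La$ in \eqref{E:frenet1} yields the first matrix directly. For the lifted frame, the key step is to identify the derivative $d\pi_{\mbf{1}}\colon T_{\mbf{1}}\Ss^3 \to \fr{so}_3$ of the covering map at the identity. Differentiating $T_z(v) = zvz^{-1}$ at $z = \mbf{1}$ in the direction of a purely imaginary quaternion $q$ gives $qv - vq = 2\,q\times v$, since for purely imaginary quaternions $qv = -\gen{q,v}+q\times v$ and $vq = -\gen{q,v}-q\times v$. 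Hence for $q = a\mbf{i}+b\mbf{j}+c\mbf{k}$,
\[
d\pi_{\mbf{1}}(q) = 2\begin{pmatrix} 0 & -c & b \\ c & 0 & -a \\ -b & a & 0\end{pmatrix}.
\]
Because $\pi$ is a group homomorphism, the identity $\pi \circ \te\Phi_\ga = \Phi_\ga$ together with the left-invariance of the logarithmic derivative forces $d\pi_{\mbf{1}}(\te\La) = \La$, and comparing entries against the matrix for $\La$ already computed yields $\te\La = \tfrac{1}{2}(\cos\rho\,\mbf{i}+\sin\rho\,\mbf{k})$.

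I do not expect a real obstacle here: the ``almost everywhere'' bookkeeping intrinsic to $H^1$ curves is handled automatically by absolute continuity, and the computation of $d\pi_{\mbf{1}}$ is a standard calculation with the double cover $\Ss^3 \to \SO_3$. The only piece that deserves attention is the factor $\tfrac{1}{2}$ in the quaternionic formula, which can also be traced back to the conformal factor $2\sqrt{2}$ in (\ref{L:2sqrt2}) rather than being derived independently from $d\pi_{\mbf{1}}$.
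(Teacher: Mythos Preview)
Your proof is correct and follows essentially the same route as the paper: compute $|\dot\Phi_\ga|=\sqrt{2}\,K\,|\dot\ga|$ from the explicit form of $\La$, integrate (using absolute continuity) to get part (a), then substitute $v=\sin\rho$, $w=\cos\rho$ for the matrix $\La$ and read off $\te\La$ via the Lie algebra isomorphism $d\pi_{\mbf{1}}$. The only cosmetic difference is that you derive $d\pi_{\mbf{1}}(q)(v)=2\,q\times v$ from scratch, whereas the paper simply records the images of $\tfrac{\mbf{i}}{2},\tfrac{\mbf{j}}{2},\tfrac{\mbf{k}}{2}$ under this map.
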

 Here, as always, $\rho$ is the radius of curvature of $\ga$. In the expression for $\te{\La}$ above and in the sequel we are identifying $\Ss^3$ with the unit quaternions and the Lie algebra $\widetilde{\mathfrak{so}}_3=T_{\mbf{1}}\Ss^3$ (the tangent space to $\Ss^3$ at $\mbf{1}$) with the vector space of all imaginary quaternions. Also, it follows from (a) that if $\ga\colon [0,T]\to \Ss^2$ is parametrized by curvature then $T=\tot(\ga)$.
 \begin{proof}Let us denote differentiation with respect to $u$ by $'$. Using \eqref{E:totalen}, we deduce that 
 	\begin{equation}\label{E:tot0}
 		\La(u)=\abs{\ga'(u)}\begin{pmatrix}
 		 0 & -1 & 0  \\
 		1 & 0 & -\ka(u)\\
 		0 & \ka(u) & 0
 		\end{pmatrix}=K(u)\abs{\ga'(u)}\begin{pmatrix}
 				0   & -\sin \rho(u) & 0 \\
 				 \sin \rho(u)  &  0 & -\cos \rho(u) \\
 				 0 & \cos\rho(u) & 0   
 			\end{pmatrix},
 	\end{equation}
 	hence $|\Phi'(u)|=\abs{\La(u)}=\sqrt{2}\?K(u)\abs{\ga'(u)}$. Therefore, $\ga$ is parametrized by curvature if and only if
\begin{equation*}
\quad	K(u)\abs{\ga'(u)}=1\text{\, for a.e.~$u\in [0,T]$.}
\end{equation*}
Integrating we deduce that this is equivalent to 
\begin{equation*}
\quad	\tot(\ga|_{[0,u]})=u\text{ for every $u\in [0,T]$,}
\end{equation*} 
which proves (a). The expression for $\te{\La}$ is obtained from \eqref{E:tot0}, using that under the isomorphism $\widetilde{\mathfrak{so}}_3\to \mathfrak{so}_3$ induced by the projection $\Ss^3\to \SO_3$, $\tfrac{\mbf{i}}{2}$, $\tfrac{\mbf{j}}{2}$ and $\tfrac{\mbf{k}}{2}$ correspond respectively to
\begin{equation*}
	\begin{pmatrix}
	 0 & 0 & 0 \\
	 0 & 0 & -1 \\
	 0 & 1 & 0 
	\end{pmatrix},\quad 	\begin{pmatrix}
	 0 & 0 & 1 \\
	 0 & 0 & 0 \\
	 -1 & 0 & 0 
	\end{pmatrix},\text{\quad and \quad}	\begin{pmatrix}
	 0 & -1 & 0 \\
	 1 & 0 & 0 \\
	 0 & 0 & 0 
	\end{pmatrix}.\qedhere
\end{equation*} 
 \end{proof}
 
We now introduce the essential notion of grafting.
 \begin{defn}\label{D:graft} Let $\ga_i\colon [0,T_i]\to\Ss^2$ ($i=0,1$) be admissible curves parametrized by curvature.
 \begin{enumerate}
 	\item [(a)] A \tdef{grafting function} is a function $\phi\colon [0,s_0]\to [0,s_1]$ of the form
 	\begin{equation}\label{E:countable}
 		\phi(t)=t+\sum_{x<t,\,x\in X^+}\de^+(x)+\sum_{x\leq t,\,x\in X^-}\de^-(x),
 	\end{equation}
 	where $X^+\subs [0,s_0)$ and $X^-\subs [0,s_0]$ are countable sets and $\de^{\pm}\colon X^{\pm}\to (0,+\infty)$ are arbitrary  functions. 
 	\item [(b)] We say that $\ga_1$ is \tdef{obtained from $\ga_0$ by grafting}, denoted $\ga_0\gr \ga_1$, if there exists a grafting function $\phi\colon [0,T_0]\to [0,T_1]$ such that $\La_{\ga_0}=\La_{\ga_1}\circ \phi$. 
 	\item [(c)] Let $J$ be an interval (not necessarily closed). A \tdef{chain of grafts} consists of a homotopy $s\mapsto \ga_s$, $s\in J$, and a family of grafting functions $\phi_{s_0,s_1}\colon [0,s_0]\to [0,s_1]$, $s_0<s_1\in J$, such that:
 	\begin{enumerate}
 		\item [(i)] $\La_{\ga_{s_0}}=\La_{\ga_{s_1}}\circ \phi_{s_0,s_1}$ whenever $s_0<s_1$;
 		\item [(ii)] $\phi_{s_0,s_2}=\phi_{s_1,s_2}\circ \phi_{s_0,s_1}$ whenever $s_0<s_1<s_2$.
 	\end{enumerate}
 	Here every curve is admissible and parametrized by curvature. 
 \end{enumerate}
 \end{defn}
 
 \begin{rems}\label{R:grafting}\ 
 
 (a) A function $\phi\colon [0,s_0]\to [0,s_1]$, $s_0\leq s_1$, is a grafting function if and only if it is increasing and there exists a countable set $X\subs [0,s_0]$ such that $\phi(t)=t+c$ whenever $t$ belongs to one of the intervals which form $(0,s_0)\ssm X$, where $c\geq 0$ is a constant depending on the interval. 
 
 (b) Observe that in eq.~\eqref{E:countable}, $x<t$ in the first sum, while $x\leq t$ in the second sum. We do not require $X^+$ and $X^-$ to be disjoint, and they may be finite (or even empty). 
 		
 		(c) If $\phi\colon [0,s_0]\to [0,s_1]$ is a grafting function then it is monotone increasing and has derivative equal to 1 a.e.. Moreover, $\phi(t+h)-\phi(t)\geq h$ for any $t$ and $h\geq 0$; in particular, $s_0\leq s_1$.
 		
 		(d)  As the name suggests, $\ga_0\gr \ga_1$ if $\ga_1$ is obtained by inserting a countable number of pieces of curves (e.g., arcs of circles) at chosen points of $\ga_0$ (see fig.~\ref{F:enxerto}). This can be used, for instance, to increase the total curvature of a curve. The difficulty is that it is usually not clear how we can graft pieces of curves onto a closed curve so that the resulting curve is still closed and the restrictions on the geodesic curvature are not violated.
 
 (e) Two curves $\ga_0,\ga_1\in \sr L_{\ka_1}^{\ka_2}(Q)$ agree if and only if $\La_{\ga_0}=\La_{\ga_1}$ a.e.~on $[0,1]$. Indeed, $\ga_i=\Phi_{\ga_i}e_1$, where $\Phi_{\ga_i}$ is the unique solution to an initial value problem as in eq.~\eqref{E:ivp} of \S 1. Of course, if the curves are parametrized by curvature instead, then the latter condition should be replaced by $T_0=T_1$ and $\La_{\ga_0}=\La_{\ga_1}$ a.e.~on $[0,T_0]=[0,T_1]$. 
 
 \end{rems}

 For a grafting function $\phi\colon [0,s_0]\to [0,s_1]$ and $t\in [0,s_0]$, define:
 \begin{equation*}
 	\om^+(t)=\lim_{h\to 0^+}\phi(t+h)-\phi(t),\qquad \om^-(t)=\lim_{h\to 0^+}\phi(t)-\phi(t-h).
 \end{equation*}	
We also adopt the convention that $\om^+(s_0)=0$, while $\om^-(0)=\phi(0)$. Note that the limits above exist because $\phi$ is increasing. 
 \begin{lemma}\label{L:grafting}Let $\phi\colon [0,s_0]\to [0,s_1]$ be a grafting function, and let $X^{\pm}$ and $\de^{\pm}$ be as in definition (\ref{D:graft}\?(a)).
 	\begin{enumerate}
 		\item [(a)] $t\in X^{\pm}$ if and only if $\om^{\pm}(t)>0$. In this case, $\de^{\pm}(t)=\om^{\pm}(t)$.
 		\item [(b)]  $X^{\pm}$ and $\de^{\pm}$ are uniquely determined by $\phi$.
 		\item [(c)] If $\phi_0\colon [0,s_0]\to [0,s_1]$ and $\phi_1\colon [0,s_1]\to [0,s_2]$ are grafting functions then so is $\phi=\phi_1\circ \phi_0$. Moreover,
 		\begin{equation*}
 			X_0^{\pm}\subs X^{\pm}\text{\quad and \quad }\de_0^{\pm}\leq \de^\pm.
 		\end{equation*}
 		(Here $\de_0^\pm$ correspond to $\phi_0$, $\de^{\pm}$ correspond to $\phi$, and so forth.)
 	\end{enumerate}
 \end{lemma}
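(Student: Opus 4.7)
The plan is to handle part (a) by a direct computation with the series defining $\phi$, deduce (b) as an immediate corollary, and handle (c) by combining the characterization from Remark 3.5(a) with the jump analysis of (a).

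For part (a), I would fix $t \in [0, s_0)$ and $h > 0$ and split
\begin{equation*}
\phi(t+h) - \phi(t) = h + \sum_{\substack{x \in X^+\\ t \le x < t+h}} \de^+(x) + \sum_{\substack{x \in X^-\\ t < x \le t+h}} \de^-(x).
\end{equation*}
As $h \to 0^+$ the linear term vanishes; the $X^+$-sum isolates the term $\de^+(t)$ (present only when $t \in X^+$) plus a tail over $\{x \in X^+ : t < x < t+h\}$ that tends to $0$ by absolute convergence of $\sum_{X^+} \de^+ \le s_1 - s_0$; the $X^-$-sum likewise tends to $0$, its range containing no fixed element. This gives $\om^+(t) = \de^+(t)$ when $t \in X^+$ and $\om^+(t) = 0$ otherwise, and a symmetric computation handles $\om^-$: the asymmetric strict/non-strict bookkeeping in the definition of $\phi$ ensures that right jumps are captured by the $X^+$-sum and left jumps by the $X^-$-sum. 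Part (b) is then immediate, since $\om^\pm$ is determined by $\phi$, hence so are $X^\pm = \{t : \om^\pm(t) > 0\}$ and the restriction $\de^\pm = \om^\pm|_{X^\pm}$.

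For (c), I would first note that every grafting function is strictly increasing and non-contractive in the sense that $\phi_i(u) - \phi_i(v) \ge u - v$ whenever $u \ge v$; this is immediate from the series formula, as the sum part is nondecreasing. By Remark 3.5(a), for each $i \in \{0,1\}$ there is a countable set $X_i \subs [0, s_i]$ such that $\phi_i$ is a translation by a constant on each connected component of $(0, s_i) \ssm X_i$. Set $Y := X_0 \cup \phi_0^{-1}(X_1)$, which is countable since $\phi_0^{-1}$ of each point of $X_1$ is at most a single point (by strict monotonicity). On each connected component $I$ of $(0, s_0) \ssm Y$, $\phi_0$ is a translation carrying $I$ into $(0, s_1) \ssm X_1$, where $\phi_1$ is also a translation; thus $\phi = \phi_1 \circ \phi_0$ is a translation on $I$. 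Another appeal to Remark 3.5(a) then identifies $\phi$ as a grafting function.

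For the inequalities, if $t \in X_0^+$ then non-contractiveness of $\phi_1$ gives $\phi(t+h) - \phi(t) = \phi_1(\phi_0(t+h)) - \phi_1(\phi_0(t)) \ge \phi_0(t+h) - \phi_0(t)$ for $h > 0$; passing to the limit yields $\om^+(t) \ge \om_0^+(t) = \de_0^+(t) > 0$, so by (a) applied to $\phi$, $t \in X^+$ and $\de^+(t) = \om^+(t) \ge \de_0^+(t)$. The argument for left jumps is identical. I do not anticipate a serious obstacle: the only real subtlety is keeping the strict and non-strict inequalities in the series aligned with the one-sided limits $\om^\pm$, after which every assertion reduces to absolute convergence of the jump series together with the strict monotonicity and non-contractiveness of grafting functions.
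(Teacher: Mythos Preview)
Your proposal is correct and follows essentially the same route as the paper: for (a) you write $\phi(t+h)-\phi(t)$ explicitly and use absolute convergence of the jump series (the paper phrases the same tail estimate via finite subsets $F^\pm\subs X^\pm$), (b) is immediate from (a) in both, and for (c) both arguments set $Y=X_0\cup\phi_0^{-1}(X_1)$, invoke the characterization in Remark~\ref{R:grafting}(a), and derive $X_0^\pm\subs X^\pm$, $\de_0^\pm\le\de^\pm$ from the non-contractiveness $\phi_1(u)-\phi_1(v)\ge u-v$.
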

 
 \begin{proof}The proof will be split into parts.
 	\begin{enumerate}
 		\item [(a)]  Firstly, $\om^+(s_0)=0$ by convention and $s_0\nin X^+$ because $X^+\subs [0,s_0)$. Secondly, $\om^-(0)=\phi(0)$ by convention, and \eqref{E:countable} tells us that $0\in X^-$ if and only if $\phi(0)\neq 0$, in which case $\de^-(0)=\phi(0)$. This proves the assertion for $t=0$ (resp.~$t=s_0$) and $X^-$ (resp.~$X^+$).
 		
 		Since 
 		\begin{equation*}
 			\sum_{x\in X^+}\de^+(x)+\sum_{x\in X^-}\de^-(x)\leq s_1-s_0,
 		\end{equation*}
 		given $\eps>0$ there exist finite subsets $F^{\pm}\subs X^{\pm}$ such that
 		\begin{equation*}
 			\sum_{x\in X^+\ssm F^+}\de^+(x)+\sum_{x\in X^-\ssm F^-}\de^-(x)<\eps.
 		\end{equation*}
 		Suppose $t\nin X^+$, $t<s_0$. Then there exists $\eta$, $0<\eta<\eps$, such that $[t,t+\eta]\cap F^+=\emptyset$ and $[t,t+\eta]\cap F^-$ is either empty or $\se{x}$. In any case,
 		\begin{equation*}
 			\om^+(t)\leq \phi(t+\eta)-\phi(t)<\eta+\eps<2\eps,
 		\end{equation*}
		which proves that $\om^+(t)=0$. 
		
		Conversely, suppose that $t\in X^+$. Then clearly $\om^+(t)\geq \de^+(t)$. Moreover, an argument entirely similar to the one above shows that $\om^+(t)\leq \de^+(t)+2\eps$ for any $\eps>0$, hence $\om^+(t)=\de^+(t)>0$. The results for $X^-$ (and $t>0$) follow by symmetry.

 		\item [(b)] Since $\om^{\pm}$ are determined by $\phi$, the same must be true of $X^{\pm}$ and $\de^{\pm}$, by part (a). The converse is an obvious consequence of the definition of grafting function in \eqref{E:countable}.
	\item [(c)] 
	
	Let $\phi_1$,\,$\phi_0$ be as in the statement and set $X_i=X_i^-\cup X_i^+$, $i=0,1$, and $X=X_0\cup \phi_0^{-1}(X_1)$. Then $X$ is countable since both $X_0$ and $X_1$ are countable and $\phi_0$ is injective. Moreover, if $(a,b)\subs (0,s_0)\ssm X$ then 
	\begin{equation*}
\quad		\phi_1(\phi_0(t))=\phi_1(t+c_0)=t+c_0+c_1 \quad (t\in (a,b))
	\end{equation*}
	for some constants $c_0,c_1\geq 0$. In addition, $\phi_1\circ \phi_0$ is increasing, as $\phi_1$ and $\phi_0$ are both increasing. Thus, $\phi_1\circ \phi_0$ is a grafting function by (\ref{R:grafting}\?(e)).
	
		For the second assertion, let $x\in X_0^+$ and $h>0$ be arbitrary. Then 
 		\begin{equation*}
 			\quad\phi_1(\phi_0(x+h))-\phi_1(\phi_0(x))\geq \phi_0(x+h)-\phi_0(x)\geq \om_0^+(x),
 		\end{equation*}
 		hence $\om^+(x)\geq \om^+_0(x)>0$. Similarly, if $x\in X_0^-$ then $\om^-(x)\geq \om^-_0(x)>0$.  Therefore, it follows from part (a) that $X_0^{\pm}\subs X^{\pm}$ and $\de^{\pm}_0\leq \de^{\pm}$. \qedhere
 	\end{enumerate}
 \end{proof}
 
  \begin{lemma}\label{L:eqrel}
 	The grafting relation $\gr$ is a partial order over $\sr L_{\ka_1}^{\ka_2}(Q)$.
 \end{lemma}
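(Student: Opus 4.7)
The plan is to verify the three defining properties of a partial order: reflexivity, transitivity, and antisymmetry. Throughout, I work with curves in $\sr L_{\ka_1}^{\ka_2}(Q)$ represented by their (essentially unique) parametrizations by curvature on $[0,T_\ga]$, where $T_\ga = \tot(\ga)$. Reflexivity is immediate: the identity $\mathrm{id}\colon [0,T_\ga]\to [0,T_\ga]$ is a grafting function (take $X^+=X^-=\emptyset$ in \eqref{E:countable}), and clearly $\La_\ga = \La_\ga\circ \mathrm{id}$. Transitivity follows directly from (\ref{L:grafting}\?(c)): if $\ga_0\gr\ga_1$ via $\phi_0\colon [0,T_0]\to [0,T_1]$ and $\ga_1\gr \ga_2$ via $\phi_1\colon [0,T_1]\to [0,T_2]$, then $\phi_1\circ \phi_0$ is again a grafting function, and
\begin{equation*}
\La_{\ga_0}=\La_{\ga_1}\circ \phi_0=(\La_{\ga_2}\circ \phi_1)\circ \phi_0=\La_{\ga_2}\circ (\phi_1\circ \phi_0),
\end{equation*}
so $\ga_0\gr\ga_2$.

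The substance lies in antisymmetry. Suppose $\ga_0\gr\ga_1$ via $\phi\colon [0,T_0]\to [0,T_1]$ and $\ga_1\gr\ga_0$ via $\psi\colon [0,T_1]\to [0,T_0]$. From remark (\ref{R:grafting}\?(c)) applied in each direction I obtain $T_0\leq T_1$ and $T_1\leq T_0$, hence $T_0=T_1=:T$. I then claim that any grafting function $\xi\colon [0,T]\to [0,T]$ must be the identity. Indeed, the inequality $\xi(t+h)-\xi(t)\geq h$ from (\ref{R:grafting}\?(c)) applied with $(t,h)=(0,T)$ yields $\xi(T)-\xi(0)\geq T$, while $\xi(0)\geq 0$ and $\xi(T)\leq T$ force $\xi(0)=0$ and $\xi(T)=T$. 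Applying the same inequality on $[0,t]$ and on $[t,T]$ then gives $\xi(t)\geq t$ and $T-\xi(t)\geq T-t$ simultaneously, so $\xi(t)=t$ for every $t\in [0,T]$. Hence $\phi=\mathrm{id}$ on $[0,T]$, so $\La_{\ga_0}=\La_{\ga_1}$ a.e.~on $[0,T]$; combined with $T_0=T_1$, remark (\ref{R:grafting}\?(e)) implies $\ga_0=\ga_1$.

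I do not anticipate any serious obstacle. The composition result in (\ref{L:grafting}\?(c)) handles transitivity at once, and antisymmetry reduces to the short rigidity statement above, namely that an increasing self-map of $[0,T]$ whose increments dominate the corresponding lengths must be the identity. The only mild point requiring care is the convention that the relation $\gr$ is applied after reparametrization by curvature: without this canonical reparametrization, $\La_{\ga_0}=\La_{\ga_1}$ a.e.~would not detect equality of curves, and the appeal to (\ref{R:grafting}\?(e)) in its second form would fail.
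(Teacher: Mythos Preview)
Your proof is correct and follows essentially the same approach as the paper: reflexivity via the identity, transitivity via (\ref{L:grafting}\?(c)), and antisymmetry by first deducing $T_0=T_1$ and then that the grafting function must be the identity, whence $\La_{\ga_0}=\La_{\ga_1}$. You have simply spelled out in more detail the step the paper compresses into ``which, in turn, implies that $\phi_0(t)=t=\phi_1(t)$ for all $t$.''
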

 \begin{proof}
 Suppose $\ga_0,\ga_1$ are as in (\ref{D:graft}), with $\ga_0\gr \ga_1$ and $\ga_1\gr\ga_0$. Let $\phi_0\colon [0,T_0]\to [0,T_1]$ and $\phi_1\colon [0,T_1]\to [0,T_0]$ be the corresponding grafting functions. By (\ref{R:grafting}\?(d)), the existence of such functions implies that $T_0=T_1$, which, in turn, implies that $\phi_0(t)=t=\phi_1(t)$ for all $t$. Hence $\La_{\ga_0}=\La_{\ga_1}\circ \phi_0=\La_{\ga_1}$, and it follows that $\ga_0=\ga_1$. This proves that $\gr$ is antisymmetric.
 
 Now suppose $\ga_0\gr \ga_1$, $\ga_1\gr \ga_2$ and let $\phi_i\colon [0,T_i]\to [0,T_{i+1}]$ be the corresponding grafting functions, $i=0,1$. By (\ref{L:grafting}\?(c)), $\phi=\phi_1\circ \phi_0$ is also a grafting function. Furthermore,
 \begin{equation*}
\qquad 	\La_{\ga_0}=\La_{\ga_1}\circ \phi_0=(\La_{\ga_2}\circ \phi_1)\circ \phi_0=\La_{\ga_2}\circ \phi
 \end{equation*} by hypothesis, so $\ga_0\gr \ga_2$, proving that $\gr$ is transitive.
 
 Finally, it is clear that $\gr$ is reflexive.
 \end{proof}
 
 \begin{lemma}\label{L:chainofgrafts}
 	Let $\Ga=(\ga_s)_{s\in [a,b)}$, $\ga_s\in \sr L_{\ka_1}^{\ka_2}(Q)$, be a chain of grafts.  Then there exists a unique extension of $\Ga$ to a chain of grafts on $[a,b]$. 
 \end{lemma}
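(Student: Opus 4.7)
First, for each $s_0 \in [a, b)$ and $t \in [0, s_0]$, the map $s \mapsto \phi_{s_0, s}(t)$ is non-decreasing on $(s_0, b)$: by the cocycle $\phi_{s_0, s'} = \phi_{s, s'} \circ \phi_{s_0, s}$ and the inequality $\phi_{s, s'}(x) \geq x$ satisfied by any grafting function, $\phi_{s_0, s'}(t) \geq \phi_{s_0, s}(t)$ for $s_0 < s < s' < b$. Being bounded above by $b$, I set $\phi_{s_0, b}(t) := \lim_{s \to b^-} \phi_{s_0, s}(t)$. To verify that $\phi_{s_0, b}$ is a grafting function, I appeal to (\ref{L:grafting}\?(c)): the jump sets $X^\pm_{s_0, s}$ and sizes $\delta^\pm_{s_0, s}(x)$ grow monotonically with $s$. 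Define $X^\pm_{s_0, b} := \bigcup_{s < b} X^\pm_{s_0, s}$ and $\delta^\pm_{s_0, b}(x) := \sup_{s < b} \delta^\pm_{s_0, s}(x)$. The total-jump bound $\sum \delta^\pm_{s_0, b} \leq b - s_0$ makes $X^\pm_{s_0, b}$ countable after discarding the $x$'s with $\delta^\pm_{s_0,b}(x) = 0$; applying monotone convergence to each sum in \eqref{E:countable} then yields the required explicit form for $\phi_{s_0, b}$.

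Next, I construct $\ga_b$ via its logarithmic derivative. Each $\phi_{s, b}$ is strictly increasing (hence injective) with derivative $1$ a.e., so its image has Lebesgue measure $s$, and the increasing union $A := \bigcup_{s \in [a, b)} \phi_{s, b}([0, s])$ has full measure in $[0, b]$. On $A$ I set $\La_{\ga_b}(\phi_{s, b}(u)) := \La_{\ga_s}(u)$; this is consistent because, for $s < s'$, the cocycle $\phi_{s, b} = \phi_{s', b} \circ \phi_{s, s'}$ and injectivity of $\phi_{s', b}$ force any coincidence $\phi_{s, b}(u) = \phi_{s', b}(u')$ to satisfy $u' = \phi_{s, s'}(u)$, whence $\La_{\ga_{s'}}(u') = \La_{\ga_s}(u)$ by hypothesis. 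I extend $\La_{\ga_b}$ arbitrarily on the null complement and let $\ga_b$ be the admissible curve with this logarithmic derivative and $\Phi_{\ga_b}(0) = I$. The chain identity $\La_{\ga_{s_0}} = \La_{\ga_b} \circ \phi_{s_0, b}$ holds by construction, and the cocycle $\phi_{s_0, b} = \phi_{s_1, b} \circ \phi_{s_0, s_1}$ follows by passing to the limit in $\phi_{s_0, s} = \phi_{s_1, s} \circ \phi_{s_0, s_1}$. For $\Phi_{\ga_b}(b) = Q$, I use that the hypothesis $\ga_s \in \sr L_{\ka_1}^{\ka_2}(Q)$ forces each inserted piece of $\ga_s$ (the frame over a gap of $\phi_{s_0,s}$) to have trivial holonomy; taking the limit over gaps one obtains $\Phi_{\ga_b}(\phi_{s, b}(s)) = Q$, and $\phi_{s, b}(s) \to b$ gives the claim.

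Uniqueness comes from the sandwich
\[
\phi_{s_0, s}(t) \;\leq\; \phi_{s_0, b}(t) \;\leq\; \phi_{s_0, s}(t) + (b - s) \qquad (s \in (s_0, b)),
\]
which any valid extension must satisfy: the lower bound from $\phi_{s, b}(x) \geq x$ and the cocycle, the upper from $\phi_{s, b}(x) - x$ being non-decreasing in $x$ and bounded above by $\phi_{s, b}(s) - s \leq b - s$. Letting $s \to b^-$ pins down $\phi_{s_0, b}$, and hence $\La_{\ga_b}$ a.e., hence $\ga_b$. The main technical obstacle is admissibility of $\ga_b$: the Sobolev pair $(\hat v_b, \hat w_b)$ associated to $\La_{\ga_b}$ via \eqref{E:Sobolev} must lie in $\E = L^2 \times L^2$. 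By change of variables along each $\phi_{s, b}$, one finds $\|\hat v_b\|_{L^2[0, b]}^2 = \lim_{s \to b^-} \|\hat v_s\|_{L^2[0, s]}^2$ and analogously for $\hat w_b$, so admissibility reduces to a uniform $L^2$ bound on the family $\{(\hat v_s, \hat w_s)\}_{s < b}$; extracting this bound from the chain-of-grafts structure is the delicate step.
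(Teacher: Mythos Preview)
Your construction of $\phi_{s_0,b}$ and of $\La_{\ga_b}$ on the full-measure set $A=\bigcup_s \phi_{s,b}([0,s])$ is exactly the paper's. There is one genuine gap and one concern that is aimed at the wrong target.

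\textbf{The endpoint condition $\Phi_{\ga_b}(b)=Q$.} Your argument via ``trivial holonomy of each inserted piece'' is incorrect: individual grafted arcs need not have trivial holonomy. In (\ref{P:totting}), for instance, four arcs are inserted whose holonomies are the rotations $\exp(\sig_i\chi_i/2)$; none is trivial, only their product is. So you cannot conclude $\Phi_{\ga_b}(\phi_{s,b}(s))=Q$. The paper's argument is different and clean: extend $\La_{\ga_s}$ by zero to $\bar\La_s\colon[0,b]\to\fr{so}_3$. The solution $\bar\Phi_s$ of $\dot\Phi=\Phi\bar\La_s$, $\Phi(0)=I$, is constant on the extension, so $\bar\Phi_s(b)=\Phi_{\ga_s}(s)=Q$. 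Since $\bar\La_s$ and $\La_{\ga_b}$ agree on $E_s=\phi_{s,b}([0,s])$, $\mu(E_s)=s\to b$, and the entries of both are $\sin\rho,\cos\rho$ (hence bounded by $1$, cf.~(\ref{L:parametrizedbycurvature})(b)), one has $\bar\La_s\to\La_{\ga_b}$ in $L^2[0,b]$. Continuous dependence of the ODE solution then gives $\Phi_{\ga_b}(b)=\lim_s\bar\Phi_s(b)=Q$.

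\textbf{Admissibility.} The uniform $L^2$ bound on $\{(\hat v_s,\hat w_s)\}$ that you flag as ``the delicate step'' is not what the argument hinges on, and your reduction is not quite right: your change-of-variables identity compares the norms $\|h(\sin\rho_s)\|_{L^2[0,s]}$ in the \emph{curvature} parametrization, whereas admissibility is formulated in the $[0,1]$ parametrization; these are different quantities. The paper does not pursue this route at all. It works entirely in the curvature parametrization, where the entries of $\La_{\ga_b}$ are $\sin\rho,\cos\rho$ and hence automatically bounded; this is what makes the ODE solvable and the $L^2$ convergence above trivial. The curvature constraint $\ka_1<\ka_b<\ka_2$ is then verified pointwise, since on each interval of $E_s$ the restriction of $\ga_b$ is a rotated arc of $\ga_s$. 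The paper does not separately verify $(\hat v_b,\hat w_b)\in L^2$ for a reparametrization on $[0,1]$; it treats the above as sufficient.
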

 \begin{proof}
 	For $s_0<s_1\in [a,b]$, let $\phi_{s_0,s_1}\colon [0,s_0]\to [0,s_1]$ be the grafting function corresponding to $\ga_{s_0}\gr \ga_{s_1}$ and similarly for $X^\pm_{s_0,s_1}$, $\de^{\pm}_{s_0,s_1}$, $\om^{\pm}_{s_0,s_1}$.
 	
	Suppose $s_0<s_1<s_2$. By hypothesis, $\phi_{s_0,s_2}=\phi_{s_1,s_2}\circ \phi_{s_0,s_1}$. Therefore, by (\ref{L:grafting}\?(c)),  
 		\begin{equation}\label{E:Xs0}
\qquad 	X_{s_0,s_1}^{\pm}\subs X^{\pm}_{s_0,s_2}\text{\quad and\quad }\de^{\pm}_{s_0,s_1}\leq \de^{\pm}_{s_0,s_2} \text{\quad ($s_0<s_1<s_2$)}.
 	\end{equation}	
 	Fix $s_0\in [a,b)$ and set
 	\begin{equation*}
 		X^{\pm}_{s_0,b}=\bcup_{s_0<s<b}X^{\pm}_{s_0,s}\text{\quad and\quad}\de^{\pm}_{s_0,b}=\sup_{s_0<s<b}\se{\de^{\pm}_{s_0,s}}.
 	\end{equation*}
 	Since $\big(X^{\pm}_{s_0,s}\big)$ is an increasing family of countable sets, $X^{\pm}_{s_0,b}$ must also be countable. Define $\phi_{s_0,b}\colon [0,s_0]\to [0,b]$ by 
 	\begin{equation*}
 		\phi_{s_0,b}(t)=t+\sum_{x<t,\,x\in X_{s_0,b}^+}\de_{s_0,b}^+(x)+\sum_{x\leq t,\,x\in X_{s_0,b}^-}\de_{s_0,b}^-(x).
 	\end{equation*} 
  Then $\phi_{s_0,b}$ is a grafting function for any $s_0$ by construction, and for $s_0<s_1$ we have 
 \begin{equation*}
 	\phi_{s_0,b}=\lim_{s\to b-}\phi_{s_0,s}=\lim_{s\to b-}\phi_{s_1,s}\circ \phi_{s_0,s_1}=\phi_{s_1,b}\circ \phi_{s_0,s_1}.
 \end{equation*}

Before defining the curve $\ga_b$, we construct its logarithmic derivative $\La$. For each $s<b$, let 
 	\begin{equation*}
 		E_s=\phi_{s,b}\big([0,s]\big),\quad E=\bcup_{s<b}E_s.\quad
 	\end{equation*}
 	Then $\mu(E_s)=s$ for all $s$, hence $[0,b]\ssm E$ has measure zero, which implies that $E$ is measurable and $\mu(E)=b$. (Here $\mu$ denotes Lebesgue measure.) For $u\in E$, $u=\phi_{s,b}(t)$ for some $t\in [0,s]$ and $s\in [a,b)$, set
 	\begin{equation}\label{E:uniquegraft}
\qquad	\La(u)=\La(\phi_{s,b}(t))=\La_{s}(t)\qquad  (u\in E),
 	\end{equation}
	where $\La_s$ denotes the logarithmic derivative of $\ga_s$. Observe that $\La$ is well-defined, for if $\phi_{s_0,b}(t_0)=u=\phi_{s_1,b}(t_1)$, with $s_0<s_1$, then
 	\begin{equation*}
		\phi_{s_1,b}(t_1)=\phi_{s_0,b}(t_0)=\phi_{s_1,b}\circ \phi_{s_0,s_1}(t_0),
 	\end{equation*}
 	hence $t_1=\phi_{s_0,s_1}(t_0)$ (because $\phi_{s_0,s_1}$ is increasing) and thus
 	\begin{equation*}
 		\La_{{s_1}}(t_1)=\La_{{s_1}}(\phi_{s_0,s_1}(t_0))=\La_{{s_0}}(t_0).
 	\end{equation*}
 	Moreover, by (\ref{L:parametrizedbycurvature}),
 	\begin{equation*}
 		\La(u)=\begin{pmatrix}
 				0   & -\sin \rho(u) & 0 \\
 				 \sin \rho(u)  &  0 & -\cos \rho(u) \\
 				 0 & \cos\rho(u) & 0   
 			\end{pmatrix}
 	\end{equation*}
 	where $\rho(u)=\rho_{{s_0}}(t)$ if $u=\phi_{s_0,b}(t)$. The measurability of $\rho$ follows from that of each $\rho_{s}$. Thus, the entries of $\La$ belong to $L^2[0,b]$ and the initial value problem $\dot\Phi=\Phi\?\La$, $\Phi(0)=I$, has a unique solution $\Phi\colon [0,b]\to \SO_3$. Naturally, we define $\ga_b(t)=\Phi (t)e_1$. 
 	
 	Let $X_{s,b}=X^+_{s,b}\cup X^-_{s,b}$ and suppose that $(\al,\be)$ is one of the intervals which form $(0,s)\ssm X_{s,b}$. Then $\phi_{s,b}(\al,\be)\subs E_s\subs [0,b]$ is an interval of measure $\be-\al$; we have $\La(t)=\La_s(t-c)$ for $t\in \phi_{s,b}(\al,\be)$ and a constant $c\geq 0$, so that the restriction of $\ga_b$ to this interval is just $\ga_s|[\al,\be]$ composed with a rotation of $\Ss^2$. In particular, we deduce that the geodesic curvature $\ka$ of $\ga_b$ satisfies $\ka_1<\ka<\ka_2$ a.e.~on $\phi_s(\al,\be)$. Since $\lim_{s\to b}\mu(E_s)=b$, this argument shows that $\ka_1<\ka<\ka_2$ a.e.~on $[0,b]$. We claim also that $\Phi(b)=Q$. To see this, let $\bar{\La}_s\colon [0,b]\to \mathfrak{so}_3$ be the extension of $\La_{s}$ by zero to all of $[0,b]$. If $\bar\Phi_s$ is the solution to the initial value problem $\dot{\bar\Phi}_s=\bar\Phi_s\bar\La_s$, $\bar\Phi_s(0)=I$, we have $\Phi_s(b)=\Phi_{s}(s)=Q$. Since $\bar\La_s$ converges to $\La$ in the $L^2$-norm, it follows from continuous dependence on the parameters of a differential equation that 
 	\begin{equation*}
 		\abs{\Phi(b)-Q}=\lim_{s\to b}\abs{\Phi(b)-\Phi_s(b)}=0.
 	\end{equation*}
 	
 	The curve $\ga_b$ satisfies $\ga_s\gr \ga_b$ for any $s\leq b$ by construction. Conversely, if this condition is satisfied then \eqref{E:uniquegraft} must hold, showing that $\ga_b$ is the unique curve with this property. This completes the proof.
 \end{proof}
 
 \subsection*{Adding loops}This subsection presents adaptations of a few concepts and results contained in \S5 of \cite{Sal3}. Let $\ka_0\in \R$, $\rho_0=\arccot \ka_0$ and $Q\in \SO_3$ be fixed throughout the discussion.
 
 For arbitrary $\rho_1\in (0,\rho_0)$, define $\sig^{\rho_1}$ to be the unique circle in $\sr L_{\ka_0}^{+\infty}(I)$ of radius of curvature $\rho_1$:
 \begin{equation*}
 	\sig^{\rho_1}(t)=\cos\rho_1(\cos\rho_1,0,\sin\rho_1)+\sin\rho_1\big(\sin\rho_1\cos(2\pi t),\sin(2\pi t),-\cos\rho_1\cos(2\pi t)\big),
 \end{equation*} 
 and let $\sig^{\rho_1}_n\in \sr L_{\ka_0}^{+\infty}(I)$ be $\sig^{\rho_1}$ traversed $n$ times; in symbols, $\sig^{\rho_1}_n(t)=\sig^{\rho_1}(nt)$, $t\in [0,1]$. As we have seen in (\ref{L:homocircles}), if $\rho_1,\rho_2<\rho_0$ then $\sig^{\rho_1}$ and $\sig^{\rho_2}$ are homotopic within $\sr L_{\ka_0}^{+\infty}(I)$. 
 
 Now let $\ga\in \sr L_{\ka_0}^{+\infty}(Q)$, $n\in \N$, $\eps>0$ be small and $t_0\in (0,1)$. Let $\ga^{[t_0\#n]}$ be the curve obtained by inserting (a suitable rotation of) $\sig^{\rho_1}_{n}$ at $\ga(t_0)$, as depicted in fig.~\ref{F:adding}. More explicitly,
 \begin{equation*}
 	\ga^{[t_0\#n]}(t)=\begin{cases}
 		\ga(t)   &   \text{ if }0\leq t\leq t_0-2\eps \\
 		 \ga(2t-t_0+2\eps)  &   \text{ if }t_0-2\eps\leq t\leq t_0-\eps \\
 		 \Phi_\ga(t_0)\sig_n^{\rho_1}\Big(\frac{t-t_0+\eps}{2\eps}\Big) &  \text{ if }t_0-\eps\leq t\leq t_0+\eps \\
 		 \ga(2t-t_0-2\eps)  &   \text{ if }t_0+\eps\leq t\leq t_0+2\eps \\
  		\ga(t)   &   \text{ if }t_0+2\eps\leq t\leq 1
  		 \end{cases}
 \end{equation*}
 
 \begin{figure}[ht]
 	\begin{center}
 		\includegraphics[scale=.45]{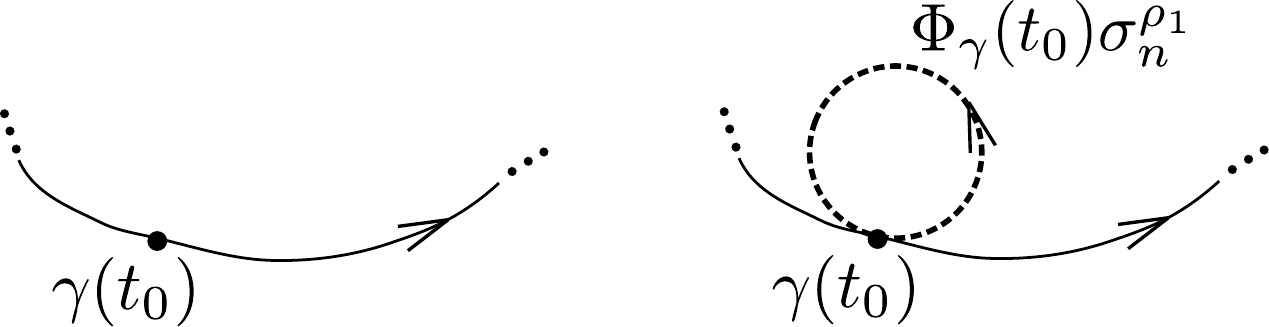}
 		\caption{A curve $\ga\in \sr L_{\ka_0}^{+\infty}(Q)$ and the curve $\ga^{[t_0\#n]}$ obtained from $\ga$ by adding loops at $\ga(t_0)$.}
 		\label{F:adding}
 	\end{center}
 \end{figure}
 
 The precise values of $\eps$ and $\rho_1$ are not important, in the sense that different values of both parameters yield curves that are homotopic. For $t_0\neq t_1\in (0,1)$ and $n_0,n_1\in \N$, the curve $\big(\ga^{[t_0\# n_0]}\big)^{[t_1\#n_1]}$ will be denoted by $\ga^{[t_0\#n_0;t_1\#n_1]}$. 
 
We shall now explain how to spread loops along a curve, as in fig.~\ref{F:adding2}; to do this, a special parametrization is necessary. Given $\ga\in \sr L_{-\infty}^{+\infty}(Q)$, let $\La_\ga=(\Phi_\ga)^{-1}\dot \Phi_\ga\colon [0,1]\to \mathfrak{so}_3$ denote its logarithmic derivative. Since the entries of $\La_\ga$ are $L^2$ functions and $[0,1]$ is bounded, 
 	\begin{equation}\label{E:bycurvature2}
 		M=\int_0^1\abs{\La_\ga(t)}\?dt<+\infty.
 	\end{equation}
 Define a function $\tau\colon [0,1]\to [0,1]$ by 
 	\begin{equation*}
 		\tau(t)=\frac{1}{M}\int_0^t \abs{\La_\ga(u)}\?du.
 	\end{equation*}
 	Then $\tau$ is a monotone increasing function, hence it admits an inverse. If we reparametrize $\ga$ by $\tau\mapsto \ga(t(\tau))$, $\tau\in [0,1]$, then its logarithmic derivative with respect to $\tau$ satisfies
 	\begin{equation*}
 		\abs{\La_\ga(\tau)}=\vert\dot\Phi_\ga(t(\tau))\vert\?\dot t(\tau)=\abs{\La_\ga(t(\tau))}\frac{M}{\abs{\La_\ga(t(\tau))}}=M.\footnote{The parameter $\tau$ is a multiple of the curvature parameter considered in (\ref{D:bycurvature}).}
 	\end{equation*}
 	Therefore, using (\ref{L:reparametrize}), we may assume at the outset that all curves $\ga\in \sr L_{-\infty}^{+\infty}(Q)$ are parametrized so that $\vert\dot \Phi_\ga\vert=\abs{\La_\ga}$ is constant (and finite). With this assumption in force, let $n\in \N$, $\rho_1\in (0,\pi)$ and define a map $F_n\colon \sr L_{-\infty}^{+\infty}(Q)\to \sr L_{-\infty}^{+\infty}(Q)$ by:
 \begin{equation}\label{E:F_n}
 \qquad	F_n(\ga)(t)=\Phi_\ga(t)\sig^{\rho_1}_n(t)\quad (\ga\in \sr L_{-\infty}^{+\infty}(Q),~t\in [0,1]).
  \end{equation} 
  
  \begin{figure}[ht]
 	\begin{center}
 		\includegraphics[scale=.48]{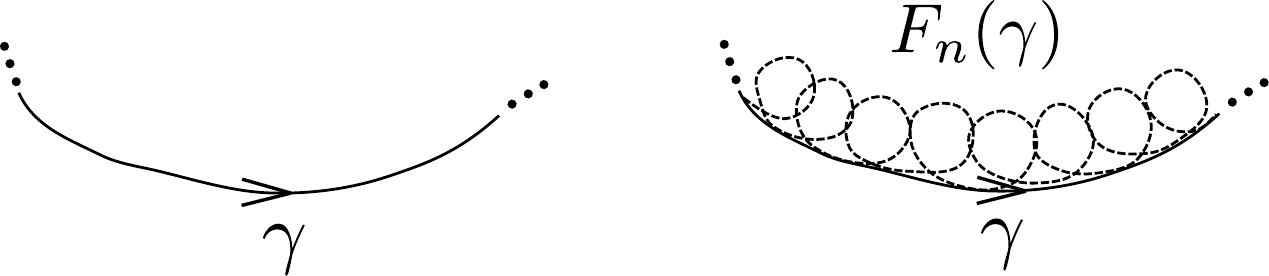}
 		\caption{A curve $\ga\in \sr L_{\ka_0}^{+\infty}(Q)$ and a ``phone wire'' approximation $F_n(\ga)$.}
 		\label{F:adding2}
 	\end{center}
 \end{figure}
 
Using that $\dot\Phi_\ga=\Phi_\ga \La_\ga$ (where $\dot{\phantom{a}}$ denotes differentiation with respect to $t$), we find that
 \begin{equation}\label{E:velo}
 	\dot F_n(\ga)=\Phi_\ga\big(\La_\ga\sig_n^{\rho_1}+\dot\sig_n^{\rho_1}\big),
 \end{equation}
and this allows us to conclude that  $\Phi_{F_n(\ga)}(0)=\Phi_\ga(0)$ and $\Phi_{F_n(\ga)}(1)=\Phi_\ga(1)$ for any admissible curve $\ga$, so that $F_n$ does indeed map $\sr L_{-\infty}^{+\infty}(Q)$ to itself.  Moreover, $F_n(\ga)$ is never homotopic to $F_{m}(\ga)$ when $m\not\equiv n \pmod{2}$. This is because the two curves have different final lifted frames: $\te{\Phi}_{F_n(\ga)}(1)=(-1)^{n-m}\te{\Phi}_{F_m(\ga)}(1)$ in $\Ss^3$. 

 \begin{lemma}\label{L:Sal1}
 	Let $\ka_0=\cot \rho_0\in \R$, $Q\in \SO_3$, $\rho_1\in (0,\rho_0)$, $K$ be compact and $f\colon K\to \sr L_{-\infty}^{+\infty}(Q)$ be continuous. Then the image of  $F_n\circ f$ is contained in $\sr L_{\ka_0}^{+\infty}(Q)$ for all sufficiently large $n$.
 \end{lemma}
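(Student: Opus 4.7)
The plan is to show that the geodesic curvature of $F_n(f(a))$ converges to $\cot\rho_1$ uniformly in $a\in K$ as $n\to\infty$; since $\rho_1<\rho_0$ gives $\cot\rho_1>\cot\rho_0=\ka_0$, this yields $F_n(f(a))\in\sr L_{\ka_0}^{+\infty}(Q)$ for all sufficiently large $n$.

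First, I would use (\ref{L:reparametrize}) to replace $f$ with a homotopic family in which each curve $f(a)$ is parametrized so that $|\La_{f(a)}(t)|$ is constant in $t$ (this does not affect $F_n$ up to reparametrization). By continuity of $f$ and compactness of $K$, one obtains a uniform bound $|\La_{f(a)}(t)|\le M$ for all $a\in K$ and $t\in[0,1]$. Set $\Psi:=\Phi_\ga\Phi_{\sig_n^{\rho_1}}$, so that $\eta:=F_n(\ga)=\Psi e_1$. The product rule gives
\begin{equation*}
\La_\Psi \;=\; \Phi_{\sig_n^{\rho_1}}^{-1}\La_\ga\Phi_{\sig_n^{\rho_1}}+\La_{\sig_n^{\rho_1}}.
\end{equation*}
The first summand is bounded in norm by $M$ pointwise (orthogonal conjugation preserves operator norm), while $\La_{\sig_n^{\rho_1}}$ is constant in $t$ and its nonzero off-diagonal entries grow linearly in $n$, namely $v_\sig=2\pi n\sin\rho_1$ and $w_\sig=2\pi n\cos\rho_1$.

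Writing $\La_\Psi$ in antisymmetric form with off-diagonal entries $a,b,c$, we have $a=v_\sig+O(M)$, $b=O(M)$, $c=w_\sig+O(M)$, uniformly in $t$ and $a\in K$. In particular $a>0$ for large $n$. The frame $\Phi_\eta$ of $\eta$ differs from $\Psi$ by a rotation $R(\theta)$ in the $e_2$–$e_3$ plane required to align the second column with the true unit tangent $\dot\eta/|\dot\eta|$, where $\tan\theta=b/a=O(1/n)$. A direct computation (performing the conjugation $R^{T}\La_\Psi R+R^{T}\dot R$ and reading off the entries) yields
\begin{equation*}
\ka_\eta \;=\; \frac{c}{\sqrt{a^2+b^2}}\;+\;\frac{\dot\theta}{\sqrt{a^2+b^2}}.
\end{equation*}
The leading term equals $(w_\sig+O(M))/(v_\sig+O(M))$ and tends uniformly to $\cot\rho_1$; the correction term has denominator of order $n$, and for $C^2$ curves $\dot\theta$ is bounded, giving an $O(1/n)$ error.

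The main obstacle is making the estimate of $\dot\theta/\sqrt{a^2+b^2}$ rigorous when $\ga$ is merely admissible (so that $\La_\ga$ is only $L^2$ and $\theta$, being built from $a,b$, need not be classically differentiable). The intended remedy is to first establish uniform convergence $\ka_{F_n(\ga)}\to\cot\rho_1$ on smooth families (where all operations are legitimate, via density (\ref{L:dense}) applied to $f$), and then to transfer the inequality $\ka_{F_n(\ga)}>\ka_0$ to general admissible families using continuity of the map $\ga\mapsto F_n(\ga)$ together with the fact that the bound $\ka>\ka_0$ is an almost-everywhere condition stable under $L^2$ approximation of the logarithmic derivative. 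Combining all of this with $\cot\rho_1>\ka_0$ proves the lemma.
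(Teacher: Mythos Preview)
Your approach is essentially the same as the paper's: both compute the logarithmic derivative of $\Psi=\Phi_\ga\Phi_{\sig_n^{\rho_1}}$ and compare it to $\La_{\sig_n^{\rho_1}}$. The paper packages this as $\Phi_{F_n(\ga)}=\Phi_\ga\Phi_\sig(nt)+O(\tfrac{1}{n})$ and then differentiates, whereas you keep the rotation $R(\theta)$ correcting $\Psi$ to $\Phi_\eta$ explicit and arrive at $\ka_\eta=(c+\dot\theta)/\sqrt{a^2+b^2}$. These are the same computation.

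You are right to flag the $\dot\theta$ term; the paper faces the identical difficulty without comment. When it writes ``differentiating both sides'' of $\Phi_{F_n(\ga)}=\Phi_\ga\Phi_\sig(nt)+O(\tfrac{1}{n})$ to obtain an $O(1)$ remainder, the error term involves $\La_\ga$ through the normalization $\dot\eta/|\dot\eta|$, and its derivative brings in $\dot\La_\ga$. For a merely admissible $\ga$ (even after reparametrizing so that $|\La_\ga|$ is constant, hence bounded), $\La_\ga$ is only $L^\infty$ and $\dot\La_\ga$ need not exist; in fact $\ta_\eta$ is then built from $L^\infty$ data and $F_n(\ga)$ need not even be admissible.

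Your proposed density fix does not close this gap as stated. Tracking constants in your own computation gives
\[
\ka_\eta=\cot\rho_1+O\!\big(M/n\big)+O\!\big(\lvert\dot\La_\ga\rvert/n^2\big),
\]
so the threshold $n_0$ guaranteeing $\ka_\eta>\ka_0$ for a smooth approximant $\tilde f$ depends on $\sup_{a,t}\lvert\dot\La_{\tilde f(a)}(t)\rvert$, and this quantity is not controlled as $\tilde f\to f$ in $\sr L_{-\infty}^{+\infty}(Q)$. Hence one cannot pass a fixed $n_0$ to the limit.

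The clean remedy is to restrict the statement to $C^2$ families from the outset: by (\ref{L:C^2}) the inclusion $\sr C_{\ka_0}^{+\infty}(Q)\hookrightarrow\sr L_{\ka_0}^{+\infty}(Q)$ is a homotopy equivalence, and every use of (\ref{L:Sal1}) in (\ref{L:Sal2}) and (\ref{L:Sal3}) concerns only connected components, so one may work throughout with $C^2$ curves. For such families $\lvert\dot\La_\ga\rvert$ is uniformly bounded over $K$, your $\dot\theta$ estimate is legitimate, and your argument (equivalently the paper's) goes through without change.
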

 \begin{proof}
 	In order to simplify the notation, we will prove the lemma when $K$ consists of a single point. The proof still works in the more general case because all that we need is a uniform bound on $\vert\La_{f(a)}\vert$ for $a\in K$. Denoting $\sig_1^{\rho_1}$ simply by $\sig$, we may rewrite \eqref{E:velo} as:
 	\begin{equation}\label{E:velo1}
 	\quad	\dot F_n(\ga)(t)=n\?\Phi_\ga(t)\big(\dot\sig(nt)+O(\tfrac{1}{n})\big)\quad (t\in [0,1]),
 	\end{equation}
 	where $O(\frac{1}{n})$ denotes a term such that $n\abs{O(\tfrac{1}{n})}$ is uniformly bounded over $[0,1]$ as $n$ ranges over all of $\N$. (In this case, $n\abs{O(\tfrac{1}{n})}=\abs{\La_\ga(t)}=M$ for all $t\in [0,1]$, with $M$ as in \eqref{E:bycurvature2}.) Therefore,
 	\begin{equation}\label{E:nor}
 		F_n(\ga)(t)\times \frac{\dot F_n(\ga)(t)}{\vert\dot F_n(\ga)(t)\vert}=\Phi_\ga(t)\left(\sig(nt)\times \frac{\dot\sig(nt)}{\abs{\dot\sig(nt)}}\right)+O(\tfrac{1}{n}).
 	\end{equation}
 	Let $\Phi_{F_n(\ga)}$ (resp.~$\Phi_\sig$) denote the frame of $F_n(\ga)$ (resp.~$\sig$) and $\La_{F_n(\ga)}$ (resp.~$\La_\sig$) its logarithmic derivative. It follows from \eqref{E:F_n}, \eqref{E:velo1} and \eqref{E:nor} that
 	\begin{equation*}
 		\Phi_{F_n(\ga)}(t)=\Phi_\ga(t)\Phi_\sig(nt)+O(\tfrac{1}{n}).
 	\end{equation*}
 	Differentiating both sides of this equality, we obtain that
 	\begin{equation*}
 		\dot\Phi_{F_n(\ga)}(t)=\dot\Phi_\ga(t)\Phi_\sig(nt)+n\?\Phi_\ga(t)\dot\Phi_\sig(nt)+O(1)=n\big(\Phi_\ga(t)\dot\Phi_\sig(nt)+O(\tfrac{1}{n})\big).
 	\end{equation*}
 	Multiplying on the left by the inverse of $\Phi_{F_n(\ga)}$, we finally conclude that
 	\begin{equation}\label{E:loga}
 		\La_{F_n(\ga)}(t)=n\big(\La_\sig(nt)+O(\tfrac{1}{n})\big).
 	\end{equation}
 	Recall that, by the definition of logarithmic derivative (eq.~\eqref{E:frenet1}, \S 1), 
 	\begin{equation}\label{E:kappan1}
 		\La_{F_n(\ga)}=\begin{pmatrix}
 		0 & -\vert\dot F_n(\ga)\vert & 0 \\
 		\vert\dot F_n(\ga)\vert & 0 & -\vert\dot F_n(\ga)\vert\ka_{F_n(\ga)} \\
 		0 & \vert\dot F_n(\ga)\vert\ka_{F_n(\ga)} & 0
 		\end{pmatrix}\text{\  and \ }\La_\sig=\begin{pmatrix}
 		0 & -\abs{\dot\sig} & 0 \\
 		\abs{\dot\sig} & 0 & -\abs{\dot\sig}\ka_1 \\
 		0 & \abs{\dot\sig}\ka_1 & 0
 		\end{pmatrix},
 	\end{equation}
 	where $\ka_{F_n(\ga)}$ (resp.~$\ka_1=\cot\rho_1$) denotes the geodesic curvature of $F_n(\ga)$ (resp.~$\sig$). Comparing the (3,2)-entries of \eqref{E:loga} and \eqref{E:kappan1}, and using \eqref{E:velo1}, we deduce that 
 	\begin{equation*}
 		n\big(\abs{\dot\sig(nt)}+O(\tfrac{1}{n})\big)\ka_{F_n(\ga)}(t)=n\big(\abs{\dot\sig(nt)}\ka_1+O(\tfrac{1}{n})\big).
 	\end{equation*}
 	Therefore $\lim_{n\to +\infty} \ka_{F_n(\ga)}=\ka_1>\ka_0$ uniformly over $[0,1]$, as required.
 \end{proof}

 \begin{lemma}\label{L:Sal2}
 	Let $\ga\in \sr L_{\ka_0}^{+\infty}(Q)$, $t_0\in (0,1)$. Then $ \ga^{[t_0\#n]}\iso F_n(\ga)$ within $\sr L_{\ka_0}^{+\infty}(Q)$ for all sufficiently large $n\in \N$.
 \end{lemma}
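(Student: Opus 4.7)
The two curves $\ga^{[t_0\#n]}$ and $F_n(\ga)$ carry the same ``topological content'' — they are both obtained from $\ga$ by attaching $n$ small loops of radius of curvature $\rho_1$ — but they differ in how these loops are distributed along the parameter interval: in $\ga^{[t_0\#n]}$ all $n$ loops are concentrated inside $[t_0-\eps, t_0+\eps]$, whereas in $F_n(\ga)$ they are spread out evenly over $[0,1]$. My plan is to interpolate by a single family $\eta_s \in \sr L_{\ka_0}^{+\infty}(Q)$, $s \in [0,1]$, with $\eta_0 = F_n(\ga)$ and $\eta_1 = \ga^{[t_0\#n]}$ (up to reparametrization), that continuously re-packages these loops.

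Fix an orientation-preserving smooth diffeomorphism $\vphi_1\colon [0,1]\to[0,1]$ that is linear on $[0,\eps]$ mapping it to $[0,t_0]$, essentially constant equal to $t_0$ on $[\eps,1-\eps]$, and linear on $[1-\eps,1]$ mapping it to $[t_0,1]$ (smoothened near the corners). Fix a complementary $\psi_1$ that is essentially constant equal to $0$ on $[0,\eps]$, runs linearly from $0$ to $1$ on $[\eps,1-\eps]$, and is constant equal to $1$ on $[1-\eps,1]$, chosen so that $\vphi_1'(t)+\psi_1'(t) > 0$ uniformly. Set $\vphi_s = (1-s)\,\id + s\,\vphi_1$, $\psi_s = (1-s)\,\id + s\,\psi_1$, and define
\[
\eta_s(t) \;=\; \Phi_\ga(\vphi_s(t))\,\sig_n^{\rho_1}(\psi_s(t)).
\]
At $s=0$ we recover $F_n(\ga)$; at $s=1$ the formula splits into tracing a reparametrization of $\ga$ on $[0,\eps]\cup[1-\eps,1]$ and tracing the $n$-times circle $\Phi_\ga(t_0)\sig_n^{\rho_1}$ on $[\eps,1-\eps]$, which is precisely $\ga^{[t_0\#n]}$ up to the smooth reparametrization absorbed by (\ref{L:reparametrize}). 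The frame conditions $\Phi_{\eta_s}(0)=I$, $\Phi_{\eta_s}(1)=Q$ follow since $\vphi_s,\psi_s$ fix the endpoints and $\Phi_{\sig_n^{\rho_1}}(0)=\Phi_{\sig_n^{\rho_1}}(1)=I$.

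To verify that $\eta_s\in \sr L_{\ka_0}^{+\infty}(Q)$ for every $s$ when $n$ is large, differentiate: writing $\Psi=\Phi_{\sig_n^{\rho_1}}\circ\psi_s$, one gets
\[
\La_{\eta_s}(t) \;=\; \vphi_s'(t)\,\Psi(t)^{-1}\La_\ga(\vphi_s(t))\,\Psi(t) \;+\; n\?\psi_s'(t)\?\La_\sig(n\psi_s(t)).
\]
On intervals where $\psi_s'$ is bounded below by a positive constant the same asymptotic argument as in the proof of (\ref{L:Sal1}) applies and the geodesic curvature of $\eta_s$ tends uniformly to $\cot\rho_1>\ka_0$; on intervals where $\psi_s'=0$ and $\vphi_s'>0$, $\eta_s$ is a reparametrization of $\ga$ rotated by the constant element $\Psi$, so inherits the geodesic curvature of $\ga$ (which is $>\ka_0$). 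Compactness of the parameter space $s\in[0,1]$ makes the estimate uniform, giving a single threshold $n$ beyond which $\eta_s$ lies in $\sr L_{\ka_0}^{+\infty}(Q)$ for every $s$.

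The main obstacle is controlling the geodesic curvature in the \emph{transition} regions — i.e., near the boundaries $t\approx \eps$ and $t\approx 1-\eps$ at $s$ close to $1$ — where $\psi_s'$ is passing from being bounded below to vanishing and the two regimes overlap. This is handled by choosing the smoothening of $\vphi_1,\psi_1$ carefully so that $\vphi_s'+\psi_s'$ stays uniformly positive and the transition happens on a length scale much larger than $1/n$; within the transition strip the $O(1/n)$ error in the computation above still dominates the slow term coming from $\La_\ga$, so the total curvature estimate survives intact.
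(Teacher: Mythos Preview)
Your overall strategy --- slide the $n$ loops of $F_n(\ga)$ together towards $\ga(t_0)$ --- is exactly what the paper has in mind (it says ``the homotopy is obtained by pushing the loops in $F_n(\ga)$ towards $\ga(t_0)$'' and defers to \cite{Sal3}, lemma~5.4). The difficulty is not the idea but the bookkeeping, and here your concrete interpolation breaks down.

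The problem is your choice of \emph{linear} interpolation $\psi_s=(1-s)\id+s\psi_1$. On $[0,\eps]$ you then have $\psi_s'(t)=1-s$, while $\vphi_s'(t)\approx t_0/\eps$ for $s$ near~$1$. Thus the ratio $\vphi_s'/\psi_s'$ is \emph{not} uniformly bounded in $s$: it blows up like $(1-s)^{-1}$. Reparametrizing by $u=\psi_s(t)$ shows that $\eta_s$ is, up to reparametrization, $F_n(\ga\circ\tilde\vphi_s)$ with $\tilde\vphi_s=\vphi_s\circ\psi_s^{-1}$; since $\abs{\La_{\ga\circ\tilde\vphi_s}}$ is proportional to $\tilde\vphi_s'=\vphi_s'/\psi_s'$, the constant $M$ in the proof of (\ref{L:Sal1}) is unbounded over the family, and no single $n$ suffices. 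Concretely, at $s=1-\tfrac{1}{n\eps}$ the restriction $\eta_s|_{[0,\eps]}$ is essentially $F_1$ applied to $\ga|_{[0,t_0]}$, and there is no reason for this to have geodesic curvature $>\ka_0$. Your last paragraph misdiagnoses the obstacle: the trouble is not a thin ``transition strip'' near $t=\eps$ but the entire interval $[0,\eps]$ in the regime $n(1-s)\sim 1/\eps$, where neither the $\sig$-dominated nor the $\ga$-dominated estimate applies. (A secondary point: the displayed formula for $\La_{\eta_s}$ is the logarithmic derivative of $\Phi_\ga\circ\vphi_s\cdot\Psi$, which is only \emph{approximately} the frame of $\eta_s$; this needs the same $O(1/n)$ treatment as in (\ref{L:Sal1}).)

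The fix is to avoid mixing the two motions on $[0,\eps]$ and $[1-\eps,1]$. Replace the linear interpolation by a family where, for each $s$, there is a subinterval $I_s=[a_s,b_s]$ shrinking from $[0,1]$ to $[t_0-\eps,t_0+\eps]$ such that $\psi_s\equiv 0$ on $[0,a_s]$, $\psi_s\equiv 1$ on $[b_s,1]$, $\psi_s$ runs affinely from $0$ to $1$ on $I_s$, and $\vphi_s'$ is bounded uniformly on $I_s$ (e.g.\ constant). Then outside $I_s$ the curve is an honest reparametrization of $\ga$, and on $I_s$ the ratio $n\psi_s'/\max(\vphi_s',1)$ is bounded below by a constant times $n$, uniformly in $s$, so the (\ref{L:Sal1}) argument applies on the nose.
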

 \begin{proof}
 	Informally, the homotopy is obtained by pushing the loops in $F_n(\ga)$ towards $\ga(t_0)$. If $n$ is large enough, then we can guarantee that the curvature remains greater than $\ka_0$ throughout the deformation; the proof is similar to that of (\ref{L:Sal1}), so we will omit it. See lemma 5.4 in \cite{Sal3} for the details when $\ka_0=0$. 
 \end{proof}
 The next result states that after we add enough loops to a curve, it becomes so flexible that any condition on the curvature may be safely forgotten.
This is yet another instance of the ``phone wire'' construction
already present in \cite{GromovPR}, \cite{EliashbergMishachev} and \cite{Levy};
we refer the reader to  \cite{MostovoySadykov} for a thorough discussion of this kind
of construction in terms of the h-principle.
 \begin{lemma}\label{L:Sal3}
 	Let $\ga_0,\ga_1\in \sr L_{\ka_0}^{+\infty}(Q)$ be two curves in the same component of $\sr I(Q)=\sr L_{-\infty}^{+\infty}(Q)$. Then $F_{n}(\ga_0)$ and $F_{n}(\ga_1)$ lie in the same component of $\sr L_{\ka_0}^{+\infty}(Q)$ for all sufficiently large $n\in \N$.
 \end{lemma}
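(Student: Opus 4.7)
The plan is to apply Lemma (\ref{L:Sal1}) directly to a path in $\sr I(Q)$ joining $\ga_0$ to $\ga_1$. Since $\sr I(Q)=\sr L_{-\infty}^{+\infty}(Q)$ is a Hilbert manifold, by (\ref{L:Hilbert}\?(a)) its path components coincide with its connected components, so the hypothesis furnishes a continuous map $\al\colon [0,1]\to \sr I(Q)$ with $\al(0)=\ga_0$ and $\al(1)=\ga_1$.

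Before $F_n$ can be applied, every curve in the image of $\al$ must be brought into the parametrization convention invoked in the proof of (\ref{L:Sal1}), namely $\bigl|\dot\Phi_{\al(s)}(t)\bigr|$ constant in $t$. I would achieve this in complete analogy with (\ref{L:reparametrize}) by setting
\[
 M_s=\int_0^1\bigl|\La_{\al(s)}(u)\bigr|\,du, \qquad \tau_s(t)=\frac{1}{M_s}\int_0^t\bigl|\La_{\al(s)}(u)\bigr|\,du,
\]
and reparametrizing $\al(s)$ by $\tau_s^{-1}$; the resulting family depends continuously on $s$. Since reparametrization preserves the geodesic curvature, the deformation of $\ga_i$ into its reparametrized version takes place entirely inside $\sr L_{\ka_0}^{+\infty}(Q)$, and may be prepended and appended to $\al$ to produce a single continuous path in $\sr I(Q)$ still joining $\ga_0$ to $\ga_1$ and now taking values in the space of curves with constant frame speed. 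Thus, without loss of generality, $\al$ already obeys the convention.

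Now I apply (\ref{L:Sal1}) with $K=[0,1]$ and $f=\al$. For all sufficiently large $n\in\N$, the continuous map $s\mapsto F_n(\al(s))$ lands in $\sr L_{\ka_0}^{+\infty}(Q)$, providing a continuous path inside this space from $F_n(\ga_0)$ to $F_n(\ga_1)$. This places them in the same connected component of $\sr L_{\ka_0}^{+\infty}(Q)$, which is what was to be proved.

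The argument is essentially immediate once (\ref{L:Sal1}) is granted; the only technical point to verify is the continuous dependence of the reparametrization $\tau_s$ on $s$, which is routine because $\al(s)\mapsto |\La_{\al(s)}|$ is continuous into $L^1[0,1]$ and $M_s$ is bounded below on compact subsets of $\sr I(Q)$. I do not anticipate any substantive obstacle beyond this.
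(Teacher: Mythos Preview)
Your proof is correct and follows essentially the same approach as the paper: take a path in $\sr I(Q)$ joining $\ga_0$ to $\ga_1$ and apply (\ref{L:Sal1}) with $K=[0,1]$. The paper's version is terser only because the constant-frame-speed convention you carefully justify was already imposed as a standing assumption before $F_n$ was defined, so the reparametrization step is absorbed into the setup rather than the proof.
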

 \begin{proof}
	Let $\ga_0,\,\ga_1$ be two curves in the same component of $\sr L_{-\infty}^{+\infty}(Q)$. Taking $K=[0,1]$ and $h\colon K\to \sr L_{-\infty}^{+\infty}(Q)$ to be a path joining $\ga_0$ and $\ga_1$, we conclude from (\ref{L:Sal1}) that $g=F_{n}\circ h$ is a path in $\sr L_{\ka_0}^{+\infty}(Q)$ joining both curves if $n$ is sufficiently large. 
 \end{proof}
 
Thus, if we can find a way to deform $\ga_i$ into $F_{2n}(\ga_i)$ for large $n$, $i=0,1$, then the question of deciding whether $\ga_0$ and $\ga_1$ are homotopic reduces to the easy verification of whether their final lifted frames $\te{\Phi}_{\ga_0}(1)$ and $\te{\Phi}_{\ga_1}(1)$ agree. One way to deform $\ga$ into $F_{2n}(\ga)$ is to graft arbitrarily long arcs of circles onto it; this is possible if $\ga$ diffuse (see fig.~\ref{F:enxerto} below).

 \subsection*{Grafting non-condensed curves}

 \begin{prop}\label{P:diffuseloose}
 	Let $\ka_0\in \R$ and suppose that $\ga\in \sr L_{\ka_0}^{+\infty}$ is diffuse. Then $\ga$ is homotopic to a circle traversed a number of times.
 \end{prop}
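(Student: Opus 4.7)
The plan follows the outline sketched in the introduction. The main idea is to use the diffuseness of $\ga$ to construct a chain of grafts adding many full loops to $\ga$ while preserving its endpoint frame, then to identify the modified curve with the phone-wire $F_N(\ga)$ via (\ref{L:Sal2}), and finally to reduce the question to the analogous one for a circle by (\ref{L:Sal3}) combined with an explicit unwinding.

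\textbf{Step 1 (Grafting with antipodal compensation).} Since $\ga$ is diffuse, I would choose $t_0 \neq t_1$ in $[0,1]$ and $\theta_0,\theta_1 \in (0,\rho_0)$ with $p := C_\ga(t_0,\theta_0) = -C_\ga(t_1,\theta_1)$; a small perturbation, justified by (\ref{L:band})(c), places the $\theta_i$ in the open interval. The circles $\Ga_0,\Ga_1$ passing tangent to $\ga$ at $\ga(t_0),\ga(t_1)$ and centered at $p,-p$ with radii of curvature $\theta_0,\theta_1$ both lie in $\sr L_{\ka_0}^{+\infty}$. The crucial point is that inserting an arc of $\Ga_0$ of angle $\alpha$ at $t_0$ contributes to the endpoint frame the global rotation $R_p(\alpha)\in\SO_3$, while inserting an arc of $\Ga_1$ of the same angle $\alpha$ (with matched orientation) at $t_1$ contributes $R_{-p}(\alpha)=R_p(\alpha)^{-1}$. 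Coupling the two insertions by a single parameter $\alpha(s)$ growing from $0$ to $2\pi n$ yields a chain of grafts $\ga_s$ in the sense of (\ref{D:graft})(c), realised as a continuous family in $\sr L_{\ka_0}^{+\infty}$ by (\ref{L:chainofgrafts}), with the endpoint frame $\Phi_{\ga_s}(1)$ constant in $s$. Its terminus is the curve $\bar\ga = \ga^{[t_0\#n;\,t_1\#n]}$ with $n$ full loops inserted at each of the two points.

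\textbf{Step 2 (Phone wire and lifted frames).} By applying (\ref{L:Sal2}) at $t_0$ and then at $t_1$, I would conclude $\bar\ga \iso F_N(\ga)$ in $\sr L_{\ka_0}^{+\infty}$ for some large $N$. The curve $\ga$ lies in one of the two components of $\sr L_{-\infty}^{+\infty}$, distinguished by the lifted frame $\te\Phi_\ga(1)\in\se{\pm\mbf{1}}$, and is therefore homotopic there to a circle $\sig_k$ with $k$ of the matching parity. By (\ref{L:Sal3}), taking $N$ larger if necessary, $F_N(\ga)$ and $F_N(\sig_k)$ lie in the same component of $\sr L_{\ka_0}^{+\infty}$.

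\textbf{Step 3 (Unwinding the loops on a circle).} It remains to exhibit an explicit homotopy $F_N(\sig_k)\iso\sig_{k+N}$ in $\sr L_{\ka_0}^{+\infty}$. The formula $F_N(\sig_k)(t)=\Phi_{\sig_k}(t)\?\sig_N^{\rho_1}(t)$ interpolates continuously, as one deforms the small circle $\sig^{\rho_1}$ through circles of increasing radius to the large circle $\sig$ itself, into $\Phi_{\sig_k}(t)\sig(Nt)=\sig_{k+N}(t)$ (with a suitable choice of orientation). For $N$ large, an argument analogous to the proof of (\ref{L:Sal1}) shows that the intermediate curves have geodesic curvature dominated by that of the interpolating circle, which stays above $\ka_0$ throughout. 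Chaining the homotopies from all three steps gives $\ga\iso\sig_{k+N}$ in $\sr L_{\ka_0}^{+\infty}$, completing the proof.

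\textbf{Main obstacle.} The technical heart is Step 1: turning the formal simultaneous-insertion prescription into a genuine chain of grafts with constant endpoint frame. The antipodal condition $C_\ga(t_0,\theta_0)=-C_\ga(t_1,\theta_1)$ is precisely what forces the two insertion-induced rotations into a common one-parameter subgroup of $\SO_3$, so that they can be cancelled exactly by a single coupling parameter $\alpha(s)$. A secondary subtlety is ensuring the perturbation that puts $\theta_0,\theta_1$ in the open interval does not destroy the antipodal relation; this uses that $C_\ga$ is an immersion away from the caustic, so antipodal values occur on an open set of parameters.
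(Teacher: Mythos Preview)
Your proposal is correct and follows essentially the same approach as the paper: the explicit grafting of circular arcs at the two antipodal caustic points (your Step~1) is exactly the paper's construction, carried out there via the quaternion formula $\te\Phi_s(T+2s)=\exp(\tfrac{s\chi_1}{2})\exp(\tfrac{s\chi_2}{2})\te\Phi(T)=\te\Phi(T)$, and the reduction via (\ref{L:Sal2}) and (\ref{L:Sal3}) is identical. Two small notes: your citation of (\ref{L:chainofgrafts}) in Step~1 is misplaced (continuity is part of the definition (\ref{D:graft})(c), not that lemma), and your passage $\ga^{[t_0\#n;\,t_1\#n]}\iso F_N(\ga)$ is really a mild extension of (\ref{L:Sal2}) (spreading loops from two base points rather than one) rather than two separate applications of it---but the same sliding-loops argument works.
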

 \begin{proof}
 	Let $\ga\colon [0,T]\to \Ss^2$ be parametrized by curvature and let $\te{\La}\colon [0,T]\to \te{\mathfrak{so}}_3$ be its (lifted) logarithmic derivative. Since $\ga$ is diffuse, we can find $0<t_1<t_2<T$ and $\rho_1,\rho_2\in [0,\rho_0]$ such that $C_\ga(t_1,\rho_1)=-C_\ga(t_2,\rho_2)$. By deforming $\ga$ in a neighborhood of $\ga(t_2)$ if necessary, we can actually assume that $\rho_1,\rho_2\in (0,\rho_0)$. Set $z_i=\te{\Phi}(t_i)$,
 	\begin{equation*}
	\chi_i=C_\ga(t_i,\rho_i)=\cos \rho_i\?\ga(t_i)+\sin \rho_i\?\no(t_i)\quad\text{and}\quad	\la_i=\cos \rho_i\?\mbf{i}+\sin\rho_i\?\mbf{k}\quad (i=1,2).
 	\end{equation*}
 	Identifying $\Ss^2$ with the unit imaginary quaternions, we have
 	\begin{equation}\label{E:chila}
 		z_i\la_iz_i^{-1}=\chi_i\quad (i=1,2).
 	\end{equation}
 	We will define a family of curves $s\mapsto \ga_s$, $s\geq 0$, as follows: First, let $\te{\La}_s\colon [0,T+2s]\to \te{\frak{so}}_3$ be given by:
 	\begin{equation*}
 		\te{\La}_s(t)=\begin{cases}
 			\te{\La}(t)   &   \text{ if }\quad 0\leq t\leq t_1 \\
 			\frac{1}{2}\la_1   &   \text{ if }\quad t_1\leq t\leq t_1+s \\
 			\te{\La}(t-s)   &   \text{ if }\quad t_1+s\leq t\leq t_2+s \\
			\frac{1}{2}\la_2   &   \text{ if }\quad t_2+s\leq t\leq t_2+2s \\
 			\te{\La}(t-2s)   &   \text{ if }\quad t_2+2s\leq t\leq T+2s \\
 		\end{cases}
 	\end{equation*}
 	Next, let $\La_s\in \mathfrak{so}_3 $ correspond to $\te{\La}_s\in \widetilde{\mathfrak{so}}_3$  and define $\Phi_s$ to be the unique solution to the initial value problem $\Phi_s(0)=I$, $\dot{\Phi}_s=\Phi_s\La_s$. Finally, set $\ga_s=\Phi_se_1$.  Geometrically, when $s=2\pi k$, $\ga_s$ is obtained from $\ga$ by grafting a circle of radius $\rho_1$ traversed $k$ times at $\ga(t_1)$ and another circle of radius $\rho_2$ traversed $k$ times at $\ga(t_2)$ (see fig.~\ref{F:enxerto}).  We claim that $\ga_s\in \sr L_{\ka_0}^{+\infty}(I)$ for all $s\geq 0$.
 	
 	Indeed, we have 
 	\begin{equation*}
 		\te{\Phi}_s(t)=\begin{cases}
 			 \te{\Phi}(t)  &   \text{ if }\quad 0\leq t\leq t_0 \\
 			 z_1\exp\big(\frac{\la_1}{2}(t-t_1)\big) &   \text{ if }\quad t_1\leq t\leq t_1+s \\ 
 			 \exp\big(\tfrac{\chi_1}{2}s\big)\te{\Phi}(t-s) &   \text{ if }\quad t_1+s\leq t\leq t_2+s \\ 
 			 \exp\big(\tfrac{\chi_1}{2}s\big)z_2\exp\big(\tfrac{\la_1}{2}(t-t_2-s)\big)  &   \text{ if }\quad t_2+s\leq t\leq t_2+2s \\ 
 			 \exp\big(\tfrac{\chi_1}{2}s\big)\exp\big(\tfrac{\chi_2}{2}s\big)\te{\Phi}(t-2s)  &   \text{ if }\quad t_2+2s\leq t\leq T+2s 
 		\end{cases}
 	\end{equation*}
 	where we have used \eqref{E:chila} to write
 	\begin{equation*}
 		\big(z_1\exp\big(\tfrac{s\la_1}{2}\big)\big)\big(z_1^{-1}\te{\Phi}(t-s)\big)=\exp\big(\tfrac{s\chi_1}{2}\big)\te{\Phi}(t-s),
 	\end{equation*}
 	which yields the expression for $\te{\Phi}(t)$ when $t\in [t_1, t_1+s]$, and similarly for the interval $[t_2+2s,T+2s]$. In particular, we deduce that the final lifted frame is:
 	\begin{equation*}
 		\te{\Phi}_s(T+2s)= \exp\big(\tfrac{s\chi_1}{2}\big)\exp\big(\tfrac{s\chi_2}{2}\big)\te{\Phi}(T)=\te{\Phi}(T),
 	\end{equation*}
 	as $\chi_2=-\chi_1$ by hypothesis.
 	This proves that each $\ga_s$ has the correct final frame. The curvature $\ka^s$ of $\ga_s$ clearly satisfies $\ka^s>\ka_0$ almost everywhere in $[0,t_1]\cup [t_1+s,t_2+s]\cup [t_2+2s,T+2s]$, because, by construction, the restriction of  $\ga_s$ to each of these intervals is the composition of a rotation of $\Ss^2$ with an arc of $\ga$. Moreover, the restriction of $\ga_s$ to the interval $[t_1,t_1+s]$ is an arc of circle of radius of curvature $\rho_1<\rho_0$; similarly, the restriction of $\ga_s$ to $[t_2+s,t_2+2s]$ is an arc of circle of radius of curvature $\rho_2<\rho_0$. Therefore $\ka^s>\ka_0$ almost everywhere on $[0,T+2s]$, and we conclude that $\ga_s\in \sr L_{\ka_0}^{+\infty}(I)$.
 	
 	We have thus proved that $\ga$ is homotopic to $\ga^{[t_0\#n;t_1\#n]}$ for all $n\in \N$ when $\ga$ is diffuse. The proposition now follows from (\ref{L:Sal2}) and (\ref{L:Sal3}) combined.  
 \end{proof}
 
 \begin{figure}[ht]
	\begin{center}
		\includegraphics[scale=.40]{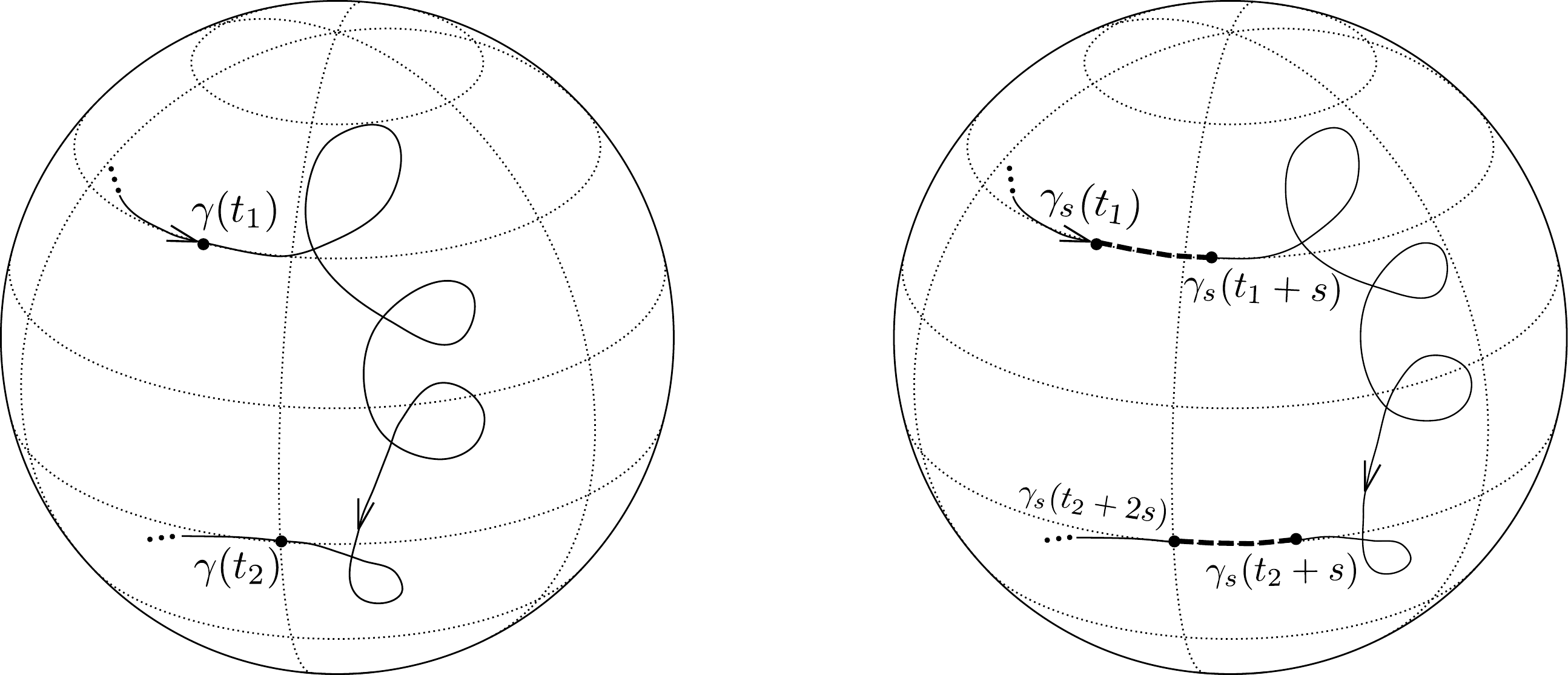}
		\caption{Grafting arcs of circles onto a diffuse curve, as described in (\ref{P:diffuseloose}).}
		\label{F:enxerto}
	\end{center}
\end{figure}

The next result says that we can still graft small arcs of circle onto $\ga$ even when it is not diffuse, as long as it is also not condensed.
 \begin{prop}\label{P:totting}
 	Suppose that $\ga\in \sr L_{\ka_0}^{+\infty}(I)$ is non-condensed. Then there exist $\eps>0$ and a chain of grafts $(\ga_s)$ such that $\ga_0=\ga$, $\ga_s\in \sr L_{\ka_0}^{+\infty}(I)$ and $\tot(\ga_s)=\tot(\ga)+s$ for all $s\in [0,\eps)$.
 \end{prop}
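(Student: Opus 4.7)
Because $\ga$ is not condensed, the image of $C_\ga$ is not contained in any closed hemisphere of $\Ss^2$; equivalently, $0\in\R^3$ lies in the interior of the convex hull of $\Im(C_\ga)$. By Carath\'eodory's theorem, combined with a small perturbation that uses density of the interior $(0,1)\x(0,\rho_0)$ in the domain of $C_\ga$ together with the openness of the interior condition, I select points $\chi_i=C_\ga(t_i,\rho_i)$, $i=1,\dots,N$ (one can take $N=4$), with $t_1<\dots<t_N\in(0,1)$ pairwise distinct and each $\rho_i\in(0,\rho_0)$ strictly, such that $0$ lies in the interior of the convex hull of $\{\chi_1,\dots,\chi_N\}$. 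Thus there exist weights $\al_i>0$ with $\sum_i\al_i=1$ and $\sum_i\al_i\chi_i=0$, and the $\chi_i$ necessarily span $\R^3$.

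After reparametrizing $\ga$ by curvature via (\ref{L:reparametrize}), for each $\mbf s=(s_1,\dots,s_N)$ in a neighborhood of $0\in\R^N_{\ge 0}$ I define $\ga_{\mbf s}$ to be the curve, still parametrized by curvature, obtained from $\ga$ by grafting at $\ga(t_i)$ an arc of circle of radius $\rho_i$ and curvature-length $s_i$, exactly as in the proof of (\ref{P:diffuseloose}). Each grafted arc has constant geodesic curvature $\cot\rho_i>\ka_0$, so the curvature constraint is preserved everywhere, and $\tot(\ga_{\mbf s})=\tot(\ga)+\sum_i s_i$. Iterating the computation of (\ref{P:diffuseloose}), the final lifted frame is
\begin{equation*}
\te\Phi_{\ga_{\mbf s}}\bigl(\tot(\ga)+\textstyle\sum_i s_i\bigr)=\exp(s_1\chi_1/2)\cdots\exp(s_N\chi_N/2)\,\te\Phi_\ga(\tot(\ga)),
\end{equation*}
so $\ga_{\mbf s}\in\sr L_{\ka_0}^{+\infty}(I)$ iff $F(\mbf s):=\exp(s_1\chi_1/2)\cdots\exp(s_N\chi_N/2)$ equals $\mbf 1\in\Ss^3$. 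Now $F(0)=\mbf 1$, the derivative $dF_0(\mbf s)=\tfrac12\sum_i s_i\chi_i$ is surjective onto $T_{\mbf 1}\Ss^3\cong\R^3$ because the $\chi_i$ span $\R^3$, and $\mbf\al=(\al_1,\dots,\al_N)\in\ker dF_0$ satisfies $\sum_i\al_i=1\neq 0$. The implicit function theorem then produces $\eps>0$ and a smooth path $s\mapsto\mbf s(s)$ in $F^{-1}(\mbf 1)$ with $\mbf s(0)=0$, $\dot{\mbf s}(0)=\mbf\al$, and $\sum_i s_i(s)=s$ for every $s\in[0,\eps)$. After shrinking $\eps$, each coordinate $s_i(s)$ is strictly positive on $(0,\eps)$.

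Setting $\ga_s:=\ga_{\mbf s(s)}$ yields a continuous family with $\ga_0=\ga$, $\ga_s\in\sr L_{\ka_0}^{+\infty}(I)$, and $\tot(\ga_s)=\tot(\ga)+s$. For $0\le s_0<s_1<\eps$, the natural grafting function
\begin{equation*}
\phi_{s_0,s_1}\colon[0,\tot(\ga)+s_0]\to[0,\tot(\ga)+s_1],
\end{equation*}
which is the identity outside the $N$ graft regions and has a positive jump of size $s_i(s_1)-s_i(s_0)$ at the end of the $i$-th existing arc, has the form prescribed in (\ref{D:graft}\?(a)), and by construction it satisfies $\La_{\ga_{s_0}}=\La_{\ga_{s_1}}\circ\phi_{s_0,s_1}$; the cocycle identity $\phi_{s_0,s_2}=\phi_{s_1,s_2}\circ\phi_{s_0,s_1}$ is immediate from the construction. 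Hence $(\ga_s)_{s\in[0,\eps)}$ is a chain of grafts with the required properties. The main obstacle is the convex-geometric selection of the $\chi_i$ in the first paragraph; once that is accomplished, the implicit function theorem delivers the family $\mbf s(s)$, and the chain-of-grafts axioms are built into the construction.
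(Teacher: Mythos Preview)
Your proof is correct and follows essentially the same approach as the paper's: both use that $0$ lies in the interior of the convex hull of $\Im(C_\ga)$ to extract four points $\chi_i=C_\ga(t_i,\rho_i)$ whose convex hull contains $0$ as an interior point, then apply the implicit function theorem to the map $F(\mbf s)=\exp(s_1\chi_1/2)\cdots\exp(s_4\chi_4/2)$ to produce the grafting parameters $s_i(s)$ with $\sum_i s_i(s)=s$.

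One small point deserves to be made explicit. For $(\ga_s)$ to be a \emph{chain} of grafts you need each $s_i$ to be monotone increasing on $[0,\eps)$, not merely positive on $(0,\eps)$: the jump $s_i(s_1)-s_i(s_0)$ in your grafting function $\phi_{s_0,s_1}$ must be nonnegative. This follows at once from $\dot s_i(0)=\al_i>0$ and the smoothness of $s\mapsto\mbf s(s)$, after a further shrinking of $\eps$; the paper handles this the same way via $\bar\sig_i'(0)>0$. With that clarification your argument is complete.
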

 \begin{proof}(In this proof the identification of $\Ss^2$ with the set of unit imaginary quaternions used in (\ref{P:diffuseloose}) is still in force.)
 	Let $\ga\colon [0,T]\to \Ss^2$ be parametrized by curvature and let $\te{\La}\colon [0,T]\to \te{\mathfrak{so}}_3$ be its (lifted) logarithmic derivative. Since $\ga$ is not condensed, $0$ lies in the interior of the convex closure of the image $C$ of $C_\ga$ by (\ref{L:closedhemisphere}). Hence, by (\ref{L:Steinitz}), we can find a 3-dimensional simplex with vertices in $C$ containing $0$ in its interior. In symbols, we can find $0< t_1<t_2<t_3<t_4<T$  and $s_1,s_2,s_3,s_4>0$, $s_1+s_2+s_3+s_4=1$, such that
 	\begin{equation}\label{E:concomb}
 		0=s_1\chi_1+s_2\chi_2+s_3\chi_3+s_4\chi_4,
 	\end{equation} 
 	where $\chi_i=C_\ga(t_i,\rho_i)$, for some $\rho_i\in (0,\rho_0)$, and the $\chi_i$ are in general position. Furthermore, these numbers $s_i$ are the only ones which have these properties (for this choice of the $\chi_i$). Define a function $G\colon \R^4\to \Ss^3$ by 
 	\begin{equation*}
 		G(\sig_1,\sig_2,\sig_3,\sig_4)=\exp\Big(\frac{\sig_1\chi_1}{2}\Big)\exp\Big(\frac{\sig_2\chi_2}{2}\Big)\exp\Big(\frac{\sig_3\chi_3}{2}\Big)\exp\Big(\frac{\sig_4\chi_4}{2}\Big).
 	\end{equation*}
 	Then $G(0,0,0,0)=\tbf{1}$ and 
 	\begin{equation*}
 		DG_{(0,0,0,0)}(a,b,c,d)=\frac{1}{2}\big(a\chi_1+b\chi_2+c\chi_3+d\chi_4\big).
 	\end{equation*}
 	Since the $\chi_i$ are in general position by hypothesis, we can invoke the implicit function theorem to find some $\de>0$ and, without loss of generality, functions $\bar{\sig}_2$,\?$\bar\sig_3$,\?$\bar\sig_4\colon (-\de,\de)\to \R$ of $\sig_1$ such that 
 	\begin{equation*}
\qquad 		G\big(\sig_1,\bar\sig_2(\sig_1),\bar\sig_3(\sig_1),\bar\sig_4(\sig_1)\big)=\tbf{1} \quad (\sig_1\in (-\de,\de)).
 	\end{equation*}

	Differentiating the previous equality with respect to $\sig_1$ at $0$ and comparing \eqref{E:concomb} we deduce that
 	\begin{equation*}
\qquad 		\bar\sig_i'(0)=\frac{s_i}{2s_1}>0\quad (i=2,3,4).
 	\end{equation*}
 	Let $s(\sig_1)=\sig_1+\bar\sig_2(\sig_1)+\bar\sig_3(\sig_1)+\bar\sig_4(\sig_1)$. Then $s'(\sig_1)>0$, hence we can write $\sig_1$, $\sig_2$, $\sig_3$ and $\sig_4$ as a function of $s$ in a neighborhood of $0$. The conclusion is thus that there exist $\eps>0$ and non-negative functions $\sig_1,\sig_2,\sig_3,\sig_4$ of $s$ such that $\sig_1(s)+\sig_2(s)+\sig_3(s)+\sig_4(s)=s$ and
 	\begin{equation*}
 		\exp\Big(\frac{\sig_1\chi_1}{2}\Big)\exp\Big(\frac{\sig_2\chi_2}{2}\Big)\exp\Big(\frac{\sig_3\chi_3}{2}\Big)\exp\Big(\frac{\sig_4\chi_4}{2}\Big)=\tbf{1}\text{\quad for all $s\in [0,+\eps)$}.
 	\end{equation*}
 	We will now use these functions to obtain $\ga_s$, $s\in [0,+\eps)$. 
 	
 	Define $\te{\La}_s\colon [0,T+s]\to \te{\mathfrak{so}_3}$ by:
 	\begin{equation*}
 		\te{\La}_s(t)=\begin{cases}
 			\te{\La}(t)   &   \text{ if }\quad 0\leq t\leq t_1 \\
 			\frac{1}{2}\la_1   &   \text{ if }\quad t_1\leq t\leq t_1+\sig_1 \\
 			\te{\La}(t-\sig_1)   &   \text{ if }\quad t_1+\sig_1\leq t\leq t_2+\sig_1 \\
			\frac{1}{2}\la_2   &   \text{ if }\quad t_2+\sig_1\leq t\leq t_2+\sig_1+\sig_2 \\
 			\te{\La}(t-\sig_1-\sig_2)   &   \text{ if }\quad t_2+\sig_1+\sig_2\leq t\leq t_3+\sig_1+\sig_2 \\
 			\frac{1}{2}\la_3   &   \text{ if }\quad t_3+\sig_1+\sig_2\leq t\leq t_3+\sig_1+\sig_2+\sig_3 \\
 			\te{\La}(t-\sig_1-\sig_2-\sig_3)   &   \text{ if }\quad t_3+\sig_1+\sig_2+\sig_3\leq t\leq t_4+\sig_1+\sig_2+\sig_3 \\
 			\frac{1}{2}{\la_4}   &   \text{ if }\quad t_4+\sig_1+\sig_2+\sig_3\leq t\leq t_4+s \\
 			\te{\La}(t-s)   &   \text{ if }\quad t_4+s\leq t\leq T+s \\
 		\end{cases}
 	\end{equation*}
 	where $\sig_i=\sig_i(s)$ ($i=1,2,3,4$) are the functions obtained above. Let $\te{\Phi}_s\colon [0,T+s]\to \Ss^3$ be the solution to the initial value problem $\te{\Phi}'=\te{\Phi}\te{\La}$, $\te{\Phi}(0)=\tbf{1}$ and let $\Phi\colon [0,T+s]\to \SO_3$ be its projection. Then using the relation $\chi_i=z_i\la_iz_i^{-1}$ one finds by a verification entirely similar to the one in the proof of (\ref{P:diffuseloose}) that
 	\begin{equation*}
 		\te{\Phi}_s(T+s)=\exp\Big(\frac{\sig_1\chi_1}{2}\Big)\exp\Big(\frac{\sig_2\chi_2}{2}\Big)\exp\Big(\frac{\sig_3\chi_3}{2}\Big)\exp\Big(\frac{\sig_4\chi_4}{2}\Big)\te{\Phi(T)}=\te{\Phi}(T).
 	\end{equation*} 
 	Hence, each $\ga_s=\Phi_se_1$ has the correct final frame. In addition, over each of the subintervals of $[0,T+s]$ listed above, $\ga_s$ is either the composition of a rotation of $\Ss^2$ with an arc of $\ga$, or an arc of circle of radius $\rho_i\in (0,\rho_0)$ $(i=1,2,3,4)$. We conclude from this that the geodesic curvature $\ka^s$ of $\ga_s$  satisfies $\ka^s>\ka_0$ almost everywhere on $[0,T+s]$, that is, $\ga_s\in \sr L_{\ka_0}^{+\infty}(I)$ as we wished. Finally, 
 	\begin{equation*}
 		\tot(\ga_s)=T+s=\tot(\ga)+s
 	\end{equation*}
 	because $\ga_s$ is parametrized by curvature (see (\ref{L:parametrizedbycurvature})), and $(\ga_s)$ is a chain of grafts by construction.
 \end{proof}

\section{Condensed Curves}\label{S:condensed}
\subsection*{Rotation number of a condensed curve}
\label{rotation}The \tdef{rotation number} $N(\eta)$ of a regular closed plane curve $\eta\colon [0,1]\to \R^2$ is simply the degree of its unit tangent vector $\ta\colon \Ss^1\to \Ss^1$ (we may consider $\ga$ and $\ta$ to be defined on $\Ss^1$ since $\ga$ is closed). Suppose now that  $\eta\colon [0,L]\to \R^2$ is parametrized by arc-length, and write 
\begin{equation}\label{E:asin}
	\ta(s)=\exp(i\theta(s)),
\end{equation}
for some angle-function $\theta\colon [0,L]\to \R$. Then the curvature $\ka$ of $\eta$ is given by 
\begin{equation}\label{E:textbook}
\ka(s)=\theta'(s);	
\end{equation}
furthermore, the rotation number $N(\eta)$ of $\eta$ is given by $2\pi N(\eta)=\theta(L)-\theta(0)$. These facts are explained in any textbook on differential geometry. The Whitney-Graustein theorem (\cite{WhiGra}, thm.~1) states that two regular closed plane curves are homotopic through regular closed curves if and only if they have the same rotation number. 

Now suppose $\ga\in \sr L_{\ka_0}^{+\infty}$ has image contained in some closed hemisphere. Let $h_\ga$ be the barycenter, on $\Ss^2$, of the set of closed hemispheres which contain $\Im(\ga)$ (cf.~(\ref{L:closedbarycenter})), and let $\pr\colon \Ss^2\to \R^2$ denote stereographic projection from $-h_\ga$. Define the \tdef{rotation number} $\nu(\ga)$ of $\ga$ by $\nu(\ga)=-N(\eta)$, where $\eta=\pr\circ \ga$. Recall that a curve $\ga \in \sr L_{\ka_0}^{+\infty}$ is called \tdef{condensed} if the image $C$ of its caustic band $C_\ga\colon [0,1]\times [0,\rho_0]\to \Ss^2$ is contained in some closed hemisphere. Because $C_\ga(t,0)=\ga(t)$, any condensed curve is contained in a closed hemisphere, hence we may speak of its rotation number. 

\begin{urem}
We orient the plane on which the sphere is projected by $\pr$ as follows: A basis $\se{v_1,v_2}$ of this plane is positively oriented if and only if $(v_1,v_2,-h_\ga)$ is a positively oriented basis of $\R^3$. This corresponds to looking at the plane from $-h_\ga$ (as is usual with the stereographic projection). The sign in $\nu(\ga)=-N(\pr \circ \ga)$ is introduced to guarantee that a condensed circle traversed $\nu$ times $(\nu\geq 1)$ has rotation number $\nu$. In fact, the rotation number is always positive.
\end{urem}

\begin{lemma}\label{L:positive}
	Let $\ka_0\in \R$ and let $\ga\in \sr L_{\ka_0}^{+\infty}$ be a condensed curve. Then $\nu(\ga)\geq 1$.
\end{lemma}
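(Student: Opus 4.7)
The plan is to pass to stereographic coordinates from $-h_\ga$ and decompose the planar rotation number of $\eta=\pr\circ\ga$ into a piece measuring the winding of $\ga$ around $h_\ga$ and a piece measuring the rotation of $\ta$ relative to the ``east'' vector field on the hemisphere, then to use the condensed hypothesis to control both pieces.

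By a small perturbation (which does not change the integer $\nu(\ga)$), I may assume $\ga$ is smooth and $\Im(\ga)\subs H^\circ:=\{p:\langle p,h_\ga\rangle>0\}$; this is essentially automatic because the barycenter construction places $h_\ga$ in the interior of $\sr S_{C_\ga}$. Rotate so that $h_\ga=e_3$ and use longitude--colatitude coordinates on $H^\circ\ssm\{h_\ga\}$, writing $\ga(t)=(\sin d(t)\cos\phi(t),\sin d(t)\sin\phi(t),\cos d(t))$ with a continuous lift $\phi$. Let $Y=(h_\ga\times\ga)/|h_\ga\times\ga|$ be the unit ``east'' field and $\alpha(t)$ a continuous lift of the oriented angle from $Y(\ga(t))$ to $\ta(t)$, so $\ta=\cos\alpha\, Y+\sin\alpha\, Z$ with $Z=\ga\times Y$ the unit ``north'' field. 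A direct computation gives
\[
\langle\ga,h_\ga\rangle=\cos d,\qquad \langle\no,h_\ga\rangle=\cos\alpha\sin d,\qquad \dot\phi=\frac{|\dot\ga|\cos\alpha}{\sin d}.
\]
Decomposing the tangent angle of $\eta$ in polar coordinates $\eta=re^{i\phi}$ (so that $\arg\dot\eta=\phi+\beta$ with $\beta=\alpha+\pi/2$) and reconciling the authors' orientation with the standard one yields
\[
\nu(\ga)=W+k,\qquad W:=\frac{\phi(1)-\phi(0)}{2\pi},\qquad k:=\frac{\alpha(1)-\alpha(0)}{2\pi},\qquad W,k\in\Z.
\]
The condensed hypothesis $\Im(C_\ga)\subs\{\langle\cdot,h_\ga\rangle\geq 0\}$ rewrites as
\[
\cos\theta\cos d(t)+\sin\theta\cos\alpha(t)\sin d(t)\geq 0\quad\text{for all }(t,\theta)\in[0,1]\times[0,\rho_0],
\]
a sectorial constraint on the pair $(\cos d,\cos\alpha\sin d)$ equivalent to $\arctan(\cos\alpha\tan d)\in[\rho_0-\pi/2,\pi/2]$.

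In the case $\rho_0\geq\pi/2$ (i.e.~$\ka_0\leq 0$), evaluating the constraint at $\theta=\rho_0$ forces $\cos\alpha\geq 0$ throughout. Hence $\dot\phi\geq 0$, giving $W\geq 0$; equality would force $\cos\alpha\equiv 0$, making $\ga$ a meridian arc, which cannot be closed and regular, so $W\geq 1$. Moreover, the continuous lift $\alpha$ is trapped in a single period-$2\pi$ interval of length $\pi$ on which $\cos\alpha\geq 0$, so closedness gives $\alpha(1)=\alpha(0)$ and $k=0$. Therefore $\nu(\ga)=W\geq 1$.

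The case $\rho_0<\pi/2$ (i.e.~$\ka_0>0$) is where I expect the main obstacle: the sectorial constraint no longer pins down the sign of $\cos\alpha$, so $W$ could a priori be zero or negative, and the splitting $\nu=W+k$ must be controlled differently. On the other hand, $\ka_g>\ka_0>0$ holds uniformly. I would close the argument by means of the conformal Gauss--Bonnet identity
\[
2\pi\nu(\ga)=\int_\ga \ka_g\,ds+A_{\Ss^2}(\Sigma_\eta),
\]
derivable from $\ka_\eta=\lambda\ka_g+\bd_n\log\lambda$ together with $\De\log\lambda=-\lambda^2$ (where $\lambda$ is the conformal factor of $\pr^{-1}$), combined with the observation that the condensed hypothesis confines every spherical osculating disk of $\ga$ inside $H$, forcing the signed enclosed region $\Sigma_\eta$ to contribute the right lower bound. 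Alternatively, the Möbius-transformation machinery alluded to in the introduction for $\ka_0\geq 0$ could be used to reduce to a normal form where positivity of the rotation number is transparent.
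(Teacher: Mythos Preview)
Your decomposition $\nu(\ga)=W+k$ via the longitude $\phi$ and the angle $\alpha$ between $\ta$ and the east field is correct, and for $\rho_0\geq\tfrac{\pi}{2}$ the argument that $\cos\alpha\geq 0$ (hence $W\geq 1$, $k=0$) is essentially right. There is a small unaddressed issue: nothing prevents $\ga$ from passing through $h_\ga$ itself, in which case $\phi$ and $Y$ are undefined; this can be repaired by perturbing $h_\ga$ within the set of admissible hemisphere centers, but it should be said.

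The real gap is the case $\ka_0>0$, which you explicitly leave open. Your Gauss--Bonnet sketch does not close it: the claim that ``the condensed hypothesis confines every spherical osculating disk of $\ga$ inside $H$'' does not follow from $\Im(C_\ga)\subs H$ (the center $\chi(t)$ lies in $H$, but nothing bounds $d(\chi(t),\bd H)$ below by $\rho(t)$), so you have no control on the signed-area term $A_{\Ss^2}(\Sigma_\eta)$. The M\"obius alternative is just a pointer to other machinery, not an argument.

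The paper's proof avoids the case split entirely by a single geometric observation you did not exploit: the \emph{regular} band, not just the caustic band. Since $\hat\ga(t)=B_\ga(t,\rho_0-\pi)=-C_\ga(t,\rho_0)$, the condensed hypothesis $C_\ga(t,\rho_0)\in H$ immediately gives $\hat\ga(t)\in -H$ for every $t$, regardless of the sign of $\ka_0$. Thus each geodesic arc $B_\ga(\{t\}\times[\rho_0-\pi,0])$ joins a point of $H$ to a point of $-H$ and meets $\bd H$ in a unique point $\sig(\tau(t))$; a short computation with $\ta_\theta=\ta$ and $\no_\theta$ shows $\tau'>0$. This yields a regular homotopy from $\ga$ to the equator traversed $\nu$ times with $\nu\geq 1$, and the stereographic rotation number is preserved along it. In your language, the paper is effectively proving $\cos\alpha>0$ not for $\ga$ but for the translate $\ga_{\theta(t)}$ sitting on the equator --- and that inequality holds uniformly in $\ka_0$ precisely because of the antipodal relation $\hat\ga=-C_\ga(\cdot,\rho_0)$.
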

\begin{proof}
	Let $\ga\in \sr L_{\ka_0}^{+\infty}$ be condensed, with 
	\[
	\Im(C_\ga)\subs H=\set{p\in \Ss^2}{\gen{p,h}\geq 0}
	\]
	and let $\hat{\ga}(t)=B_\ga(t,\rho_0-\pi)$ be the other boundary curve of $B_\ga$. If $\hat\ga(t_0)\in \Int H$ for some $t_0\in [0,1]$ then, by convexity, $H$ must also contain the geodesic segment 
	\begin{equation*}
		B_\ga(\se{t_0}\times [\pi-\rho_0,0])
	\end{equation*}
	joining $\ga(t_0)$ to $\hat\ga(t_0)$. Further, $C_\ga(\se{t_0}\times [0,\rho_0])\subs H$ by hypothesis, hence $H$ contains a geodesic of length $\pi$, and at least one of its enpoints (viz., $\hat\ga(t_0)$) lies in $\Int H$. This contradicts the fact that $H$ has diameter $\pi$. We conclude that $\Im(\hat\ga)\subs -H$. (See fig.~\ref{F:rotation}.)
	
	We lose no generality in assuming that $h=e_3$ and that $\ga$ is $C^1$ (the latter may be achieved by reparametrizing $\ga$ by arc-length). Let $\sig\colon \R\to \Ss^2$ be the standard parametrization of $\bd H$, $\sig(\tau)=(\cos(2\pi \tau),\sin (2\pi \tau),0)$. Because $\ga(t)\in H$, while $\hat\ga(t)\in {-}H$ for each $t\in [0,1]$, there exists $\theta(t)\in [0,\rho_0]$ such that $B_\ga(t,\theta(t))\in \bd H$, that is, $B_\ga(t,\theta(t))=\sig(\tau(t))$. This $\theta$ is unique because $B_\ga(t,\cdot)$ intersects $\bd H$ transversally for all $t$ (otherwise $\dot\ga(t)$ would be orthogonal to $\bd H$, and $\Im(\ga)\nsubs H$).  In addition, both $\theta$ and $\tau$ are $C^1$ functions by the implicit function theorem. We claim that $\tau'>0$ over $[0,1]$.
	
	For $\vphi\in [0,\rho_0]$, let $\ta_{\vphi}$ (resp.~$\no_{\vphi}$) denote the unit tangent (resp.~normal) vector to $\ga_{\vphi}$. Then $\no_{\theta(t)}(t)$ is the unit tangent vector to the curve $u\mapsto B_{\ga}(t,u)$ at $u=\theta(t)$. Since $B_{\ga}(t,0)=\ga(t)\in H$ and $B_{\ga}(t,\rho_0-\pi)=\hat\ga(t)\in {-}H$, with at least one of them lying off $\bd H$,  we have $\gen{\no_{\theta(t)}(t),e_3}>0$ for any $t\in [0,1]$. From
	\begin{alignat*}{9}
		\ta_{\theta(t)}(t)&=\no_{\theta(t)}(t)\times \ga_{\theta(t)}(t)=\no_{\theta(t)}(t)\times \sig(\tau(t)) \quad \text{and}\quad \frac{\dot\sig(\tau(t))}{\abs{\dot\sig(\tau(t))}}=e_3\times \sig(\tau(t)),
	\end{alignat*}
we deduce that
\[
\gen{\ta_{\theta(t)}(t),\dot \sig(\tau(t))}=\abs{\dot\sig(\tau(t))}\gen{\no_{\theta(t)}(t),e_3}>0
\]
(where $\dot\sig(\tau(t))$ denotes the derivative of $\sig$ with respect to $\tau$, at $\tau(t)$). Moreover, $\ta_{\vphi}=\ta_0$ for any $\vphi\in [0,\rho_0-\pi]$ (as seen in eq.~\eqref{E:normaltranslation} in \S 1). Hence,
\begin{equation*}
	\gen{\ta_{\vphi}(t),\dot \sig(\tau(t))}>0\text{\quad for each $\vphi\in [0,\rho_0-\pi],~t\in [0,1]$}.
\end{equation*} 
This implies that $\tau'(t)>0$, as claimed. 
	
	Let $\pr$ denote stereographic projection from ${-}h={-}e_3$  and let $F\colon [0,1]\times [0,1]\to \Ss^2$ be given by 	$F(s,t)=B_{\ga}\big(t,s\theta(t)\big)$. Then $F$ is a regular homotopy between $\ga$ and the geodesic circle $\sig$ traversed a certain number $\nu\geq 1$ of times, in the direction indicated in fig.~\ref{F:rotation} and determined by the parametrization we have chosen. Therefore $\pr \circ \ga$ and $\pr \circ \sig$ are regularly homotopic as well, and $\nu(\ga)=-N(\pr\circ \ga)=-N(\pr \circ \sig)=\nu$ as we wished to show.
\end{proof}

	\begin{figure}[ht]
		\begin{center}
			\includegraphics[scale=.50]{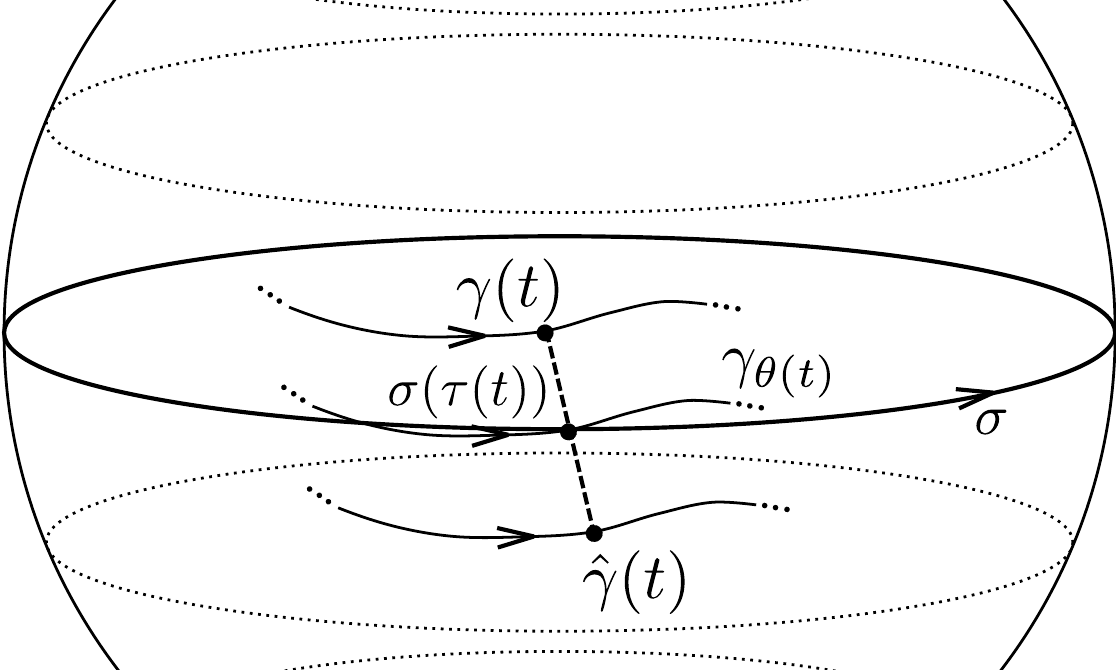}
			\caption{}
			\label{F:rotation}
		\end{center}
	\end{figure}

\begin{urem} It is natural to ask why this notion of rotation number is not extended to a larger class of curves. If $\ga$ is any admissible curve then by parametrizing it by arc-length (so that it becomes $C^1$) and applying Sard's theorem, we deduce that there exists some point $p\in \Ss^2$ not in the image of $\ga$. We could use stereographic projection from $p$ to define the rotation number of $\ga$. The trouble is that it is not clear how $p$ can be chosen so that the resulting number is continuous (i.e., locally constant) on $\sr L_{\ka_0}^{+\infty}$: A different choice of $p$ yields a different rotation number (although its parity remains the same).  In fact, the class of spherical curves for which a meaningful notion of rotation number exists must be restricted, since it is always possible to deform a circle traversed $\nu$ times into a circle traversed $\nu+2$ times in $\sr L_{\ka_0}^{+\infty}$ if $\nu$ is sufficiently large.
\end{urem}

\subsection*{Condensed curves in $\sr L_{\ka_0}^{+\infty}$ for $\ka_0\geq 0$}

\begin{prop}\label{P:positivecondensed}
	Let $A$ be a connected compact space, $\ka_0\geq 0$ and $f\colon A\to \sr L_{\ka_0}^{+\infty}(I)$ be such that $f(a)$ is condensed for all $a\in A$. Then there exists $\nu\geq 1$ such that $f$ is homotopic in $\sr L_{\ka_0}^{+\infty}(I)$ to the constant map $a\mapsto \sig_\nu$, $\sig_\nu$ a circle traversed $\nu$ times. 
\end{prop}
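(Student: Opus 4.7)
The plan has three stages: first show that $\nu(f(a))$ is constant on $A$, then reduce to a parametrized Whitney--Graustein statement in the plane via stereographic projection, and finally handle the basepoint condition $\Phi_\ga(0) = I = \Phi_\ga(1)$ using M\"obius transformations. For the first stage, since $f(a)$ is condensed for all $a$, the barycenter map $a \mapsto h_{f(a)} \in \Ss^2$ is continuous by (\ref{L:causticbarycenter}), so the projected plane curves $\pr_{-h_{f(a)}} \circ f(a)$ depend continuously on $a$, and their integer-valued rotation numbers are locally constant. The connectedness of $A$ then forces a common value $\nu = \nu(f(a))$, and $\nu \geq 1$ by (\ref{L:positive}).

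Next, the hypothesis $\ka_0 \geq 0$ (equivalently $\rho_0 \leq \pi/2$) should imply that each projected curve $\eta_a := \pr_{-h_{f(a)}} \circ f(a)$ is \emph{locally convex} in $\R^2$, i.e., has plane curvature of constant sign. This follows from the standard conformal identity relating plane curvature, spherical geodesic curvature, and the conformal factor of stereographic projection, together with the condensed hypothesis that controls the position of the curve relative to $h_{f(a)}$. With this in hand, I would invoke a parametrized form of the Whitney--Graustein theorem for locally convex closed plane curves: a continuous family of such curves with common rotation number $-\nu$ admits a homotopy, through families of locally convex closed plane curves, to the constant family equal to a small standard circle traversed $\nu$ times in the plane. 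This is essentially the version of Whitney--Graustein flagged in the introduction's outline, and can be proved by convex interpolation of derivatives of angle functions, for which local convexity becomes a convex constraint.

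Finally, pulling back by $(\pr_{-h_{f(a)}})^{-1}$ yields a homotopy of $f$ through families of curves on $\Ss^2$ with geodesic curvature greater than $\ka_0$, ending at a family whose image is a single parametrized circle; this is a homotopy in $\sr L_{\ka_0}^{+\infty}(\cdot, \cdot)$ but not automatically in $\sr L_{\ka_0}^{+\infty}(I)$. The main obstacle, and the reason for invoking the M\"obius transformations of $\Ss^2$ mentioned in the introduction, is to correct the intermediate frames so that $\Phi(0) = I = \Phi(1)$ is maintained throughout. Since both $f(a)$ and $\sig_\nu$ already lie in $\sr L_{\ka_0}^{+\infty}(I)$, the needed correction is the identity at the two endpoints of the homotopy. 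In between, composing at each time with a continuously chosen M\"obius transformation (acting on the plane picture as a M\"obius transformation of $\R^2 \cup \{\infty\}$ preserving local convexity) provides enough flexibility to match up the initial and final frame data simultaneously; compactness of $A$ and continuity of $a \mapsto h_{f(a)}$ ensure this can be done continuously in $(u, a)$. Coordinating these corrections with the plane homotopy so that both endpoints of every intermediate curve match $I$ is the delicate part of the argument.
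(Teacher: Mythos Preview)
Your outline has the right skeleton (project to the plane, run a Whitney--Graustein argument, pull back, fix the frame), but the roles you assign to the ingredients are inverted, and this creates a genuine gap in the curvature control.

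The crucial missing idea is that the M\"obius transformations are not used to correct the frame; they are used \emph{before} any plane projection, to shrink each curve toward $h_{f(a)}$ so that its geodesic curvature becomes arbitrarily large. This is the content of Lemma~(\ref{L:Moebius}): because the curve is condensed, every osculating circle has its center in the closed hemisphere $H_{f(a)}$, and a dilatation $T_r$ in the stereographic chart sends such circles to circles of very small spherical diameter when $r$ is small, hence of geodesic curvature as large as we like. Your plan of projecting stereographically first and then hoping the plane curvature stays above some bound does not work when $\ka_0>0$: even if $\ka_g>\ka_0$ implies the projected curve is locally convex (which itself needs the conformal identity you allude to, together with the condensed hypothesis, and is not automatic), the plane Whitney--Graustein homotopy only guarantees plane curvature of a fixed sign, not any quantitative lower bound. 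Pulling back then gives no control on $\ka_g-\ka_0$. The paper sidesteps this by first driving the geodesic curvature above $\ka_0+2$ via $T_r$, then using \emph{orthogonal} (not stereographic) projection onto $T_{h_a}\Ss^2$, which by Lemma~(\ref{L:projection}) perturbs the curvature by an arbitrarily small amount for sufficiently small curves. The plane homotopy (Lemma~(\ref{L:Whitney})) can then be run with a curvature margin to spare.

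Separately, your third stage is both unnecessary and risky. General M\"obius transformations of $\Ss^2$ do \emph{not} preserve the bound $\ka_g>\ka_0$, so using them to correct frames could destroy the curvature condition. The paper's frame correction is trivial: once you have a homotopy $H(s,a)$ in $\sr L_{\ka_0}^{+\infty}$ (closed curves, no basepoint condition), replace it by $\bar H(s,a)=\Phi_{H(s,a)}(0)^{-1}H(s,a)$. This is a rotation, so it preserves geodesic curvature exactly; it equals the identity at $s=0$ because $f(a)\in\sr L_{\ka_0}^{+\infty}(I)$ already; and it lands every $\bar H(s,a)$ in $\sr L_{\ka_0}^{+\infty}(I)$.
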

The idea of the proof is to use M\"obius transformations to make the curves $\eta_a=f(a)$ so small that they become approximately plane curves. The hypothesis that the curves are condensed guarantees that the geodesic curvature does not decrease during the deformation.  A slight variation of the Whitney-Graustein theorem is then used to deform the curves to a circle traversed $\nu$ times, where $\nu$ is the common rotation number of the curves.  

We will also need the following technical result, which is a corollary of the proof of (\ref{P:positivecondensed}).
\begin{cor}\label{C:positivecondensed}
Let $\ka_0\geq 0$ and $\ga\in \sr L_{\ka_0}^{+\infty}$ be a condensed curve. Then there exists a homotopy $s\mapsto \ga_s\in \sr L_{\ka_0}^{+\infty}$ $(s\in [0,1])$ such that $\ga_1=\ga$, $\ga_0$ is a parametrized circle and $\Im(C_{\ga_s})$ is contained in an open hemisphere for each $s\in [0,1)$.
\end{cor}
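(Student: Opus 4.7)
The plan is to extract the required homotopy from the argument of Proposition~(\ref{P:positivecondensed}), specialized to the singleton family $f \equiv \ga$, while verifying that the caustic bands stay inside an open hemisphere for $s < 1$ (the open-hemisphere condition is strict only for $s < 1$; at $s = 1$ we need only the given condensed hypothesis).

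Since $\ga$ is condensed, fix $h \in \Ss^2$ so that $\Im(C_\ga) \subs H := \{p \in \Ss^2 : \gen{p,h} \geq 0\}$; one may take $h = h_\ga$ as in (\ref{L:causticbarycenter}). I will build the homotopy in three concatenated stages and then reverse the time parameter so that $s = 0$ corresponds to a standard parametrized circle and $s = 1$ to $\ga$ itself. Stage~1 (short, near $s = 1$): a small initial deformation that opens a gap between $\Im(C_{\ga_s})$ and $\bd H$. A clean way is to start the M\"obius contraction of Stage~2 below with contraction parameter $\lambda$ slightly less than $1$; the curve and its caustic are then pulled strictly toward $h$, into $\Int H$. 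Stage~2: the M\"obius contraction from the proof of (\ref{P:positivecondensed}). Conjugating a family of planar homotheties $z \mapsto \lambda z$, $\lambda \to 0^+$, by stereographic projection from $-h$ yields a homotopy that shrinks the curve into arbitrarily small open caps around $h$. The hypothesis $\ka_0 \geq 0$ is used here, as in (\ref{P:positivecondensed}): the Euclidean curvature of the image in the plane scales like $\lambda^{-1}$ while the spherical correction term remains bounded, so the geodesic curvature stays above $\ka_0$ for $\lambda$ small. Throughout this stage the radius of curvature also shrinks, so both the curve and its caustic band stay inside a small cap about $h$, hence in $\Int H$. Stage~3: with the curve now in a tiny cap, apply the small-radius Whitney-Graustein step of (\ref{P:positivecondensed}) to deform it to a small geodesic circle traversed $\nu(\ga)$ times, then expand through concentric circles about $h$ up to the standard parametrized circle $\sig^{\rho_1}_{\nu(\ga)}$ with $\rho_1 < \rho_0$. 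Each intermediate curve lies inside an open cap about $h$, and its caustic band is likewise contained in an open hemisphere.

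The main technical obstacle is to verify that in Stage~2 not only the curve itself but also its caustic band is contracted into a small cap about $h$. The caustic depends on second derivatives, so one cannot argue directly from the image of the curve; one must check that under a contraction by $\lambda$ the radius of curvature of the image is of order $\lambda$, so that the caustic remains within spherical distance $O(\lambda)$ of the (already contracted) curve. This is the same kind of calculation that drives the proof of (\ref{P:positivecondensed}) and needs only the explicit effect of a M\"obius transformation on an osculating circle, so no new idea beyond the one used there is required.
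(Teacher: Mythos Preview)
Your three-stage outline follows the same overall plan as the paper, but there is a genuine gap in the justification of the open-hemisphere condition. The assertion in Stage~1 that ``the curve and its caustic are then pulled strictly toward $h$, into $\Int H$'' for $\la$ slightly less than $1$ is precisely the non-trivial claim, and your Stage~2 justification (``the radius of curvature also shrinks'') is an asymptotic statement about small $\la$, not one valid on all of $(0,1)$. The caustic band $C_{\ga_\la}$ is \emph{not} the image $T^\la(C_\ga)$: although $T^\la$ maps osculating circles to osculating circles, it does not map centers to centers, and in any case the band has fixed width $\rho_0$ independent of $\la$. So there is no a priori reason why $\Im(C_{\ga_\la})$ should lie in $\Int H$ for intermediate $\la$, even if $\Im(C_\ga)\subs \bar H$.

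The paper fills this gap by a connectedness argument. One shows two local statements: (i) if $\Im(C_{\ga_s})\subs \bar H$, then $\Im(C_{\ga_\sig})\subs \Int H$ for $\sig$ slightly less than $s$; (ii) if $\Im(C_{\ga_s})\nsubs \Int H$, then $\Im(C_{\ga_\sig})\nsubs \bar H$ for $\sig$ slightly greater than $s$. The proof of (i) and (ii) uses that the boundary of $\Im(C_{\ga_s})$ lies in $\Im(\ga_s)\cup \Im(\ce\ga_s)$, that both $\ga_s$ and $\ce\ga_s$ have positive geodesic curvature (so their tangencies with the geodesic $\bd H$ are isolated), and that at each such tangency the segment $C_{\ga_s}(\{t_0\}\times[0,\rho_0])$ lies on a geodesic through $h_\ga$, which projects to a line through the origin and hence behaves predictably under $T^\la$. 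With (i) and (ii) in hand, the set $S=\{s\in(0,1):\Im(C_{\ga_s})\nsubs \Int H\}$ is shown to be empty by examining $\sup S$. Your proposal needs this argument, or an equivalent one, to control the caustic band for \emph{all} $s\in(0,1)$.
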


We start by defining spaces of closed curves in $\R^2$ which are analogous to the spaces $\sr L_{\ka_1}^{\ka_2}$ of curves on $\Ss^2$.\footnote{These spaces of plane curves will only be considered in this section.} Let $-\infty\leq \ka_1<\ka_2\leq +\infty$. A \tdef{$(\ka_1,\ka_2)$-admissible plane curve} is an element $(c,z,\hat v,\hat w)$ of $\R^2\times \Ss^1\times L^2[0,1]\times L^2[0,1]$. With such a 4-tuple we associate the unique curve $\ga\colon [0,1]\to \R^2$ satisfying
\begin{equation*}
	\ga(t)=c+\int_0^t v(\tau)\ta(\tau)\?d\tau,\quad \ta(0)=z,\quad \ta'(t)=w(t)i\ta(t)\quad (t\in [0,1]),
\end{equation*}
where $v$ and $w$ are given by eq.~\eqref{E:Sobolev} on p.~\pageref{E:Sobolev} and $i=(0,1)$ is the imaginary unit. The space of all $(\ka_1,\ka_2)$-admissible plane curves is thus given the structure of a Hilbert manifold, and we define $\sr W_{\ka_1}^{\ka_2}$ to be its subspace consisting of all closed curves.

Although $\dot\ga$ is defined only almost everywhere for a curve $\ga\in \sr W_{\ka_1}^{\ka_2}$, its unit tangent vector $\ta$ is defined over all of $[0,1]$, and if we parametrize $\ga$ by a multiple of arc-length instead, then $\dot\ga$ is defined and nonzero everywhere. More importantly, since $\ta$ is (absolutely) continuous, we may speak of the rotation number of $\ga$ and \eqref{E:textbook} still holds a.e..


\begin{lemma}\label{L:Whitney}
Let $A$ be compact and connected, $\ka_0\geq 0$ and $A\to \sr W_{\ka_0}^{+\infty}$, $a\mapsto \eta_a$, be a continuous map. Then there exists a homotopy $[0,1]\times A\to \sr W_{\ka_0}^{+\infty}$, $(s,a)\mapsto \eta_a^s$, such that $\eta_a^0=\eta_a$ and 
\begin{equation*}
\qquad	\eta_a^1(t)=\sig_N(t+t_a)\text{\quad for all $a\in A$, $t\in [0,1]$},
\end{equation*}
where $\sig_N(t)=R_0\exp(2\pi iN t)$ is a circle traversed $N>0$ times. In addition, if the image of $\eta_a$ is contained in some ball $B(0;R)$ for all $a\in A$, then we can arrange that $\eta_a^s$ have the same property for all $s\in [0,1]$ and $a\in A$.
\end{lemma}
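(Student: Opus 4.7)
\emph{Proof plan.} Since $A$ is connected and the rotation number $\eta\mapsto N(\eta)$ is a continuous integer-valued function on $\sr W_{\ka_0}^{+\infty}$, all the $\eta_a$ will share a common $N\in\Z$. Combined with the identity $2\pi N=\int_0^1 \ka_a(t)\abs{\dot\eta_a(t)}\,dt$ and $\ka_a>\ka_0\geq 0$, this forces $N\geq 1$. Using (\ref{L:reparametrize}) I may first reparametrize each $\eta_a$ proportionally to arc length, so that its angle function $\theta_a\colon[0,1]\to\R$, defined by $\dot\eta_a/\abs{\dot\eta_a}=\exp(i\theta_a)$, becomes strictly increasing with $\theta_a(1)-\theta_a(0)=2\pi N$. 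Setting $t_a=(\theta_a(0)-\pi/2)/(2\pi N)$, the curve $\sig_N(\cdot+t_a)$ is well-defined, depending on $t_a$ only modulo $1/N$, and its initial tangent direction agrees with that of $\eta_a$. This $\sig_N(\cdot+t_a)$ is the intended endpoint of the homotopy, and $a\mapsto t_a$ is continuous.

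The main tool will be a scaling trick. A homothety $x\mapsto\lambda x$ of $\R^2$ divides geodesic curvature by $\lambda$, so shrinking a curve by any factor $\lambda\leq 1$ preserves the condition $\ka>\ka_0$ and in fact makes it easier to satisfy. My plan is to build the homotopy in three stages. First, for $s\in[0,1/3]$, I shrink each $\eta_a$ homothetically toward the origin by a factor linearly decreasing from $1$ to a small $\epsilon>0$, chosen uniformly in $a$ by compactness so that every image $\epsilon\?\eta_a$ and the small circle $\epsilon\?\sig_N(\cdot+t_a)$ lie well inside $B(0,R/2)$. Ball containment is preserved by the convexity of $B(0,R)$ and the fact that $0\in B(0,R)$, while the curvature constraint is preserved by the scaling principle. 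In the third stage, for $s\in[2/3,1]$, I reverse the scaling, expanding $\epsilon\?\sig_N(\cdot+t_a)$ back to $\sig_N(\cdot+t_a)$; the curvature then decreases monotonically from $1/(\epsilon R_0)$ to $1/R_0>\ka_0$, and the curves remain centered at $0$ with radius at most $R_0\leq R$.

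The heart of the argument is the middle stage $s\in[1/3,2/3]$, in which I deform $\epsilon\?\eta_a$ to $\epsilon\?\sig_N(\cdot+t_a)$ by a Whitney-Graustein-type interpolation: linearly interpolate the angle functions between $\theta_a$ and $t\mapsto 2\pi N(t+t_a)+\pi/2$ (whose boundary values match in $\theta(1)-\theta(0)=2\pi N$), choose a compatible convex combination $v_a^s$ of the two speed functions, and correct the resulting curve by subtracting a linear drift of the form $t\mapsto t\cdot\int_0^1 v_a^s(\tau)\exp(i\theta_a^s(\tau))\,d\tau$ so that closure is restored. The main obstacle is ensuring closure, positivity of $v_a^s$, and the curvature bound $\ka>\ka_0$ simultaneously throughout the interpolation. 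The saving grace is the dramatic separation of scales produced by stage~1: both endpoints of the middle stage have curvature of order $1/\epsilon$ and live in $B(0,R/2)$, so for $\epsilon$ sufficiently small the interpolated curves will have curvature of the same order, swamping $\ka_0$, and will stay within $B(0,R)$ by continuity and compactness of $A$. The closure correction is a lower-order perturbation of a curve that is already nearly closed, hence does not disturb regularity. When $\ka_0=0$ the scaling stages are superfluous and the argument reduces to the classical Whitney-Graustein construction with $\epsilon=1$.
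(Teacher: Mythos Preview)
Your three-stage strategy (shrink, Whitney--Graustein interpolate, expand) is essentially the paper's approach with the scaling reorganized: the paper first normalizes all curves to a common length, performs the angle-function interpolation with closure correction, computes that the resulting curvature satisfies $\ka_a^s>C\ka_0$ for some uniform $C>0$, and only then scales by a factor $\la(s)\in(0,1]$ to push the curvature above $\ka_0$ and keep everything inside $B(0;R)$. Your shrink-first-then-expand packaging is equivalent and arguably cleaner conceptually, since it makes transparent why the curvature bound $\ka>\ka_0$ is easy at small scale.

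There is, however, a real gap in your middle stage. The assertion that the interpolated curves ``will have curvature of the same order $1/\eps$'' addresses only the \emph{magnitude} of the curvature, not its \emph{sign}. The closure correction replaces the velocity $v_a^s\exp(i\theta_a^s)$ by $v_a^s\exp(i\theta_a^s)-c$ for some constant $c$, and while $\abs{c}<v_a^s$ guarantees regularity, it does not by itself prevent the curvature from dipping below zero somewhere. One needs the explicit computation
\[
\ka_a^s(t)=\frac{(v_a^s)^2\,\dot\theta_a^s(t)}{\abs{\tau_a^s(t)}^3}\Big[1-\det\Big(\textstyle\int_0^1\exp(i\theta_a^s)\,,\,i\exp(i\theta_a^s(t))\Big)\Big],
\]
together with the observation that the determinant has absolute value $<1$ (because $\theta_a^s$ is monotone over an interval of length $2\pi N$, so $\exp(i\theta_a^s)$ covers all of $\Ss^1$) and $\dot\theta_a^s>0$. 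This is precisely the calculation the paper carries out, and it is what makes your scaling heuristic valid: once you know $\ka_a^s>0$ uniformly on the compact set $A\times[0,1]$, compactness gives $\ka_a^s\geq c_0>0$, and then at scale $\eps$ the curvature is $\geq c_0/\eps>\ka_0$. Your closing remark that for $\ka_0=0$ ``the argument reduces to the classical Whitney--Graustein construction'' is slightly misleading for the same reason: classical Whitney--Graustein concerns regular homotopy only, and the fact that the standard interpolation preserves \emph{positive} curvature when the angle functions are monotone is exactly this extra computation. Minor points you should also fix: specify $R_0<\min\{1/\ka_0,R\}$ so that stage~3 stays in $\sr W_{\ka_0}^{+\infty}\cap B(0;R)$, and interpolate the basepoints $\eps\eta_a(0)\rightsquigarrow\eps\sig_N(t_a)$ in stage~2 so that the homotopy is continuous across the stage boundaries.
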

Thus, given a family of curves in $\sr W_{\ka_0}^{+\infty}$ indexed by a compact connected set, we may deform all of them to the same parametrized circle $\sig_N$, except for the starting point of the parametrization. 

\begin{proof} Since $A$ is connected, all the curves $\eta_a$ have the same rotation number $N$. Moreover, $N>0$ because of \eqref{E:textbook} and the fact that $\ka_0\geq 0$. 



For $\eta\in \sr W_{\ka_0}^{+\infty}$, let $z_\eta=\ta_\eta(0)$, where $\ta_\eta$ is the unit tangent vector to $\eta$. The homotopy $g\colon \sr [0,1]\times A\to \sr W_{\ka_0}^{+\infty}$ by translations,
\begin{equation*}
	g(s,a)(t)=\eta_a(t)-s\big(iz_{\eta_a}+\eta_a(0)\big)\qquad (s,\,t\in [0,1],~a\in A), 
\end{equation*} 	
preserves the curvature and, for any $a\in A$, $g(1,a)$ has the property that it starts at some $z\in \Ss^1$ in the direction $i z$. Thus, we may assume without loss of generality that the original curves $\eta_a$ have this property.

Let $\rho_0=\frac{1}{\ka_0}$, $L(\eta_a)$ denote the length of $\eta_a$, $L_0=\min_{a\in A}\se{L(\eta_a)}$ and let $R_1>0$ satisfy
\begin{equation}\label{E:easy}
	R_1<\min\Big\{\frac{L_0}{2\pi N},\rho_0\Big\}.\footnote{If $\ka_0=0$ then we adopt the convention that $\rho_0=+\infty$.}
\end{equation}
Define $f\colon [0,1]\times A\to \sr W_{\ka_0}^{+\infty}$ to be the homotopy given by
\begin{equation*}
\qquad	f(s,a)(t)=\eta_a(0)+\Big((1-s)+s\frac{2\pi N R_1}{L(\eta_a)}\Big)\big(\eta_a(t)-\eta_a(0)\big)\qquad (s,\,t\in [0,1],~a\in A).
\end{equation*}
Then $f(1,a)$ has length $L=2\pi N R_1$ for all $a\in A$. In addition, the curvature of $f(s,a)$ is bounded from below by $\ka_0$ for all $s\in [0,1]$, $a\in A$ and almost every $t\in [0,1]$, as an easy calculation using \eqref{E:easy} shows. 

The conclusion is that we lose no generality in assuming that the curves $\eta_a$ all have the same length $L=2\pi NR_1$. Further, by (\ref{L:reparametrize}), we can assume that they are all parametrized by a multiple of arc-length. This implies that $\dot\eta_a$ takes values on the circle $L \?\Ss^1$ of radius $L$. Using angle-functions $\theta_{a}$ with $\theta_{a}(0)=0$ and $\theta_{a}(1)=2\pi N$, we can write:
	\begin{equation*}
\qquad		\dot\eta_a(t)=Lz_{a}\exp\big(i\theta_{a}(t)\big)\qquad (t\in [0,1]),
	\end{equation*}
where $z_a=\ta_{\eta_a}(0)$. Let $\theta(t)=2\pi N t$, $t\in [0,1]$, and define 
\begin{equation*}
\quad	\theta_a^s(t)=(1-s)\theta_a(t)+s\theta(t),\qquad \bar\tau_a^s(t)= Lz_a\exp(i\theta_a^s(t)) \quad (s,\,t\in [0,1],~a\in A).
\end{equation*}
Then $\theta_a^s(0)=0$ and $\theta_a^s(1)=2\pi N$ for all $s\in [0,1]$, $a\in A$. The idea is that $\bar\tau_a^s$ should be the tangent vector to a curve; the problem is that this curve need not be closed. We can fix this by defining instead
\begin{equation*}
	\tau_a^s(t)=\bar\tau_a^s(t)-\int_0^1\bar\tau_a^s(v)\,dv,\qquad \eta_a^s(t)=-iz_a+\int_0^t\tau_a^s(v)\,dv.
\end{equation*}

The conditions $\int_0^1\tau_a^s(t)\,dt=0$ and $\tau_a^s(0)=\tau_a^s(1)$ then guarantee that $\eta_a^s$ is a closed curve. Because $\theta_a^s(1)=2\pi N$ and $N>0$, $\bar\tau_a^s$ must traverse all of $L\?\Ss^1$, so that $\int_0^1\bar\tau_a^s(v)\,dv$ lies in the interior of the disk bounded by this circle for any $s\in [0,1]$, $a\in A$. Consequently, $\tau_a^s(t)$ never vanishes. Moreover,
\begin{equation*}
\qquad	\eta_a^0=\eta_a\quad\text{ and }\quad\eta_a^1(t)=-iz_{\eta_a}\exp(2\pi N i t)\quad\text{for all $a\in A$}.
\end{equation*}

Finally,  $\eta_a^s$ has positive curvature for all $s\in [0,1]$ and $a\in A$. Although it is easier to see this using a geometrical argument, the following computation suffices: The curvature $\ka_a^s$ of $\eta_a^s$ is given by
\begin{alignat*}{10}
	\ka_a^s(t)&=\frac{\det\big(\tau_a^s(t),\dot{\tau}_a^s(t)\big)}{\abs{\tau_a^s(t)}^3}=\frac{L^2\dot\theta_a^s(t)}{\abs{\tau_a^s(t)}^3}\Big[1-\det\Big(\int_0^1\exp(i\?\theta_a^s(v))\,dv,\,i\exp(i\?\theta_a^s(t))\Big)\Big].
\end{alignat*}
Because $\theta_a^s=(1-s)\theta_a+s\theta$ is monotone increasing (recall that $\theta_a'=\ka_a>\ka_0\geq 0$ a.e.~by hypothesis), the map $t\mapsto \exp(i\theta_a^s(t))$ runs over all of $\Ss^1$ for any $s$ and $a$. As a consequence, the integral above has norm strictly less than 1, hence so does the determinant. In fact, since $A$ is compact, we can find a constant $C>0$, independent of $a$ and $s$, such that 
\begin{equation*}
	\ka_a^s>C\ka_0.
\end{equation*}
For $\la>0$ and an admissible plane curve $\ga$,  the curve $\la\ga$ has curvature given by $\tfrac{\ka}{\la}$, where $\ka$ is the curvature of  $\ga$. Again using compactness of $A$,  we may find a smooth function $\la\colon [0,1]\to (0,1]$ such that $\la(0)=1$ and $\la(s)$ is as small as necessary for $s\in (0,1]$ to guarantee that $\ka_a^s>\ka_0$ for all $s\in [0,1]$ and $a\in A$ if we replace $\eta_a^s$ with $\la(s)\eta_a^s$. In addition, we can choose $\la$ so that the image of $\la(s)\eta_a^s$ is contained in the ball $B_R(0)$ if this is the case for each $\eta_a$. This establishes the lemma with $R_0=\la(1)$.
\end{proof}

The next result states that the geodesic curvature of a curve $\ga\colon [0,1]\to \Ss^2$ and the curvature of the plane curve obtained by projecting $\ga$ orthogonally on $T_p
\Ss^2$ are roughly the same, as long as the curve is contained in a small neighborhood of $p$.

\begin{lemma}\label{L:projection}
	Let $\ka_0<\ka_1<\ka_2$ and $p\in \Ss^2$ be given. Identifying $T_p\Ss^2$ with $\R^2$, with $p$ corresponding to the origin, let $P\colon \Ss^2\to \R^2$ be the orthogonal projection. Then there exists $\eps>0$ such that:
	\begin{enumerate}
		\item [(a)] If $\ga\in \sr L_{\ka_2}^{+\infty}$ satisfies $d(\ga(t),p)<\eps$ for all $t\in [0,1]$, then $\eta=P \circ\ga\in \sr W_{\ka_1}^{+\infty}$.
		\item [(b)]  If $\eta\in \sr W_{\ka_1}^{+\infty}$ satisfies $\abs{\eta(t)}<\eps$ for all $t\in [0,1]$, then $\ga=P^{-1} \circ\eta\in \sr L_{\ka_0}^{+\infty}$.
	\end{enumerate}
\end{lemma}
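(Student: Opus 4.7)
The plan is to establish a direct comparison between the geodesic curvature $\ka_\ga$ of a curve $\ga$ in $\Ss^2$ near $p$ and the Euclidean curvature $\ka_\eta$ of its projection $\eta=P\circ\ga$, and then to choose $\eps$ so small that the discrepancy cannot push the curvature out of the required interval. First I would assume without loss of generality that $p=e_3$ and identify $T_p\Ss^2$ with $\R^2$ via $(x,y)\leftrightarrow(x,y,0)$, so that $P(x,y,z)=(x,y)$ and $P^{-1}(x,y)=(x,y,z)$ with $z=\sqrt{1-x^2-y^2}$, defined on the open unit disk.

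The central step is a direct computation, carried out at any point of twice-differentiability (which, after suitable reparametrization, occurs almost everywhere for admissible curves). Writing $\ga=(\eta,z)$ with $z=\sqrt{1-\abs{\eta}^2}$ and using the formulas $\ka_\eta=\det(\dot\eta,\ddot\eta)/\abs{\dot\eta}^3$ and $\ka_\ga=\det(\ga,\dot\ga,\ddot\ga)/\abs{\dot\ga}^3$, I would expand the latter determinant along the third row and substitute the identities $z\dot z=-\eta\cdot\dot\eta$ and $z\ddot z=-\abs{\dot\eta}^2-\eta\cdot\ddot\eta-\dot z^2$ (both obtained from $\abs{\eta}^2+z^2=1$). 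The terms containing $\ddot\eta$ reassemble into a factor multiplying $\det(\dot\eta,\ddot\eta)$, yielding a formula of the form
\[
\ka_\ga=\lambda(\eta,\dot\eta)\,\ka_\eta+\mu(\eta,\dot\eta)
\]
in which $\lambda$ and $\mu$ depend only on $\eta$ and on the direction of $\dot\eta$ (not on $\ddot\eta$). A direct estimation will give $\abs{\lambda-1}=O(\abs{\eta}^2)$ and $\abs{\mu}=O(\abs{\eta})$, uniformly over unit $\dot\eta$.

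Given $\ka_0<\ka_1<\ka_2$ and the positive gaps $\ka_1-\ka_0$, $\ka_2-\ka_1$, one then selects $\eps>0$ small enough to ensure both implications $\ka_\ga>\ka_2\Rightarrow\ka_\eta>\ka_1$ and $\ka_\eta>\ka_1\Rightarrow\ka_\ga>\ka_0$ whenever the curve stays within $\eps$ of $p$ (equivalently, $\abs{\eta}<\eps$). The absolute error $\mu$ is absorbed by the gaps within any bounded range of curvature, while for unboundedly large $\abs{\ka_\eta}$ the relative error $\lambda-1$ is of order $\eps^2$, so $\ka_\ga$ remains comparable to $\ka_\eta$ and hence still far above the thresholds $\ka_0$ and $\ka_2$. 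Admissibility of the image curves will follow automatically, because $P$ and $P^{-1}$ are smooth diffeomorphisms of their respective domains and therefore preserve the $H^1$-regularity of the curve and the $L^2$-regularity of its logarithmic derivative. The chief obstacle is the uniform verification of the estimates for $\lambda$ and $\mu$ --- a direct but somewhat tedious calculation --- compounded by the need to treat arbitrarily large curvatures simultaneously with the shrinking of $\eps$; this is handled, as indicated, by separating the bounded-curvature regime (where the absolute error dominates) from the large-curvature regime (where the relative error does, but with no effect on the sought implications).
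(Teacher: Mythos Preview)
Your approach is correct and is precisely the direct computation the paper has in mind---the paper itself omits the proof as ``straightforward'' (referring to the thesis \cite{tese} for details), so there is nothing to compare against beyond that. Your identity $\ka_\ga=\lambda\,\ka_\eta+\mu$ with $\abs{\lambda-1}=O(\abs{\eta}^2)$ and $\abs{\mu}=O(\abs{\eta})$ is right; in fact, parametrizing so that $\abs{\dot\eta}=1$, one finds $\lambda=\big(z\,\abs{\dot\ga}^3\big)^{-1}$ exactly. One minor simplification: since $\lambda>0$ for small $\eps$, the correspondence $\ka_\eta\leftrightarrow\ka_\ga$ is monotone, so it suffices to check the threshold values $\ka_\ga=\ka_2$ in (a) and $\ka_\eta=\ka_1$ in (b); the separate treatment of the large-curvature regime is unnecessary.
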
 
In part (a), $d$ denotes the distance function on $\Ss^2$ and the transformation $P^{-1}$ in part (b) is to be understood as the inverse of $P$ when restricted to the hemisphere $\set{q\in \Ss^2}{\gen{q,p}>0}$. 

\begin{proof}
	The proof is straightforward and will be omitted. See \cite{tese}, (6.4).
\end{proof}

\begin{lemma}\label{L:Moebius}
	Let $h\in \Ss^2$, $H=\set{q\in \Ss^2}{\gen{q,h}\geq 0}$, let $\pr\colon \Ss^2\to \R^2$ denote stereographic projection from ${-}h$. Let $\ka_0>0$ and $\ga\in \sr L_{\ka_0}^{+\infty}$ be such that $\Im(C_\ga)\subs H$. Define $T_r\colon \Ss^2\to \Ss^2$ to be the M\"obius transformation (dilatation) given by
	\begin{equation*}
	\quad	T_r(p)=\pr^{-1}\big(r\pr (p)\big) \quad (r\in (0,1],~p\in \Ss^2).
	\end{equation*}
	Then, given $\ka_1>\ka_0$, there exists $r_0>0$, depending only on $\ka_0$ and $\ka_1$, such that the geodesic curvature $\ka^r$ of $T_r(\ga)$ satisfies $\ka^r>\ka_1$ a.e.~for any $r\in (0,r_0)$.
\end{lemma}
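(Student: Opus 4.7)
The plan is to exploit the fact that M\"obius transformations of $\Ss^2$ are analytic diffeomorphisms and therefore preserve both circles and the order of contact between a curve and its tangent circles. Consequently, the osculating circle of $T_r\circ \ga$ at $T_r(\ga(t))$ is simply the $T_r$-image of the osculating circle of $\ga$ at $\ga(t)$, and the radius of curvature $\rho^r(t)$ of $T_r\ga$ coincides with the spherical radius of this image circle. The strategy is to prove that for small $r$ these image circles become uniformly tiny, making $\ka^r(t)=\cot\rho^r(t)$ large --- in particular greater than $\ka_1$.

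The first step is to use the hypothesis $\Im(C_\ga)\subs H$ to localize the osculating circles of $\ga$. For a.e.\ $t$, the osculating circle at $\ga(t)$ has spherical center $\chi(t)=C_\ga(t,\rho(t))\in H$ and spherical radius $\rho(t)<\rho_0<\pi/2$ (since $\ka_0>0$). By the triangle inequality, every point $p$ on this circle satisfies $d(p,h)\leq \rho(t)+d(\chi(t),h)\leq \rho_0+\pi/2<\pi$. In stereographic coordinates from $-h$, where $\abs{\pr(p)}=\tan(d(p,h)/2)$, this yields the uniform bound
\[
\abs{\pr(p)}\leq M_0:=\tan\Big(\tfrac{\pi}{4}+\tfrac{\rho_0}{2}\Big)<+\infty,
\]
a constant depending only on $\ka_0$.

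The second step is to apply $T_r$, which in these coordinates is the dilation $z\mapsto rz$: the image osculating circle lies in the Euclidean disk of radius $rM_0$, hence in the spherical ball $\{p\in\Ss^2:d(p,h)\leq 2\arctan(rM_0)\}$. Since a spherical circle of radius $\rho'\leq\pi/2$ has spherical diameter $2\rho'$ which cannot exceed the diameter $4\arctan(rM_0)$ of its enclosing ball, we obtain the uniform estimate $\rho^r(t)\leq 2\arctan(rM_0)$. Setting
\[
r_0=\tan(\rho_1/2)/M_0\quad\text{with}\quad \rho_1=\arccot\ka_1,
\]
we get $\rho^r(t)<\rho_1$ for all $r<r_0$, and therefore $\ka^r(t)=\cot\rho^r(t)>\cot\rho_1=\ka_1$ a.e., as required. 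The constant $r_0$ depends only on $\ka_0$ and $\ka_1$.

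The only delicate point is the invariance of osculating circles under M\"obius transformations, but this is immediate from the smoothness of $T_r$ combined with the characterization of the osculating circle by second-order contact; no real obstruction is anticipated.
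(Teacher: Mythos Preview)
Your proof is correct and follows essentially the same approach as the paper: reduce to osculating circles (using that M\"obius transformations preserve second-order contact and map circles to circles), use the hypothesis $\Im(C_\ga)\subs H$ together with $\rho<\rho_0<\tfrac{\pi}{2}$ to trap every osculating circle inside the fixed ball $B_d(h;\tfrac{\pi}{2}+\rho_0)$, and then observe that the dilation $T_r$ shrinks this ball into $B_d(h;\rho_1)$ for small $r$, forcing the image circles to have spherical radius below $\rho_1$. The only difference is cosmetic: you make the stereographic formula $\abs{\pr(p)}=\tan(d(p,h)/2)$ explicit and write down a closed-form $r_0$, whereas the paper simply invokes the existence of such an $r_0$.
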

\begin{proof}
	Suppose that $\ga\in \sr L_{\ka_0}^{+\infty}$ is parametrized by its arc-length and let $\sig$ be a parametrization, also by arc-length, of an arc of the osculating circle to $\ga$ at $\ga(s_0)$, i.e., let $\sig$ satisfy:
	\begin{equation*}
		   \sig(s_0)=\ga(s_0),\qquad \sig'(s_0)=\ga'(s_0),\qquad \sig''(s_0)=\ga''(s_0).
	\end{equation*}
(It makes sense to speak of $\ga''$ (as an $L^2$ map) because $\ga'=\ta$ is $H^1$ by hypothesis.) Then $T_r\circ \sig$ has contact of order 3 with $T_r\circ \ga$ at $s_0$, hence their geodesic curvatures at the corresponding point agree. Therefore, it suffices to prove the result for a circle $\Sig$ whose center $\chi$ lies in $\Int H$. Let $\rho_i=\arccot \ka_i$, $i=0,1$, and $\rho$ be the radius of curvature of $\Sig$,  $\rho<\rho_0< \frac{\pi}{2}$. If $d$ denotes the distance function on $\Ss^2$, then $\Sig\subs B_d\big(h;\frac{\pi}{2}+\rho_0\big)$ (where the latter denotes the set of $q\in \Ss^2$ such that $d(h,q)<\frac{\pi}{2}+\rho_0$). Choose $r_0$ such that 
	\begin{equation*}
		T_r\big(B_d\big(h;\tfrac{\pi}{2}+\rho_0\big)\big)\subs B_d\big(h;\rho_1\big) \text{\ for all $r\in (0,r_0)$;}
	\end{equation*}
such an $r_0$ exists because $B_d\big(h;\tfrac{\pi}{2}+\rho_0\big)$ is a distance $\frac{\pi}{2}-\rho_0>0$ away from $-h$. Then $T_r(\Sig)$ is a circle, for a M\"obius transformation such as $T_r$ maps circles to circles, and its diameter is at most $2\rho_1$. Thus, its geodesic curvature must be greater than $\ka_1$. Moreover, it is clear that the choice of $r_0$ does not depend on $h$ or on $\Sig$, only on $\rho_0$ and $\rho_1$.
\end{proof}


\begin{proof}[Proof of (\ref{P:positivecondensed})]
	Let $\ga_a$ denote $f(a)$ and let $h_a$ be the barycenter of the set of closed hemispheres which contain $\Im(C_{\ga_a})$; by (\ref{L:causticbarycenter}), the map $h\colon A\to \Ss^2$ so defined is continuous. 
	
	Let $\pr_a$ denote stereographic projection $\Ss^2\to \R^2$ from $-h_a$, so that $h_a$ is projected to the origin, and define a family $T^s_a\colon \Ss^2\to \Ss^2$ of M\"obius transformations by:
	\[
\qquad \quad	T^s_a(q)=\pr_a^{-1}(s\pr_a(q))\qquad (q\in \Ss^2,~s\in (0,1],~a\in A).
	\]
Set $\ga_a^s=T^s_a\ga_a$. 

Assume first that $\ka_0>0$. From (\ref{L:Moebius}) it follows that we can choose $\de>0$ so small that the geodesic curvature of $\ga_a^\de$ is greater than $\ka_0+2$ a.e.~for any $a\in A$. Now choose $\eps>0$ as in (\ref{L:projection}), with $\ka_1=\ka_0+1$, $\ka_2=\ka_0+2$. By reducing $\de$ if necessary, we can guarantee that the curves $\ga_a^\de$ have image contained in $B_d(h_a;\eps)$, for each $a$. Let $\eta_a$ be the orthogonal projection of $\ga_a^\de$ onto $T_{h_a}\Ss^2$.  We are then in the setting of (\ref{L:Whitney}). The conclusion is that we can deform all $\eta_a$ to a single circle $\sig_\nu$, modulo the starting point of the parametrization, in such a way that the curves have image contained in $B(0;\eps)$ and curvature greater than $\ka_0+1$ throughout the deformation. By (\ref{L:projection}) again, when we project this homotopy back to $\Ss^2$, the geodesic curvature of the curves is always greater than $\ka_0$.

To sum up, we have described a homotopy $H\colon [0,1]\times A\to \Ss^2$ such that $H(0,a)=\ga_a$ and $H(1,a)$ is a circle traversed $\nu$ times for all $a\in A$; further, the geodesic curvature $\ka_a^s$ of $H(s,a)$ satisfies $\ka_a^s(t)>\ka_0$ for each $s\in [0,1]$ and almost every $t \in [0,1]$. These curves $H(a,s)$ do not satisfy $\Phi(0)=I=\Phi(1)$, but we can correct this by setting 
\begin{equation*}
	\bar H(s,a)=\Phi_{H(a,s)}(0)^{-1}H(a,s)\?
\end{equation*}
and using $\bar H$ instead; this has no effect on the geodesic curvature and finishes the proof that $f$ is null-homotopic, since $\bar H(1,a)$ is the same parametrized circle for all $a$.

We shall now indicate how to modify the proof when $\ka_0=0$. With the notation as above, let 
\[
d_0=\sup\set{d\big(C_{\ga_a^{0.5}}(t,\theta),h_a\big)}{t\in [0,1],~\theta\in [0,\rho_0],~a\in A}.
\]
Then $d_0<\frac{\pi}{2}$ for all $a\in A$ because $A$ is compact and $\Im(C_{\ga_a^s})$ is contained in the o\sz p\sz e\sz n hemisphere $\set{q\in \Ss^2}{\gen{q,h_a}>0}$ for all $a\in A$,~$s\in (0,1)$ (see the first part of the proof of (\ref{C:positivecondensed}) below). Choose $d_1$ with $d_0<d_1<\frac{\pi}{2}$. If $\Sig$ is an osculating circle to some $\ga_a^{0.5}$, we can assert that its center lies in $B_d\big(h_a;d_1\big)$, hence $\Sig\subs B_d\big(h_a;d_1+\frac{\pi}{2}\big)$. This uniform estimate allows us to repeat the reasoning in the proof of (\ref{L:Moebius}) to find $\de>0$ such that the geodesic curvature of $\ga_a^\de$ is greater than $\ka_0+2$ a.e.~for any $a\in A$. The rest of the proof is the same as when $\ka_0>0$.
\end{proof}

We now provide a proof of (\ref{C:positivecondensed}). This result will be used to show that a notion of rotation number for non-diffuse curves, which will be introduced in the next section, coincides with the one presented at the beginning of this section. 

\begin{proof}[Proof of (\ref{C:positivecondensed})]
	Let $h_\ga$ be the barycenter on $\Ss^2$ of the set of closed hemispheres which contain $\Im(C_\ga)$ and, as in the proof of (\ref{P:positivecondensed}), define $\ga_s=T^s\circ \ga$, where
	\begin{equation}\label{E:Moeb}
		T^s(q)=\pr^{-1}(s\pr(q))\quad (q\in \Ss^2,~s\in (0,1])
	\end{equation}
	and $\pr$ denotes stereographic projection from $-h_\ga$. Let $H=\set{p\in \Ss^2}{\gen{p,h_\ga}>0}$. We claim that $\Im(C_{\ga_s})\subs H$ for all $s\in (0,1)$. This follows from the following two assertions:
	\begin{enumerate}
		\item [(i)] If $\Im(C_{\ga_{s}})\subs \bar H$, then there exists $\eps>0$ such that $\Im(C_{\ga_\sig})\subs H$ for all $\sig\in (s-\eps,s)$;
		\item [(ii)] If $\Im(C_{\ga_{s}})\nsubs H$, then there exists $\eps>0$ such that $\Im(C_{\ga_\sig})\nsubs \bar H$ for all $\sig\in (s,s+\eps)$. 
	\end{enumerate}
	For any $s$, the boundary of $\Im(C_{\ga_{s}})$ is contained in the union of the images of $\ga_{s}=C_{\ga_{s}}(\cdot,0)$ and $\ce\ga_{s}=C_{\ga_{s}}(\cdot,\rho_0)$. Moreover, $\ga$  has positive geodesic curvature by hypothesis, and a straightforward calculation shows that $\ce\ga$ also does (the details may be found in (\ref{L:cega})). 
	
	If $\Im(C_{\ga_{s}})\subs H$ then (i) is obviously true, since $H$ is an open hemisphere; similarly, (ii) clearly holds if $\Im(C_{\ga_s})\nsubs \bar H$. Suppose then that $\Im(C_{\ga_{s}})\subs \bar H$, but $\Im(C_{\ga_{s}})\nsubs H$ for some $s>0$. This means that there exists $t_0\in [0,1]$ such that either $\ga_s$ or $\ce{\ga}_s$ is tangent to $\bd H$ at $\ga_{s}(t_0)$ or $\ce\ga_s(t_0)$, respectively. In the first case, $\no_{\ga_s}(t_0)=h_\ga$, and in the second $\no_{\ga_s}(t_0)=-h_\ga$. In either case, $C_{\ga_{s}}\big(\{t_0\}\times[0,\rho_0]\big)$ is an arc of the geodesic through $\ga_{s}(t_0)$ and $h_\ga$. Such geodesics through $h_\ga$ are mapped to lines through the origin by $\pr$, hence \eqref{E:Moeb} implies that there exists $\eps>0$ such that $C_\ga(t,\sig)\subs H$ for any $t\in (t_0-\eps,t_0+\eps)$ and $\sig\in (s-\eps,s)$ and $C_\ga(t_0,\sig)\nsubs \bar H$ for any $\sig\in (s,s+\eps)$. Furthermore, since the geodesic curvatures of $\ga$, $\ce\ga$ are positive and $\bd H$ is a geodesic, the set of $t_0\in [0,1]$ where $\ga,\ce\ga$ are tangent to $\bd H$ must be finite. This implies (i) and (ii).
	
	Now let $S=\set{s\in (0,1)}{\Im(C_{\ga_s})\nsubs H}$. Assume that $S\neq \emptyset$ and let $s_0=\sup S$. Applying (i) to $\ga_{1}=\ga$ we conclude that there exists $\eps>0$ with $S\cap (1-\eps,1)=\emptyset$. Hence, $s_0<1$ and $\Im(C_{\ga_{s_0}})\nsubs H$ by construction. An application of (ii) yields a contradiction. Thus, $S=\emptyset$.
	
	Let $\rho_0=\arccot\ka_0$  and $r=\frac{\pi}{2}-\rho_0$. Choosing $\de>0$ so that $\Im(\ga_\de)\subs B_d(h_\ga;r)$, and proceeding as in the proof of (\ref{P:positivecondensed}), we can extend $s\mapsto \ga_s$ $(s\in [\de,1])$ to all of $[0,1]$ so that $\ga_0$ is a parametrized circle and $\Im(\ga_s)\subs B_d(h_\ga;r)$ for all $s\in [0,\de]$ (where $d$ denotes the distance function on $\Ss^2$). The inequality $d(\eta(t),C_\eta(t,\theta))=\theta<\rho_0$, which holds for any $\eta\in \sr L_{\ka_0}^{+\infty}$, implies that 
	\begin{equation*}
		d(h_\ga,C_{\ga_s}(t,\theta))<\frac{\pi}{2}\quad\text{for any $t\in [0,1],~\theta\in [0,\rho_0]$ and $s\in [0,\de]$}.
	\end{equation*}
	Hence $\Im(C_{\ga_s})\subs H$ for all $s\in [0,\de]$. The same inclusion for $s\in [\de,1)$ was established above, so the proof is complete.
\end{proof}

\begin{cor}\label{L:super}
	Let $\ka_0\geq 0$ and $1\leq \nu\in \N$.
	\begin{enumerate}
		\item [(a)] The subset $\sr O$ (resp.~$\sr O_\nu$) of $\sr L_{\ka_0}^{+\infty}(I)$ consisting of all condensed curves (resp.~all condensed curves having rotation number $\nu$) is the closure of an open set.
		\item [(b)] If $\ga\in \sr O_\nu$ and $\sr U\subs \sr L_{\ka_0}^{+\infty}(I)$ is any open set containing $\ga$, then $\ga$ is homotopic to a smooth curve within $\sr O_\nu\cap \sr U$.
	\end{enumerate} 
\end{cor}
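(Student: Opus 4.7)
The plan is to exhibit $\sr O$ and $\sr O_\nu$ as the closures of the explicit open subsets obtained by requiring $\Im(C_\ga)$ to lie in an \emph{open} hemisphere. Set $\sr O^\circ=\se{\ga\in \sr L_{\ka_0}^{+\infty}(I) : \Im(C_\ga)\subs \se{p\in \Ss^2 : \gen{p,h}>0}\text{ for some }h\in \Ss^2}$ and $\sr O^\circ_\nu=\sr O^\circ\cap \sr O_\nu$. I will first verify that $\sr O^\circ$ is open and $\sr O$ is closed. Both assertions use that $\ga\mapsto C_\ga$ is continuous as a map into $C^0([0,1]\x[0,\rho_0],\Ss^2)$, which follows from continuous dependence on parameters in the ODE \eqref{E:ivp} applied to the $L^2$-data $(\hat v,\hat w)$: openness of $\sr O^\circ$ then follows from compactness of $[0,1]\x[0,\rho_0]$, while closedness of $\sr O$ uses compactness of $\Ss^2$ (extract a convergent subsequence of the defining unit vectors $h_n$ and pass to the limit in $\gen{C_{\ga_n}(t,\theta),h_n}\geq 0$).

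For part (a), the containment $\sr O^\circ\subs \sr O$ is trivial, so it suffices to show $\sr O\subs \ol{\sr O^\circ}$ (and the analogue for $\sr O_\nu$). Given $\ga\in \sr O$, Corollary (\ref{C:positivecondensed}) furnishes a homotopy $s\mapsto \ga_s$ with $\ga_1=\ga$, every $\ga_s$ condensed, and $\ga_s\in \sr O^\circ$ for $s<1$; continuity then gives $\ga_s\to \ga$, so $\ga\in \ol{\sr O^\circ}$. The rotation number is locally constant on $\sr O$ because $h_\ga$ varies continuously by (\ref{L:causticbarycenter}), making $\pr_{-h_\ga}\circ \ga$ a continuous family of regular plane curves with integer-valued rotation number; hence the rotation number is constant along $\se{\ga_s}$, so if $\ga\in \sr O_\nu$ we have $\ga_s\in \sr O^\circ_\nu$ for $s<1$. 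In particular $\sr O^\circ_\nu=\sr O^\circ\cap \nu^{-1}(\nu)$ is open in $\sr L_{\ka_0}^{+\infty}(I)$, completing (a).

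For part (b), given $\ga\in \sr O_\nu$ and an open neighborhood $\sr U\ni \ga$, the same homotopy $\se{\ga_s}$ serves. By continuity of $s\mapsto \ga_s$ and openness of $\sr U$, pick $s_0\in [0,1)$ so that $\ga_s\in \sr U$ for every $s\in [s_0,1]$; then $\ga_{s_0}$ lies in the open set $\sr V=\sr O^\circ_\nu\cap \sr U$. As an open subset of a Hilbert manifold, $\sr V$ is locally path-connected by (\ref{L:Hilbert}\?(a)), and smooth curves are dense in $\sr L_{\ka_0}^{+\infty}(I)$ by (\ref{L:dense}), so I can find a smooth $\te\ga\in \sr V$ close enough to $\ga_{s_0}$ to be joined to it by a path inside $\sr V$. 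Concatenating with $\se{\ga_s}_{s\in[s_0,1]}$ yields the desired homotopy from $\ga$ to $\te\ga$ inside $\sr O_\nu\cap \sr U$. The principal technical ingredient throughout is the continuity of the homotopy produced by (\ref{C:positivecondensed}), which is what lets us push $\ga$ slightly into $\sr O^\circ_\nu$ without ever leaving $\sr U$.
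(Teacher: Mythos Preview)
Your proof is correct and follows essentially the same approach as the paper: define the ``strictly condensed'' subset $\sr O^\circ$ (the paper calls it $\sr S$), show it is open and $\sr O$ is closed, and use the M\"obius-dilation homotopy of (\ref{C:positivecondensed}) to push any $\ga\in\sr O$ into $\sr O^\circ$; part (b) is handled identically via density of smooth curves in a path-connected open neighborhood. The only minor differences are that the paper proves closedness of $\sr O$ by showing its complement is open via the simplex characterization (\ref{L:closedhemisphere})--(\ref{L:Steinitz}) rather than your sequential compactness argument, and that the barycenter relevant to the definition of $\nu(\ga)$ is the one from (\ref{L:closedbarycenter}) (hemispheres containing $\Im(\ga)$), not (\ref{L:causticbarycenter}); either reference suffices for continuity, so this does not affect the argument.
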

\begin{proof}
	Let $\sr S\subs \sr O$ be the subset consisting of all curves $\ga\in \sr L_{\ka_0}^{+\infty}(I)$ such that $\Im(C_\ga)$ is contained in an o\sz p\sz e\sz n hemisphere. Then $\sr S$ is open, because if the compact set $C=\Im(C_\ga)$ is such that $\gen{c,h}>0$ for some $h\in \Ss^2$ and all $c\in C$, then the same inequality holds for all $c\in \Im(C_\eta)$ whenever $\eta\in \sr L_{\ka_0}^{+\infty}(I)$ is sufficiently close to $\ga$. Similarly, $\sr O$ is closed. For if $\ga\nin \sr O$, then, by (\ref{L:closedhemisphere}) and (\ref{L:Steinitz}), we can find a 3-dimensional simplex with vertices in $\Im(C_\ga)$ containing $0\in \R^3$ in its interior. If $\eta\in \sr L_{\ka_0}^{+\infty}(I)$ is sufficiently close to $\ga$ then we can also find a simplex $\De_\eta$ with vertices in $\Im(C_\eta)$ such that $0\in \Int \De_\eta$.  It follows that $\bar{\sr S}\subs \sr O$.
	
	Let $\ga\in \sr O$. Define a family $T^s\colon \Ss^2\to \Ss^2$ of M\"obius transformations by \eqref{E:Moeb}, where $\pr\colon \Ss^2\to \R^2$ denotes stereographic projection from $-h_\ga$, and $h_\ga$ is the barycenter of the set of closed hemispheres which contain $C=\Im(C_\ga)$ (cf.~(\ref{L:causticbarycenter})). Then $\ga_s=T^s\circ \ga\in \sr S$ for all $s\in (0,1)$ by (\ref{C:positivecondensed}), establishing the reverse inequality $\bar {\sr S}\sups \sr O$. The proof of the assertion about $\sr O_\nu$ is analogous and will be omitted.


To prove (b), let $\eps>0$ be such that $\ga_s=T^s\circ \ga \in \sr U$ for all $s\in [1-\eps,1]$. Choose a path-connected neighborhood $\sr V\subs \sr S\cap \sr U$ of $\ga_{1-\eps}$, and, for $s\in [0,1-\eps]$, let $\ga_s$ be a path in $\sr V$ joining a smooth curve $\ga_0$ to $\ga_{1-\eps}$. As each $\ga_s$ is condensed ($s\in [0,1]$), $\nu(\ga_s)$ is defined for all $s$; since it can only take on integral values, it must be independent of $s$. Thus, $s\mapsto \ga_s$ $(s\in [0,1]$) is the desired path.
\end{proof}


\subsection*{Condensed curves in $\sr L_{\ka_0}^{+\infty}$ for $\ka_0<0$}

The purpose of this subsection is to prove the following analogue to (\ref{P:positivecondensed}).
\begin{prop}\label{P:negativecondensed}
	Let $K$ be a connected compact space, $\ka_0< 0$ and $f\colon K\to \sr L_{\ka_0}^{+\infty}(I)$ be such that $f(p)$ is condensed for all $p\in K$. Then there exists $\nu\geq 1$ such that $f$ is homotopic in $\sr L_{\ka_0}^{+\infty}(I)$ to the constant map $p\mapsto \sig_\nu$, $\sig_\nu$ a circle traversed $\nu$ times. 
\end{prop}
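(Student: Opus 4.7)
The plan is to mirror the structure of the proof of (\ref{P:positivecondensed}), substituting a regular-band-based deformation for the M\"obius-dilation step, which is no longer available when $\ka_0<0$ because M\"obius shrinking amplifies negative curvatures and hence breaks the constraint $\ka>\ka_0$.

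First, by (\ref{L:causticbarycenter}) the map $p\mapsto h_p:=h_{\ga_p}\in\Ss^2$ is continuous, and by composing with a continuous family of rotations in $\SO_3$ followed by the initial-frame correction $\bar H(s,p)=\Phi_{H(s,p)}(0)^{-1}H(s,p)$ used at the end of the proof of (\ref{P:positivecondensed}), I reduce to the case $h_p=e_3$ for all $p$, so that $\Im(C_{\ga_p})\subseteq H:=\se{q\in\Ss^2:\gen{q,e_3}\geq 0}$. The rotation number $p\mapsto\nu(\ga_p)$ is integer-valued and continuous, hence constant on the connected space $K$, equal to some $\nu\geq 1$ by (\ref{L:positive}).

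Second, I would deform each $\ga_p$ to a common parametrized circle using the regular band $B_{\ga_p}$. Because $\ka_0<0$ forces $\rho_0>\pi/2$, the regular band extends by more than $\pi/2$ across the normal direction, and the proof of (\ref{L:positive}) already exhibits a function $\theta_p\colon[0,1]\to[0,\rho_0]$ with $B_{\ga_p}(t,\theta_p(t))\in\bd H$ for all $t$, producing the regular homotopy $F(s,t)=B_{\ga_p}(t,s\theta_p(t))$ between $\ga_p$ and the boundary geodesic of $H$ traversed $\nu$ times. My plan is to upgrade this to a homotopy in $\sr L_{\ka_0}^{+\infty}(I)$. The key geometric observation is that at each intermediate time $s$ the curve $F(s,\cdot)$ is a pointwise translate of $\ga_p$ by a variable amount along the regular band, and for condensed curves one may compute directly, using the formula (\ref{C:radiusofcurvature}) suitably generalized to variable translations, that the resulting geodesic curvature stays above $\ka_0$; the fact that $\ka_0<0$ is crucial, since the limiting boundary geodesic at $s=1$ has geodesic curvature $0>\ka_0$, leaving room to interpolate.

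Once all curves in the family have been deformed to a common circle traversed $\nu$ times (up to the starting point of the parametrization), I would absorb the remaining reparametrization freedom as in the last part of (\ref{L:Whitney}) and the final step of the proof of (\ref{P:positivecondensed}), concluding with the frame correction $\bar H$ to land in $\sr L_{\ka_0}^{+\infty}(I)$. The main obstacle is the second step: verifying that the homotopy $F(s,t)=B_{\ga_p}(t,s\theta_p(t))$ (or some suitable refinement smoothing $\theta_p$ and avoiding corners) has geodesic curvature $>\ka_0$ for every $s\in[0,1]$, uniformly in $p\in K$. This requires a quantitative estimate analogous to (\ref{L:Moebius}) but for translations by variable amounts along the normal; compactness of $K$ then yields a uniform lower bound, but extracting a clean bound in terms of $\rho_0$, $\theta_p$, and the radius of curvature $\rho_p$ of $\ga_p$ will require a careful case analysis depending on whether $s\theta_p(t)<\rho_p(t)$ or $s\theta_p(t)>\rho_p(t)$, i.e., whether the translated point lies before or beyond the caustic.
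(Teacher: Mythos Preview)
Your plan has a genuine gap at the second step, and you have correctly identified it as ``the main obstacle'': there is no reason to expect the variable-translation homotopy $F(s,t)=B_{\ga_p}(t,s\theta_p(t))$ to have geodesic curvature bounded below by $\ka_0$. Formula (\ref{C:radiusofcurvature}) applies only to translation by a \emph{constant} amount; for a variable amount $s\theta_p(t)$ the tangent vector of $F(s,\cdot)$ acquires a component in the $\partial_\theta B_{\ga_p}$ direction proportional to $\theta_p'(t)$, and the geodesic curvature then involves $\theta_p''(t)$. Since $\theta_p$ is defined implicitly by the condition that $B_{\ga_p}(t,\theta_p(t))$ lie on the equator, its derivatives are not bounded a priori in terms of $\ka_0$ alone---a condensed curve can oscillate rapidly while remaining condensed, producing large $\vert\theta_p''\vert$ and hence geodesic curvatures far below $\ka_0$ at some points of $F(s,\cdot)$. (Indeed, in (\ref{L:positive}) the paper uses this $F$ only as a \emph{regular} homotopy to compute a rotation number; no curvature control is claimed or needed there.) Your proposed ``case analysis'' does not address this: the difficulty is not where the translate sits relative to the caustic, but that the curvature of a variable translate depends on second-order data of the translation function, which you have no handle on.

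The paper takes a genuinely different route that sidesteps this issue. It first translates by the constant $\rho_1=\tfrac{\pi-\rho_0}{2}$ to pass to the symmetric space $\sr L_{-\ka_1}^{+\ka_1}$, then lifts the regular band of each curve to the $\nu$-sheeted cover $\Ss^2_\nu$ and observes (Lemma~\ref{L:regularisgood}) that the lifted band is a \emph{good band} of fixed width $R=2\bar\rho_1$. The key step is that the space $\sr G$ of good bands is contractible (Corollary~\ref{C:contratil}, via an explicit retraction from the larger contractible space $\sr A$ of \emph{acceptable} bands), so the family $p\mapsto B_p$ can be contracted within $\sr G$ to a single band. From each good band one extracts its \emph{central curve} (Lemma~\ref{L:central}, Corollary~\ref{C:central}), whose radius of curvature is automatically confined to $[\tfrac{R}{2},\pi-\tfrac{R}{2}]$ as a consequence of the equidistance property defining a good band---not of any explicit differentiation of an interpolation function. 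Translating back by $-\rho_1$ returns to $\sr L_{\ka_0}^{+\infty}$. This is why the paper's argument succeeds where the naive interpolation stalls: the curvature control is built into the geometric notion of ``central curve of a good band'' rather than being an analytic estimate on a hand-constructed homotopy.
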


Let $1\leq \nu\in \N$ and let $\Ss^2_\nu$ denote the $\nu$-sheeted connected covering of $\Ss^2\ssm \se{\pm \text{point}}$, where we may assume that the point is the north pole $N$. We will identify $\Ss^1\times (-\frac{\pi}{2},\frac{\pi}{2})$ with $\Ss^2\ssm \se{\pm N}$ through the homeomorphism $h$ given by $h(z,\phi)=(\cos\phi\? z,\sin\phi)$. This, in turn, yields an identification of $\Ss^2_\nu$ with  $\Ss^1_\nu\times (-\frac{\pi}{2},\frac{\pi}{2})$, where $\Ss^1_\nu$ is the $\nu$-sheeted connected covering space of $\Ss^1$. 
We will prefer to work with the space $\Ss^1_\nu\times (-\frac{\pi}{2},\frac{\pi}{2})$ instead of $\Ss^2_\nu$, but its Riemannian metric is the one induced on the latter space by $\Ss^2$ through the covering map.

\begin{defn}\label{D:acceptable}
\hspace{-3pt}\footnote{These notions will only be used in this subsection.}
Let $0<R<\frac{\pi}{2}$. An \tdef{acceptable band} $A\colon [0,1]\times [0,1]\to \Ss^1_\nu \times (-\frac{\pi}{2},\frac{\pi}{2}) \equiv \Ss^2_\nu$ is a map given by
\begin{equation}\label{E:acceptable}
	A(t,u)=\big(\exp(2\pi \nu it)\?,\?(1-u)\theta_-(t)+u\theta_+(t)\big)\quad (t,\,u\in [0,1])
\end{equation}
and satisfying the following conditions:
\begin{enumerate}
	\item [(i)] $\theta_{\pm}\colon [0,1]\to (-\frac{\pi}{2},\frac{\pi}{2})$ are continuous, $0\leq \theta_+\leq R$ and  $-R\leq \theta_-\leq 0$.
	\item [(ii)] Let $\bd A_{+}$ (resp.~$\bd A_-$) denote the image of $[0,1]\times \se{1}$ (resp.~$[0,1]\times \se{0}$) under $A$. Then $d(p,\bd A_-)\geq R$ and $d(q,\bd A_+)\geq R$ for every $p\in \bd A_+$ and every $q\in \bd A_-$.\footnote{Here and in what follows, $d$ denotes the distance function on $\Ss^2_\nu$ (or on $\Ss^2$).}
\end{enumerate}
	The \tdef{interior} $\ring A$ of $A$ is simply the interior of the image of $A$. The set of all acceptable bands (for fixed $R$) will be denoted by $\sr A$ and furnished with the $C^0$ (uniform) topology.  Finally, we denote by $\sr G$ the subspace of $\sr A$ consisting of all acceptable bands $A$ such that $d(p,\bd A_-)=R=d(q,\bd A_+)$ for any $p\in \bd A_+$ and $q\in \bd A_-$. Such a band will be called \tdef{good} and $R$ its \tdef{width}.
\end{defn}
The motivation for this definition comes from the following lemma.

\begin{lemma}\label{L:regularisgood}
	Let $\ka_0=\cot \rho_0<0$ and $\ga\in \sr L_{\ka_0}^{+\infty}$ be a condensed curve having rotation number $\nu$. Then the image of the lift of the regular band $B_\ga\colon [0,1]\times [\rho_0-\pi,0]\to \Ss^2$ of $\ga$ to $\Ss^2_\nu$ is the image of a good band of width $\pi-\rho_0$.
\end{lemma}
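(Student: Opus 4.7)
\emph{Plan.} Take $h = h_\ga\in\Ss^2$ as in (\ref{L:causticbarycenter}) and identify $\Ss^2\setminus\{\pm h\}$ with $\Ss^1\times(-\tfrac{\pi}{2},\tfrac{\pi}{2})$ so that $h$ is the north pole; hence $\Im(C_\ga)\subset H=\Ss^1\times[0,\tfrac{\pi}{2})$. Observe that $R:=\pi-\rho_0\in(0,\tfrac{\pi}{2})$ since $\ka_0<0$ means $\rho_0\in(\tfrac{\pi}{2},\pi)$.

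The first step is to replay the argument in the proof of (\ref{L:positive}) in the present setting. It gives $\Im(\ga)\subset H$, $\Im(\hat\ga)\subset -H$, and that for every $t$ the perpendicular slice $[\rho_0-\pi,0]\ni\theta\mapsto B_\ga(t,\theta)$ crosses $\bd H$ transversally at a single value $\theta^*(t)$; moreover, the equator-crossing curve $t\mapsto B_\ga(t,\theta^*(t))$ moves monotonically along $\bd H$ and winds exactly $\nu$ times. Next, I verify that $\Im(B_\ga)\subset\Ss^2\setminus\{\pm h\}$: each slice is a geodesic arc of length $R<\tfrac{\pi}{2}$, and an easy trigonometric calculation using $C_\ga(t,\rho_0)\in H$ shows such an arc cannot reach $\pm h$ (if necessary one first replaces $h$ by a small perturbation, noting that the good-band property is closed). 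Consequently $B_\ga$ admits a lift $\tilde B\colon[0,1]\times[\rho_0-\pi,0]\to\Ss^2_\nu$, and by (\ref{L:band}) $\tilde B$ is a smooth immersion.

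The second step is to re-parametrize the image of $\tilde B$ in the ``good-band'' form. Set $\tilde\ga=\tilde B(\cdot,0)$ and $\tilde{\hat\ga}=\tilde B(\cdot,\rho_0-\pi)$. Since the equator-crossing has monotone longitude of total increment $2\pi\nu$, and since each slice is transversal to every meridian (again because slice-length $R<\tfrac{\pi}{2}$), the longitude coordinate is strictly monotone along each of $\tilde\ga$ and $\tilde{\hat\ga}$ and along each slice up to its unique latitude-maximum. Reparametrizing so that $\tilde\ga(t)$ has longitude $\exp(2\pi\nu it)$, one checks that the preimage of each meridian $\{z\}\times(-\tfrac{\pi}{2},\tfrac{\pi}{2})$ in $\Im(\tilde B)$ is a single closed latitude-interval $[\theta_-(t),\theta_+(t)]$, where $\theta_+(t)$ and $\theta_-(t)$ are the latitudes of $\tilde\ga$ and $\tilde{\hat\ga}$ above $z$ (after matching parametrization of the lower boundary by longitude). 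Defining
\begin{equation*}
	A(t,u)=\big(\exp(2\pi\nu it),\,(1-u)\theta_-(t)+u\theta_+(t)\big)
\end{equation*}
then gives a continuous map with $\Im(A)=\Im(\tilde B)$, and the inequalities $0\le\theta_+(t)\le R$, $-R\le\theta_-(t)\le 0$ follow from $\theta_+(t)-\theta_-(t)\le\text{length of slice at }t=R$, combined with $\theta_+\ge 0$ and $\theta_-\le 0$.

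Finally, I must verify the \emph{good}-band inequality $d(p,\bd A_-)=R=d(q,\bd A_+)$ for all $p\in\bd A_+$, $q\in\bd A_-$. The upper bound $\le R$ is immediate from the geodesic slices. For the lower bound, one uses that in the cover $\Ss^2_\nu$ the lifted band $\tilde B$ is \emph{embedded}: any geodesic arc shorter than $R$ from $\tilde\ga$ to $\tilde{\hat\ga}$ would have to stay inside the embedded band, but the foliation by perpendicular slices realizes the minimum distance at the slice endpoints, which is exactly $R$. The main obstacle in this plan will be this last point --- showing that $\tilde B$ is embedded in $\Ss^2_\nu$, equivalently that the slice-foliation and the longitude-foliation fit together globally without collapse. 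This is where the strict bound $\rho(t)<\rho_0$ and the choice of the $\nu$-sheeted cover must be used together: the strict bound keeps the caustic inside $H$ (preventing focal collisions of slices), while passing to $\Ss^2_\nu$ prevents two different longitudes of $\tilde\ga$ from projecting to the same longitude of $\ga$, which is what could otherwise produce a non-perpendicular shortcut from $\bd A_+$ to $\bd A_-$.
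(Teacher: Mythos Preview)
Your plan follows the paper's proof closely: show $\Im(\ga)\subset H$ and $\Im(\hat\ga)\subset -H$, lift $B_\ga$ to $\Ss^2_\nu$, express the image as an acceptable band via meridian-intersection functions $\theta_\pm$, and obtain the distance lower bound from the orthogonal $(t,\theta)$-coordinates. The paper's argument for the lower bound is exactly this --- it writes a path from $\bd A_-$ to $\bd A_+$ as $\te{B}_\ga(t(u),\theta(u))$ and uses $\abs{\bd B_\ga/\bd\theta}=1$ together with orthogonality; like you, it does not prove on the page that such coordinates exist globally (equivalently, that $\te{B}_\ga$ is an embedding), deferring instead to the thesis. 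So your flagging of the embedding as ``the main obstacle'' matches the paper's own deferral.

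There is, however, a concrete error in your second step. The claim ``each slice is transversal to every meridian (because slice-length $R<\tfrac{\pi}{2}$)'' is false: the slice at $t$ lies on the great circle with pole $\pm\ta(t)$, and this circle is a meridian through $\pm h$ precisely when $\gen{h,\ta(t)}=0$, which nothing forbids; slice-length is irrelevant. More importantly, even granting transversality, it would not imply that the longitude of $\te\ga(t)$ is monotone in $t$. The correct argument is direct: the longitude-rate along $\ga$ is a positive multiple of $\gen{h,\no(t)}$, and from $C_\ga(t,\rho_0)\in H$, $\ga(t)\in H$ and $\cos\rho_0<0$ one obtains $\sin\rho_0\,\gen{h,\no(t)}\geq -\cos\rho_0\,\gen{h,\ga(t)}\geq 0$. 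The same computation, written as $(\gen{h,\ga},\gen{h,\no})=r(\cos\psi,\sin\psi)$ with $r\leq 1$ and $\psi\geq\rho_0-\tfrac{\pi}{2}$, gives $\abs{\gen{h,B_\ga(t,\theta)}}=r\abs{\cos(\theta-\psi)}\leq\sin\rho_0$ for all $\theta\in[\rho_0-\pi,0]$; this is precisely the paper's containment $\Im(B_\ga)\subset\Ss^1\times[\rho_0-\pi,\pi-\rho_0]$, and it makes your pole-avoidance-by-perturbation unnecessary.
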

Recall that the rotation number $\nu$ of $\ga$ must be positive by (\ref{L:positive}).
\begin{proof}
	By hypothesis, the image of the caustic band $C_\ga$ is contained in a hemisphere, say, 
	\begin{equation*}
		H=\set{p\in \Ss^2}{\gen{p,N}\geq 0}.
	\end{equation*}
Let $\hat\ga$ be the other boundary curve of $B_\ga$, $\hat\ga(t)=B_\ga(t,\rho_0-\pi)$. Then $\hat\ga(t)=-C_\ga(t,\rho_0)\in -H$ for all $t\in [0,1]$. Since $d(\ga(t),\hat\ga(t))=\pi-\rho_0<\frac{\pi}{2}$, $\Im(\ga)\subs H$ and $\Im(\hat\ga)\subs {-}H$, the image of the regular band is actually contained in $\Ss^1\times [\rho_0-\pi,\pi-\rho_0]$ (where we are identifying $\Ss^2\ssm \se{\pm N}$ with $\Ss^1\times (-\frac{\pi}{2},\frac{\pi}{2})$). 
	
	Let $\te{B}_\ga\colon [0,1]\times [\rho_0-\pi,0]\to \Ss^2_\nu$ be the lift of $B_\ga$ to $\Ss^2_\nu \equiv \Ss^1_\nu\times (-\frac{\pi}{2},\frac{\pi}{2})$. For each $z\in \Ss^1_\nu$, let the \tdef{meridian} $\mu_z$ be the geodesic parametrized by $\mu_z(t)=(z,t)$, $t\in (-\frac{\pi}{2},\frac{\pi}{2})$. By what we have just proved and the fact that $\ga$ has rotation number $\nu$, we may define continuous functions $\theta_{\pm}\colon \Ss^1_\nu\to (-\frac{\pi}{2},\frac{\pi}{2})$ by the relations
	\[
	\mu_z(\theta_+(z))\in \te{B}_\ga([0,1]\times \se{0}) \text{\quad and \quad}\mu_z(\theta_-(z))\in \te{B}_\ga([0,1]\times \se{\rho_0-\pi}). 
	\]
	Then the map $A\colon [0,1]\times [0,1] \to \Ss^1_\nu \times (-\frac{\pi}{2},\frac{\pi}{2}) \equiv \Ss^2_\nu$ given by
\begin{equation*}
	A(t,u)=\big(\exp(2\pi \nu it)\?,\?(1-u)\theta_-(t)+u\theta_+(t)\big)\quad (t,\,u\in [0,1])
\end{equation*}
defines an acceptable band whose image coincides with that of $\te{B}_\ga$. Furthermore, the equality $d(\ga(t),\hat{\ga}(t))=\pi-\rho_0$ implies that $d(p,\bd A_{\pm})\leq \pi-\rho_0$ for any $p\in \bd A_{\mp}$. We claim that $A$ is a good band of width $\pi-\rho_0$. To see this, suppose $\eta\colon [0,1]\to \Ss^2_\nu$ is a piecewise $C^1$ curve joining $\bd A_-$ to $\bd A_+$ and write $\eta(u)=\te{B}_\ga(t(u),\theta(u))$. Then the length is minimized when $\theta$ is monotone and $\dot t(u)=0$ for all $u\in [0,1]$, hence the minimal length is $\pi-\rho_0$; for the proof see the similar argument in \cite{tese}, (10.5).
\end{proof}

\begin{lemma}\label{L:contratil}
	The space $\sr A$ is contractible.
\end{lemma}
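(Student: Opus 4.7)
My plan is to construct an explicit strong deformation retraction of $\sr A$ onto a single ``standard'' band. For $A\in\sr A$ with data $(\theta_-,\theta_+)$ and $s\in[0,1]$, set
\[
	\theta_+^s(t)=(1-s)\theta_+(t)+sR,\qquad \theta_-^s(t)=(1-s)\theta_-(t)-sR,
\]
and let $H(s,A)$ be the band whose data is $(\theta_-^s,\theta_+^s)$ via formula \eqref{E:acceptable}. Then $H(0,\cdot)=\id_{\sr A}$, while $H(1,\cdot)$ is the constant map to the flat symmetric band $A^*$ with $\theta_+^*\equiv R$ and $\theta_-^*\equiv-R$; continuity of $H$ in the $C^0$-topology is immediate from the formula. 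The only nontrivial task is to verify that $H(s,A)\in\sr A$ for every $s\in [0,1]$.

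Condition (i) of Definition \ref{D:acceptable} is preserved trivially, since $0\le\theta_+^s\le R$ and $-R\le\theta_-^s\le 0$ are convex combinations of endpoints already lying in those intervals. The heart of the argument is a monotonicity lemma that I would establish first: if $A$ is acceptable and $(\te\theta_-,\te\theta_+)$ is another pair satisfying (i) with $\te\theta_+\ge\theta_+$ and $\te\theta_-\le\theta_-$ pointwise, then the resulting band $\te A$ also satisfies condition (ii). Because the bounds in (i) force $\theta_+^s\ge\theta_+$ and $\theta_-^s\le\theta_-$ for every $s\in[0,1]$, applying the lemma at each $s$ yields the preservation of condition (ii) along the homotopy.

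To prove the monotonicity lemma, take $p\in\bd\te A_+$ and a nearest point $q\in\bd\te A_-$; assuming $d(p,q)<\pi$, join them by a minimizing geodesic $\gamma$ in $\Ss^2_\nu$ and track its latitude coordinate $y$. Since $y(p)\ge 0\ge y(q)$ and $\theta_-(z)\le 0\le\theta_+(z)$ on all of $\Ss^1_\nu$, the continuous function $y(\gamma(t))-\theta_+(z(\gamma(t)))$ starts nonnegative and ends nonpositive, while $y(\gamma(t))-\theta_-(z(\gamma(t)))$ starts nonnegative and ends nonpositive as well. Hence $\gamma$ meets $\bd A_+$ at some point $p'$ and $\bd A_-$ at some point $q'$, and the order $p'$ before $q'$ follows because between the crossings $\gamma$ must pass through the region $\theta_-\le y\le\theta_+$. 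The sub-arc of $\gamma$ from $p'$ to $q'$ therefore has length at least $d(\bd A_+,\bd A_-)\ge R$ by the acceptability of $A$, whence $d(p,q)\ge R$; the symmetric argument from $\bd\te A_-$ to $\bd\te A_+$ completes the verification.

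The main obstacle I foresee is a clean justification of this intermediate-value argument: one has to make sense of the latitude coordinate along a geodesic which might stray near the poles $y=\pm\pi/2$, and to ensure that the crossings of $\bd A_+$ and $\bd A_-$ occur in the claimed order along $\gamma$. Everything else reduces to elementary convexity of the function space of pairs $(\theta_-,\theta_+)$.
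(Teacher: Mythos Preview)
Your homotopy is exactly the one the paper uses: the paper defines the same
\[
\theta^s_+(t)=(1-s)\theta_+(t)+sR,\qquad \theta^s_-(t)=(1-s)\theta_-(t)-sR
\]
and simply asserts that $(A,s)\mapsto A_s$ lands in $\sr A$, without spelling out why condition~(ii) is preserved. You go further and supply a monotonicity lemma (enlarging the band can only increase the distances in (ii)), which is the right justification and is implicit in the paper. So the approach is the same; you are just filling in a detail that the paper leaves to the reader.

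Both obstacles you anticipate dissolve on closer inspection. For the pole issue, you do not need a minimizing geodesic: to show $d(p,q)\geq R$ it is enough to show that \emph{every} path in $\Ss^2_\nu$ from $p\in\bd\te A_+$ to $q\in\bd\te A_-$ has length at least $R$. The latitude coordinate is globally defined on $\Ss^2_\nu$, so your intermediate-value argument applies to an arbitrary path, and no discussion of geodesic existence or poles is needed. For the ordering issue, it is not actually required either: once your argument produces any $p'\in\bd A_+$ and any $q'\in\bd A_-$ on $\gamma$, the portion of $\gamma$ between them (in whichever order they occur) is a path joining $\bd A_+$ to $\bd A_-$, hence of length $\geq d(p',\bd A_-)\geq R$ by the acceptability of $A$; and that portion is no longer than $\gamma$ itself. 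With these two simplifications your verification of condition~(ii) is complete.
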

\begin{proof}
 	Let $A\in \sr A$ be given by \eqref{E:acceptable} and let $s\in [0,1]$. Define a family of acceptable bands $A_s$ by
	\begin{equation*}
		A_s(t,u)=\big(\exp(2\pi \nu it)\?,\?(1-u)\theta^s_-(t)+u\theta^s_+(t)\big),
	\end{equation*}
	where
	\begin{equation*}
		\theta^s_+(t)=(1-s)\theta_+(t)+sR\text{\quad and \quad}\theta^s_-(t)=(1-s)\theta_-(t)-sR
	\end{equation*}
	Then the map $\sr A\times [0,1]\to \sr A$ given by $(A,s)\mapsto A_s$ is a contraction of $\sr A$.
\end{proof}

\begin{lemma}\label{L:retrato}
	The subspace $\sr G$ is a retract of $\sr A$.
\end{lemma}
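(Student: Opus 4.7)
The goal is to construct a continuous map $r\colon \sr A \to \sr G$ that restricts to the identity on $\sr G$. The natural idea is to replace the boundary curves of an acceptable band by curves that are mutually ``parallel'' at distance exactly $R$ on $\Ss^2_\nu$, since good bands are precisely those for which each boundary curve is the $R$-offset of the other.

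Given $A \in \sr A$ with boundary curves $\bd A_\pm$ (the graphs of $\theta_\pm$), I would proceed via an $R$-offset construction. For each $t \in [0,1]$, define $\tilde\theta_+(t)$ to be the unique value characterized by the point $(\exp(2\pi\nu i t), \tilde\theta_+(t))$ lying at distance exactly $R$ from $\bd A_-$ (the upper $R$-offset of $\bd A_-$ along the meridian indexed by $t$), and analogously define $\tilde\theta_-(t)$ as the corresponding lower $R$-offset of $\bd A_+$. The acceptability hypotheses, namely the bounds $-R\leq \theta_-\leq 0\leq \theta_+\leq R$ together with the mutual distance condition $d(p,\bd A_-)\geq R$ for $p\in \bd A_+$ and $d(q,\bd A_+)\geq R$ for $q\in \bd A_-$, are precisely what is needed to guarantee that these offsets are well-defined continuous graphs with values in $[0,R]$ and $[-R,0]$ respectively, so that the resulting data define an acceptable band $r(A)$.

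The crucial verification is that $r(A)$ is actually good: one needs $d(p,\bd r(A)_-) = R$ for every $p \in \bd r(A)_+$ and symmetrically. Since $\tilde\theta_+$ was constructed from the original $\bd A_-$ rather than from $\bd r(A)_-$, a single application of the offset is not automatically symmetric. To remedy this, I would either work symmetrically from the outset (for instance, defining the new boundaries as $R/2$-offsets of the medial set $\{p : d(p,\bd A_+) = d(p,\bd A_-)\}$, which by construction yields two curves at mutual distance $R$), or else iterate the $R$-offset operation and pass to a limit using compactness together with equicontinuity estimates stemming from the uniform bounds on $\theta_\pm$. Continuity of $r$ in the $C^0$ topology then follows from the continuous dependence of the spherical distance function on its arguments.

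Finally, if $A \in \sr G$, then by definition $\bd A_+$ already coincides with the upper $R$-offset of $\bd A_-$ and vice versa, so the construction returns the original boundaries and $r(A) = A$. The main obstacle in this plan is establishing goodness of $r(A)$ for general $A \in \sr A$, that is, choosing the right symmetric formulation so that both distance conditions hold simultaneously; once this is accomplished, continuity of $r$ and the identity property on $\sr G$ are routine.
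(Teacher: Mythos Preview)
Your iterative option is essentially the paper's approach, but the paper supplies a refinement that your sketch is missing. Rather than iterating the bare $R$-offset, the paper defines a \emph{nested decreasing} sequence by alternately trimming with shrinking tolerances: $A^{n+1}=\{p\in A^n: d(p,\bd A^n_{(-1)^n})\leq R+2^{-n}\}$. The extra $2^{-n}$ guarantees that each $A^{n+1}$ is contained in $A^n$ and that the boundary functions $\theta^n_\pm$ form Cauchy sequences in the uniform norm, so $B=\bigcap_n A^n$ exists and is easily checked to be good. If you literally iterate the $R$-offset as you propose, monotonicity and convergence are not clear: the sequence could in principle oscillate, and the ``compactness plus equicontinuity'' argument you allude to has no obvious starting point.

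Your medial-set alternative has a genuine gap. The assertion that the two $R/2$-offsets of $M=\{p:d(p,\bd A_+)=d(p,\bd A_-)\}$ are automatically at mutual distance $R$ is false in general: if $M$ has large geodesic curvature (and nothing in the definition of acceptable band prevents this), normal geodesics from $M$ may focus before reaching distance $R/2$, so two points on opposite offsets can be strictly closer than $R$. Thus the ``symmetric one-shot'' construction does not work without further control on $M$, and the iterative scheme with decreasing tolerances is really needed.
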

\begin{proof}
Let $A\in \sr A$ be given by \eqref{E:acceptable}. Define $A^1=\Im(A)$, $\theta_{\pm}^1=\theta_{\pm}$ and
\begin{equation*}
	\qquad	A^{2}=\set{p\in A^1}{d(p,\bd A^1_{-})\leq R+\tfrac{1}{2}}.
\end{equation*}
We will call a geodesic $\mu_z$ in $\Ss^2_\nu\equiv \Ss^1_\nu\times (-\frac{\pi}{2},\frac{\pi}{2})$ of the form $\se{z}\times (-\frac{\pi}{2},\frac{\pi}{2})$ a \tdef{meridian}, and parametrize it by $\mu_z(t)=(z,t)$. We begin by establishing the following facts:
\begin{enumerate}
	\item [(a)] Each meridian $\mu_z$ intersects $\bd A^2$ at exactly two points $\mu_z(\theta^2_-(z))$ and $\mu_z(\theta^2_+(z))$, with $\theta^2_+\geq 0$ and $\theta^2_-\leq 0$. We define $\bd A^2_{\pm}$ as the set of all $\mu_z(\theta^2_{\pm}(z))$ for $z\in \Ss^1_\nu$.
	\item [(b)] $\bd A^2_-=\bd A^1_-$.
	\item [(c)] $p\in \bd A_{+}^{2}$ if and only if one of the following holds:
		\begin{alignat*}{9}
			 & p\in \bd A^{1}_{+} & & \text{\quad and\quad }d(p,\bd A^1_-)\leq R+\tfrac{1}{2}, \quad \text{or} \\ 
			 & p\in \ring A^1& &\text{\quad and\quad} d(p,\bd A^1_{-})=R+\tfrac{1}{2}.
		\end{alignat*} 
	\item [(d)] The boundary $\bd A^2$ of $A^2$ is the disjoint union of $\bd A^2_+$ and $\bd A^2_-$. Moreover, 
	\begin{equation*}
		R\leq d(p,\bd A^2_-)\leq R+\frac{1}{2}\text{\quad and \quad}R\leq d(q,\bd A^2_+)\leq d(q,\bd A^1_+)
	\end{equation*}
	for any $p\in \bd A^2_+$ and $q\in \bd A^2_-$.  
	\item [(e)] $A^2$ is the (image of) an acceptable band, and the functions in (\ref{D:acceptable}\?(i)) corresponding to $A^2$ are $\theta^2_{\pm}$. Moreover, 
	\begin{equation}\label{E:thetas}
		0\leq \theta^2_+\leq \min\{R+\tfrac{1}{2},\theta^1_+\}\text{\quad and \quad}-R\leq \theta^2_-=\theta^1_-\leq 0.
	\end{equation}
\end{enumerate}

The inclusion $\bd A^1_-\subs \Ss^1_\nu\times [-R,0]$ implies, firstly, that 
\begin{equation}\label{E:fcons}
	A^2\cap \big(\Ss^1_\nu\times [-R,0])=A^1\cap \big(\Ss^1_\nu\times [-R,0]),
\end{equation}
as every point of $A^1\cap \big(\Ss^1_\nu\times [-R,0])$ lies at a  distance less than or equal to $R$ from $\bd A^1_-$. Secondly, it implies that 
\begin{equation*}
	t\mapsto d(\mu_z(t),\bd A^1_-)
\end{equation*} is a monotone decreasing function of $t$ when $t\geq 0$. 

It follows from \eqref{E:fcons} and the properties of $A^1$ that, for any $z\in \Ss^1_\nu$, there exists a unique $\theta^2_-(z)\in [-R,0]$ such that $\mu_z(\theta^2_-(z))\in \bd A^2$, unless $\mu_z(0)\in \bd A^1_+$. In the latter case, $d(\mu_z(0),\bd A^1_-)=R$, $\theta^2_-(z)=-R$ and  $\theta^2_+(z)=0$. If $\mu_z(0)\nin \bd A^1_+$, let $\theta^2_+(z)>0$ be the smallest $t\in (0,R]$ such that either $\mu_z(t)\in \bd A^1_+$ or $d(\mu_z(t),\bd A^1_-)=R+\frac{1}{2}$. Suppose $\mu_z(\theta^2_+(z))\in \bd A^1_+$. Then  $\mu_z(\theta^2_+(z))\in A^2$ (because it lies a distance $\leq R+\frac{1}{2}$ from $\bd A_-^1$), while $\mu_z(t)\nin A^1\sups A^2$ for $t>\theta^2_+(z)$. Thus, $\mu_z(\theta^2_+(z))\in \bd A^2$. If $d(\mu_z(\theta^2_+(z)),\bd A^1_-)=R+\frac{1}{2}$, then again $\mu_z(\theta^2_+(z))\in A^2$ while $\mu_z(t)\nin A^2$ for $t>\theta^2_+(z)$, since, for such $t$, $d(\mu_z(t),\bd A^1_-)>R+\frac{1}{2}$ by the second consequence. Moreover, in both cases $\mu_z(t)$ does not intersect $\bd A^2$ again for $t>0$. This proves (a), (b), (c) and also establishes \eqref{E:thetas}.

Since 
\[
\bd A^2=\bcup_{z\in \Ss^1_\nu}\mu_z\cap \bd A^2,
\]
(a) implies the first assertion of (d). In turn, (b) and (c) together immediately imply that 
\[
R\leq d(p,\bd A^2_-)=d(p,\bd A^1_-)\leq R+\frac{1}{2}
\]
for any $p\in \bd A^2_+$. That $d(q,\bd A^2_+)\leq d(q,\bd A^1_+)$ for any $q\in \bd A^2_-$ follows from the fact that $\bd A^2_+$ lies below $\bd A^1_+$, in the sense that any geodesic joining $\bd A^1_-$ to $\bd A^1_+$ must first intersect a point of $\bd A^2_+$. Indeed, $\theta^2_+(z)\leq \theta^1_+(z)$ for any $z\in \Ss^1_\nu$, as we have already seen in \eqref{E:thetas}. Thus, (d) holds.

By construction, 
\begin{equation*}
	A^2=\set{p\in \Ss^2_\nu\equiv \Ss^1_\nu\times (-\tfrac{\pi}{2},\tfrac{\pi}{2})}{p=(z,\theta)\text{\ for some\ }\theta\in [\theta^2_-(z),\theta^2_+(z)]}.
\end{equation*}
Hence, $A^2$ is the image of the acceptable band given by 
\begin{equation*}
	(t,u)\mapsto \big(\exp(2\pi \nu it)\?,\?(1-u)\theta^2_-(t)+u\theta^2_+(t)\big)\quad (t,\,u\in [0,1]).
\end{equation*}

Using induction and the corresponding versions of items (a)--(e) (whose proofs are the same in the general case), define
\begin{equation*}
\qquad	A^{n+1}=\set{p\in A^n}{d(p,\bd A^n_{(-1)^n})\leq R+2^{-n}}\qquad (n\in \N).
\end{equation*} 
Finally, let $B=\bcap_{n=1}^{+\infty}A^n$. We claim that $B$ is the image of a good band. 

Given $N\in \N$ and $m,n> N$, we have 
\begin{equation*}
	\abs{\theta^n_{\pm}(z)-\theta^m_{\pm}(z)}\leq 2^{-N+1}\text{\ \ for any $z\in \Ss^1_\nu$}
\end{equation*}
by construction. Therefore, $\theta^n_{+}\decr \theta_+$ and $\theta^n_-\incr \theta_-$ for some functions $\theta_{\pm}\colon \Ss^1_\nu\to [-R,R]$, which are continuous as the uniform limit of continuous functions. Moreover, $B$ is the image of the map
\begin{equation*}
	(t,u)\mapsto \big(\exp(2\pi \nu it)\?,\?(1-u)\theta_-(t)+u\theta_+(t)\big)\quad (t,\,u\in [0,1]),
\end{equation*}
again by construction. We claim that $d(x,\bd B_{\pm})=R$ for any $x\in \bd B_{\mp}$. Suppose for a contradiction that $d(p,\bd B_-)<R$ for some $p\in \bd B_+$, and let $pq$ be a geodesic of length $d(p,\bd B_-)$, with $q\in \bd B_-$. Choose neighborhoods $U\ni p$ and $V\ni q$ such that $d(x,y)<R$ for any $x\in U$, $y\in V$. Since $p,q\in \bd B_{\pm}$, by choosing a sufficiently large $n\in \N$, we may find $x\in \bd A^n_+\cap U$ and $y\in \bd A^n_-\cap V$ with $d(x,y)<R$, a contradiction. Similarly, if $d(p,\bd B_-)=R+\eps$ for some $\eps>0$, choose neighborhoods $U\ni p$ and $V\ni q$ such that $d(x,y)\geq R+\frac{\eps}{2}$ for any $x\in U$ and $V\ni q$. Let $N\in \N$ be so large that $2^{-N}<\frac{\eps}{2}$. Since $p,q\in \bd B_{\pm}$, we may find some $n>2N$ and $x\in \bd A^n_+\cap U$, $y\in \bd A^n_-\cap V$. Then $d(x,y)\geq R+ \frac{\eps}{2}>R+2^{-N}$, again a contradiction. The assumption that $d(q,\bd B_+)\neq R$ for some $q\in \bd B_-$ also yields a contradiction. We conclude that $B$ is a good band of width $R$.

If $r\colon \sr A\to \sr G$ is the map which associates to an acceptable band $A$ the good band $B$ obtained by the process described above, then $r(A)=A$ whenever $A\in \sr G$. In addition, we see by induction that the map $A\mapsto A^n$ is continuous on $\sr A$ for every $n\in \N$. Given $\eps>0$, we can arrange that $\norm{A^n-A^m}_{C^0}<\eps$ for any $A\in \sr A$ by choosing $m,\,n\geq N$ and a sufficiently large $N\in \N$. Hence, $r\colon \sr A\to \sr G$ is a retraction.
\end{proof}

\begin{cor}\label{C:contratil}
	The space $\sr G$ is contractible.
\end{cor}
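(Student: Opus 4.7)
The plan is simply to combine the two preceding results. By Lemma \ref{L:contratil}, the space $\sr A$ is contractible, and by Lemma \ref{L:retrato}, $\sr G$ is a retract of $\sr A$. A retract of a contractible space is itself contractible, and this yields the conclusion at once.

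Concretely, I would argue as follows. Let $r\colon \sr A \to \sr G$ be the retraction constructed in Lemma \ref{L:retrato}, and let $i\colon \sr G \inc \sr A$ denote the inclusion, so that $r\circ i = \id_{\sr G}$. Let $H\colon [0,1]\times \sr A \to \sr A$ be a contraction of $\sr A$ onto some point $A_0\in \sr A$, as furnished by Lemma \ref{L:contratil}; thus $H(0,A)=A$ and $H(1,A)=A_0$ for every $A\in \sr A$. Then the continuous map
\begin{equation*}
	G\colon [0,1]\times \sr G \to \sr G,\quad G(s,B) = r\bigl(H(s,i(B))\bigr),
\end{equation*}
satisfies $G(0,B) = r(i(B)) = B$ and $G(1,B) = r(A_0)$ for every $B\in \sr G$. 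Hence $G$ is a contraction of $\sr G$ onto the good band $r(A_0)\in \sr G$, which shows that $\sr G$ is contractible.

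There is no obstacle in this corollary; all the real work was carried out in Lemmas \ref{L:contratil} and \ref{L:retrato}, and the statement is a formal consequence of the existence of a retraction $\sr A\to \sr G$ together with the contractibility of $\sr A$.
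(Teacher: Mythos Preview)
Your proof is correct and matches the paper's approach exactly: the paper simply states that the corollary is an immediate consequence of Lemmas \ref{L:contratil} and \ref{L:retrato}, and you have spelled out the standard argument that a retract of a contractible space is contractible.
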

\begin{proof}
	This is an immediate consequence of (\ref{L:contratil}) and (\ref{L:retrato}).
\end{proof}

\begin{defn}
Let $B$ be a good band of width $R$. A \tdef{track} of $B$ is a curve on $\Ss^2_\nu$ of length $R$ joining a point of $\bd B_+$ to a point of $\bd B_-$. 
\end{defn}
In other words, a track is a length-minimizing geodesic joining $\bd B_+$ to $\bd B_-$; in particular, it is a smooth curve. Also, if $\Ga_1$, $\Ga_2$ are tracks through $p\in \bd B_+$ and $q\in \bd B_-$ then $\Ga_1=\Ga_2$, since two geodesics on $\Ss^2$ intersect at a pair of antipodal points, and  $p$ and $q$ do not map to the same point nor to a pair of antipodal points on $\Ss^2$ under the covering map.
\begin{lemma}\label{L:nocrossing}
	Let $B$ be a good band. Then two tracks of $B$ cannot intersect at a point lying in $\ring B$.
\end{lemma}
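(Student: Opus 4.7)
The plan is to argue by contradiction. Suppose two distinct tracks $\Ga_1,\Ga_2$ of $B$ meet at some $x\in \ring B$; let $\Ga_i$ join $p_i\in\bd B_+$ to $q_i\in \bd B_-$, so that each $\Ga_i$ is a length-minimizing geodesic of length $R$. I will rule out two cases, depending on whether $\Ga_1$ and $\Ga_2$ share the same unit tangent vector at $x$.

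\textbf{Case 1: $\Ga_1$ and $\Ga_2$ have the same unit tangent vector at $x$.} Then they lie on a common underlying geodesic $\Sig\subs \Ss^2_\nu$. Parametrize $\Sig$ by arc-length so that $x$ corresponds to parameter $0$ and the common forward tangent points in the direction of increasing parameter. Then $\Ga_i$ covers the interval $[a_i,a_i+R]$ with $a_i=-d(p_i,x)\in[-R,0]$, and $a_i=0$ would force $p_i=x\in\ring B$, contradicting $p_i\in\bd B_+$; hence $a_i<0$. If $\Ga_1\neq \Ga_2$, then $a_1\neq a_2$; assume WLOG $a_1<a_2$. As $a_2>-R=a_1+R-R\geq a_1$ and the endpoints of the interval are forced to be strictly inside $(-R,R)$, one checks that $a_2<a_1+R$. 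The subarc of $\Sig$ from parameter $a_2$ to $a_1+R$ connects $p_2\in\bd B_+$ to $q_1\in\bd B_-$ and has length $(a_1+R)-a_2<R$, so $d(p_2,q_1)<R$. This contradicts the good band condition $d(p_2,\bd B_-)=R$.

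\textbf{Case 2: $\Ga_1$ and $\Ga_2$ have distinct unit tangent vectors at $x$.} Consider the two broken paths
\[
\alpha_{12}\colon p_1\xrightarrow{\Ga_1} x\xrightarrow{\Ga_2} q_2,\qquad \alpha_{21}\colon p_2\xrightarrow{\Ga_2}x\xrightarrow{\Ga_1}q_1.
\]
Their total lengths sum to $\big(d(p_1,x)+d(x,q_1)\big)+\big(d(p_2,x)+d(x,q_2)\big)=2R$, so at least one of them, say $\alpha$, has length $\leq R$. Connecting $\alpha$'s endpoints in $\bd B_+\cup\bd B_-$, the corner at $x$ arrives with unit tangent $v$ and leaves with unit tangent $w$, where $v\neq w$. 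Note that also $v\neq -w$: otherwise $\Ga_1$ and $\Ga_2$ would traverse the same geodesic in opposite directions, forcing $\{p_i\}=\{q_{3-i}\}$ and violating $\bd B_+\cap\bd B_-=\emptyset$.

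\textbf{The shortcut step (main technical point).} Since $x\in \ring B$ and $R<\pi/2$ (recall the width satisfies $R=\pi-\rho_0$ with $\rho_0\in(\pi/2,\pi)$ when $\ka_0<0$, and more generally $R$ is bounded by the length of a track), a small geodesic ball around $x$ is strictly convex and isometric to a ball in $\Ss^2$. Choose $y$ on $\alpha$ slightly before $x$ and $z$ on $\alpha$ slightly after $x$, both at arclength distance $\eps>0$ from $x$ along $\alpha$. The triangle $yxz$ has sides of length $\eps,\eps$ and angle at $x$ equal to the angle between $-v$ and $w$, which lies in $(0,\pi)$ since $v\neq\pm w$. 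The spherical law of cosines then gives $d(y,z)<2\eps$ strictly. Replacing the arc of $\alpha$ between $y$ and $z$ with a minimizing geodesic from $y$ to $z$ yields a path $\alpha'$ from $p_{\bullet}\in\bd B_+$ to $q_{\bullet}\in\bd B_-$ of length strictly less than $L(\alpha)\leq R$. Hence $d(p_{\bullet},\bd B_-)<R$, contradicting the good band property.

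The main obstacle, as sketched above, is the shortcut step, which requires a clean local-to-global argument; this is straightforward once one observes that distances involved are below the injectivity radius of $\Ss^2_\nu$, so that the spherical strict triangle inequality applies and the corner is genuinely shortenable.
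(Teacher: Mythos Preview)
Your argument is on the right track and shares the paper's core idea (form a broken path through $x$ and either it is already shorter than $R$, or shortcut it at the corner), but the dismissal of the case $v=-w$ in Case~2 is not properly justified. If $\Ga_1$ and $\Ga_2$ lie on the same geodesic $\Sigma$ through $x$ but are oriented oppositely, the endpoints need not coincide: with $s_i=d(p_i,x)\in(0,R)$, one finds $p_1,q_2$ on one side of $x$ along $\Sigma$ and $p_2,q_1$ on the other, at distances $s_1,\ R-s_2$ and $s_2,\ R-s_1$ from $x$ respectively. The fix is immediate, however: the distance along $\Sigma$ from $p_1$ to $q_2$ is $|R-s_1-s_2|<R$ (since $s_1,s_2\in(0,R)$), contradicting $d(p_1,\bd B_-)=R$. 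So this subcase can simply be absorbed into your Case~1 analysis rather than excluded by the endpoint-coincidence claim.

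For comparison, the paper organizes the proof by a different trichotomy: it compares $d(x,q_1)$ with $d(x,q_2)$. When these are unequal, the shorter of the two broken paths $p_1\!\to\!x\!\to\!q_2$ and $p_2\!\to\!x\!\to\!q_1$ already has length strictly less than $R$, and no shortcut is needed at all. Only in the equality case does one invoke the corner-cutting (strict triangle inequality) argument, and there the angle at $x$ cannot be $\pi$ since that would force the two tracks to coincide. This split sidesteps the same-geodesic analysis entirely and is a bit more economical than your tangent-vector dichotomy.
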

\begin{proof}
	Suppose for the sake of obtaining a contradiction that two tracks $p_1q_1$ and $p_2q_2$, with $p_i\in \bd B_+$ and $q_i\in \bd B_-$, intersect at a point $x\in \ring B$ (see fig.~\ref{F:tracks}). Then one of the following must occur (here $ab$ denotes the segment of the corresponding geodesic and also its length):

\begin{figure}[ht]
	\begin{center}
		\includegraphics[scale=.36]{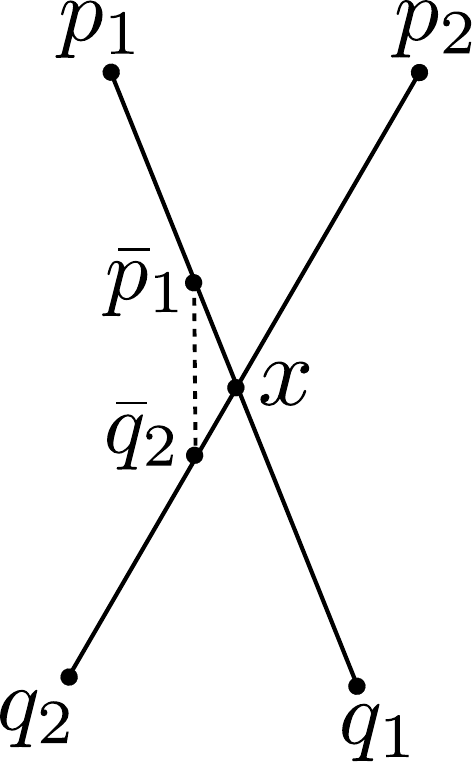}
		\caption{}
		\label{F:tracks}
	\end{center}
\end{figure}

\begin{enumerate}
	\item [(i)] $xq_1=xq_2$; 
	\item [(ii)]  $xq_1>xq_2$;
	\item [(iii)] $xq_1<xq_2$.
\end{enumerate}

If (i) holds, let $\bar{p}_1,\bar{q}_2$ be points on $p_1x$ and $xq_2$, respectively, which lie in a normal neighborhood of $x$. Then, by the triangle inequality,
\begin{equation*}
	R=p_1q_1=p_1x+xq_2>p_1\bar{p}_1+\bar{p}_1\bar{q}_2+\bar{q}_2q_2.
\end{equation*}
This contradicts the fact that $B$ is a good band of width $R$.

If (ii) holds then $R=p_1q_1>p_1x+xq_2$. Again, this contradicts the fact that $p_1q_1$ is a path of minimal length joining $p_1$ to $\bd B_-$. Similarly, if (iii) holds then $R=p_2q_2>p_2x+xq_1$, contradicting the fact that $p_2q_2$ is a path of minimal length joining $p_2$ to $\bd B_-$.
\end{proof}

\begin{urem}
	Note that this result may be false for an acceptable band. In the proof, we have implicitly used the fact that if $pq$ is a path of minimal length joining $p\in \bd B_+$ to $\bd B_-$ then $pq$ is also a path of minimal length joining $q$ to $\bd B_+$, and this is not necessarily true for an acceptable band.
\end{urem}

\begin{lemma}\label{L:uniquetrack}
	Every point in the interior of a good band $B$ lies in a unique track of $B$.
\end{lemma}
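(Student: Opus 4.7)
The plan is to prove existence and uniqueness separately. Uniqueness is essentially immediate from Lemma~(\ref{L:nocrossing}): any two tracks sharing an interior point $x \in \ring B$ would have to coincide, since distinct tracks cannot meet in $\ring B$. The real content of the lemma is existence, and my strategy there is to construct a continuous family of tracks parametrized by $\bd B_+$ whose union exhausts $\bar B$.

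First, I would produce a track through each boundary point. Given $p \in \bd B_+$, compactness of $\bd B_-$ and continuity of the distance function yield $q_p \in \bd B_-$ with $d(p, q_p) = d(p, \bd B_-) = R$; the unique minimizing geodesic from $p$ to $q_p$ has length $R$, and is therefore a track $T_p$. A short computation with the triangle inequality then shows that every point $y = T_p(s)$ along an arclength-parametrization satisfies $d(y, \bd B_+) = s$ and $d(y, \bd B_-) = R - s$: indeed, if $d(y, \bd B_+) < s$ were realized by some $p' \in \bd B_+$, then $d(p', q_p) < s + (R - s) = R$, contradicting $d(p', \bd B_-) = R$. This ensures $T_p \subset \bar B$. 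Next I would show $T_p$ is the \emph{only} track from $p$: two distinct tracks from $p$ would end at distinct points $q \neq q'$ of $\bd B_-$ (since two distinct geodesics of length $R < \pi/2$ from $p$ cannot share an endpoint, because $R$ is far shorter than the distance to the antipode $-p$), and Lemma~(\ref{L:nocrossing}) would force them to meet only at $p$; a local argument exploiting that both realize the minimum distance $R$ from $p$ to $\bd B_-$ would then rule out this configuration.

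With $p \mapsto T_p$ well-defined, I would assemble the continuous map $\Phi \colon \bd B_+ \times [0, R] \to \bar B$, $\Phi(p, s) = T_p(s)$, whose continuity follows from standard stability of short minimizing geodesics on the sphere with respect to their endpoints. Surjectivity of $\Phi$ then proves existence. The image of $\Phi$ is compact, hence closed in $\bar B$; near an interior point $\Phi(p_0, s_0)$, the derivatives of $\Phi$ along $p$ and along $s$ span a $2$-dimensional subspace (the $s$-direction is the track direction, transverse to $\bd B_+$), so $\Phi$ is locally a diffeomorphism onto a neighborhood in $\bar B$, making the image open in $\ring B$. Connectedness of $\bar B$ then forces $\Phi$ to be onto, completing the proof. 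The main obstacle I anticipate is the uniqueness of $T_p$ at boundary points: without it the map $\Phi$ is not well-defined and the foliation picture collapses, so this step—which genuinely requires the symmetric minimization built into the ``good band'' definition together with the bound $R < \pi/2$—is the heart of the argument.
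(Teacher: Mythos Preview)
Your approach has two genuine gaps. First, the boundaries $\bd B_{\pm}$ are only continuous: in the definition of an acceptable band the functions $\theta_{\pm}$ are merely assumed continuous, so there is no reason $\bd B_+$ should be differentiable. Your openness argument for the image of $\Phi$ invokes ``derivatives of $\Phi$ along $p$'', which simply do not exist in this generality. (This could perhaps be repaired via invariance of domain if $\Phi$ were known to be a well-defined, continuous injection on $\bd B_+\times(0,R)$, but that brings you to the second, more serious, problem.) Second, you correctly flag the uniqueness of the track from a given $p\in\bd B_+$ as the ``heart of the argument'', yet you do not prove it; the phrase ``a local argument exploiting that both realize the minimum distance'' is not a proof. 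Without smoothness of $\bd B_-$, first-variation orthogonality is unavailable, and the good-band symmetry alone does not obviously rule out two distinct nearest points $q_1,q_2\in\bd B_-$ to a single $p$. Without this uniqueness, $\Phi$ is not well defined and the whole construction collapses.

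The paper sidesteps both obstacles: it never needs uniqueness at boundary points, and it never differentiates along $\bd B_{\pm}$. Instead it characterizes the set $T$ of points lying on some track by the equation $d(a,\bd B_+)+d(a,\bd B_-)=R$, which immediately shows $T$ is closed; it then argues by contradiction that if some $x\in\ring B$ lay in a component $V$ of $\ring B\ssm T$, one could choose two distinct tracks touching $\bd V$, pick a point $q\in\bd B_-$ separated from $x$ by neither of them, and observe that the track through $q$ (which exists by compactness) must enter $V$ by Lemma~(\ref{L:nocrossing}), contradicting $V\cap T=\emptyset$. This topological trapping argument is robust under the mere continuity of $\theta_{\pm}$.
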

\begin{figure}[ht]
	\begin{center}
		\includegraphics[scale=.32]{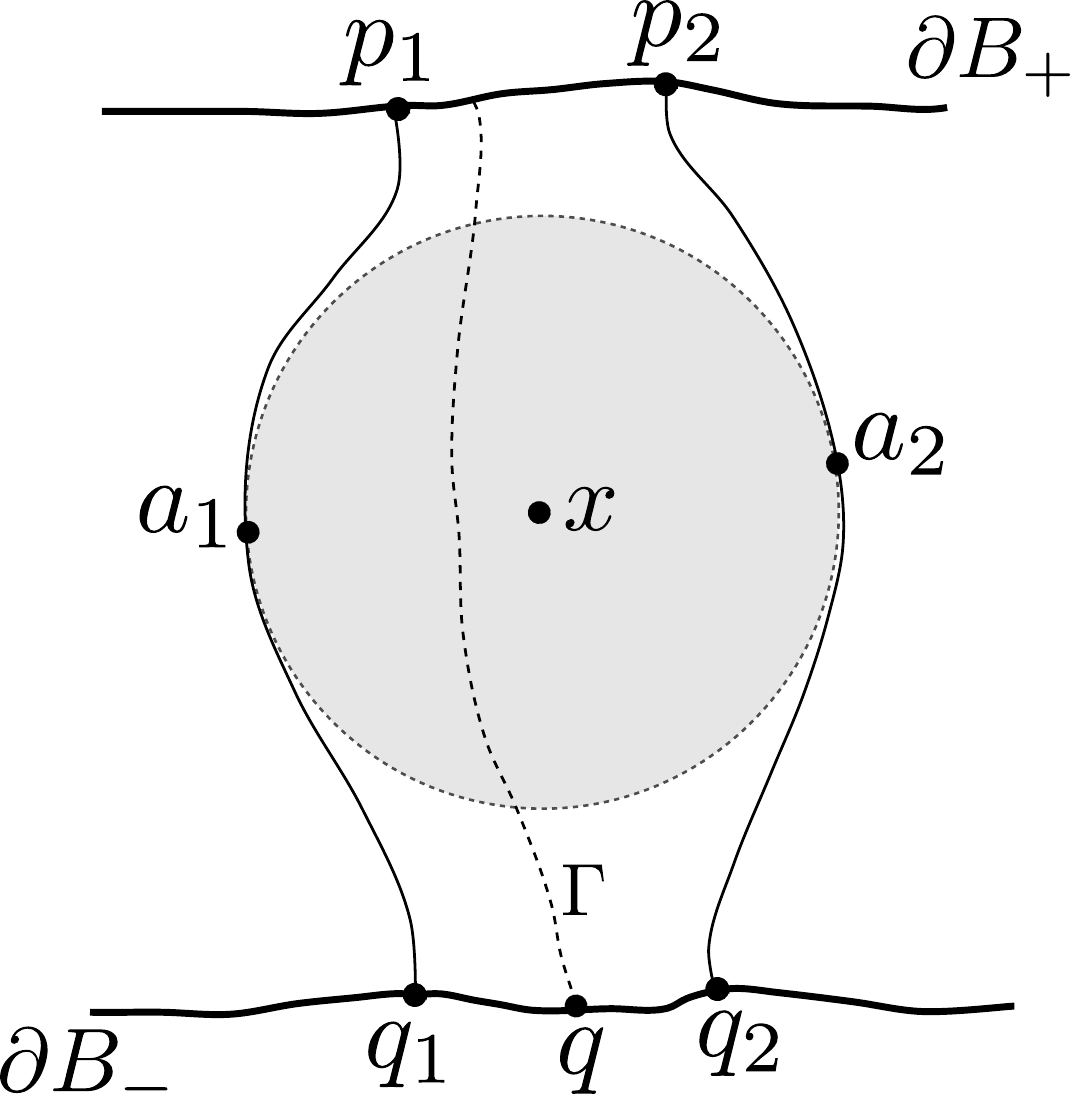}
		\caption{}
		\label{F:tracks2}
	\end{center}
\end{figure}
\begin{proof}
	Let $R$ be the width of $B$ and let $T\subs \Im(B)$ consist of all points which lie on some track of $B$. It is clear from the definitions that $\bd B_{\pm}\subs T$. We claim that $a\in T$ if and only if 
	\begin{equation}\label{E:sum}
		d(a,\bd B_+)+d(a,\bd B_-)=R
	\end{equation}
	The existence of a track through $a$ implies that $d(a,\bd B_+)+d(a,\bd B_-)\leq R$. If the inequality were strict, then there would exist a path of length less than $R$ joining $\bd B_+$ to $\bd B_-$, which is impossible. Conversely, suppose \eqref{E:sum} holds, and let $p\in \bd B_+$, $q\in \bd B_-$ be the points of $\bd B_+$ (resp.~$\bd B_-$) which are closest to $a$. Then the concatenation of the geodesics $pa$ and $aq$ is a path of length $R$ joining $\bd B_+$ to $\bd B_-$, i.e., a track. Hence, $a\in T$. 
	
	The characterization of $T$ that we have established implies that the latter is a closed set. Now suppose that $x\nin T$, let $V$ be the component of $\ring B\ssm T$ containing $x$ (see fig.~\ref{F:tracks2}, where $V$ is depicted as a gray open ball). Since $T$ is closed, any point in $\bd V$ lies in $T$. Choose points $a_1,a_2\in \bd V\ssm (\bd B_+\cup \bd B_-)$ such that the (unique) tracks $p_iq_i$ through $a_i$ do not coincide, where $p_i\in \bd B_+$ and $q_i\in \bd B_-$ ($i=1,2$). Such points $a_i$ exist because otherwise $V= \ring B$, which is absurd since any point on a track lies in $T$. Because the tracks are distinct, at least one of $p_1\neq p_2$ or $q_1\neq q_2$ must hold. Assume without loss of generality that $q_1\neq q_2$, and let $q\in \bd B_-$ be such that it is possible to join $q$ to $x$ in $\Im(B)$ without crossing $p_1q_1$ nor $p_2q_2$. Let $\Ga$ be a track through $q$. Then $\Ga$ joins $q$ to $\bd B_+$, but it does not intersect $p_1q_1$ nor $p_2q_2$ by  (\ref{L:nocrossing}). It follows that $\Ga$ must contain points of $V$, a contradiction which shows that  $T=\Im(B)$. In other words,  every point of $\Im(B)$ lies in a track of $B$; uniqueness has already been established in (\ref{L:nocrossing}).
\end{proof}

\begin{lemma}\label{L:central}
	Let $B$ be a good band of width $R$ and let $0<r<R$. Then the set $\ga_r$ consisting of all those points in $\ring B$ at distance $r$ from $\bd B_+$ is (the image of) a closed admissible curve whose radius of curvature $\rho$ satisfies $r\leq \rho\leq \pi-R+r$ almost everywhere.
\end{lemma}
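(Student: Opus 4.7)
The plan is to realize $\gamma_r$ as a closed regular curve whose tangent field comes from the track foliation of $B$, and then to derive the radius-of-curvature bound from a comparison with two disks of radii $r$ and $R-r$ tangent to $\gamma_r$ at each of its points. Throughout, I exploit Lemma (\ref{L:uniquetrack}), which asserts that every point of $\ring B$ lies on a unique track.

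First I note that $\gamma_r\subset\ring B$, since each $y\in\gamma_r$ has $d(y,\bd B_+)=r>0$ and, arguing along the track through $y$, $d(y,\bd B_-)\geq R-r>0$. For each $y\in\gamma_r$ let $\Gamma_y$ denote the unique track through $y$, with endpoints $p(y)\in\bd B_+$ and $q(y)\in\bd B_-$. I would establish continuity of $y\mapsto(p(y),q(y))$ by the following compactness--uniqueness argument: if $y_n\to y$ in $\gamma_r$ and, along a subsequence, $p(y_n)\to p^*$ and $q(y_n)\to q^*$, then the tracks $\Gamma_{y_n}$ converge to a geodesic segment of length $R$ joining $p^*\in\bd B_+$ to $q^*\in\bd B_-$ and passing through $y$; since this geodesic realizes the width of the good band, it is itself a track through $y$, and by the uniqueness part of (\ref{L:uniquetrack}) it must coincide with $\Gamma_y$, forcing $p^*=p(y)$ and $q^*=q(y)$.

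Next, since $\gamma_r$ is a level set of the distance function $f(z)=d(z,\bd B_+)$, whose gradient in $\ring B$ has unit length along the tracks, and since $\bd B_+$ is a closed connected loop wrapping $\nu$ times around $\Ss^1_\nu$, the set $\gamma_r$ is itself a closed connected loop. At each $y\in\gamma_r$ the tangent direction to $\gamma_r$ is orthogonal to $\Gamma_y$; combined with the continuity of $y\mapsto\Gamma_y$ just established, this yields a continuous unit tangent field $\ta$ along $\gamma_r$. Reparametrizing by arc-length produces a closed $C^1$ regular parametrization with image $\gamma_r$. For the curvature estimate, fix $y\in\gamma_r$ and set $p=p(y)$, $q=q(y)$. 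For any $z$ in the open metric disk $D(p;r)\subset\Ss^2_\nu$, we have $d(z,\bd B_+)\leq d(z,p)<r$, so $D(p;r)\cap\gamma_r=\emptyset$; its boundary circle $\bd D(p;r)$ passes through $y$ and is tangent to $\gamma_r$ there, since the radius $\Gamma_y|_{[0,r]}$ is perpendicular to both. Likewise $D(q;R-r)\cap\gamma_r=\emptyset$ and $\bd D(q;R-r)$ is tangent to $\gamma_r$ at $y$, from the opposite side. Orienting these circles consistently with the normal of $\gamma_r$, $\bd D(p;r)$ has radius of curvature $r$ while $\bd D(q;R-r)$ has radius of curvature $\pi-(R-r)=\pi-R+r$. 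Since $\gamma_r$ is locally trapped between these two tangent circles, at any point where it admits an osculating circle the radius of curvature $\rho$ satisfies $r\leq\rho\leq\pi-R+r$. This uniform two-sided control forces $\ta$ to be Lipschitz in arc-length, so $\Phi_{\gamma_r}\in H^1$ and the measurable geodesic curvature of $\gamma_r$ lies in the bounded interval $[\cot(\pi-R+r),\cot(r)]$ almost everywhere; hence $\gamma_r$ is admissible.

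I expect the main obstacle to be the rigorous continuity of the track map in the absence of any regularity of $\bd B_\pm$ (which are only known to be continuous), and then converting the geometric disk-comparison into the $H^1$ regularity demanded by the definition of admissibility. Both ultimately rest on the uniform two-sided trapping by tangent disks of radii $r$ and $R-r$, together with the uniqueness of tracks supplied by (\ref{L:uniquetrack}).
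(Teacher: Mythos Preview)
Your approach is correct and takes a genuinely different route from the paper's. The paper first defines unit fields $\no,\ta$ on $\ring B$ via the track directions and proves they are Lipschitz on compact subsets by spherical trigonometry: for nearby $a_0,a_1\in\ring B$, the tracks $\Ga_{a_0},\Ga_{a_1}$ extend to meet at a vertex $a_2$ lying outside $\ring B$ (by (\ref{L:nocrossing})), and the law of sines in $\tri a_0a_1a_2$ bounds $\sin a_2/\sin(a_0a_1)$, hence the angle between the track directions, by a constant depending only on $d(\se{a_0,a_1},\bd B_\pm)$. Then $\ga_r$ is taken to be the integral curve of $\ta$; it is closed because $d(\cdot,\bd B_+)$ is constant along it, and admissible because $\Phi_{\ga_r}$ has Lipschitz entries. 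Finally, the curvature bound comes not from a disk comparison but from the translation structure already set up in \S\ref{S:belowandabove}: for each $\theta\in(r-R,r)$ the curve $\ga_{r-\theta}$ is precisely the translation of $\ga_r$ by $\theta$ in the sense of \eqref{E:translation}, and since every such translation is regular, \eqref{E:partialt2} forces $0<\rho(t)-\theta<\pi$ for all such $\theta$, whence $r\le\rho\le\pi-R+r$. Your compactness--uniqueness argument for track continuity replaces the law-of-sines step, and your rolling-disk argument (which is correct: the good-band identity $d(\cdot,\bd B_+)+d(\cdot,\bd B_-)=R$ on $\ring B$ really does keep $\ga_r$ out of $D(q;R-r)$) replaces the translation step, yielding both the curvature bound and, with the extra work you flag, the Lipschitz regularity of $\ta$. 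The paper's route has the advantage of obtaining regularity first, independently of the curvature estimate, and of reusing machinery already developed; yours is more self-contained and geometric.
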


\begin{proof}
	For $p\in \ring B$, let $\Ga_p\colon [0,R]\to \Ss^2_\nu$ denote the unique track through $p$, parametrized by arc-length, with $\Ga_p(0)\in \bd B_-$ and $\Ga_p(R)\in \bd B_+$. Define vector fields $\no$ and $\ta$ on $\ring B$ by letting $\no(p)$ be the unit tangent vector to $\Ga_p$ at $p$ and $\ta(p)=\no(p)\times p$. We claim that the restriction of $\no$ (and consequently that of $\ta$) to any compact subset $K$ of $\ring B$ satisfies a Lipschitz condition. Let $d_0<\min\{d(K,\bd B_+)\?,\?d(K,\bd B_-)\}$, let $a_0,a_1\in K$, with $a_1$ close to $a_0$, and consider the (spherical) triangle having $\Ga_{a_0}$, $\Ga_{a_1}$, $a_0a_1$ as sides and $a_0$, $a_1$, $a_2$ as vertices (see fig.~\ref{F:tracks3}). The point $a_2$ must lie outside of $\ring B$ by (\ref{L:nocrossing}). Let $p_0$ be the point where the geodesic segment $a_0a_2$ intersects $\bd B_{\pm}$. Then
	\[
	a_0a_2\geq a_0p_0\geq  d_0.
	\]
	 Hence, by the law of sines (for spherical triangles) applied to $\tri a_0a_1a_2$,
	\begin{equation*}
		\frac{\sin a_2	}{\sin(a_0a_1)}=\frac{\sin{a_1}}{\sin(a_0a_2)}\leq \frac{1}{\sin d_0},
	\end{equation*}
	Using parallel transport we may compare
	\begin{equation*}
		\frac{\angle (\no(a_0),\no(a_1))}{a_0a_1}\text{\quad with \quad} \frac{\san a_2}{a_0a_1}\approx \frac{\sin a_2}{\sin(a_0a_1)}
	\end{equation*} 
	to obtain a Lipschitz condition satisfied by the former, but we omit the computations.
	\begin{figure}[ht]
		\begin{center}
			\includegraphics[scale=.30]{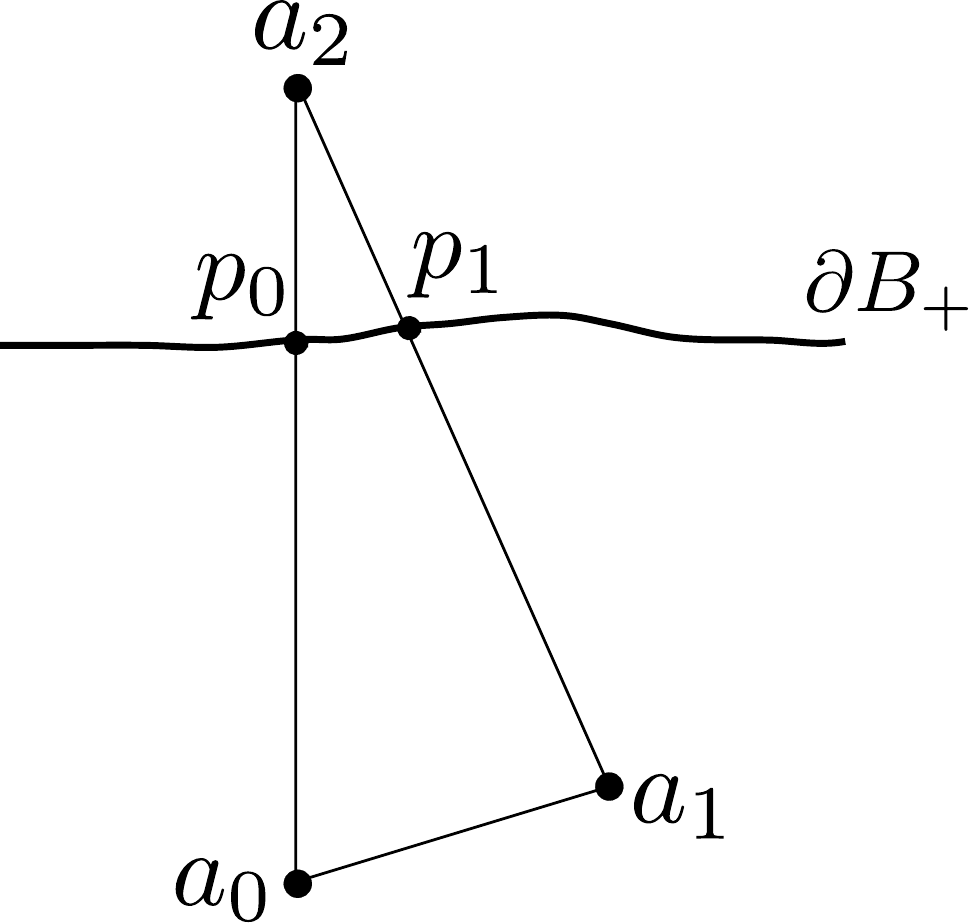}
			\caption{}
			\label{F:tracks3}
		\end{center}
	\end{figure}
	
	Now given $p\in \ring B$ at distance $r$ from $\bd B_+$, $0<r<R$, let $\ga_r$ be the integral curve through $p$ of the vector field $\ta$. Then $\ga_r$ is parametrized by arc-length and its frame is given by
	\begin{equation*}
		\Phi_{\ga_r}(t)=\begin{pmatrix}
		| & | & | \\
		\ga_r(t) & \ta(\ga_r(t)) & \no(\ga_r(t)) \\
		| & | & |
		\end{pmatrix}
	\end{equation*}
	by construction. If $d(t)=d(\ga_r(t),\bd B_+)$ then $\dot d\equiv 0$, since $\ta(\ga_r(t))$ is orthogonal to the track through $\ga_r(t)$ for every $t$. Hence $d$ is constant, equal to $r$, and $\ga_r$ is a closed curve. Moreover, since $\ta$ and $\no$ satisfy a Lipschitz condition when restricted to the image of $\ga_r$, we see that the entries of $\Phi_{\ga_r}$ are absolutely continuous with bounded derivative. In particular, these derivatives belong to $L^2$. We conclude that $\ga_r$ is admissible.
	
	For $r-R<\theta<r$, the curve $\ga_{r-\theta}$ is the translation of $\ga_r$ by $\theta$ (as defined on p.~\pageref{E:translation}, eq.~\eqref{E:translation}) by construction. Since $\ga_{r-\theta}$ is also regular, we deduce from \eqref{E:partialt2} in (\ref{L:band}) that the radius of curvature $\rho$ of $\ga_r$ satisfies
	\begin{equation*}
		0<\rho(t)-\theta<\pi
	\end{equation*}
	for all $t$ at which $\rho$ is defined and all $\theta$ in $(r-R,r)$. Therefore, $r\leq \rho\leq \pi-R+r$ a.e.. 
\end{proof}
	
	\begin{cor}\label{C:central}
	Let $B$ be a good band of width $R$. Then the central curve $\ga_{\frac{R}{2}}$ is an admissible curve whose radius of curvature is restricted to $\big[\frac{R}{2},\pi-\frac{R}{2}\big]$.\hfill\qed
\end{cor}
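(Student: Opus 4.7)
The corollary is an immediate specialization of Lemma (\ref{L:central}). The plan is simply to set $r = R/2$ in the hypothesis of that lemma. Since $0 < R/2 < R$ (as $R > 0$ because $B$ is a good band of positive width), the lemma applies and produces a closed admissible curve $\ga_{R/2}$, namely the locus of points in $\ring B$ at distance $R/2$ from $\bd B_+$.

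It remains only to substitute into the curvature bound: the lemma asserts that the radius of curvature $\rho$ of $\ga_r$ satisfies $r \le \rho \le \pi - R + r$ almost everywhere. With $r = R/2$ this reads
\begin{equation*}
\frac{R}{2} \le \rho(t) \le \pi - R + \frac{R}{2} = \pi - \frac{R}{2} \quad \text{for a.e.\ } t,
\end{equation*}
which is exactly the claimed restriction. There is no obstacle to overcome: all geometric and analytic content (existence of tracks, Lipschitz estimate on $\no$, admissibility, and the two-sided curvature estimate) has already been absorbed in Lemma (\ref{L:central}), and the corollary is merely the symmetric midpoint case in which the upper and lower bounds on $\rho$ are equidistant from $\pi/2$.
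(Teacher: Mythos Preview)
Your proposal is correct and matches the paper's approach exactly: the corollary is stated with a \qed symbol and no proof, because it is the immediate substitution $r=R/2$ in Lemma~(\ref{L:central}), precisely as you describe.
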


Before finally presenting a proof of (\ref{P:negativecondensed}), we extend the definition of the regular band of a curve to any space $\sr L_{\ka_1}^{\ka_2}$. 
\begin{defn}
Let $\ga\in \sr L_{\ka_1}^{\ka_2}$. The \tdef{\tup(regular\tup) band} $B_\ga$ spanned by $\ga$ is the map:
	\begin{equation*}
			B_\ga\colon [0,1]\times [\rho_1-\pi,\rho_2]\to \Ss^2,\quad B_\ga(t,\theta)=\cos \theta\, \ga(t)+\sin \theta\, \no(t).
	\end{equation*}
\end{defn}
The statement and proof of (\ref{L:band}) still hold, except for obvious modifications.
	
	\begin{proof}[Proof of (\ref{P:negativecondensed})] By (\ref{L:C^2}), we may replace $\sr L_{\ka_0}^{+\infty}(I)$ with $\sr C_{\ka_0}^{+\infty}(I)$, that is, we may assume that the curves $\ga_p=f(p)$ are of class $C^2$. Let $\rho_0=\arccot \ka_0$, 
	\begin{equation}\label{E:centranslation}
		\rho_1=\frac{\pi-\rho_0}{2},\ \ \ka_1=\cot \rho_1
	\end{equation}
	and let $\eta_p$ be the translation of $\ga_p$ by $\rho_1$ (compare (\ref{C:twoviewpoints})). Then the radius of curvature $\rho_{\eta_p}$ of $\eta_p$ satisfies $\rho_1<\rho_{\eta_p}<\pi-\rho_1$ for all $p\in K$. Since $\rho_{\eta_p}$ is continuous and $K$ is compact, there exists $\bar\rho_1\in (\rho_1,\frac{\pi}{2})$ such that 
	\begin{equation*}
\bar\rho_1<\rho_{\eta_p}<\pi-\bar\rho_1\quad\text{for all $p\in K$.}
	\end{equation*}
	In particular, the regular band of $\eta_p$ may be extended from $[0,1]\times [-\rho_1,\rho_1]$ to $[0,1]\times [-\bar\rho_1,\bar\rho_1]$, for any $p$. Consider the space $\sr G$ of good bands of width $R=2\bar\rho_1$ and the corresponding space $\sr A\sups \sr G$ of acceptable bands. 
	
	Recall that $K$ is connected by hypothesis, hence $p\mapsto \nu(\ga_p)$ is constant; let $\nu$ be the common rotation number of all the $\ga_p$. Let $B^0_p\in \sr G$ be the regular band, of width $2\bar\rho_1$, of $\eta_p$ (whose image is the same as that of the regular band of $\ga_p$), and let $B^1_p\in \sr G$ be the regular band of a geodesic $\sig$ of rotation number $\nu$, the latter being the central curve of the band. The combination of (\ref{L:regularisgood}), (\ref{C:contratil}) and  (\ref{C:central}) yields a homotopy $(s,p)\mapsto \eta^s_p$ from the map $p\mapsto \eta_p^0=\eta_p$ to the constant map $p\mapsto \eta_p^1=\sig$, where $\eta^s_p$ is the central curve of a good band $B^s_p\in \sr G$, $s\in [0,1]$, $p\in K$. Moreover, (\ref{C:central}) guarantees that the radius of curvature $\rho_{\eta_p^s}$ of $\eta_p^s$ satisfies $\bar\rho_1\leq \rho_{\eta_p^s}\leq \pi-\bar\rho_1$ for each $s\in [0,1]$ and $p\in K$. Consequently,
	\begin{equation*}
		\rho_1<\rho_{\eta_p^s}<\pi-\rho_1\text{\quad for each $s\in [0,1]$,~$p\in K$,}
	\end{equation*}
	and it follows that $(s,p)\mapsto \eta_p^s$ is a homotopy in $\sr C_{-\ka_1}^{+\ka_1}$ from the map $p\mapsto \eta_p$ to a constant map. If we let $\ga_p^s$ be the translation of $\eta_p^s$ by $-\rho_1$, then $\ga_p^0$ is the original curve $\ga_p=f(p)$ for each $p$, and $(s,p)\mapsto \ga_p^s$ is a homotopy in $\sr C_{\ka_0}^{+\infty}$ from $f$ to the constant map $p\mapsto \bar\sig$, where $\bar\sig$ (the translation of $\sig$ by $-\rho_1$) is a circle traversed $\nu$ times.
	
	We have proved that $f\colon K\to  \sr C_{\ka_0}^{+\infty}(I)$ is null-homotopic in $\sr C_{\ka_0}^{+\infty}$. The latter space may be replaced by $\sr C_{\ka_0}^{+\infty}(I)$ without altering the conclusion by the usual trick of substituting $\ga_p^s$ by $\Phi_{\ga_p^s}(0)^{-1}\ga_p^s$ ($s\in [0,1]$,~$p\in K$).
	\end{proof}
	
\begin{mthm}\label{C:contrensed}
	Let $\ka_0\in \R$, $\nu\geq 1$ and let $\sr O_\nu\subs \sr L_{\ka_0}^{+\infty}(I)$ be the subspace consisting of all condensed curves having rotation number $\nu$. Then $\sr O_\nu$ is weakly contractible.
\end{mthm}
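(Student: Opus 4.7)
The plan is to prove $\pi_k(\sr O_\nu) = 0$ for every $k \geq 0$ by showing that every continuous map $f \colon \Ss^k \to \sr O_\nu$ is null-homotopic within $\sr O_\nu$; the case $k = 0$ can be handled by treating the two points of $\Ss^0$ separately and concatenating the resulting paths, so I focus on $k \geq 1$, where $\Ss^k$ is compact and connected. The essential inputs are (\ref{P:positivecondensed}) (when $\ka_0 \geq 0$) and (\ref{P:negativecondensed}) (when $\ka_0 < 0$), applied with $K = \Ss^k$: each yields a homotopy in the ambient space $\sr L_{\ka_0}^{+\infty}(I)$ from $f$ to the constant map to a standard circle $\sig_\nu$. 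My task is to verify that this homotopy can be arranged to remain inside $\sr O_\nu$.

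A useful preliminary observation is that, once condensedness of each intermediate curve is established, constancy of the rotation number is automatic: by (\ref{L:causticbarycenter}) the barycenter $h_\ga$ varies continuously on the condensed locus, whence $\nu(\ga) = -N(\pr_{-h_\ga} \circ \ga)$ defines a continuous integer-valued function there, necessarily locally constant along any connected homotopy. Thus the only real issue is checking that each step of the homotopies constructed in the two propositions preserves the property of being condensed.

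For $\ka_0 \geq 0$, I would track the three stages in the proof of (\ref{P:positivecondensed}). The M\"obius dilatation $\ga_a^s = T_a^s \ga_a$ for $s \in [\de, 1]$ keeps the caustic band inside an open hemisphere by the first part of the proof of (\ref{C:positivecondensed}). The subsequent Whitney--Graustein step (\ref{L:Whitney}), performed on plane curves confined to a disk of radius $\eps$, pulls back via $P^{-1}$ to spherical curves inside a cap of radius $\eps$ about $h_a$; choosing $\eps$ so that $\eps + \rho_0 < \tfrac{\pi}{2}$ guarantees that the caustic bands sit inside a cap of radius less than $\tfrac{\pi}{2}$, hence inside an open hemisphere. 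The final left-multiplication by $\Phi(0)^{-1}$ is a rotation of $\Ss^2$, which preserves condensedness. For $\ka_0 < 0$, I would inspect the proof of (\ref{P:negativecondensed}): after the translation to $\sr C_{-\ka_1}^{+\ka_1}$, the contraction in the contractible space $\sr G$ of good bands (by (\ref{C:contratil})) produces central curves $\eta_p^s$ which are admissible with radius of curvature in $[\bar\rho_1, \pi - \bar\rho_1]$ by (\ref{C:central}); via (\ref{L:regularisgood}) their regular bands embed in the strip $\Ss^1_\nu \times [-\bar\rho_1, \bar\rho_1]$ of width $2\bar\rho_1 < \pi$ in $\Ss^2_\nu$, which projects down to a proper subset of $\Ss^2$ contained in the closed hemisphere $\{\langle \cdot, e_3 \rangle \geq 0\}$. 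Translating back to $\sr L_{\ka_0}^{+\infty}(I)$ places the caustic band of $\ga_p^s$ in that same hemisphere, so $\ga_p^s \in \sr O_\nu$.

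The main obstacle I anticipate is the last verification in the $\ka_0 < 0$ case: that the projection $\Ss^2_\nu \to \Ss^2$ applied to the central curve of a good band of width less than $\pi$, followed by the inverse translation, yields a curve whose caustic band lies in a single hemisphere rather than wrapping around the sphere. This should be a direct geometric consequence of the width bound $2\bar\rho_1 < \pi$ together with the fact (implicit in (\ref{L:retrato}) and (\ref{L:uniquetrack})) that each meridian of $\Ss^2_\nu$ meets $\bd B$ in exactly two points, but it requires careful bookkeeping of the translation formulas and the identification $\Ss^2_\nu \equiv \Ss^1_\nu \times (-\tfrac{\pi}{2}, \tfrac{\pi}{2})$ to ensure condensedness is genuinely preserved under the covering projection.
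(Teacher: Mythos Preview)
Your approach is essentially the paper's own: the paper's proof is the single sentence ``This was established in (\ref{P:positivecondensed}) for $\ka_0\geq 0$ and in (\ref{P:negativecondensed}) for $\ka_0<0$.'' You are more careful than the paper in one respect, and rightly so: as stated, those two propositions only yield a null-homotopy of $f\colon\Ss^k\to\sr O_\nu$ in the ambient space $\sr L_{\ka_0}^{+\infty}(I)$, not in $\sr O_\nu$ itself. Your verification that the specific homotopies constructed in the proofs actually remain condensed (hence in $\sr O_\nu$, by your locally-constant rotation-number argument) is exactly what is needed to justify the stated conclusion, and your treatment of the $\ka_0\geq 0$ case via (\ref{C:positivecondensed}) and the small-cap bound $\eps+\rho_0<\tfrac{\pi}{2}$ is correct.

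Your flagged obstacle in the $\ka_0<0$ case is a genuine point the paper glosses over. The resolution is along the lines you indicate: by (\ref{D:acceptable}) one has $R=2\bar\rho_1<\tfrac{\pi}{2}$ (this uses $\rho_1<\tfrac{\pi}{4}$, which holds since $\rho_0>\tfrac{\pi}{2}$), so every good band in $\sr G$ has image in $\Ss^1_\nu\times[-R,R]$ with $R<\tfrac{\pi}{2}$; moreover the contraction of (\ref{L:contratil}) and the retraction of (\ref{L:retrato}) visibly preserve this bound. Hence the central curve $\eta_p^s$ and its regular band of width $2\rho_1<2\bar\rho_1$ stay in the strip $\Ss^1_\nu\times(-\tfrac{\pi}{2},\tfrac{\pi}{2})$, whose projection to $\Ss^2$ avoids $\pm N$. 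Tracing the translation by $-\rho_1$ back to $\sr L_{\ka_0}^{+\infty}$ then places the caustic band of $\ga_p^s$ in a fixed closed hemisphere centered at $N$; the bookkeeping is routine once you use that the caustic band of $\ga_p^s$ coincides (up to a shift in the $\theta$-coordinate) with one half of the regular band of $\eta_p^s$. So your plan goes through, though the paper itself does not spell this out.
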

\begin{proof}
	This was established in (\ref{P:positivecondensed}) for $\ka_0\geq 0$ and in (\ref{P:negativecondensed}) for $\ka_0<0$. 
\end{proof}
\begin{urem}
	For $\ka_0=-\infty$, the only condensed curves in $\sr L_{\ka_0}^{+\infty}$ are geodesic circles (and so $\sr O_\nu$ has only one element). In any case, the topology of $\sr L_{-\infty}^{+\infty}$ is well understood, and this is the main reason why we always assume that $\ka_0\in \R$.
\end{urem}


\section{Non-diffuse Curves}
In this section we define a notion of rotation number for any non-diffuse curve in $\sr L_{\ka_0}^{+\infty}$ and establish a bound on the total curvature of such a curve which depends only on its rotation number and $\ka_0$ (prop.~(\ref{P:totbound})). The rotation number defined here is more abstract than the one in \S\ref{S:condensed} (it is given by the number of sheets of a certain covering map). However, as we shall prove right after its definition, the two notions agree when the curve is both condensed and non-diffuse. 
\begin{lemma}\label{L:connectedness}
Suppose $X$ is a connected, locally connected topological space and $C\neq \emptyset$ is a closed connected subspace. Let $\Du_{\al \in J}B_\al$ be the decomposition of $X\ssm C$ into connected components. Then:
\begin{enumerate}
	\item [(a)] $\bd B_\al\subs C$ for all $\al\in J$.
	\item [(b)]  For any $J_0\subs J$, the union $C\cup \bcup_{\be \in J_0}B_\be$ is also connected.
\end{enumerate} 
\end{lemma}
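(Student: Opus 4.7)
My plan is as follows. Since $X$ is locally connected and $X \ssm C$ is open, each component $B_\al$ is both open and connected; this is the structural fact I will rely on throughout.

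For part (a), I would argue contrapositively: take $x \in \bd B_\al$ and suppose $x \nin C$. Then $x\in X\ssm C$, so $x\in B_\be$ for some $\be \in J$. Since $B_\be$ is open and $x \in \overline{B_\al}$, $B_\be$ meets $B_\al$, forcing $B_\be = B_\al$ (distinct components are disjoint) and hence $x\in B_\al$. But $B_\al$ is open, so $B_\al \cap \bd B_\al = \emptyset$, a contradiction.

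Before tackling (b), I would record the following companion to (a): each $\partial B_\al$ is nonempty. Indeed, if $\bd B_\al=\emptyset$ then $B_\al$ is clopen in $X$, and the connectedness of $X$ together with $B_\al\neq \emptyset$ would give $B_\al=X$, contradicting $C\neq \emptyset$ and $C\subs X\ssm B_\al$. Combined with (a), this yields $\emptyset\neq \bd B_\al\subs C$, so the closure in $X$ of every $B_\al$ meets $C$.

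For part (b), set $Y=C\cup \bcup_{\be\in J_0}B_\be$ and suppose for contradiction that $Y=U\sqcup V$ is a separation with $U,V$ open in $Y$ and nonempty. Since $C$ is connected, it lies entirely in one piece, say $C\subs U$; since each $B_\be$ is connected, each lies entirely in $U$ or entirely in $V$, and some $B_{\be_0}$ must lie in $V$ as $V\neq\emptyset$. Now consider $\ol{B_{\be_0}}$, the closure taken in $X$. By (a), $\ol{B_{\be_0}}=B_{\be_0}\cup \bd B_{\be_0}\subs B_{\be_0}\cup C\subs Y$, so $\ol{B_{\be_0}}$ is a connected subspace of $Y$. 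The sets $U\cap \ol{B_{\be_0}}$ and $V\cap \ol{B_{\be_0}}$ are disjoint and open in $\ol{B_{\be_0}}$; the second contains $B_{\be_0}$ and the first contains $\bd B_{\be_0}$, which is nonempty by the remark above. This separates $\ol{B_{\be_0}}$, contradicting its connectedness and completing the proof.

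The entire argument is essentially formal point-set topology; the only subtlety I expect to watch is the nonemptiness of $\bd B_\al$, which is what makes the argument for (b) go through and is the place where both the connectedness of $X$ and the hypothesis $C\neq \emptyset$ are genuinely used.
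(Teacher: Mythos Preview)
Your argument is correct and cleanly organized; the key observations that each $B_\al$ is open (by local connectedness of $X$) and that $\bd B_\al\neq\emptyset$ (by connectedness of $X$ and $C\neq\emptyset$) are exactly what is needed. The paper itself omits the proof, deferring to the thesis \cite{tese}, so there is nothing to compare beyond noting that your write-up would serve perfectly well in its place.
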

\begin{proof} The proof is not difficult, and will be omitted. See \cite{tese}, (7.1) for the details.\end{proof}

We will also need the following well-known results.\footnote{Part (b) of (\ref{T:Schoenflies}) is an immediate corollary of the Riemann mapping theorem and part (c) is the 2-dimensional case of the annulus theorem.}

\begin{thm}\label{T:Schoenflies}Let $A\subs \Ss^2$ be a connected open set.
	\begin{enumerate}
		\item [(a)] $A$ is simply-connected if and only if $\Ss^2\ssm A$ is connected.
		\item [(b)] If $A$ is simply-connected and $\Ss^2\ssm A\neq \emptyset$, then $A$ is homeomorphic to an open disk.
		\item[(c)] Let $S_{\pm}\subs \Ss^2$ be disjoint and homeomorphic to $\Ss^1$. Then the closure of the region bounded by $S_{-}$ and $S_+$ is homeomorphic to $\Ss^1\times [-1,1]$.\qed
	\end{enumerate}
\end{thm}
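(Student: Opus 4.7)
The three parts are classical facts, so the plan is mainly to recall where they come from and to indicate which standard machinery I would invoke, rather than to carve out fresh arguments. Since the paper only needs these statements for later topological bookkeeping (separating $\Ss^2$ into a disk and an annulus), I would give concise, self-contained proofs that avoid heavy complex-analytic machinery whenever possible.

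For (a), my plan is to argue by Alexander duality on the sphere. Writing $C=\Ss^2\ssm A$, Alexander duality gives $\tilde H_0(C;\Z)\cong \tilde H^1(A;\Z)$, and since $A$ is an open subset of a $2$-manifold it has the homotopy type of a CW complex, so $\tilde H^1(A;\Z)=0$ iff $\pi_1(A)$ is trivial (once one knows $H_1=\pi_1^{\mathrm{ab}}$ and uses that an open surface is a $K(\pi,1)$ with free fundamental group, so $\pi_1$ is trivial iff its abelianization is). This gives both implications at once. Alternatively, I would give a direct proof: if $C$ is disconnected, a loop in $A$ separating two components of $C$ is non-trivial by a linking/degree argument; conversely, if $C$ is connected, any loop in $A$ can be capped off in $\Ss^2$ by a disk and then isotoped off $C$ because one can push the disk across the connected complement.

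For (b), I would reduce to the Riemann mapping theorem. Pick $p\in \Ss^2\ssm A$ and compose with stereographic projection from $p$ to realize $A$ as a simply-connected open proper subset of $\R^2\cong \C$. The Riemann mapping theorem provides a biholomorphism (hence in particular a homeomorphism) with the open unit disk. If one wishes to avoid complex analysis, the same conclusion follows from the uniformization theorem for surfaces, or from a direct proof via exhaustion by Jordan domains together with the Jordan–Schoenflies theorem. For (c), I would invoke the (two-dimensional) Schoenflies theorem: each of $S_\pm$ bounds a closed topological disk $D_\pm$ in $\Ss^2$, and the region in question is $\Ss^2\ssm(\ring D_-\cup \ring D_+)$, which is a compact surface with two boundary circles and Euler characteristic $0$; by the classification of compact surfaces with boundary, it is homeomorphic to $\Ss^1\times[-1,1]$. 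To upgrade this to an explicit homeomorphism matching $S_\pm$ with $\Ss^1\times\{\pm 1\}$, one uses the collar neighborhood theorem for boundaries of $2$-manifolds.

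The main obstacle, such as it is, lies in (c): one must make sure that the two Schoenflies disks $D_\pm$ are actually disjoint and on ``opposite sides'' of each other, which is not automatic from the hypothesis that $S_-\cap S_+=\emptyset$. I would handle this by applying (a) to the open set $A=\Ss^2\ssm(S_-\cup S_+)$: its complement has two components (the interiors of $D_\pm$) plus the annular region, and the Jordan curve theorem applied to each $S_\pm$ separately identifies which component of $\Ss^2\ssm S_\mp$ contains $S_\pm$, forcing the disks to be disjoint. After that the classification of surfaces finishes the job. Everything else is textbook, so once these structural points are settled the proof is essentially a bookkeeping exercise.
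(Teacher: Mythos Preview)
The paper does not prove this theorem at all: it is stated with a \qed and no argument, and the accompanying footnote merely records that (b) is an immediate corollary of the Riemann mapping theorem and that (c) is the two-dimensional case of the annulus theorem. Nothing is said about (a). So there is no ``paper's own proof'' to compare against beyond these two pointers.

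Your proposal is correct in spirit and strictly more detailed than what the paper offers. For (b) you invoke exactly the same tool (Riemann mapping after stereographic projection). For (c) you take a slightly different route---Schoenflies plus the classification of compact surfaces with boundary---rather than appealing to the annulus theorem by name; in dimension two these are essentially the same circle of ideas, and your version has the advantage of being self-contained. Two small cleanups: in (a), the Alexander-duality isomorphism you want is $\tilde H_1(A;\Z)\cong \check H^0(C;\Z)$ (reduced), not the pairing you wrote, though the conclusion is unaffected since open surfaces have free $\pi_1$. In (c), your discussion of why the two Schoenflies disks can be chosen disjoint is a bit tangled; the clean argument is that $S_+$ lies in one component of $\Ss^2\ssm S_-$, so take $D_-$ to be the closure of the other, and symmetrically for $D_+$; a one-line connectedness check then shows $D_-\cap D_+=\emptyset$.
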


\begin{lemma}\label{L:annulus}
	Let $U_{\pm}\subs  \Ss^2$ be homeomorphic to open disks, $U_-\cup U_+=\Ss^2$. Then \[
	U_-\cap U_+\home \Ss^1\times (-1,1).\]
\end{lemma}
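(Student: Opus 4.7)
The plan is to exhaust $V:=U_+\cap U_-$ by a nested sequence of closed annuli and then assemble a homeomorphism $V\to \Ss^1\times(-1,1)$ as a direct limit.

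Set $K_\pm := \Ss^2 \ssm U_\pm$. Since each $U_\pm$ is a proper simply-connected open subset of $\Ss^2$, part (a) of (\ref{T:Schoenflies}) forces $K_\pm$ to be nonempty, compact, and connected; the hypothesis $U_+\cup U_-=\Ss^2$ yields $K_+\cap K_-=\emptyset$, so that $V=\Ss^2\ssm(K_+\cup K_-)$. I would fix homeomorphisms $\phi_\pm:U_\pm\to\R^2$. For each $r>0$ large enough that $\phi_+(K_-)\subset\{|x|<r\}$, the Jordan curve $S_r^+:=\phi_+^{-1}(\{|x|=r\})$ lies in $V$ and bounds in $\Ss^2$ two open disks (Jordan curve theorem combined with (\ref{T:Schoenflies}\?(b))): the inner disk $D_r^+:=\phi_+^{-1}(\{|x|<r\})\supset K_-$ and the outer disk $\Omega_r^+:=\Ss^2\ssm\overline{D_r^+}=K_+\cup\phi_+^{-1}(\{|x|>r\})\supset K_+$. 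The closed set $\overline{\Omega_r^+}$ decreases to $K_+$ as $r\to\infty$, since every $p\in U_+$ has finite $|\phi_+(p)|$. Define $S_s^-,\,D_s^-,\,\Omega_s^-$ symmetrically using $\phi_-$.

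Since $K_+,K_-$ are disjoint and the compact families $\overline{\Omega_r^+},\overline{\Omega_s^-}$ decrease to $K_+,K_-$ respectively, I would inductively choose sequences $r_n,\,s_n\to\infty$ such that $\overline{\Omega_{r_n}^+}\cap\overline{\Omega_{s_n}^-}=\emptyset$ for every $n$. Set
\[
A_n:=\Ss^2\ssm(\Omega_{r_n}^+\cup\Omega_{s_n}^-).
\]
Each $A_n$ is closed in $\Ss^2$ and its boundary is the disjoint union of the Jordan curves $S_{r_n}^+$ and $S_{s_n}^-$, so part (c) of (\ref{T:Schoenflies}) identifies $A_n$ with $\Ss^1\times[-1,1]$. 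By construction $A_n\subset\Int A_{n+1}$, and $\bigcup_n A_n=V$, since any point $p\notin K_+\cup K_-$ leaves $\Omega_{r_n}^+\cup\Omega_{s_n}^-$ once $n$ is sufficiently large.

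To conclude, I would construct compatible homeomorphisms $h_n:A_n\to\Ss^1\times[a_n,b_n]$ with $a_n\to -1$, $b_n\to 1$ and $h_{n+1}|_{A_n}=h_n$, taking the direct limit to obtain the desired homeomorphism $V\home\Ss^1\times(-1,1)$. The inductive step is enabled by noting that $A_{n+1}\ssm\Int A_n$ is a disjoint union of two closed annuli---one bounded by $S_{r_n}^+\cup S_{r_{n+1}}^+$, the other by $S_{s_n}^-\cup S_{s_{n+1}}^-$, each identified with $\Ss^1\times[0,1]$ by a further application of (\ref{T:Schoenflies}\?(c))---together with the standard extension of a given homeomorphism of one boundary circle of a closed annulus across the whole annulus. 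I expect this last inductive-extension step to be the main technical obstacle of the proof, though it reduces at each stage to another invocation of (\ref{T:Schoenflies}\?(c)).
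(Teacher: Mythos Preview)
Your proposal is correct and follows essentially the same strategy as the paper: exhaust $U_+\cap U_-$ by a bi-infinite tower of closed annuli obtained from concentric circles under the uniformizing maps of $U_\pm$, then inductively extend homeomorphisms across each new collar using the annulus theorem (\ref{T:Schoenflies}\?(c)). The paper packages the inductive extension step as an explicit preliminary claim (any homeomorphism of one boundary circle of $\Ss^1\times[-1,1]$ extends over the cylinder) and indexes the collars by $\Z$ rather than growing a nested family $A_n$ outward, but these are only organizational differences; your use of the model $\R^2$ and the compactness of $K_\mp$ in fact streamlines the paper's separate argument that the circles $h_\pm(r\Ss^1)$ eventually approach $\bd U_\pm$.
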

\begin{proof} We first make two claims:
\begin{enumerate}
	\item [(a)] Suppose $C\home \Ss^1\times [-1,1]$ and $h\colon \bd C_-\to \Ss^1\times \se{-1}$ is a homeomorphism, where $\bd C_-$ is one of the boundary circles of $C$. Then $h$ may be extended to a homeomorphism $H\colon C\to \Ss^1\times [-1,1]$.
	\item [(b)]  Let $M$ be a tower of cylinders, in the sense that:
	\begin{enumerate}
		\item [(i)]  $M_i\home \Ss^1\times [-1,1]$ for each $i\in \Z$;
		\item [(ii)] $M=\bcup_{i\in \Z}M_i$ and $M$ has the weak topology determined by the $M_i$;
		\item [(iii)] $M_i\cap M_j=\emptyset$ for $j\neq i\pm 1$ and $M_{i}\cap M_{i+1}=S_{i}^+=S_{i+1}^-$, where $S_i^{\pm}$ are the boundary circles of $M_i$.
	\end{enumerate} Then $M\home \Ss^1\times (-1,1)$.
\end{enumerate}
Claim (a) is obviously true if $C=\Ss^1\times [-1,1]$: Just set $H(z,t)=(h(z),t)$. In the general case let $F\colon C\to \Ss^1\times [-1,1]$ be a homeomorphism. Note that $\bd C$ is well-defined as the inverse image of $\Ss^1\times \se{\pm 1}$ ($p\in \bd C$ if and only if $U\ssm \se{p}$ is contractible whenever $U$ is a sufficiently small neighborhood of $p$). Hence $\bd C$ consists of two topological circles, $\bd C_{\pm}=F^{-1}\big(\Ss^1\times \se{\pm 1}\big)$. Let $f=F|_{\bd C_-}$ and let $g=h\circ f^{-1}\colon \Ss^1\to \Ss^1$. As we have just seen, we can extend $g$ to a self-homeomorphism $G$ of $\Ss^1\times [-1,1]$. Now define $H\colon C\to \Ss^1\times [-1,1]$ by $H=G\circ F$. Then $H|_{\bd C_-}=g\circ f=h$, as desired.

To prove claim (b), let $H_0\colon M_0\to \Ss^1\times [-\frac{1}{2},\frac{1}{2}]$ be any homeomorphism. By applying (a) to $M_{\pm 1}$ and $h_{\pm 1}=H_0|_{S_0^{\pm}}$, we can extend $H_0$ to a homeomorphism 
\[
H_1\colon M_0\cup M_{\pm 1}\to \Ss^1\times \Big[{-}\tfrac{2}{3},\tfrac{2}{3}\Big],
\]
and, inductively, to a homeomorphism 
\begin{equation*}
\qquad	H_k\colon \bcup_{\abs{i}\leq k} M_i\to \Ss^1\times \Big[-1+\frac{1}{k+2},1-\frac{1}{k+2}\Big]\qquad (k\in \N).
\end{equation*}
Finally, let $H\colon M\to \Ss^1\times (-1,1)$ be defined by $H(p)=H_i(p)$ if $p\in M_i$. Then $H$ is bijective, continuous and proper, so it is the desired homeomorphism.

Returning to the statement of the lemma, note first that $\bd U_{\pm}\subs U_{\mp}$. Indeed, if $p\in \bd U_-\cap(\Ss^2\ssm U_+)$ then $p\nin U_-\cup U_+=\Ss^2$, hence no such $p$ exists. Let $h_{\pm}\colon B(0;1)\to U_{\pm}$ be homeomorphisms, and define $f_{\pm}\colon [0,1)\to \R$ by 
\begin{equation*}
	f_{\pm}(r)=\sup\set{d\big(p,\bd U_{\pm}\big)}{p\in h_{\pm}(r\Ss^1)},
\end{equation*}
where $d$ denotes the distance on $\Ss^2$. We claim that $\lim_{r\to 1}f_{\pm}(r)=0$. Observe first that $f_{\pm}$ is strictly decreasing, for if $q\in h_{\pm}(r_0\Ss^1)$, $r_0<r$, then any geodesic joining $q$ to $\bd U_{\pm}$ intersects $h(r\Ss^1)$. Hence the limit exists; if it were positive, then $U_{\pm}$ would be at a positive distance from $\bd U_{\pm}$, which is absurd.

Now choose $n\in \N$ such that 
\begin{equation*}
	f_{\pm}(t)<\frac{1}{2}\min\se{d\big(\bd U_{-},\Ss^2\ssm U_+\big),d\big(\bd U_{+},\Ss^2\ssm U_-\big)}
\end{equation*} 
for any $t>1-\tfrac{1}{n}$. Set 
\begin{equation*}
	S_i=h_+\Big(\Big(1-\frac{1}{n+i}\Big)\Ss^1\Big)\text{ for $i>0$ and }	S_i=h_-\Big(\Big(1-\frac{1}{n-i}\Big)\Ss^1\Big)\text{ for $i<0$}.
\end{equation*}	 
Finally, let $M_0$ be the region of $U_-\cap U_+$ bounded by $S_{1}$ and $S_{-1}$ and, for $i>0$ (resp.~$<0$), let $M_i$ the region bounded by $S_i$ and $S_{i+1}$ (resp.~$S_{i-1}$). Using (\ref{T:Schoenflies}\?(c)) we see that $U_-\cap U_+=\bcup M_i$ is a tower of cylinders as in claim (b), and we conclude that $U_-\cap U_+\home \Ss^1\times (-1,1)$.
\end{proof}


We now return to spaces of curves.

\begin{defns}\label{D:BCD}
For fixed $\ka_0\in \R$ and $\ga\in \sr L_{\ka_0}^{+\infty}$, let $C$ denote the image of $C_\ga$ and $D=-C$. Assuming $\ga$ non-diffuse (meaning that $C\cap D=\emptyset$), let $\hat{C}$ (resp.~$\hat{D}$) be the connected component of $\Ss^2\ssm D$ containing $C$ (resp.~the component of $\Ss^2\ssm C$ containing $D$) and let $B=\hat{C}\cap \hat{D}$. 
\end{defns}
\begin{figure}[ht]
	\begin{center}
		\includegraphics[scale=.23]{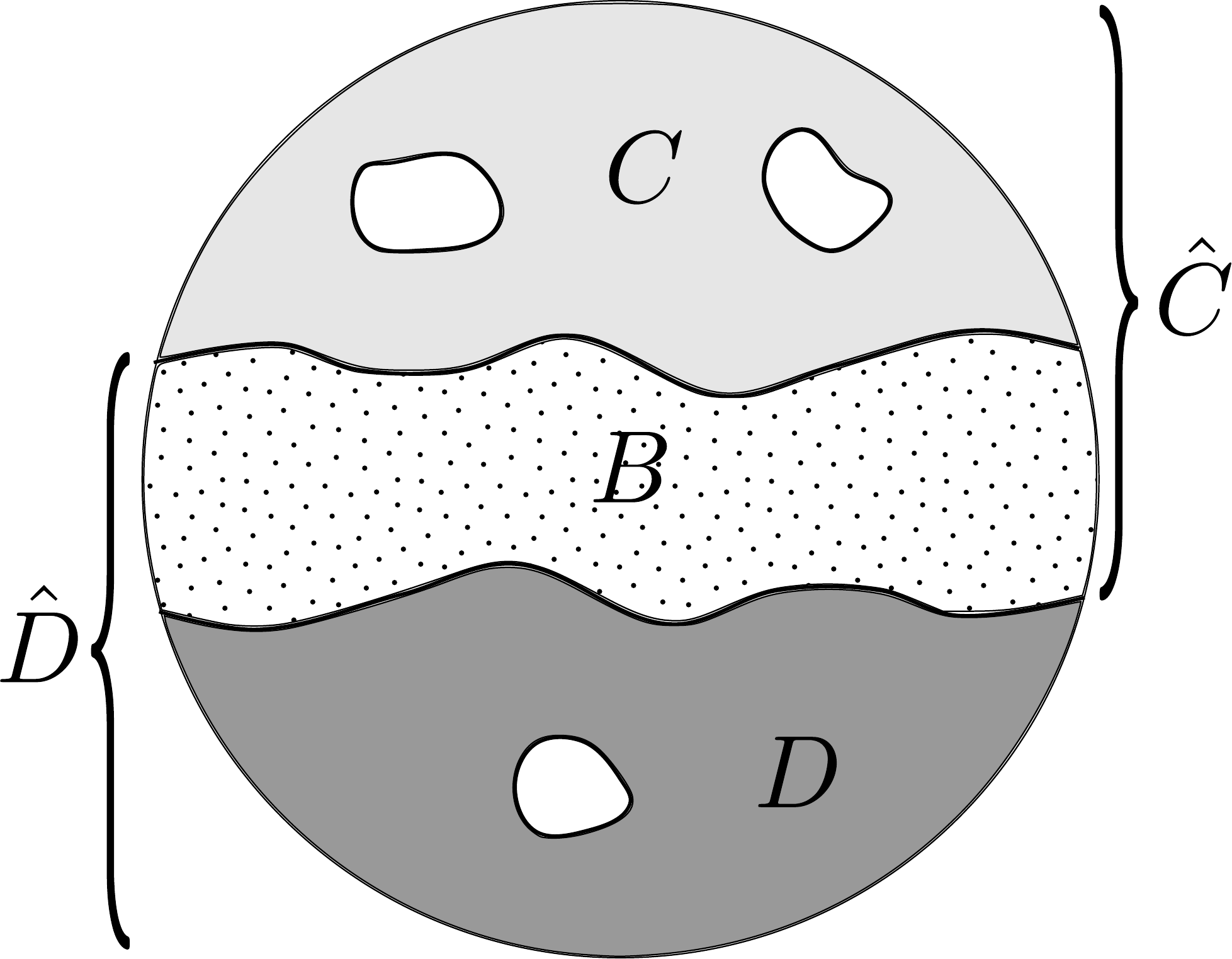}
		\caption{A sketch of the sets defined in (\ref{D:BCD}) for a non-diffuse curve $\ga\in \sr L_{\ka_0}^{+\infty}$. The lightly shaded region is $C$ and the darkly shaded region is $D={-}C$; both are closed. The dotted region represents $B$, which is homeomorphic to $\Ss^1\times (-1,1)$ by (\ref{L:Bannulus}\?(c)).}
		\label{F:bcd}
	\end{center}
\end{figure}

\begin{lemma}\label{L:Bannulus}Let the notation be as in (\ref{D:BCD}).
\begin{enumerate}
	\item [(a)] $C$ and $D$ are at a positive distance from each other.
	\item [(b)] $B\subs \Ss^2\ssm (C\cup D)$ is open and consists of all $p\in \Ss^2$ such that: there exists a path $\eta\colon [-1,1]\to \Ss^2$ with 
		\begin{equation*}
			\eta(-1)\in D, \quad\eta(1)\in C,\quad \eta(0)=p\quad\text{and}\quad\eta(-1,1)\subs \Ss^2\ssm (C\cup D).
		\end{equation*} 
	\item [(c)] The set $B$ is homeomorphic to $\Ss^1\times (-1,1)$.
\end{enumerate}
\end{lemma}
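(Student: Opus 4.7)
The plan is to handle the three parts in order, with (a) and (b) being routine point-set topology and (c) being the substantive step, which I will reduce to Lemma~(\ref{L:annulus}) by identifying $\hat C$ and $\hat D$ themselves as the two open disks.

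Part (a) is immediate: $C=\Im(C_\ga)$ and $D=-C$ are continuous images of the connected compact rectangle $[0,1]\times[0,\rho_0]$, hence they are compact and connected; by the non-diffuseness hypothesis they are disjoint, and two disjoint compact subsets of $\Ss^2$ lie at a positive distance.

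For (b), since $\Ss^2$ is locally connected, $\hat C$ and $\hat D$ are open, and so $B=\hat C\cap \hat D$ is open.  From $\hat C\subs \Ss^2\ssm D$ and $\hat D\subs \Ss^2\ssm C$ one obtains $B\subs \Ss^2\ssm(C\cup D)$.  For the path characterization, given $p\in B$ I will use that $\hat D$ is open and connected in the manifold $\Ss^2$, hence path-connected, to produce a path from $p$ to a point of $D\subs \hat D$ inside $\hat D$; truncation at the first time the path meets the closed set $D$ gives a half-path that lies in $\hat D\subs \Ss^2\ssm C$ and meets $D$ only at its far endpoint.  A symmetric construction in $\hat C$ yields the other half, and the concatenation (reparametrized on $[-1,1]$) is the required $\eta$.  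The converse is immediate: the restriction of such an $\eta$ to $[-1,0]$ witnesses that $p$ lies in the component $\hat D$ of $\Ss^2\ssm C$ containing $D$, and its restriction to $[0,1]$ puts $p$ in $\hat C$.

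For (c), I will apply Lemma~(\ref{L:annulus}) to $U_-:=\hat C$ and $U_+:=\hat D$; two facts must be verified: each is an open disk, and their union is $\Ss^2$.  For the first, observe that $\Ss^2\ssm\hat C$ is $D$ together with all the remaining components of $\Ss^2\ssm D$; by Lemma~(\ref{L:connectedness}\?(b)), applied to the closed connected set $D\subs \Ss^2$, this union is connected.  Then Theorem~(\ref{T:Schoenflies}\?(a)) forces $\hat C$ to be simply connected, and (\ref{T:Schoenflies}\?(b)) promotes it to a homeomorphism with an open disk, since its complement contains $D\neq\emptyset$.  The same reasoning handles $\hat D$.

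To prove $\hat C\cup\hat D=\Ss^2$, I will decompose $\Ss^2\ssm(C\cup D)=\bigsqcup_i B_i$ into its connected components.  Each $B_i$ is a proper open subset of $\Ss^2$, so Lemma~(\ref{L:connectedness}\?(a)) gives $\emptyset\neq \bd B_i\subs C\cup D$.  If $\bd B_i\cap C\neq \emptyset$, then $B_i\cup C$ is a connected subset of $\Ss^2\ssm D$ (both pieces are connected and disjoint from $D$, and they share a point of $\overline{B_i}\cap C$), so $B_i\subs \hat C$; symmetrically, if $\bd B_i$ meets $D$, then $B_i\subs \hat D$.  Hence in every case $B_i\subs \hat C\cup \hat D$, which together with $C\subs \hat C$ and $D\subs \hat D$ gives $\hat C\cup \hat D=\Ss^2$.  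Lemma~(\ref{L:annulus}) then produces the homeomorphism $B\home \Ss^1\times(-1,1)$.  I expect the one delicate point to be this last component-analysis step; once the pair $(\hat C,\hat D)$ is identified as the two covering disks, everything else assembles from the lemmas already cited.
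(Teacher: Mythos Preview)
Your proof is correct and follows essentially the same route as the paper: parts (a) and (b) are handled identically, and for (c) you and the paper both show that $\hat C$ and $\hat D$ are open disks via (\ref{L:connectedness}\?(b)) and (\ref{T:Schoenflies}), then cover $\Ss^2$ and invoke (\ref{L:annulus}). The only minor wrinkle is that your citation of (\ref{L:connectedness}\?(a)) for the components of $\Ss^2\ssm(C\cup D)$ is technically off, since that lemma as stated requires the closed set to be connected and $C\cup D$ is not; the conclusion $\bd B_i\subs C\cup D$ is of course still true and immediate from local connectedness, and the paper sidesteps this by decomposing $\Ss^2\ssm C$ instead (where (\ref{L:connectedness}) applies directly).
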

\begin{proof}
The proof of each item will be given separately. 
\begin{enumerate}
	\item [(a)] This is clear, since $C$ and $D$ are compact sets which, by hypothesis, do not intersect.
	\item [(b)] Being components of open sets,  $\hat{C}$ and $\hat{D}$ are open, hence so is $B$. 
	
	Suppose $p\in B$. Since $p\in \hat{C}$, there exists $\eta_+\colon [0,1]\to \Ss^2$ such that 
	\begin{equation*}
		\eta_+(0)=p,\quad \eta_+(1)\in C\quad\text{and}\quad \eta_+[0,1]\subs \Ss^2\ssm D.
	\end{equation*} 
	We can actually arrange that $\eta_+[0,1)\subs \Ss^2\ssm (C\cup D)$ by restricting the domain of $\eta_+$ to $[0,t_0]$, where $t_0=\inf\set{t\in [0,1]}{\eta_+(t)\in C}$ and reparametrizing; note that $t_0>0$ because $B$ is open and disjoint from $C$. Similarly, there exists $\eta_-\colon [-1,0]\to \Ss^2$ such that 
		\begin{equation*}
		\eta_-(-1)\in D,\quad \eta_-(0)=p\quad\text{and}\quad \eta_-(-1,0]\subs \Ss^2\ssm (C\cup D).
	\end{equation*} 
	Thus, $\eta=\eta_-\ast\eta_+$ satisfies all the requirements stated in (b).

	Conversely, suppose that such a path $\eta$ exists. Then $p\in \hat{C}$, for there is a path $\eta_+=\eta|_{[0,1]}$ joining $p$ to a point of $C$ while staying outside of $D$ at all times. Similarly, $p\in \hat{D}$, whence $p\in B$.
	
	\item[(c)] The set $\hat{C}$ is open and connected by definition. Its complement is also connected by (\ref{L:connectedness}\?(b)), as it consists of $D$ and the components of $\Ss^2\ssm D$ distinct from $\hat{C}$. From (\ref{T:Schoenflies}\?(a)) it follows that $\hat{C}$ is simply-connected. Further, $\hat{C}\cap D=\emptyset$, hence the complement of $\hat{C}$ is non-empty and (\ref{T:Schoenflies}\?(b)) tells us that $\hat{C}$ is homeomorphic to an open disk. By symmetry, the same is true of  $\hat{D}$. 
	
	We claim that $\hat{C}\cup \hat{D}=\Ss^2$. To see this suppose $p\nin C$, and let $A$ be the component of $\Ss^2\ssm C$ containing $p$. If $A\cap D\neq \emptyset$ then $A=\hat{D}$ by definition. Otherwise $A\cap D=\emptyset$, hence there exists a path in $\Ss^2\ssm D$ joining $p$ to $\bd A$. By (\ref{L:connectedness}\?(a)), $\bd A\subs C$, consequently $A\subs \hat{C}$. In either case, $p\in \hat{C}\cup \hat{D}$.
	
	We are thus in the setting of (\ref{L:annulus}), and the conclusion is that
	\begin{equation*}
		B=\hat{C}\cap \hat{D}\home \Ss^1\times (-1,1).\qedhere
	\end{equation*}	
\end{enumerate}
\end{proof}

In what follows let $\bd B_\ga$ be the restriction of $B_\ga$ to $[0,1]\times \se{0,\rho_0-\pi}$, let 
\begin{equation*}
	\hat{B}=\Im(B_\ga)\ssm \Im(\bd B_\ga),
\end{equation*}
and let 
\begin{equation*}
	\bar{B}_\ga\colon \Ss^1\times [\rho_0-\pi,0]\to \Ss^2
\end{equation*}
be the unique map satisfying $\bar{B}_\ga\circ (\pr\times \id)=B_\ga$, $\pr(t)=\exp(2\pi it)$. 

\begin{lemma}\label{L:coveredband}
	Let $\ka_0\in \R$ and suppose that $\ga\in \sr L_{\ka_0}^{+\infty}$ is non-diffuse. Then:
	\begin{enumerate}
		\item [(a)] For any $t\in [0,1]$, $B_\ga\big(\se{t}\times (\rho_0-\pi,0)\big)$ intersects $B$.
		\item [(b)] $B\subs \hat{B}$.
		\item [(c)] $\bar{B}_\ga^{-1}(q)$ is a finite set for any $q\in \Ss^2$ and $\bar{B}_\ga\colon \bar{B}_\ga^{-1}(\hat{B})\to \hat{B}$ is a covering map. 
	\end{enumerate} 
\end{lemma}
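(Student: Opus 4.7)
The plan is to handle the three parts in order, relying on the characterization of $B$ given in (\ref{L:Bannulus}\?(b)) together with the local diffeomorphism property of $B_\ga$ from (\ref{L:band}). For part (a), I fix $t\in[0,1]$ and examine the geodesic arc $\al\colon[\rho_0-\pi,0]\to\Ss^2$ defined by $\al(\theta)=B_\ga(t,\theta)$, noting that $\al(0)=\ga(t)\in C$ and $\al(\rho_0-\pi)=\hat\ga(t)\in D$. Since $C$ and $D$ are disjoint compact sets at positive distance by (\ref{L:Bannulus}\?(a)), the quantities
\[
\theta_D=\sup\{\theta\in[\rho_0-\pi,0]:\al(\theta)\in D\},\quad \theta_C=\inf\{\theta\in[\theta_D,0]:\al(\theta)\in C\}
\]
are attained and satisfy $\rho_0-\pi\leq\theta_D<\theta_C\leq 0$ with $\al(\theta_D)\in D$, $\al(\theta_C)\in C$ and $\al\big((\theta_D,\theta_C)\big)\subs\Ss^2\ssm(C\cup D)$. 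Reparametrizing $\al|_{[\theta_D,\theta_C]}$ piecewise linearly as $\eta\colon[-1,1]\to\Ss^2$ with $\eta(0)=\al(\theta_*)$ for any chosen $\theta_*\in(\theta_D,\theta_C)$, the characterization (\ref{L:Bannulus}\?(b)) then yields $\al(\theta_*)\in B\cap B_\ga(\{t\}\times(\rho_0-\pi,0))$, which proves (a).

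For (b), I first note that $B\cap\Im(\bd B_\ga)=\emptyset$, since $\Im(\bd B_\ga)=\Im(\ga)\cup\Im(\hat\ga)\subs C\cup D$ while $B$ is disjoint from $C\cup D$; hence it suffices to show $B\subs\Im(B_\ga)$. I set $M=B\cap\Im(B_\ga)$ and argue that $M$ is clopen in the connected space $B$ (connected by (\ref{L:Bannulus}\?(c))) and nonempty. Nonemptiness is immediate from (a). Closedness of $M$ in $B$ is clear because $\Im(B_\ga)$ is closed in $\Ss^2$ as the continuous image of a compact set. For openness, pick $p=B_\ga(t,\theta)\in M$; then $p\in B$ forces $p\notin C\cup D$, so $\theta\in(\rho_0-\pi,0)$, and (\ref{L:band}\?(a)) makes $B_\ga$ a local diffeomorphism near $(t,\theta)$, hence an open map there, so $\Im(B_\ga)$ contains a neighborhood of $p$ in $\Ss^2$. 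Connectedness of $B$ then forces $M=B$.

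For part (c), finiteness of $\bar B_\ga^{-1}(q)$ for each $q\in\Ss^2$ comes from compactness of the domain $\Ss^1\times[\rho_0-\pi,0]$ together with the isolation of preimages: by (\ref{L:band}) the partial derivatives $\frac{\bd B_\ga}{\bd t}$ and $\frac{\bd B_\ga}{\bd \theta}$ are everywhere linearly independent, and by extending $B_\ga$ via the same formula to $\Ss^1\times(\rho_0-\pi-\eps,\eps)$ for small $\eps>0$ and applying the inverse function theorem, each preimage is isolated, so the compact preimage set is finite. For the covering property, I take $q\in\hat B$ with finite preimage $\{p_1,\dots,p_n\}$, which necessarily lies in the interior $\Ss^1\times(\rho_0-\pi,0)$ because $q\notin\Im(\bd B_\ga)$; then I choose pairwise disjoint interior neighborhoods $V_i\ni p_i$ on which $\bar B_\ga|_{V_i}$ is a diffeomorphism onto an open set $U_i\subs\Ss^2$, let $K=(\Ss^1\times[\rho_0-\pi,0])\ssm\bigcup_iV_i$ (compact), and define the open neighborhood $U=\big((\bigcap_iU_i)\ssm\bar B_\ga(K)\big)\cap\hat B$ of $q$ in $\hat B$. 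A direct verification then gives $\bar B_\ga^{-1}(U)=\bigsqcup_i(\bar B_\ga|_{V_i})^{-1}(U)$, with each piece mapped homeomorphically onto $U$. The main point requiring care is that $\hat B$ need be neither open nor closed in $\Ss^2$, so the covering property must be checked intrinsically in the subspace topology of $\hat B$; I expect this bookkeeping with $\hat B$ to be the primary technical hurdle.
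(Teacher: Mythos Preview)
Your proof is correct and follows essentially the same approach as the paper's: the same endpoint argument for (a), the same clopen-in-connected argument for (b), and the same local-diffeomorphism-plus-compactness construction of evenly covered neighborhoods for (c). One remark: your worry that $\hat B$ ``need be neither open nor closed in $\Ss^2$'' is unfounded, since $\hat B=\bar B_\ga\big(\Ss^1\times(\rho_0-\pi,0)\big)\ssm\Im(\bd B_\ga)$ is the difference of an open set (image of the interior under a local diffeomorphism) and a compact set, hence open; this makes the subspace-topology bookkeeping trivial.
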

\begin{proof} We split the proof into parts.
\begin{enumerate}
	\item [(a)] Note first that $B_\ga(t,0)\in C$  and $B_\ga(t,\rho_0-\pi)\in D$ for any $t\in [0,1]$ by definition. Let 
	\begin{alignat*}{5}
		\theta_1=&\inf\set{\theta\in [\rho_0-\pi,0]}{B_\ga(t,\theta)\in C},\\
		\theta_0=&\sup\set{\theta\in [\rho_0-\pi,\theta_1]}{B_\ga(t,\theta)\in D}.
	\end{alignat*}	
	Then $\theta_0<\theta_1$ by (\ref{L:Bannulus}\?(a)). Let $\eta=B_\ga|_{\se{t} \times [\theta_0,\theta_1]}$. Then
	\begin{equation*}
		\eta(\theta_0)\in D,\quad \eta(\theta_1)\in C\quad \text{and}\quad \eta(\theta_0,\theta_1)\subs \Ss^2\ssm (C\cup D)
	\end{equation*}
	by construction. Therefore, any point $\eta(\theta)$ for $\theta \in (\theta_0,\theta_1)$ satisfies the characterization of $B$ given in (\ref{L:Bannulus}\?(b)), and we conclude that 
	\begin{equation*}
		B_\ga\big(\se{t}\times (\theta_0,\theta_1)\big)\subs B.
	\end{equation*}
	\item [(b)] Let $B_0=B\cap \Im(B_\ga)$. By part (a), $B_0\neq \emptyset$. Since $\Im(\bd B_\ga)\subs C\cup D$, while $B\cap (C\cup D)=\emptyset$ by definition, $B\cap \Im(\bd B_\ga)=\emptyset$. Hence,
		\begin{equation*}
		B_0=B\cap \bar{B}_\ga\big(\Ss^1\times (\rho_0-\pi,0)\big),
	\end{equation*}  
which is an open set because $\bar{B}_\ga$ is an immersion, by (\ref{L:band}\?(a)). Since $\Im(B_\ga)$ is compact, $B_0$ is also closed in $B$. But $B$ is connected by (\ref{L:Bannulus}\?(c)), consequently $B_0=B$ and $B\subs \hat{B}$.
	 
	\item [(c)] Let $q\in \Ss^2$ be arbitrary. The set $\bar{B}_\ga^{-1}(q)$ is discrete because $\bar{B}_\ga$ is an immersion, and it is compact as a closed subset of $\Ss^2$. Hence, it must be finite. Now suppose $q\in \hat{B}$. Let $\bar{B}_\ga^{-1}(q)=\se{p_i}_{i=1}^n$ and choose disjoint open sets $U_i\ni p_i$ restricted to which $\bar{B}_\ga$ is a  diffeomorphism. Let $U=\bcup_{i=1}^nU_i$ and
	\begin{equation*}
		W=\bar{B}_\ga(U_1)\cap \dots \cap \bar{B}_\ga(U_n)\ssm \bar{B}_\ga\big(\Ss^1\times [\rho_0-\pi,0]\ssm U\big).
	\end{equation*}
	Then $W$ is a distinguished neighborhood of $q$, in the sense that $\bar{B}_\ga^{-1}(W)=\Du_{i=1}^n V_i$ and $\bar{B}_\ga\colon V_i\to W$ is a diffeomorphism for each $i$, where 
	\begin{equation*}
		V_i=\bar{B}_\ga^{-1}(W)\cap U_i.\qedhere
	\end{equation*}	
\end{enumerate}
\end{proof}

Parts (b) and (c) of (\ref{L:coveredband}) allow us to introduce a useful notion which essentially counts how many times a non-diffuse curve winds around $\Ss^2$.
\begin{defn}\label{D:rotationnumber}
	Let $\ka_0\in \R$ and suppose that $\ga\in \sr L_{\ka_0}^{+\infty}$ is non-diffuse. We define the \tdef{rotation number} $\nu(\ga)$ of $\ga$ to be the number of sheets of the covering map $\bar{B}_\ga\colon \bar{B}_\ga^{-1}(B)\to B$.
\end{defn}

\begin{urem}\label{R:rotation}
Suppose now that $\ga\in \sr L_{\ka_0}^{+\infty}$ is not only non-diffuse but also condensed (meaning that $C$ is contained in a closed hemisphere). In this case, a ``more natural'' notion of the rotation number of $\ga$ is available, as described on p.~\pageref{rotation}. Let us temporarily denote by $\bar\nu(\ga)$ the latter rotation number. We claim that $\bar\nu(\ga)=\nu(\ga)$ for any condensed and non-diffuse curve $\ga$. It is easy to check that this holds whenever $\ga$ is a circle traversed a number of times. If $\ga_s$ ($s\in [0,1]$) is a continuous family of curves of this type then $\nu(\ga_s)=\nu(\ga_0)$ and $\bar{\nu}(\ga_s)=\bar{\nu}(\ga_0)$ for any $s$, since $\nu$ and $\bar\nu$ can only take on integral values and every element in their definitions depends continuously on $s$. Moreover, it follows from (\ref{C:positivecondensed}) and (\ref{P:negativecondensed}) that any condensed and non-diffuse curve is homotopic through curves of this type to a circle traversed a number of times. 
\end{urem}

\begin{prop}\label{P:totbound}
		Let $\ka_0\in \R$, $\rho_0=\arccot \ka_0$ and suppose that $\ga\in \sr L_{\ka_0}^{+\infty}$ is non-diffuse. Then 
		\begin{equation}\label{E:totbound}
			\tot(\ga)< \frac{4\pi}{\cos^2\Big(\frac{\rho_0}{2}\Big)}\,\nu(\ga).
		\end{equation}
\end{prop}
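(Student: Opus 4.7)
The plan is to interpret $\tot(\ga)$ as the pullback area of the regular band $B_\ga$ and to bound this quantity from above by the area of $\Ss^2$ weighted by the covering multiplicity $\nu(\ga)$.

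First, I would compute the pullback of the area form of $\Ss^2$ via $B_\ga$. From (\ref{L:band}), $\partial B_\ga/\partial t$ and $\partial B_\ga/\partial \theta$ are orthogonal, with norms $|\dot\ga(t)|\sin(\rho(t)-\theta)/\sin\rho(t)$ and $1$ respectively, so the pullback area element equals
\begin{equation*}
	\frac{|\dot\ga(t)|\sin(\rho(t)-\theta)}{\sin\rho(t)}\,dt\,d\theta.
\end{equation*}
The inner integral over $\theta\in[\rho_0-\pi,0]$ evaluates to
\begin{equation*}
\int_{\rho_0-\pi}^{0}\sin(\rho-\theta)\,d\theta=\cos\rho+\cos(\rho-\rho_0)=2\cos(\rho_0/2)\cos(\rho-\rho_0/2)
\end{equation*}
by the sum-to-product identity. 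Since $\rho(t)\in(0,\rho_0)$ forces $|\rho(t)-\rho_0/2|<\rho_0/2<\pi/2$, we have $\cos(\rho(t)-\rho_0/2)>\cos(\rho_0/2)$ pointwise, and thus
\begin{equation*}
	\int_{0}^{1}\!\!\int_{\rho_0-\pi}^{0}\frac{|\dot\ga(t)|\sin(\rho(t)-\theta)}{\sin\rho(t)}\,d\theta\,dt\;>\;2\cos^2(\rho_0/2)\int_{0}^{1}\frac{|\dot\ga(t)|}{\sin\rho(t)}\,dt\;=\;2\cos^2(\rho_0/2)\,\tot(\ga).
\end{equation*}

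Next, by the area formula applied to the immersion $\bar B_\ga$ (an immersion by (\ref{L:band}(a))), this double integral equals $\int_{\Ss^2}\#\bar B_\ga^{-1}(q)\,dA(q)$. Since $\Im(\bd B_\ga)$ is a union of two curves it has area zero, so the integral can be taken over $\hat B=\Im(B_\ga)\setminus\Im(\bd B_\ga)$. By (\ref{L:coveredband}(c)) the restriction $\bar B_\ga\colon \bar B_\ga^{-1}(\hat B)\to \hat B$ is a covering map, and by definition its multiplicity on the component of $\hat B$ containing $B$ is $\nu(\ga)$. If one can establish that every component of $\hat B$ carries multiplicity at most $\nu(\ga)$, then
\begin{equation*}
\int_{\Ss^2}\#\bar B_\ga^{-1}(q)\,dA(q)\leq 4\pi\,\nu(\ga),
\end{equation*}
and combining this with the lower bound yields $\tot(\ga)<2\pi\nu(\ga)/\cos^2(\rho_0/2)$, which implies \eqref{E:totbound} \emph{a fortiori}.

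The main obstacle is exactly this uniform multiplicity bound, since \emph{a priori} other components of $\hat B$ could carry a larger covering degree. The natural route is to show that $\hat B$ is connected. Given $(t_0,\theta_0)\in \Ss^1\times(\rho_0-\pi,0)$ with $\bar B_\ga(t_0,\theta_0)\in\hat B$, (\ref{L:coveredband}(a)) supplies some $\theta_1$ with $\bar B_\ga(t_0,\theta_1)\in B$, and linear interpolation in $\theta$ traces a great-circle arc from $\bar B_\ga(t_0,\theta_0)$ to a point of $B$ lying in $\Im(B_\ga)$. The delicate point is that this arc may cross the one-dimensional set $\Im(\bd B_\ga)=\ga\cup(-\chi)$; a small generic perturbation in the $t$-coordinate, together with the fact that $\ga$ and $-\chi$ are $C^1$ away from the singular values of the caustic, should make these crossings transverse and hence avoidable, keeping the perturbed path inside $\hat B$ and connecting the chosen component of $\hat B$ to the one containing $B$.
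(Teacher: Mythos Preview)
Your area computation is correct and elegant: the pullback of the area form under $B_\ga$ integrates to $\int_0^1 \frac{|\dot\ga|}{\sin\rho}(\cos\rho+\cos(\rho_0-\rho))\,dt$, and the sum-to-product identity indeed gives the lower bound $2\cos^2(\rho_0/2)\tot(\ga)$. In fact this is exactly the integrand the paper obtains via Gauss--Bonnet on each rectangle, so the analytic core of the two approaches coincides.

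The genuine gap is in your multiplicity argument. You want to show that $\hat B$ is connected so that the covering multiplicity is $\nu(\ga)$ everywhere, and you propose to connect an arbitrary point of $\hat B$ to $B$ by perturbing the geodesic segment $B_\ga(\{t_0\}\times[\theta_0,\theta_1])$ to avoid $\Im(\bd B_\ga)$. But transversality does not help here: a path is $1$-dimensional, $\Im(\bd B_\ga)$ is $1$-dimensional, and the ambient space is $2$-dimensional, so generic intersections are isolated points, not empty. A curve can separate a surface, and nothing you have said rules out $\bar B_\ga^{-1}(\Im(\bd B_\ga))$ disconnecting $(t_0,\theta_0)$ from $\bar B_\ga^{-1}(B)$ in the domain. (Also, $\Im(\bd B_\ga)=\Im(\ga)\cup\Im(\hat\ga)$ with $\hat\ga(t)=B_\ga(t,\rho_0-\pi)$, which is a regular curve; it is not $-\chi$.) So the uniform bound $\#\bar B_\ga^{-1}(q)\le\nu(\ga)$ is unproved, and with it your sharper inequality $\tot(\ga)<2\pi\nu(\ga)/\cos^2(\rho_0/2)$.

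The paper circumvents this entirely. It picks $b\in B$, shows the height function $h(t)=\gen{\ga(t),b}$ has exactly $2\nu(\ga)$ nondegenerate critical points, and proves that between consecutive critical points $\tau_{i-1}<\tau_i$ the restricted band $B_\ga|_{[\tau_{i-1},\tau_i]\times[\rho_0-\pi,0]}$ is \emph{injective}. Injectivity of each such rectangle gives $\text{Area}(R_i)<4\pi$, and summing over the $2\nu(\ga)$ rectangles yields \eqref{E:totbound}. Note that this injectivity only gives global multiplicity $\le 2\nu(\ga)$, not $\le\nu(\ga)$; plugged into your area formula it reproduces precisely the paper's constant $4\pi$, not your hoped-for $2\pi$. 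If you want to salvage your route, the real work is to replace the perturbation sketch by something like the paper's height-function decomposition to control the multiplicity.
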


Recall that a non-convex curve in $\sr L_{0}^{+\infty}$ is not simple.
It thus follows from (\ref{L:coveredband}\?(a)) that a non-diffuse curve $\eta$
which is also non-convex must satisfy $\nu(\eta) > 1$.
The previous proposition is closely related to lemma 4.1\?(b) in \cite{Sal3}, 
which states that if $\ga$ is a closed convex curve, then $\tot(\ga)< 4\pi$.
Unlike the latter, the inequality \eqref{E:totbound} is not tight:
it implies only that $\tot(\ga)< 8\pi$
if $\ga \in \sr L_{0}^{+\infty}$ is non-diffuse with $\nu(\ga) = 1$ (note that $\rho_0=\frac{\pi}{2}$ if $\ka_0=0$).


\begin{proof}
It is easy to check that being non-diffuse is an open condition. Using (\ref{L:dense}), we deduce that the closure of the subset of all $C^2$ non-diffuse curves in $\sr L_{\ka_0}^{+\infty}$ contains the set of all (admissible) non-diffuse curves. Therefore, we lose no generality in restricting our attention to $C^2$ curves.

Let $b\in B$ be arbitrary; we have $B=-B$, hence  $-b\in B$ also. Let $\hat{\ga}$ be the other boundary curve of $B_\ga$:
\begin{equation*}
\quad	\hat{\ga}(t)=B_\ga(t,\rho_0-\pi)=-\cos\rho_0\, \ga(t)-\sin\rho_0\,\no(t)\quad (t\in [0,1]).
\end{equation*}
Then
\begin{equation}\label{E:GB1}
\quad	\hat{\ga}'(t)=\big(\ka(t)\sin\rho_0-\cos\rho_0\big)\ga'(t)=\frac{\sin(\rho_0-\rho(t))}{\sin\rho(t)}\ga'(t)\quad (t\in [0,1]).\footnote{In this proof, derivatives with respect to $t$ are denoted using a $'$ to simplify the notation.}
\end{equation}
(Here, as always, $\ka=\cot \rho$ is the geodesic curvature of $\ga$.) In particular, the unit tangent vector $\hat{\ta}$ to $\hat\ga$ satisfies $\hat{\ta}=\ta$. By (\ref{C:radiusofcurvature}), the geodesic curvature $\hat\ka$ of $\hat\ga$ is given by
\begin{equation}\label{E:GB2}
\quad	\hat\ka(t)=\cot(\rho(t)-(\rho_0-\pi))=\cot(\rho(t)-\rho_0)\quad (t\in [0,1]).
\end{equation} 
Define $h,\hat{h}\colon [0,1]\to (-1,1)$ by 
\begin{equation}\label{E:GB9}
	h(t)=\gen{\ga(t),b}\text{\quad and\quad} \hat{h}(t)=\gen{\hat{\ga}(t),b}.
\end{equation}
These functions measure the ``height'' of $\ga$ and $\hat{\ga}$ with respect to $\pm b$.  We cannot have $\abs{h(t)}=1$ nor $\vert\hat{h}(t)\vert=1$ because the images of $\ga$ and $\hat{\ga}$ are contained in $C$ and $D$ respectively, which are disjoint from $B$ (by definition (\ref{D:BCD})).  Also,
\begin{alignat}{9}\label{E:GB10}
	h'(t)=&\abs{\ga'(t)}\gen{b,\ta(t)},\quad&\quad \hat{h}'(t)&=\frac{\sin(\rho_0-\rho(t))}{\sin\rho(t)}h'(t).
\end{alignat}
Let $\Ga_t$ be the great circle whose center on $\Ss^2$ is $\ta(t)$, 
\begin{equation*}
	\Ga_t=\set{\cos \theta\, \ga(t)+\sin\theta\, \no(t)}{\theta\in [-\pi,\pi)}.
\end{equation*} 
We have $\ga(t),\,\hat{\ga}(t)\in \Ga_t$ by definition. Moreover, the following conditions are equivalent:
\begin{enumerate}
	\item [(i)] $b\in \Ga_t$.
	\item [(ii)]  $h'(t)=0$.
	\item [(iii)] $\hat{h}'(t)=0$.
	\item [(iv)] The segment $B_\ga\big(\se{t}\times (\rho_0-\pi,0)\big)$ contains either $b$ or $-b$.
\end{enumerate} 
The equivalence of the first three conditions follows from \eqref{E:GB10}. The equivalence (i)$\?\leftrightarrow\?$(iv) follows from the facts that $b\nin C\cap D$ and that $\Ga_t$ is the union of the segments $\pm B_\ga\big(\se{t}\times (\rho_0-\pi,0)\big)$ and $\pm C_\ga\big(\se{t}\times [0,\rho_0]\big)$ (see fig.~\ref{F:Gammat}, p.~\pageref{F:Gammat}). The equivalence of the last three conditions tells us that $h$ and $\hat{h}$ have exactly $2\nu(\ga)$ critical points, for each of $B_\ga^{-1}(b)$ and $B_\ga^{-1}(-b)$ has cardinality $\nu(\ga)$, by definition (\ref{D:rotationnumber}).

Suppose that $\tau$ is a critical point of $h$ and $\hat{h}$. Because $b\in \Ga_\tau\ssm (C\cup D)$, we can write
\begin{equation}\label{E:thetae}
	b=\cos \theta\, \ga(\tau)+\sin \theta\, \no(\tau)\text{, for some $\theta\in (\rho_0-\pi,0)\cup (\rho_0,\pi)$}.
\end{equation}
A straightforward calculation shows that:
\begin{equation*}
	h''(\tau)=\gen{\ga''(\tau),b}=\frac{\abs{\ga'(\tau)}^2}{\sin\rho(\tau)}\sin(\theta-\rho(\tau)).
\end{equation*} 
Using \eqref{E:thetae} and $0<\rho(\tau)<\rho_0$ we obtain that either
\begin{equation*}
	-\pi<\theta-\rho(\tau)<0\text{\ \ or\ \ }0<\theta-\rho(\tau)<\pi.
\end{equation*}
In any case, we deduce that $h''(\tau)\neq 0$. The proof that $\tau$ is a nondegenerate critical point of $\hat{h}$ is analogous: one obtains by another calculation that 
\begin{equation*}
	\hat h''(\tau)=\frac{\abs{\ga'(\tau)}^2}{\sin^2(\rho(\tau))}\sin(\rho_0-\rho(\tau))\?\sin(\theta-\rho(\tau)),
\end{equation*}
and it follows from the above inequalities that $\hat{h}''(\tau)\neq 0$. In particular, two neighboring critical points $\tau_0<\tau_1$ of $h$ (and $\hat h$) cannot be both maxima or both minima for $h$ (and $\hat{h}$). We will prove the proposition by obtaining an upper bound for $\tot\big(\ga|_{[\tau_0,\tau_1]}\big)$.

\begin{figure}[ht]
	\begin{center}
		\includegraphics[scale=.32]{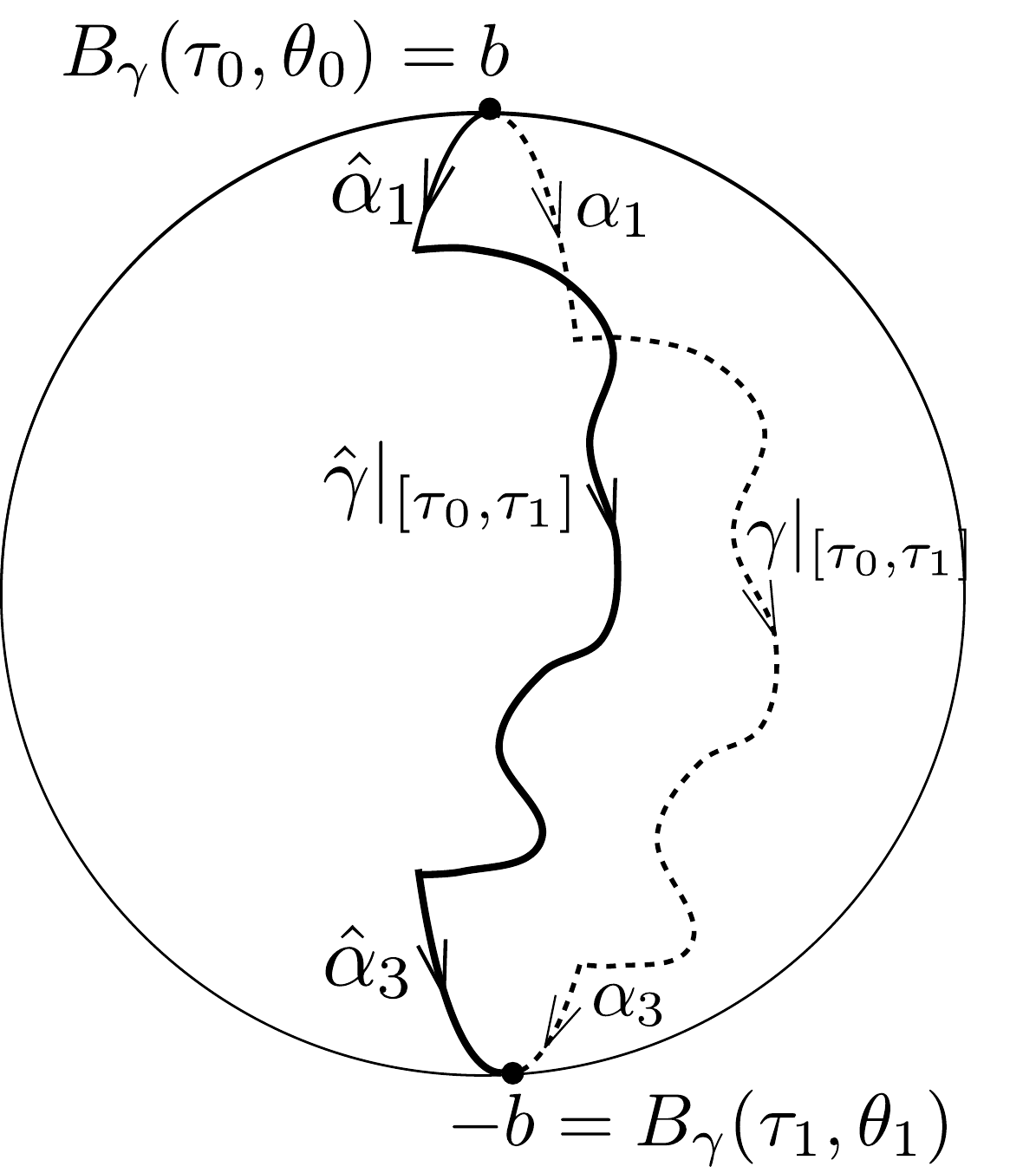}
		\caption{An illustration of the boundary of the rectangle $R=B_\ga|_{ [ \tau_0,\tau_1]\times [\rho_0-\pi,0] }$ considered in the proof of (\ref{P:totbound}).}
		\label{F:rectangle}
	\end{center}
\end{figure}

We first claim that $B_\ga|_{[\tau_0,\tau_1]\times [\rho_0-\pi,0]}$ is injective. Suppose for concreteness that $h'<0$ throughout $(\tau_0,\tau_1)$ and that $b=B_\ga(\tau_0,\theta_0)$, $-b=B_\ga(\tau_1,\theta_1)$, where $\theta_0,\theta_1\in (\rho_0-\pi,0)$.  Let $\al=\al_1\ast\al_2\ast\al_3$ be the concatenation of the curves $\al_i\colon [0,1]\to \Ss^2$ given by 
\begin{alignat*}{9}
\al_1(t)=&B_\ga\big(\tau_0\?,\?(1-t)\theta_0\big),\quad \al_2(t)=\ga\big((1-t)\tau_0+t\tau_1\big),\\
\al_3(t)=&B_\ga\big(\tau_1\?,\? t\theta_1\big),
\end{alignat*}
as sketched in fig.~\ref{F:rectangle}. Similarly, let $\hat\al$ be the concatenation of the curves $\hat\al_i\colon [0,1]\to \Ss^2$,   
\begin{alignat*}{9}
\hat\al_1(t)=&B_\ga\big(\tau_0\?,\?(1-t)\theta_0+t(\rho_0-\pi)\big),\quad \hat\al_2(t)=\hat\ga\big((1-t)\tau_0+t\tau_1\big),\\
\hat\al_3(t)=&B_\ga\big(\tau_1\?,\?(1-t)(\rho_0-\pi)+t\theta_1\big).
\end{alignat*}
Define six functions $h_i,\hat{h}_i\colon [0,1]\to [-1,1]$ by the formulas 
\begin{equation*}
\quad	h_i(t)=\gen{\al_i(t),b}\text{\quad and\quad }\hat h_i(t)=\gen{\hat\al_i(t),b}\quad (i=1,2,3).
\end{equation*}	 Note that $h_2$ is essentially the restriction of $h$ to $[\tau_0,\tau_1]$ and similarly for $\hat{h}_2$ (see \eqref{E:GB9}). Moreover, all of these functions are monotone decreasing. For $i=2$ this is immediate from \eqref{E:GB10} and the hypothesis that $h'<0$ on $(\tau_0,\tau_1)$. For $i=1,3$ this follows from the fact that $\al_i$, $\hat{\al}_i$ are geodesic arcs through $\pm b$, and our choice of orientations for these curves. 

Because the map $B_\ga|_{[\tau_0,\tau_1]\times [\rho_0-\pi,0]}$ is an immersion, if  $B_\ga$ is not injective then either $\al$ and $\hat{\al}$ intersect each other, or one of them has a self-intersection. We can discard the possibility that either curve has a self-intersection from the fact that all functions $h_i$, $\hat{h}_i$ are monotone decreasing. Further, since $B\home \Ss^1\times (-1,1)$, we can find a Jordan curve $\be\colon [0,1]\to B$  through $\pm b$ winding once around the $\Ss^1$ factor. If $\al$ and $\hat\al$ intersect (at some point other than $\al(0)=\hat{\al}(0)$ or $\al(1)=\hat{\al}(1)$), then this must be an intersection of $\ga$ and $\hat\ga$. This is impossible because $\be$, which has image in $B$, separates $C$ and $D$, which contain the images of $\ga$ and $\hat\ga$, respectively.

Thus, $R=B_\ga|_{[\tau_0,\tau_1]\times [\rho_0-\pi,0]}$ is diffeomorphic to a rectangle, and its boundary consists of $\hat\ga|_{[\tau_0,\tau_1]}$, $\ga|_{[\tau_0,\tau_1]}$ (the latter with reversed orientation) and the two geodesic arcs $B_\ga\big(\se{\tau_0}\times [\rho_0-\pi,0]\big)$ and $B_\ga\big(\se{\tau_1}\times [\rho_0-\pi,0]\big)$. Recall from (\ref{L:band}) that $\tfrac{\bd B_\ga}{\bd t}$ is always orthogonal to $\tfrac{\bd B_\ga}{\bd \theta}$. Using Gauss-Bonnet we deduce that
\begin{equation*}
	\Big(\frac{\pi}{2}+\frac{\pi}{2}+\frac{\pi}{2}+\frac{\pi}{2}\Big)+\int_{\tau_0}^{\tau_1}\hat\ka(t)\abs{\hat\ga'(t)}\,dt-\int_{\tau_0}^{\tau_1}\ka(t)\abs{\ga'(t)}\,dt+\text{Area}(R)=2\pi.
\end{equation*}
Using \eqref{E:GB1}, \eqref{E:GB2} and the fact that $\text{Area}(R)<\text{Area}(\Ss^2)=4\pi$ we obtain:
\begin{equation}\label{E:GB3}
	\int_{\tau_0}^{\tau_1}\Big(\cot\rho(t)+\frac{\sin(\rho_0-\rho(t))}{\sin\rho(t)}\cot(\rho_0-\rho(t))\Big)\abs{\ga'(t)}\,dt<4\pi.
\end{equation}
Let us see how this yields an upper bound for $\tot\big(\ga|_{[\tau_0,\tau_1]}\big)$. From $\cos(x)+\cos(y)=2\cos\big(\frac{x+y}{2}\big)\cos\big(\frac{x-y}{2}\big)$ and $\abs{\rho(t)-\frac{\rho_0}{2}}<\frac{\rho_0}{2}$ we deduce that 
\begin{alignat*}{9}
	&\sin\rho(t)\Big(\cot\rho(t)+\frac{\sin(\rho_0-\rho(t))}{\sin\rho(t)}\cot(\rho_0-\rho(t))\Big)\\
	=&\cos\rho(t)+\cos(\rho_0-\rho(t))=2\cos \Big( \frac{\rho_0}{2} \Big)\cos \Big(\rho(t)- \frac{\rho_0}{2} \Big)\geq 2\cos^2 \Big( \frac{\rho_0}{2} \Big).
\end{alignat*}
The Euclidean curvature $K$ of $\ga$ thus satisfies
\begin{alignat}{9}
	K(t)&=\sqrt{1+\ka(t)^2}=\sqrt{1+\cot\rho(t)^2}=\csc \rho(t) \label{E:GB4}
	\\ &\leq\frac{1}{2\cos^2 \big( \frac{\rho_0}{2} \big)}\Big(\cot\rho(t)+\frac{\sin(\rho_0-\rho(t))}{\sin\rho(t)}\cot(\rho_0-\rho(t))\Big). \notag
\end{alignat}	
Combining \eqref{E:GB3} and \eqref{E:GB4} we obtain:
\begin{alignat*}{9}
	&\tot\big(\ga|_{[\tau_0,\tau_1]}\big)=\int_{\tau_0}^{\tau_1}K(t)\abs{\ga'(t)}\,dt <\frac{2\pi}{\cos^2\Big(\frac{\rho_0}{2}\Big)}.
\end{alignat*}
Extending $\ga$ to all of $\R$ by declaring it to be $1$-periodic and choosing consecutive critical points $\tau_0<\tau_1<\dots<\tau_{2\nu(\ga)-1}<\tau_{2\nu(\ga)}$, so that $\tau_{2\nu(\ga)}=\tau_0+1$, we finally conclude from the previous estimate (with $[\tau_{i-1},\tau_i]$ in place of $[\tau_0,\tau_1]$)  that 
\begin{equation*}
	\tot(\ga)=\sum_{i=1}^{2\nu(\ga)}\tot\big(\ga|_{[\tau_{i-1},\tau_i]}\big) < \frac{4\pi}{\cos^2\Big(\frac{\rho_0}{2}\Big)}\,\nu(\ga).\qedhere
\end{equation*}
\end{proof}

\section{Homotopies of Circles}\label{S:7}
Let $k\geq 1$ be an integer. The \tdef{bending of the $k$-equator} is an explicit homotopy (to be defined below) from a great circle traversed $k$ times to a great circle traversed $k+2$ times. It is an ``optimal'' homotopy of this type, in the following sense: It is possible to deform a circle traversed $k$ times into a circle traversed $k+2$ times in $\sr L_{-\ka_1}^{+\ka_1}(I)$ if and only if we may carry out the bending of the $k$-equator in this space (meaning that the absolute value of the geodesic curvature is bounded by $\ka_1$ throughout the bending). 

\begin{figure}[ht]
	\begin{center}
		\includegraphics[scale=.27]{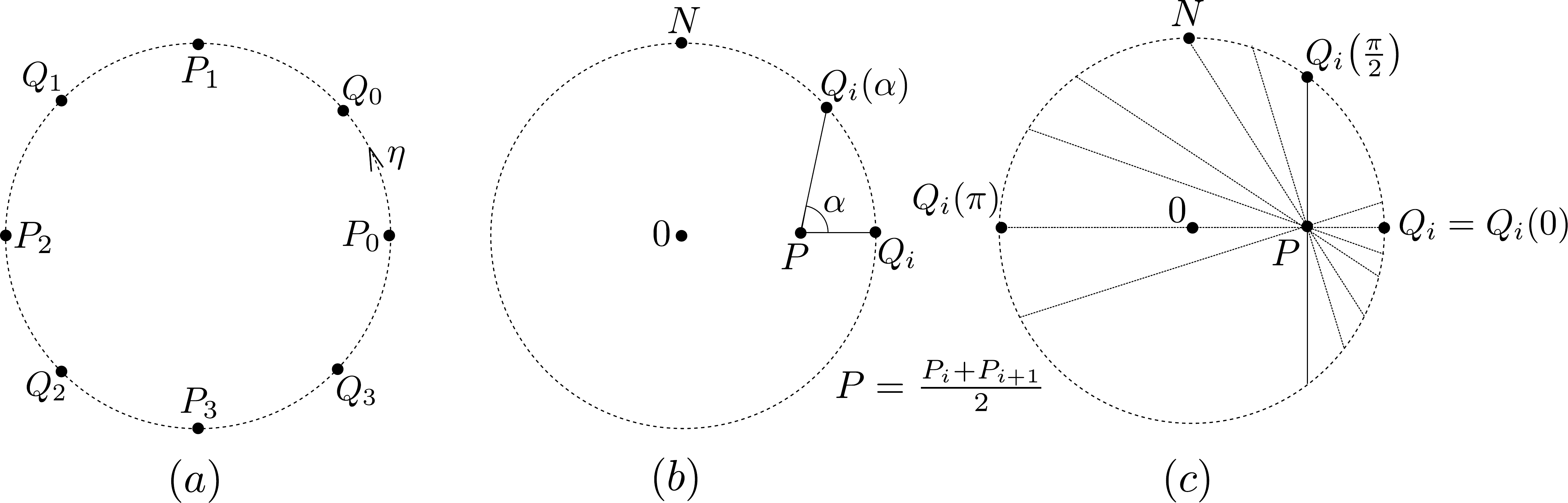}
		\caption{}
		\label{F:fourcorners}
	\end{center}
\end{figure}

Let $N=(0,0,1)\in \Ss^2$ be the north pole, let
\begin{equation*}
\qquad	\eta(t)=\big( \cos(2k\pi t),\sin(2k\pi t),0 \big)\quad (t\in [0,1])
\end{equation*}
be a parametrization of the equator traversed $k\geq 1$ times ($k\in \N$) and let 
\begin{equation*}
\qquad	P_i=\eta\Big(\frac{i}{2k+2}\Big),\quad Q_i=\eta\Big(\frac{i+\tfrac{1}{2}}{2k+2}\Big) \quad (i=0,1,\dots,2k+1),
\end{equation*}
as illustrated in fig.~\ref{F:fourcorners}\?(a) for $k=1$. Define $Q_i(\al)$ (see fig.~\ref{F:fourcorners}\?(b)) to be the unique point in the geodesic through $N$ and $Q_i$ such that 
\begin{equation*}
\qquad	\sphericalangle Q_i \Big(\frac{P_i+P_{i+1}}{2}\Big)Q_{i}(\al)=\al \quad (-\pi\leq \al\leq \pi,~i=0,1,\dots,2k+1).
\end{equation*}

Let $A_i(\al)\subs \Ss^2$ be the arc of circle through $P_iQ_i(\al)P_{i+1}$, with orientation determined by this ordering of the three points, and define
	\begin{equation*}
\qquad		\sig_{\al,i}\colon \Big[0,\frac{1}{2k+2}\Big]\to \Ss^2 \quad(0\leq \al\leq \pi,~i=0,\dots,2k+1)
	\end{equation*}
to be a parametrization of $A_i((-1)^i\al)$ by a multiple of arc-length, as illustrated in fig.~\ref{F:fourcorners2} below for $k=1$. Note that $A_i(0)$ is just $\frac{k}{2k+2}$ of the equator, while $A_i(\pi)$ is the ``complement'' of $A_i(0)$, which is $\frac{k+2}{2k+2}$ of the equator.
	
	Let $\sig_\al\colon [0,1]\to \Ss^2$ be the concatenation of all the $\sig_{\al,i}$, for $i$ increasing from $0$ to $2k+1$ (as in fig.~\ref{F:fourcorners2}). Then $\sig_0$ is the equator traversed $k$ times, while $\sig_\pi$ is the equator traversed $k+2$ times, in the opposite direction. The curve $\sig_\al$ is closed and regular for all $\al\in [0,\pi]$. However, its geodesic curvature is a step function, taking the value $(-1)^i\ka(\al)$ for $t\in (\frac{i}{2k+2},\frac{i+1}{2k+2})$, where $\ka(\al)$ depends only on $\al$. At the points $t=\frac{i}{2k+2}$ the curvature is not defined, except for $\al=0,\pi$, when the curvature vanishes identically. 
	
	We are only interested in the maximum value of $\ka(\al)$ for $0\leq \al\leq \pi$, which can be easily determined. For any $\al$, the center of the circle $C$ of which $A_i(\al)$ is an arc is contained in the plane $\Pi_1$ through $0$, $Q_i$ and $N$, since this plane is the locus of points equidistant from $P_i$ and $P_{i+1}$ ($\Pi_1$ is the plane of figures \ref{F:fourcorners}\?(b) and \ref{F:fourcorners}\?(c)). By definition, $C$ is contained in the plane $\Pi_2$ through $P_i$, $Q_i(\al)$ and $P_{i+1}$. Thus, the center of $C$ lies in the line $\Pi_1\cap \Pi_2=PQ_k(\al)$, and the segment of this line bounded by $\Ss^2$ is a diameter of $C$. Clearly, this diameter is shortest when $\al=\tfrac{\pi}{2}$ (see fig.~\ref{F:fourcorners}\?(c)). The corresponding spherical radius is $\rho=\frac{k\pi}{2k+2}$, hence \tit{the maximum value attained by $\ka(\al)$ for $0\leq \al\leq \pi$ is} 
	\begin{equation*}
		\ka(\tfrac{\pi}{2})=\cot\Big(\frac{k\pi}{2k+2}\Big)=\tan\Big(\frac{\pi}{2k+2}\Big),
	\end{equation*}
	\tit{and the minimum value is $-\ka(\tfrac{\pi}{2})$.}
	\begin{defn}\label{D:bending}
	Let $\sig_\al$ be as in the discussion above ($0\leq \al\leq \pi$) and assume that
	\begin{equation}\label{E:kappa1}
	\ka_1>\tan\Big(\frac{\pi}{2k+2}\Big).		
	\end{equation}	The \tdef{bending of the $k$-equator} is the family of curves $\eta_s\in \sr L_{-\ka_1}^{+\ka_1}(I)$ given by:
	\begin{equation*}
		\eta_s(t)=\big(\Phi_{\sig_{s\pi}}(0)\big)^{-1}\sig_{s\pi}(t)\quad (s,t\in [0,1]).
	\end{equation*}
	\end{defn}
	\begin{figure}[ht]
	\begin{center}
		\includegraphics[scale=.26]{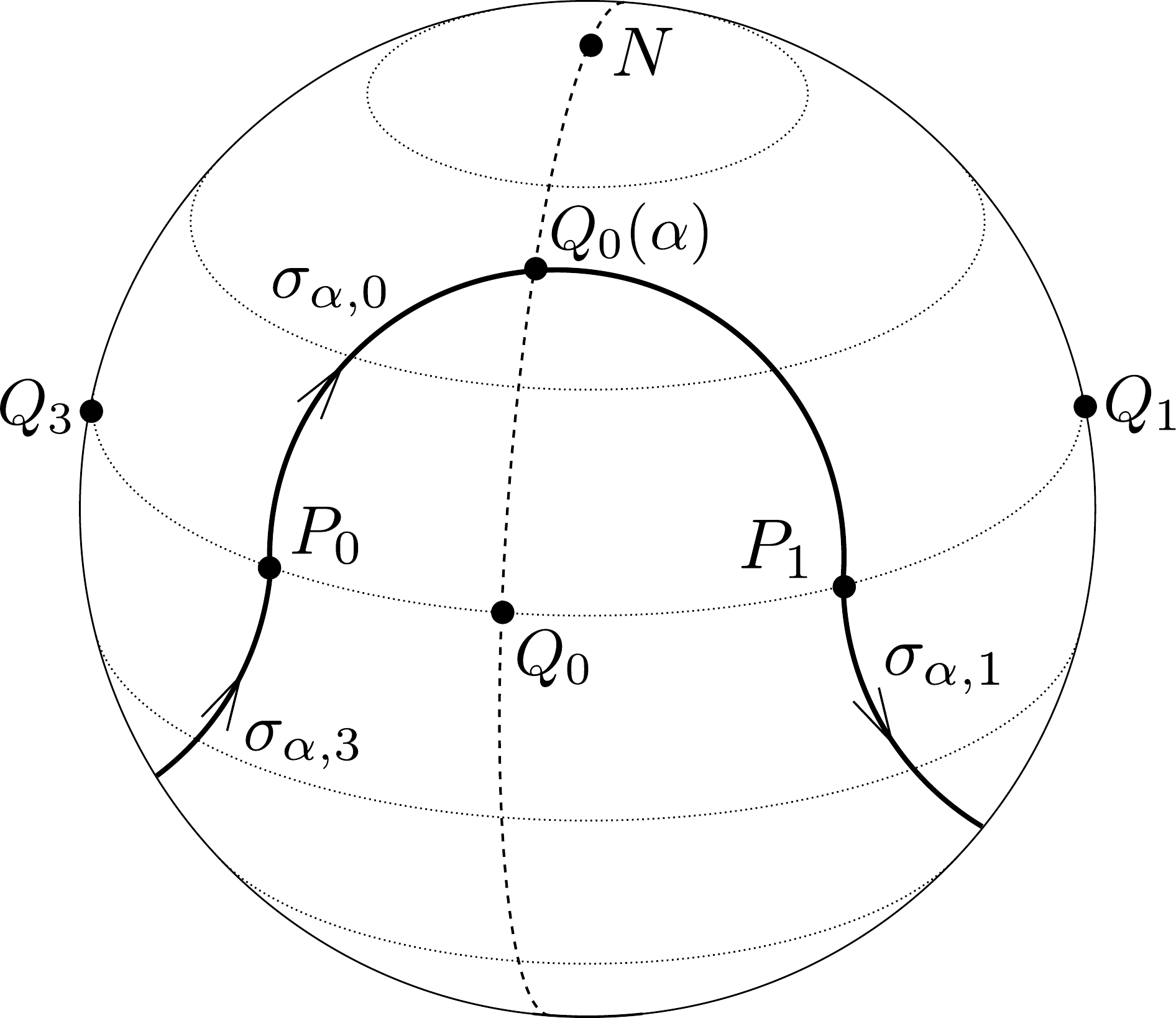}
		\caption{An illustration of the bending of the 1-equator. The curve $\sig_\al$ is the concatenation of $\sig_{\al,0},\dots,\sig_{\al,3}$.}
		\label{F:fourcorners2}
	\end{center}
\end{figure}
Note that $\eta_0$ is the equator of $\Ss^2$ traversed $k$ times and $\eta_1$ is the equator traversed $k+2$ times, in the same direction. The following result is an immediate consequence of the discussion above.
\begin{prop}\label{P:borderline0}
	Let $\ka_0=\cot \rho_0\in \R$ and let $\sig_k, \sig_{k+2}\in \sr L_{\ka_0}^{+\infty}(I)$ be circles traversed $k$ and $k+2$ times, respectively. Then $\sig_k$ lies in the same component of $\sr L_{\ka_0}^{+\infty}(I)$ as $\sig_{k+2}$ if 
	\begin{equation}
		k\geq \left\lfloor{\frac{\pi}{\rho_0}}\right\rfloor.
	\end{equation}
\end{prop}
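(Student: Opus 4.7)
The plan is to translate the problem to a symmetric space of curves and then invoke the bending of the $k$-equator directly. First I would use corollary (\ref{C:twoviewpoints}): set $\rho_1=(\pi-\rho_0)/2$ and $\ka_1=\cot\rho_1$, so that $\rho_0=\pi-2\rho_1$ and (\ref{C:twoviewpoints}) yields a homeomorphism $\sr L_{\ka_0}^{+\infty}(I)\home \sr L_{-\ka_1}^{+\ka_1}(I)$. By remark (\ref{R:circletocircle}) this homeomorphism sends parametrized circles traversed $j$ times to parametrized circles traversed $j$ times. Hence it suffices to prove that images of $\sig_k$ and $\sig_{k+2}$ lie in the same connected component of $\sr L_{-\ka_1}^{+\ka_1}(I)$.

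Next I would verify that the hypothesis $k\geq \lfloor \pi/\rho_0\rfloor$ is equivalent to the curvature bound (\ref{E:kappa1}) needed to carry out the bending. Indeed, $k\geq \lfloor \pi/\rho_0\rfloor$ is equivalent to $k+1>\pi/\rho_0$, i.e., $\rho_0>\pi/(k+1)$. Substituting $\rho_0=\pi-2\rho_1$ gives
\begin{equation*}
	\rho_1<\frac{\pi}{2}-\frac{\pi}{2(k+1)}.
\end{equation*}
Since $\cot$ is strictly decreasing on $(0,\pi)$ and $\cot(\pi/2-x)=\tan x$, this is equivalent to
\begin{equation*}
	\ka_1=\cot\rho_1>\tan\Big(\frac{\pi}{2(k+1)}\Big)=\tan\Big(\frac{\pi}{2k+2}\Big),
\end{equation*}
which is precisely condition (\ref{E:kappa1}).

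Finally I would appeal to definition (\ref{D:bending}): the bending of the $k$-equator produces a continuous path $s\mapsto \eta_s$ in $\sr L_{-\ka_1}^{+\ka_1}(I)$ with $\eta_0$ the equator traversed $k$ times and $\eta_1$ the equator traversed $k+2$ times. By (\ref{L:homocircles}), $\eta_0$ lies in the same component as the image of $\sig_k$ and $\eta_1$ lies in the same component as the image of $\sig_{k+2}$. Concatenating these three paths gives a continuous path in $\sr L_{-\ka_1}^{+\ka_1}(I)$ connecting the images of $\sig_k$ and $\sig_{k+2}$, which transports back to $\sr L_{\ka_0}^{+\infty}(I)$ under the homeomorphism above.

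There is essentially no obstacle here; the entire proposition is a bookkeeping exercise identifying when the parameters of the (already constructed) bending fit inside the prescribed curvature range after the symmetrizing translation of (\ref{C:twoviewpoints}). The only spot requiring care is the equivalence $k\geq \lfloor \pi/\rho_0\rfloor \Leftrightarrow \rho_0>\pi/(k+1)$, which must be checked in both cases (whether or not $\pi/\rho_0$ is an integer) but is elementary.
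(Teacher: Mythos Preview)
Your proof is correct and follows essentially the same route as the paper's: set $\rho_1=(\pi-\rho_0)/2$, transport via the homeomorphism of (\ref{C:twoviewpoints}), invoke the bending of the $k$-equator, and finish with (\ref{L:homocircles}). You are in fact more explicit than the paper in checking that $k\geq \lfloor \pi/\rho_0\rfloor$ is equivalent to the curvature bound \eqref{E:kappa1}; the paper simply asserts this.
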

	\begin{proof}
		Let $\rho_1=\frac{\pi-\rho_0}{2}$, so that $\ka_1=\cot\rho_1$ satisfies \eqref{E:kappa1}.  Let $\ga_s$ ($s\in [0,1]$) be the image of the bending $\eta_s$ of the $k$-equator under the homeomorphism $\sr L_{-\ka_1}^{+\ka_1}(I)\home \sr L_{\ka_0}^{+\infty}(I)$ of (\ref{C:twoviewpoints}). Then $\ga_0$ is some circle traversed $k$ times, while $\ga_1$ is a circle traversed $k+2$ times. Using (\ref{L:homocircles}) we deduce that $\sig_k\iso \ga_0\iso \ga_1\iso \sig_{k+2}$, hence $\sig_k$ and $\sig_{k+2}$ lie in the same component of $\sr L_{\ka_0}^{+\infty}(I)$.
	\end{proof}
\begin{cor}\label{C:negacircles}
	Let $\rho_i=\arccot(\ka_i)$, $i=1,2$, and suppose that $\rho_1-\rho_2>\frac{\pi}{2}$. Let $\sig_{k_0},\sig_{k_1}\in\sr L_{\ka_1}^{\ka_2}(I)$ (resp.~$\sr L_{\ka_1}^{\ka_2}$) be two parametrized circles traversed $k_0$ and $k_1$ times, respectively. Then $\sig_{k_0}$ and $\sig_{k_1}$ lie in the same connected component if and only if $k_0\equiv k_1\pmod 2$.
\end{cor}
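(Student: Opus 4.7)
The plan is to reduce everything to the space $\sr L_{\ka_0}^{+\infty}(I)$ and then apply the two main tools already established: Proposition (\ref{P:borderline0}) for the constructive (``if'') direction, and the Hirsch-Smale theorem (together with Lemma (\ref{L:bit})) for the obstructive (``only if'') direction.

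First I would invoke Corollary (\ref{C:belowandabove}) to produce a homeomorphism $\sr L_{\ka_1}^{\ka_2}(I)\home \sr L_{\ka_0}^{+\infty}(I)$ with $\rho_0 = \rho_1 - \rho_2$; by Remark (\ref{R:circletocircle}), under this homeomorphism a circle traversed $k$ times is sent to a circle traversed $k$ times. The hypothesis $\rho_1 - \rho_2 > \frac{\pi}{2}$ translates to $\rho_0 > \frac{\pi}{2}$, hence $\frac{\pi}{\rho_0} < 2$ and $\fl{\pi/\rho_0}\leq 1$. In particular, the inequality $k \geq \fl{\pi/\rho_0}$ holds for every $k\geq 1$, so Proposition (\ref{P:borderline0}) yields $\sig_k \iso \sig_{k+2}$ in $\sr L_{\ka_0}^{+\infty}(I)$ for all $k\geq 1$. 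Iterating this, any two parametrized circles of the same parity lie in the same connected component of $\sr L_{\ka_0}^{+\infty}(I)$, proving sufficiency.

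For the converse, I would use the continuous inclusion $\sr L_{\ka_1}^{\ka_2}(I) \inc \sr L_{-\infty}^{+\infty}(I) = \sr I(I)$. If $\sig_{k_0}$ and $\sig_{k_1}$ are in the same component of the smaller space, they are in the same component of $\sr I(I)$, so Lemma (\ref{L:bit}) forces $\te\Phi_{\sig_{k_0}}(1) = \te\Phi_{\sig_{k_1}}(1)$ in $\Ss^3$. A direct computation shows that for a circle $\sig_k$ traversed $k$ times and starting at $\Phi(0) = I$, the frame traces a closed loop in $\SO_3$ representing the $k$-th power of the generator of $\pi_1(\SO_3) \iso \Z/2$; equivalently, $\te\Phi_{\sig_k}(1) = (-1)^k\,\mbf{1}$. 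Hence the equality of final lifted frames forces $k_0 \equiv k_1 \pmod 2$.

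The ``resp.~$\sr L_{\ka_1}^{\ka_2}$'' version is immediate from Proposition (\ref{P:arbitrary}): the homeomorphism $\sr L_{\ka_1}^{\ka_2}\home \SO_3\times \sr L_{\ka_1}^{\ka_2}(I)$, combined with connectedness of $\SO_3$, puts the path components of the two spaces in natural bijection, and this bijection respects the notion of ``circle traversed $k$ times.'' No step presents any real difficulty, since the hard work is entirely contained in Proposition (\ref{P:borderline0}) (via the bending of the $k$-equator); the only minor verification is the lifted-frame computation for $\sig_k$, which is a routine check in $\Ss^3$.
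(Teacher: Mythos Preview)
Your proof is correct and follows essentially the same route as the paper: reduce to $\sr L_{\ka_0}^{+\infty}(I)$ via (\ref{C:belowandabove}) and (\ref{R:circletocircle}), use (\ref{P:borderline0}) for the ``if'' direction (with $\rho_0>\frac{\pi}{2}$ giving $\fl{\pi/\rho_0}\leq 1$), use (\ref{L:bit}) for the ``only if'' direction, and handle the $\sr L_{\ka_1}^{\ka_2}$ case via (\ref{P:arbitrary}). The mention of Hirsch--Smale is superfluous, since (\ref{L:bit}) alone already gives the needed obstruction.
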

\begin{proof}
	By (\ref{P:arbitrary}), it suffices to prove the result for $\sr L_{\ka_1}^{\ka_2}(I)$. It follows from (\ref{L:bit}) that if $\sig_{k_0}$ and $\sig_{k_1}$ lie in the same component of $\sr L_{\ka_1}^{\ka_2}(I)$, then $k_0\equiv k_1\pmod 2$. Under the homeomorphism $\sr L_{\ka_1}^{\ka_2}(I)\home \sr L_{\ka_0}^{+\infty}(I)$ of (\ref{C:belowandabove}), the condition $\rho_1-\rho_2>\frac{\pi}{2}$ translates into $\rho_0>\frac{\pi}{2}$, hence the converse is a consequence of (\ref{L:homocircles}) and (\ref{P:borderline0}).
\end{proof}

\subsection*{Homotopies of condensed curves} The previous corollary settles the question of when two circles in $\sr L_{\ka_0}^{+\infty}(I)$ lie in the same component of this space for $\ka_0<0$. Because of this, we will assume for the rest of the section that $\ka_0\geq 0$; the following proposition implies the converse to (\ref{P:borderline0}), and together with it, settles the same question in this case.
\begin{prop}\label{P:borderline}
	Let $\ka_0=\cot \rho_0\geq 0$ and let 
	\[
	n=\fl{\frac{\pi}{\rho_0}}+1.
	\]
	Suppose that $s\mapsto \ga_s\in \sr L_{\ka_0}^{+\infty}(I)$ is a homotopy, with $\ga_0$ condensed and $\nu(\ga_0)\leq n-2$ $(s\in [0,1])$. Then $\ga_s$ is condensed and $\nu(\ga_s)=\nu(\ga_0)$ for all $s\in [0,1]$.
\end{prop}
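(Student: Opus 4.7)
The plan is to reduce the proposition to showing that $\sr O_{\nu_0}$ is a connected component of $\sr L_{\ka_0}^{+\infty}(I)$ whenever $\nu_0 \leq n-2$; once this is established, any continuous path $s\mapsto \ga_s$ beginning in $\sr O_{\nu_0}$ must remain there. Since $\sr O_{\nu_0}$ is path-connected by (\ref{C:contrensed}), it suffices to prove that it is both open and closed in $\sr L_{\ka_0}^{+\infty}(I)$. Closedness is immediate from (\ref{L:super}\?(a)), which states that $\sr O_{\nu_0}$ is the closure of an open set. So the whole content is openness.

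First I would reduce openness to the following geometric claim: any curve $\ga\in \bd\sr O$ (boundary taken inside $\sr L_{\ka_0}^{+\infty}(I)$) has $\nu(\ga)\geq n-1$. Granting this, fix $\ga\in \sr O_{\nu_0}$ with $\nu_0\leq n-2$; then $\ga\notin \bd \sr O$, so $\ga$ has a whole open neighborhood $U\subs \sr O$. Because $\nu\colon \sr O\to \Z$ is integer-valued and continuous on $\sr O$ (it extends the locally constant function on the open set $\sr S$ from (\ref{L:super}\?(a)) by the same argument using stereographic projection from $-h_\ga$ and continuity of $\ga\mapsto h_\ga$), it is locally constant, so shrinking $U$ we obtain $U\subs \sr O_{\nu_0}$, establishing openness.

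For the geometric claim I would argue by contradiction: fix $\ga\in \bd\sr O$ and a sequence $\eta_k\to \ga$ with $\eta_k$ non-condensed. By (\ref{L:closedhemisphere}) applied to each $\Im(C_{\eta_k})$, the origin lies in the interior of the convex hull of $\Im(C_{\eta_k})\subs \R^3$; passing to the limit, $0$ lies in the convex hull of $\Im(C_\ga)$, but on its boundary (since $\ga\in \sr O$). Consequently, $\Im(C_\ga)$ meets $\bd H_\ga$ in a nontrivial set of tangency points. I would then stereographically project from $-h_\ga$ and examine the resulting plane curve $\eta=\pr\circ\ga$, whose rotation number is $\nu(\ga)$ by definition (see p.~\pageref{rotation}); the tangency between $C_\ga$ and $\bd H_\ga$ translates into the osculating circles of $\eta$ meeting the line at infinity of $\pr$, i.e., the geodesic curvature of $\eta$ attains values forcing ``bumps'' whose count is controlled by a Gauss--Bonnet/total-curvature computation analogous to that preceding (\ref{D:bending}).

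The hardest step will be turning this heuristic into a sharp lower bound $\nu(\ga)\geq n-1$. My expectation is that the count of bumps can be extracted by modelling $\ga$ locally on the intermediate stages of the bending of the $k$-equator (\ref{D:bending}): the bending is precisely the prototype of a curve escaping from $\sr O$, and its admissibility in $\sr L_{-\ka_1}^{+\ka_1}(I)$ requires $k\geq \fl{\pi/\rho_0}=n-1$ by \eqref{E:kappa1} combined with the identification of (\ref{C:twoviewpoints}). Alternatively, one may translate $\ga$ by $-\rho_0/2$ (as in the proof of (\ref{P:negativecondensed})) and examine the lifted regular band as a good band in the cover $\Ss^2_\nu$; a boundary curve would give a good band whose width forces $\nu\geq n-1$ via the length estimates of (\ref{L:central})--(\ref{C:central}). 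Either route should deliver the required bound, and the proposition then follows from the opening reduction.
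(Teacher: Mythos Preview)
Your reduction is correct and matches the paper's strategy: both arguments pivot on the claim that any \emph{equatorial} curve (your ``$\ga\in\bd\sr O$'') must satisfy $\nu(\ga)\geq n-1$. The paper packages this as Lemma~(\ref{L:equatorial}) and then finishes exactly as you do, by a first-time argument along the homotopy. Your use of (\ref{L:super}\?(a)) for closedness and local constancy of $\nu$ on the interior of $\sr O$ is fine.

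The genuine gap is that you have not proved the key lemma; you have only offered two heuristics, and neither is close to a proof as stated. Approach~(a) goes in the wrong direction: the bending construction shows that a particular homotopy exists when $k\geq n-1$, not that no homotopy exists when $k\leq n-2$. There is no mechanism for ``modelling $\ga$ locally on intermediate stages of the bending''; an arbitrary equatorial curve need not resemble those symmetric piecewise-circular curves at all. Approach~(b) is also problematic: the good-band machinery of \S\ref{S:condensed} was built for $\ka_0<0$ (where the regular band has width $\pi-\rho_0<\tfrac{\pi}{2}$), and the width/curvature estimates of (\ref{L:central})--(\ref{C:central}) give information about the \emph{central} curve of a band, not a lower bound on $\nu$ for a curve whose caustic band touches an equator. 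You would need an entirely new argument to extract $\nu\geq n-1$ from band widths. Also, your remark about ``osculating circles meeting the line at infinity'' is off: under stereographic projection from $-h_\ga$, the equator $\bd H_\ga$ goes to a finite circle, not to infinity.

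What the paper actually does for (\ref{L:equatorial}) is quite different from both of your suggestions: it picks a point $N\in E_\ga$, studies the height functions $h(t)=\gen{\ga(t),N}$ and $\ce h(t)=\gen{\ce\ga(t),N}$, shows via (\ref{L:borderline}) that their critical points are nondegenerate and alternate, and then counts critical points by a careful case analysis (three configurations of how $C_\ga$ meets $E_\ga$, using (\ref{L:Steinitz})), together with the auxiliary inequality (\ref{L:calculus}). The count of critical points translates directly into a lower bound on $\nu(\ga)$. This is the missing core of your argument.
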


The case $\ka_0=0$ of the above proposition
is equivalent to the well-known fact (due to M.~Shapiro, cf.~\cite{Ani})
that a homotopy of a convex curve
through curves of positive curvature is actually
a homotopy through convex curves.
Indeed, it will follow from our characterization of the components of $\sr L_{0}^{+\infty}$ that
a curve $\ga$ in this space is convex if and only if
it is condensed with $\nu(\ga) = 1$.
 
Taking $\ga_0$ to be a circle $\sig_k$ traversed $k$ times for $k\leq n-2$,
we conclude that it is not possible to deform $\sig_k$ into a circle traversed $k+2$ times in $\sr L_{\ka_0}^{+\infty}$. The proof of (\ref{P:borderline}) will be broken into several parts. We start with the definition of an equatorial curve, which is just a borderline case of a condensed curve.
\begin{defn}
Let $\ka_0\geq 0$. We shall say that a curve $\ga\in \sr L_{\ka_0}^{+\infty}$ is \tdef{equatorial} if the image $C$ of its caustic band is contained in a closed hemisphere, but not in any open hemisphere. Let 
\[
H_\ga=\set{p\in \Ss^2}{\gen{p,h_\ga}\geq 0}
\]
be a closed hemisphere containing $\ga$, and let
\[
E_\ga=\set{p\in \Ss^2}{\gen{p,h_\ga}=0}
\]
denote the corresponding \tdef{equator}. Also, let $\ce \ga\colon [0,1]\to \Ss^2$ be the curve given by 
\begin{equation*}
	\ce\ga(t)=C_\ga(t,\rho_0).
\end{equation*}
\end{defn}

\begin{lemma}\label{L:cega}
	Let $\ka_0\geq 0$, let $\ga\in \sr L_{\ka_0}^{+\infty}$ be an equatorial curve of class $C^2$. Then:
	\begin{enumerate}
		\item [(a)] The hemisphere $H_\ga$ and the equator $E_\ga$ defined above are uniquely determined by $\ga$.
		\item [(b)] The geodesic curvature $\ce{\ka}$ of $\ce{\ga}$ is given by:
		\begin{equation*}\label{E:cecurvature}
				\ce\ka=\cot(\rho_0-\rho)>0.
		\end{equation*}
	\end{enumerate}
\end{lemma}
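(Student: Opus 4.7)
For part (b), I would verify the formula by direct computation. The spherical Frenet relations $\dot\ga = v\ta$ and $\dot\no = -v\cot\rho\,\ta$ (the latter from $\no = \ga\times\ta$) give
\begin{equation*}
\dot{\ce\ga}(t) = \cos\rho_0\,\dot\ga + \sin\rho_0\,\dot\no = \frac{v\sin(\rho-\rho_0)}{\sin\rho}\,\ta,
\end{equation*}
which is a nonzero multiple of $-\ta$ since $0<\rho<\rho_0$. Hence $\ce\ga$ is a regular $C^1$ curve with unit tangent $\te\ta = -\ta$ and unit normal $\te\no = \ce\ga\times\te\ta = \sin\rho_0\,\ga - \cos\rho_0\,\no$. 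A short calculation using $\dot\ta = -v\ga + v\cot\rho\,\no$ then yields
\begin{equation*}
\ce\ka = \frac{\gen{\dot{\te\ta},\,\te\no}}{|\dot{\ce\ga}|} = \frac{\cos(\rho_0-\rho)}{\sin(\rho_0-\rho)} = \cot(\rho_0-\rho),
\end{equation*}
and positivity follows because $\ka_0\geq 0$ forces $\rho_0\leq \pi/2$, so $\rho_0-\rho\in (0,\pi/2)$.

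For part (a), I would argue by contradiction: suppose distinct $h_1,h_2\in \Ss^2$ both satisfy $\Im(C_\ga)\subs H_{h_i}$. The case $h_2 = -h_1$ forces $\Im(C_\ga)\subs E_{h_1}$. Evaluating $\gen{C_\ga(t,\theta),h_1} = \cos\theta\gen{\ga(t),h_1} + \sin\theta\gen{\no(t),h_1}$ to zero on $[0,\rho_0]$ gives $\gen{\ga(t),h_1} = \gen{\no(t),h_1} = 0$; then $\ta(t) = \ga(t)\times\no(t)$ is parallel to $h_1$ and, being continuous, is constantly $\epsilon h_1$ for some $\epsilon\in \{\pm 1\}$. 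But then $\dot\ga = v\ta$ is parallel to $h_1$, while differentiating $\gen{\ga,h_1}\equiv 0$ gives $\dot\ga\perp h_1$; hence $v\equiv 0$, contradicting regularity.

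In the main case $h_1\neq \pm h_2$, set $h_3 = (h_1+h_2)/|h_1+h_2|$ and $q = (h_1\times h_2)/|h_1\times h_2|$. Direct verification shows $H_{h_1}\cap H_{h_2}\subs H_{h_3}$, with $E_{h_3}\cap H_{h_1}\cap H_{h_2} = \{\pm q\}$: indeed, $\gen{p,h_3} = 0$ combined with $\gen{p,h_1},\gen{p,h_2}\geq 0$ forces $\gen{p,h_1} = \gen{p,h_2} = 0$, i.e., $p\in E_{h_1}\cap E_{h_2} = \{\pm q\}$. Since $\ga$ is equatorial, $\Im(C_\ga)$ is not contained in $\Int H_{h_3}$, so some $p_0\in \{\pm q\}$ lies in $\Im(C_\ga)$; write $p_0 = C_\ga(t_0,\theta_0)$. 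Near $p_0$ the closed quadrant $H_{h_1}\cap H_{h_2}$ is an angular sector with corner at $p_0$ of opening strictly less than $\pi$; its tangent cone $W\subs T_{p_0}\Ss^2$ is therefore a convex wedge of opening less than $\pi$, and in particular $W$ contains no pair of opposite directions. The plan is then to produce a regular $C^1$ curve through $p_0$ whose image lies in this quadrant, forcing both directions of its tangent line at $p_0$ to lie in $W$, which is impossible. Specifically: if $\theta_0 = 0$ take $\ga$; if $\theta_0 = \rho_0$ take $\ce\ga$ (regular by part (b)); and if $\theta_0\in (0,\rho_0)$ take the geodesic arc $\theta\mapsto C_\ga(t_0,\theta)$, whose velocity $-\sin\theta\,\ga(t_0)+\cos\theta\,\no(t_0)$ is a unit vector by (\ref{L:band}\?(b)). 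In every subcase the resulting curve passes through $p_0$ while remaining in $\Im(C_\ga)$, contradicting the wedge condition. Uniqueness of $E_\ga = \bd H_\ga$ then follows immediately from that of $H_\ga$; the main subtlety is locating the regular curve through $p_0$ in the interior subcase $\theta_0\in (0,\rho_0)$, which is resolved cleanly by using the geodesic arc $C_\ga(t_0,\cdot)$ rather than attempting a direct analysis of $C_\ga$ at a possible caustic point.
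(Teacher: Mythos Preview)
Your proof is correct and follows the same overall strategy as the paper: for (b), a direct Frenet computation of $\ce\ga'$ and the resulting curvature; for (a), the observation that if $C$ lies in two distinct closed hemispheres then it lies in their lune, followed by an analysis at the corners $E_1\cap E_2$ to contradict equatoriality.

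The one noteworthy difference is in part (a). The paper asserts without justification that $\bd C\subs \Im\ga\cup\Im\ce\ga$, so that only $\ga$ or $\ce\ga$ could reach a corner, and then uses that a regular curve staying in the lune cannot pass through a corner. You avoid this boundary claim entirely by splitting on $\theta_0\in\{0\}$, $\{\rho_0\}$, or $(0,\rho_0)$ and, in the interior case, using the geodesic arc $\theta\mapsto C_\ga(t_0,\theta)$ itself as the regular curve through the corner; this is cleaner and fully rigorous. You also treat the degenerate case $h_2=-h_1$ explicitly (where $H_1\cap H_2$ is a great circle rather than a genuine lune), which the paper's wording does not literally cover. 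One harmless slip: $\ga\times\no=-\ta$, not $\ta$, but since you only use $\ta\parallel h_1$ the sign is irrelevant.
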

\begin{proof} Suppose that $C=\Im(C_\ga)$ is contained in distinct closed hemispheres $H_1$ and $H_2$. Then it is contained in the closed lune $H_1\cap H_2$. The boundary of $C$ is contained in the union of the images of $\ga, \ce{\ga}$, and these curves have a unit tangent vector at all points, so they cannot pass through either of the points in $E_1\cap E_2$ (where $E_i$ is the equator corresponding to $H_i$). It follows that $C$ is contained in an open hemisphere, a contradiction which establishes (a).

For part (b) we calculate:\footnote{For the rest of the section we denote derivatives with respect to $t$ by a $'$ to unclutter the notation.}
\begin{alignat}{9}\label{E:cefirst}
	\ce{\ga}'(t)&=\abs{\ga'(t)}\big(\cos\rho_0-\ka(t)\sin\rho_0\big)\ta(t) \\ \label{E:cesecond}
	\ce{\ga}''(t)&=\abs{\ga'(t)}^2\big(\cos\rho_0-\ka(t)\sin\rho_0\big)\big(-\ga(t)+\ka(t)\no(t)\big)+\la(t)\ta(t),
\end{alignat}
where $\ka$, $\ta$ and $\no$ denote the geodesic curvature of and unit and normal vectors to $\ga$, respectively, and the value of $\la(t)$ is irrelevant to us. Hence,
\begin{equation*}
	\ce\ka=\frac{\gen{\ce\ga\,,\,\ce\ga'\times \ce\ga''}}{\abs{\ce\ga'}^3}=\frac{\ka\cos\rho_0+\sin\rho_0}{\abs{\cos\rho_0-\ka\sin\rho_0}}=\frac{\cos(\rho_0-\rho)}{\abs{\sin(\rho-\rho_0)}}=\cot(\rho_0-\rho).\qedhere
\end{equation*}
\end{proof}

\begin{lemma}\label{L:borderline}
	Let $\ka_0\geq 0$ and $\ga\in \sr L_{\ka_0}^{+\infty}$ be an equatorial curve of class $C^2$. Take $N\in E_\ga$ and define $h,\ce{h}\colon [0,1]\to \R$ by 
	\begin{equation}\label{E:borderline}
		h(t)=\gen{\ga(t),N},\quad \ce h(t)=\gen{\ce\ga(t),N}.
	\end{equation}
	\begin{enumerate}
		\item [(a)] The following conditions are equivalent:
	\begin{enumerate}
	\item [(i)] $\pm N\in \Ga_\tau$ for some $\tau\in [0,1]$.
	\item [(ii)]  $\tau\in [0,1]$ is a critical point of $h$.
	\item [(iii)]  $\tau\in [0,1]$ is a critical point of $\ce h$.
\end{enumerate} 
		\item [(b)] If $\tau$ is a common critical point of $h$, $\ce h$, then $h''(\tau)\ce h''(\tau)<0$.
		\item [(c)] If $\tau<\bar\tau$ are neighboring critical points then $h''(\tau)h''(\bar\tau)<0$ and  $\ce{h}''(\tau)\ce h''(\bar\tau)<0$.
	\end{enumerate} 
\end{lemma}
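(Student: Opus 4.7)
The plan is to reduce everything to two computations via the frame equations \eqref{E:frenet1}. First I would establish the derivative formulas: writing $v := \abs{\ga'}>0$ and $\ga' = v\ta$, we have $h'(t) = v(t)\gen{\ta(t),N}$, while \eqref{E:cefirst} combined with the identity $\cos\rho_0 - \ka\sin\rho_0 = -\sin(\rho_0-\rho)/\sin\rho$ gives $\ce\ga' = -v\mu\,\ta$, where $\mu := \sin(\rho_0-\rho)/\sin\rho$. Since $\ka_0\geq 0$ forces $\rho_0\leq \pi/2$ and $0<\rho(t)<\rho_0$, one has $\mu > 0$, so that $\ce h'(t) = -v(t)\mu(t)\gen{\ta(t),N} = -\mu(t)h'(t)$. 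Both derivatives vanish at $\tau$ precisely when $\ta(\tau)\perp N$, which is equivalent to $N$ lying in the plane spanned by $\ga(\tau)$ and $\no(\tau)$, i.e., $N\in \Ga_\tau$. Since $\Ga_\tau$ is a great circle, $N\in \Ga_\tau$ is the same as $\pm N\in \Ga_\tau$, proving (a).

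For (b), at any critical point $\tau$ I would write $N = \cos\theta\,\ga(\tau)+\sin\theta\,\no(\tau)$ for some $\theta\in[-\pi,\pi)$ (possible by (a)) and differentiate once more. Using $\ta' = -v\ga + v\ka\no$ from \eqref{E:frenet1} and $\gen{\ta(\tau),N}=0$ (so that the $v'$ and $(v\mu)'$ terms in the product rule drop out), this would yield
\begin{align*}
h''(\tau) &= v(\tau)\gen{\ta'(\tau),N} = v(\tau)^2\bigl(\ka(\tau)\sin\theta - \cos\theta\bigr) = \frac{v(\tau)^2\sin(\theta-\rho(\tau))}{\sin\rho(\tau)},\\
\ce h''(\tau) &= -v(\tau)\mu(\tau)\gen{\ta'(\tau),N} = -\mu(\tau)\,h''(\tau),
\end{align*}
and hence $h''(\tau)\ce h''(\tau) = -\mu(\tau)\,h''(\tau)^2\leq 0$, with equality exactly when $\sin(\theta-\rho(\tau))=0$, i.e., when $N = \pm\chi(\tau)$. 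The main obstacle will be excluding this degenerate case to secure strict inequality: as $\chi(\tau)\in \Im(C_\ga)\subs H_\ga$, having $N = \chi(\tau)\in E_\ga$ would force the caustic curve to touch the equator at the critical value. I expect this is handled either by a (possibly implicit) nondegeneracy condition on $N$, or by observing that the set $\{\pm\chi(\tau):\tau\text{ is a critical point of }h\}$ is finite and can be avoided by a small perturbation of $N$ within $E_\ga$, with continuity of both sides in $N$ then extending the strict inequality.

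Part (c) will follow from (a) and (b). Between neighboring critical points $\tau<\bar\tau$ the derivative $h'$ does not vanish on $(\tau,\bar\tau)$ and so has constant sign there. Combined with $h''(\tau),h''(\bar\tau)\neq 0$ from (b), the one-sided behavior of $h'$ next to each endpoint forces $h''$ to have opposite signs at $\tau$ and $\bar\tau$ (one endpoint is a local minimum and the other a local maximum), whence $h''(\tau)h''(\bar\tau)<0$. The identity $\ce h' = -\mu h'$ with $\mu>0$ shows that $\ce h'$ has constant, opposite sign to $h'$ on $(\tau,\bar\tau)$, so the same argument produces $\ce h''(\tau)\ce h''(\bar\tau)<0$.
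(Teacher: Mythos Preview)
Your computations for (a) and (c) are correct and essentially coincide with the paper's. The gap is in (b): you correctly isolate the obstruction $\sin(\theta-\rho(\tau))=0$, i.e.\ $N=\pm\chi(\tau)$, but your proposed fixes do not work. A perturbation of $N$ would prove the statement only for \emph{generic} $N\in E_\ga$, and continuity cannot upgrade a strict inequality from a dense set to all $N$; the lemma is asserted for every $N\in E_\ga$, and is used that way in (\ref{L:equatorial}).

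The missing idea, which the paper supplies, is to use the hypothesis $C=\Im(C_\ga)\subs H_\ga$ (part of being equatorial) to restrict where $N$ can sit on $\Ga_\tau$: one shows that $\pm N\notin C_\ga(\{\tau\}\times(0,\rho_0))$, so that in the expression $N=\cos\theta\,\ga(\tau)+\sin\theta\,\no(\tau)$ one has $\theta\in[\rho_0-\pi,0]\cup[\rho_0,\pi]$. Since $0<\rho(\tau)<\rho_0$, this forces $\theta-\rho(\tau)\in(-\pi,0)\cup(0,\pi)$ and hence $\sin(\theta-\rho(\tau))\neq 0$, giving the strict inequality directly. The geometric reason is exactly what you began to articulate but did not finish: if $N\in E_\ga$ were an interior point of the geodesic arc $C_\ga(\{\tau\}\times[0,\rho_0])\subs H_\ga$, then (provided $\Ga_\tau\neq E_\ga$) the two great circles $\Ga_\tau$ and $E_\ga$ meet transversally, so the arc crosses $E_\ga$ at $N$ and one of its endpoints $\ga(\tau)$, $\ce\ga(\tau)$ would lie outside $H_\ga$, contradicting $C\subs H_\ga$. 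The degenerate case $\Ga_\tau=E_\ga$ cannot occur either: it would mean $\ta(\tau)=\pm h_\ga$, whence $t\mapsto\gen{\ga(t),h_\ga}$ changes sign at $\tau$, again violating $\Im(\ga)\subs C\subs H_\ga$. Once (b) is secured in this way, your argument for (c) goes through unchanged.
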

Recall that $\Ga_t$ is the great circle 
\begin{equation*}
\Ga_t=\set{\cos \theta\, \ga(t)+\sin \theta\, \no(t)}{\theta\in [-\pi,\pi)}.
\end{equation*}
Part (b) implies in particular that all critical points of $h$, $\ce h$ are nondegenerate.
\begin{proof} A straightforward calculation using (\ref{E:cefirst}) shows that:
\begin{equation}\label{E:tau1}
\qquad	h'(t)=\abs{\ga'(t)}\gen{N,\ta(t)},\quad \ce{h}'(t)=\frac{\sin(\rho(t)-\rho_0)}{\sin\rho(t)}h'(t)\qquad (t\in [0,1]).
\end{equation}
The equivalence of the conditions in (a) is immediate from this and the definition of $\Ga_t$. 

From $\pm N\in E_\ga$ and $C=\Im(C_\ga)\subs H_\ga$, it follows that $\pm N\nin C \big([0,1]\times (0,\rho_0) \big)$. Thus, if $\tau$ is a critical point of $h$, $\ce h$, i.e., if $N\in \Ga_\tau$,  then we can write
\begin{equation}\label{E:range}
	N=\cos \theta\, \ga(\tau)+\sin \theta\, \no(\tau)\text{\ \ for some $\theta\in [\rho_0-\pi,0]\cup [\rho_0,\pi]$.}
\end{equation}
Another calculation, with the help of \eqref{E:cesecond}, yields:
	\begin{equation*}
		h''(\tau)=\frac{\abs{\ga'(\tau)}^2}{\sin\rho(\tau)}\sin\big(\theta-\rho(\tau)\big),\quad \ce h''(\tau)=\frac{\abs{\ga'(\tau)}^2}{\sin^2\rho(\tau)}\sin\big(\theta-\rho(\tau)\big)\sin\big(\rho(\tau)-\rho_0\big)
	\end{equation*}
	Taking the possible values for $\theta$ in \eqref{E:range} and $0<\rho(\tau)<\rho_0$ into account, we deduce that
	\begin{equation*}
		h''(\tau)\ce h''(\tau)=\frac{\abs{\ga'(\tau)}^4}{\sin^3\rho(\tau)}\sin^2\big(\theta-\rho(\tau)\big)\sin\big(\rho(\tau)-\rho_0\big)<0,
	\end{equation*}
	since all terms here are positive except for $\sin\big(\rho(\tau)-\rho_0\big)$. This proves (b).
	
	For part (c), suppose that $\tau<\bar\tau$ are neighboring critical points, but $h''(\tau)h''(\bar\tau)>0$. This means that $h'$ vanishes at $\tau,\bar\tau$ and takes opposite signs on the intervals $(\tau,\tau+\eps)$ and $(\bar\tau-\eps,\bar\tau)$ for small $\eps>0$. Hence, it must vanish somewhere in $(\tau,\bar\tau)$, a contradiction. The proof for $\ce h$ is the same. 
\end{proof}

Let $\ka_0\geq 0$, $\ga\in \sr L_{\ka_0}^{+\infty}$ be an equatorial curve and  $\pr\colon \Ss^2\to \R^2$ denote the stereographic projection from $-h_\ga$, where $H_\ga=\set{p\in \Ss^2}{\gen{p,h_\ga}\geq 0}$. As for any condensed curve, we may define a (non-unique) continuous angle function $\theta$ by the formula:
\begin{equation*}
\qquad	\exp(i\theta(t))=\ta_\eta(t),\quad \eta(t)=\pr\circ \ga(t)\quad (t\in [0,1]);
\end{equation*}
here $\ta_\eta$ is the unit tangent vector, taking values in $\Ss^1$, of the plane curve $\eta$. The function $\theta$ is strictly decreasing since $\ka_0\geq 0$, and 
\begin{equation*}
	2\pi\nu(\ga)=\theta(0)-\theta(1).
\end{equation*}
\begin{figure}[ht]
	\begin{center}
		\includegraphics[scale=.20]{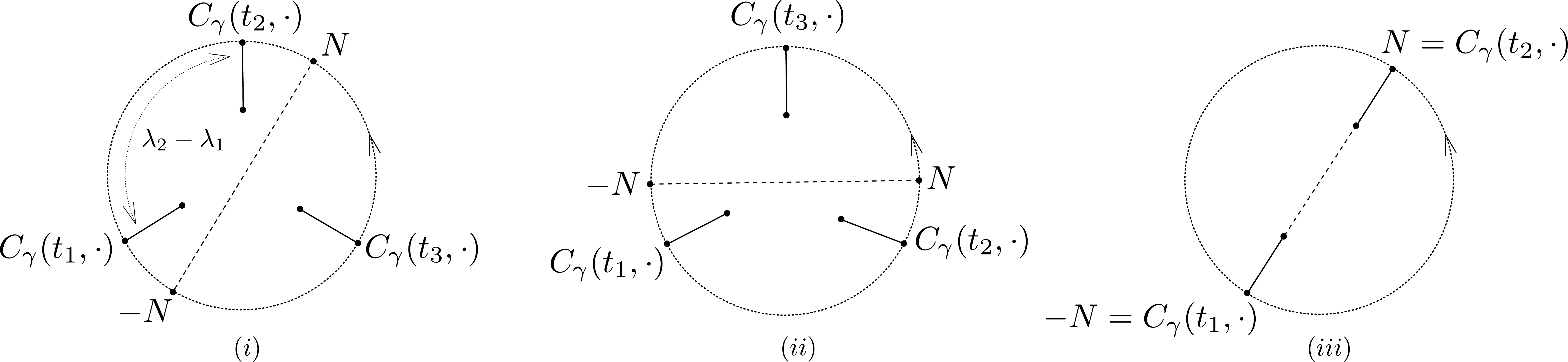}
		\caption{Three possibilities for an equatorial curve $\ga$. The circle represents $E_\ga$ and its interior represents $H_\ga$, seen from above.}
		\label{F:borderline}
	\end{center}
\end{figure}

\begin{lemma}\label{L:equatorial}
	Let $\ka_0\geq 0$, $\ga\in \sr L_{\ka_0}^{+\infty}$ be an equatorial curve of class $C^2$ and
	\begin{equation*}\label{E:n2}
				n=\left\lfloor{\frac{\pi}{\rho_0}}\right\rfloor+1.
	\end{equation*} 
	Then $\nu(\ga)\geq n-1$.
\end{lemma}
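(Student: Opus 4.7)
The idea is to relate $\nu(\ga)$ to a count of critical points of a height function along $\ga$, and then to use the equatorial condition together with the bound $\rho(t)<\rho_0$ to lower-bound this count.

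First, I would reduce to the case $\ga(\tau_0)\in E_\ga$ for some $\tau_0$. Since $\Im(C_\ga)\subs H_\ga$ touches $E_\ga$ and $C_\ga$ is an immersion off the caustic by (\ref{L:band}\?(c)), the touch point must lie on $\Im(\ga)\cup\Im(\ce\ga)$ or at a caustic point (otherwise a neighborhood of the touch point in the band would cross $E_\ga$). By (\ref{L:cega}\?(b)), $\ce\ga\in\sr L_{\ka_0}^{+\infty}$, and a direct computation from the definitions gives $C_{\ce\ga}(t,\theta)=C_\ga(t,\rho_0-\theta)$, so $\Im(C_{\ce\ga})=\Im(C_\ga)$ and $\ce\ga$ is equatorial as well, with $\nu(\ce\ga)=\nu(\ga)$ (both curves are homotopic through the caustic band to the same circle traversed $\nu$ times). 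After replacing $\ga$ with $\ce\ga$ if necessary, I may assume $\ga(\tau_0)\in E_\ga$; the caustic case is handled by a small perturbation via (\ref{L:essentialtranslation}).

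Next, for generic $N\in E_\ga$ chosen close to $-\ga(\tau_0)$, Lemma (\ref{L:borderline}) guarantees that $h(t)=\gen{\ga(t),N}$ has only nondegenerate critical points, alternating between maxima and minima, totaling $2m$ on the period. I would show $m=\nu(\ga)$ via stereographic projection from $-h_\ga$: the projected plane curve $\eta=\pr\circ\ga$ has rotation number $-\nu(\ga)$, and its unit tangent is perpendicular to the corresponding direction in $\R^2$ exactly $2\nu(\ga)$ times, corresponding bijectively to critical points of $h$ through the formula $h'(t)=\abs{\ga'(t)}\gen{\ta(t),N}$. Then, between consecutive critical points $\tau_i<\tau_{i+1}$ the tangent $\ta$ performs a half-revolution relative to $N$, and since $\rho(t)<\rho_0$, a spherical comparison argument (in the spirit of the Gauss--Bonnet-type estimate used in the proof of (\ref{P:totbound})) shows that the angular progress of $\ga$ around the axis $h_\ga$ on this interval is at most $\rho_0$. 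Since the total signed angular progress around $h_\ga$ must account for $\pi$ per half-revolution of $\ta$, and there are $2m$ such intervals, summing yields $2m\?\rho_0\geq 2\pi$, hence $\nu(\ga)=m\geq\pi/\rho_0$; as $m$ is an integer, this gives $\nu(\ga)\geq\fl{\pi/\rho_0}=n-1$.

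The main obstacle is the precise spherical comparison in the last step: showing that between two consecutive critical points of $h$, the curve $\ga$ subtends an angle at $h_\ga$ no larger than $\rho_0$. I expect this to use the regular band $B_\ga$ as a geometric scaffold together with the fact that each osculating circle has spherical radius strictly less than $\rho_0$, analogous to how the critical-point analysis was exploited in \S 5 to bound the total curvature of a non-diffuse curve.
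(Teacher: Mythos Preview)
Your proof has a genuine gap at the key estimate, and the paper's approach is substantially different in a way that matters.

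\textbf{The main gap.} Your step 5 claims that between consecutive critical points of $h$, the angular progress of $\ga$ around the axis $h_\ga$ is at most $\rho_0$. You acknowledge this as the ``main obstacle'' but offer only a vague analogy with the total-curvature estimate of (\ref{P:totbound}). This is the wrong quantity to bound, and I do not see how to make it work. The critical points of $h(t)=\gen{\ga(t),N}$ detect when $\ta(t)\perp N$ for a fixed $N\in E_\ga$; there is no direct mechanism tying the \emph{longitude} around $h_\ga$ on such an arc to the bound $\rho<\rho_0$. The Gauss--Bonnet argument of (\ref{P:totbound}) controls total curvature, not longitude, and the boundary structure there (an injective rectangle in $\Ss^2$) is unavailable here.

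\textbf{What the paper actually does.} The essential new ingredients, which your outline lacks, are:
\begin{itemize}
\item[(1)] The function $\ce h(t)=\gen{\ce\ga(t),N}$ is used \emph{alongside} $h$. At each common critical point $\tau_j$ one has $\pm N\in\Ga_{\tau_j}$, and since $\ga(\tau_j)$ and $\ce\ga(\tau_j)$ lie on $\Ga_{\tau_j}$ at distance exactly $\rho_0$, the \emph{latitudes} (with respect to $N$, not $h_\ga$) satisfy $M_j-m_j=\rho_0$ exactly. Combined with the alternation from (\ref{L:borderline}), a telescoping sum gives $k\rho_0>M_k-m_1$.
\item[(2)] The equatorial condition is converted, via the convexity lemmas (\ref{L:basicconvex}), (\ref{L:Steinitz}), (\ref{L:closedhemisphere}), into the existence of either two antipodal points or three points $C_\ga(t_i,\theta_i)\in C\cap E$ with $0$ in their convex hull. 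This partitions $[0,1]$ into two or three arcs, and a \emph{different} $N\in E$ is chosen on each arc so that the right-hand side $M_k-m_1$ is controlled by the corresponding arc of $E$. Summing over the arcs gives $(k_1+k_2+k_3)\rho_0>2\pi$ (with small corrections handled by (\ref{L:calculus}) in one case), and the relation between $\sum k_i$ and $\nu(\ga)$ then yields $\nu(\ga)\geq n-1$.
\end{itemize}
Your single global choice of $N$ cannot capture this: the bound $M_k-m_1$ for a single $N$ is at most $\pi$, not $2\pi$, so you would get at best $\nu(\ga)\geq\tfrac12\lfloor\pi/\rho_0\rfloor$.

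\textbf{Minor issues.} Your reduction to $\ga(\tau_0)\in E_\ga$ via swapping $\ga\leftrightarrow\ce\ga$ is not needed for the paper's argument, and the asserted equality $\nu(\ce\ga)=\nu(\ga)$ deserves care: since $\ta_{\ce\ga}=-\ta$, the curve $\ce\ga$ has reversed orientation, and the caustic band is not a regular homotopy (it degenerates along the caustic). This can be fixed, but not in the one line you give it.
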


\begin{proof}
	Let $C=\Im(C_\ga)$, $H=H_\ga$ be the closed hemisphere containing $\ga$ and $E=E_\ga$ be the corresponding equator, oriented so that $H$ lies to its left. It follows from the combination of (\ref{L:basicconvex}), (\ref{L:Steinitz}) and (\ref{L:closedhemisphere}) that either we can find two antipodal points in $C\cap E$ or we can choose $t_1<t_2<t_3$ and $\theta_i\in \se{0,\rho_0}$ such that $0$ is a convex combination of the points $C_\ga(t_i,\theta_i)\in C\cap E$. There are three possibilities, as depicted in fig.~\ref{F:borderline}; the only difference between the first two is the order of the points in the orientation of $E$.

In cases (i) and (ii), choose $N$ in $E$ so that 
\[
\gen{C_\ga(t_2,\theta_2),N}=-\gen{C_\ga(t_1,\theta_1),N}>0.
\]
Let $h$ and $\ce h$ be as in \eqref{E:borderline} and define latitude functions $\la$, $\ce\la$ by
\begin{equation*}
\qquad	\la(t)=\arcsin(h(t)),\quad \ce\la(t)=\arcsin(\ce h(t))\qquad (t\in [0,1]).
\end{equation*} 
Let $\tau_1<\dots<\tau_{k_1}$ be all the common critical points of these functions in the interval $[t_1,t_2)$, and let 
\begin{equation*}
	m_j=\min\{\la(\tau_j),\ce\la(\tau_j)\},\quad M_j=\max\{\la(\tau_j),\ce\la(\tau_j)\}.
\end{equation*}	
From (\ref{L:borderline}\?(a)), we deduce that 
\begin{equation}\label{E:bl1}
\qquad	M_j-m_j=\rho_0 \text{\quad for all $j=1,\dots,k_1$},
\end{equation}
while from (\ref{L:borderline}\?(b)) and (\ref{L:borderline}\?(c)), we deduce that the $\tau_j$ are alternatingly maxima and minima of $\la$ (resp.~minima and maxima of $\ce\la$) as $j$ goes from $1$ to $k_1$, whence
	\begin{equation}\label{E:bl2}
\qquad		M_j>m_{j+1}\text{\quad for all $j=1,\dots,k_1-1$}.
	\end{equation}
	 Let 
	 \[
	 \la_2=\max\se{\la(t_2),\ce\la(t_2)}\text{\quad and \quad}\la_1=\min\{\la(t_1),\ce{\la}(t_1)\}=-\la_2.
	 \]
	Then $\la_2-\la_1$ is just the angle between $C_\ga(t_1,\cdot)\cap E$ and $C_\ga(t_2,\cdot)\cap E$ measured along $E$, as depicted in fig.~\ref{F:borderline}\?(i). For the rest of the proof we consider each case separately.
	 
 In case (i), 
 \begin{equation}\label{E:bl3}
 	m_1\leq \la_1\text{\quad and \quad}\la_2\leq M_{k_1}.
 \end{equation}
	Combining \eqref{E:bl1}, \eqref{E:bl2} and \eqref{E:bl3}, we find that 
	\begin{equation}\label{E:casei}
		k_1\rho_0=\sum_{j=1}^{k_1}(M_j-m_j)>\sum_{j=1}^{k_1-1}(m_{j+1}-m_{j})+M_{k_1}-m_{k_1}=M_{k_1}-m_1\geq \la_2-\la_1.
	\end{equation}
		Let there be $k_2$ (resp.~$k_3$) critical points of $h$, $\ce h$ in the interval $[t_2,t_3)$ (resp.~$[t_3,t_1+1)$), where for the latter we are considering $\ga$ as a 1-periodic curve. Then an analogous result to \eqref{E:casei} holds for $k_2$ and $k_3$, and summing all three inequalities we conclude that  
		 \begin{equation*}
		 	k_1+k_2+k_3>\frac{2\pi}{\rho_0}\geq 2(n-1).
		 \end{equation*}
In case (i), the number of half-turns of the tangent vector to the image of $\ga$ under stereographic projection through $-h_\ga$ in $[0,1]$ is given by $k_1+k_2+k_3-2$. Hence, 
		 \begin{equation*}
		 	\nu(\ga)=\frac{k_1+k_2+k_3-2}{2}> n-2,
		 \end{equation*}
		 as claimed. 
		 
		 In case (ii), a direct calculation using basic trigonometry shows that
		 \begin{equation*}
		 	m_1<\arcsin ( \cos \rho_0 \sin \la_1)=-\arcsin(\cos \rho_0 \sin \la_2) \text{\quad and \quad} M_{k_1}>\arcsin(\cos\rho_0\sin\la_2).
		 \end{equation*}
		 Combining this with \eqref{E:bl1} and \eqref{E:bl2}, we obtain that 
		 \begin{equation*}
		 	k_1\rho_0=\sum_{j=1}^{k_1}(M_j-m_j)>\sum_{j=1}^{k_1-1}(m_{j+1}-m_j)+M_{k_1}-m_{k_1}=M_{k_1}-m_1>2\arcsin(\cos\rho_0\sin \la_2),
		 \end{equation*}
		 and similarly for $k_2$ and $k_3$, where the latter denote the number of critical points of $h$, $\ce h$ in the intervals $[t_2,t_3)$ and $[t_3,t_1+1)$, respectively. More precisely, we have:
		 \begin{equation}\label{E:bl4}
		 	k_1+k_2+k_3>\frac{2}{\rho_0} \big[ \arcsin(\cos\rho_0\sin\la_2)+\arcsin(\cos\rho_0\sin\la_4)+\arcsin(\cos\rho_0\sin\la_6)\big],
		 \end{equation}
where $\la_4=\max\se{\la(t_3),\ce\la(t_3)}$, $\la_6=\max\se{\la(t_1),\ce\la(t_1)}$ and these latitudes are measured with respect to the chosen points $\pm N$ corresponding to each of the intervals $[t_2,t_3]$ and $[t_3,t_3+1]$.  In case (ii), the number of half-turns of the tangent vector to the image of $\ga$ under stereographic projection through $-h_\ga$ in $[0,1]$ is given by $k_1+k_2+k_3-2$. Hence, it follows from \eqref{E:bl4} and lemma (\ref{L:calculus}) below that 
		 \begin{equation*}
		 	\nu(\ga)=\frac{k_1+k_2+k_3+2}{2}>\Big(\frac{\pi}{\rho_0}-2\Big)+1\geq n-2,
		 \end{equation*}
		 as we wished to prove.
		 
		 Finally, in case (iii), we may choose $\pm N\in E\cap C$, that is, we may find $t_1<t_2$ and $\theta_i\in \se{0,\rho_0}$ such that 
		 \begin{equation*}
		 	N=C_\ga(t_2,\theta_2)=-C_\ga(t_1,\theta_1).
		 \end{equation*}
		 In this case $\la_2-\la_1=\pi$ and
		 \begin{equation*}
		 	\nu(\ga)=\frac{k_1+k_2-2}{2},
		 \end{equation*} 
		 where $k_1$ (resp. $k_2$) is the number of critical points of $h,\,\ce h$ in $[t_1,t_2]$ (resp.~$[t_2,t_1+1]$). Note that $t_1$, $t_2$ are critical points of $h$ which are counted twice in the sum $k_1+k_2$ (under the identification of $t_1$ with $t_1+1$); this is the reason why we need to subtract $2$ from $k_1+k_2$ to calculate the number of half-turns of the tangent vector.  Using \eqref{E:bl2} one more time, we deduce that 
		 \begin{equation*}
		 	\qquad k_1\rho_0=\sum_{j=1}^{k_1}(M_j-m_j)>\sum_{j=1}^{k_1-1}(m_{j+1}-m_j)+M_{k_1}-m_{k_1}=M_{k_1}-m_1=\la_2-\la_1=\pi;
		 \end{equation*}
		 similarly, $k_2\rho_0>\pi$. Therefore,
		 \begin{equation*}
		 	\nu(\ga)=\frac{k_1+k_2-2}{2}>\frac{\pi}{\rho_0}-1\geq n-2.\qedhere
		 \end{equation*}
\end{proof}

Here is the technical lemma that was invoked in the proof of (\ref{L:equatorial}).
\begin{lemma}\label{L:calculus}
	Let $\la_2+\la_4+\la_6=\pi$, $0\leq \la_i\leq \frac{\pi}{2}$ and $0< \rho_0\leq \frac{\pi}{2}$. Then   
	\begin{equation*}
		\arcsin(\cos\rho_0\sin\la_2)+\arcsin(\cos\rho_0\sin\la_4)+\arcsin(\cos\rho_0\sin\la_6)\geq \pi-2\rho_0
	\end{equation*}
\end{lemma}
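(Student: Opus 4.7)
The plan is to treat this as a constrained optimization problem on the $2$-dimensional simplex $\Delta = \{(\lambda_2,\lambda_4,\lambda_6) : \lambda_i \in [0,\pi/2],\ \lambda_2+\lambda_4+\lambda_6 = \pi\}$ and show that the objective attains its minimum exactly at the vertices of $\Delta$, where it equals $\pi - 2\rho_0$. Let $f(\lambda) = \arcsin(\cos\rho_0\sin\lambda)$. The first step is to verify that $f$ is concave on $[0,\pi/2]$. A direct computation gives
\begin{equation*}
f''(\lambda) = \frac{-\cos\rho_0\sin^2\rho_0\sin\lambda}{(1-\cos^2\rho_0\sin^2\lambda)^{3/2}} \leq 0
\end{equation*}
on $[0,\pi/2]$, with strict inequality on $(0,\pi/2)$ when $\rho_0 \in (0,\pi/2)$.

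Since $F(\lambda_2,\lambda_4,\lambda_6) = f(\lambda_2)+f(\lambda_4)+f(\lambda_6)$ is a sum of concave functions, it is itself concave on the compact convex polytope $\Delta$. A standard fact (the set where a concave function attains its minimum on a compact convex set is a face, and every nonempty face contains a vertex) implies that $F$ attains its minimum on $\Delta$ at one of the vertices of $\Delta$. The next step is to enumerate these vertices: a vertex of $\Delta$ is a point where at least two of the inequality constraints $\lambda_i \in [0,\pi/2]$ are active. The case ``two coordinates equal to $0$'' would force the third to equal $\pi$, which violates the upper bound; hence every vertex has at least two coordinates equal to $\pi/2$, forcing the remaining one to equal $0$. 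Thus the vertices are precisely the three permutations of $(\pi/2,\pi/2,0)$.

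Finally, evaluating $F$ at any such vertex gives
\begin{equation*}
F(\pi/2,\pi/2,0) = 2\arcsin(\cos\rho_0) + \arcsin(0) = 2\Bigl(\frac{\pi}{2}-\rho_0\Bigr) = \pi - 2\rho_0,
\end{equation*}
which establishes the inequality, with equality at the vertices. The only nonroutine step is the computation of $f''$; everything else is standard convex analysis. I do not foresee any serious obstacle.
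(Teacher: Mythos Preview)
Your proof is correct and follows essentially the same approach as the paper: both compute $f''$ to establish concavity of $f(\lambda)=\arcsin(\cos\rho_0\sin\lambda)$, deduce that the sum $F$ is concave on the triangle $\Delta$, identify the vertices as the permutations of $(\tfrac{\pi}{2},\tfrac{\pi}{2},0)$, and evaluate $F$ there to obtain $\pi-2\rho_0$. The only stylistic difference is that the paper writes out the convex-combination inequality explicitly rather than invoking the general fact that a concave function on a polytope attains its minimum at a vertex.
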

\begin{proof}
	Let $f\colon [0,\pi]\to \R$ be the function given by $f(t)=\arcsin(\cos\rho_0\sin t)$. Then
	\begin{equation*}
		f''(t)=-\frac{\sin^2\rho_0\cos\rho_0\sin t}{\big(1-\cos^2\rho_0\sin^2t\big)^{\frac{3}{2}}},
	\end{equation*}
	so that $f''(t)\leq 0$ for all $t\in (0,\pi)$ and $f$ is a concave function. Consequently,
	\begin{equation}\label{E:concave}
f(s_1a+s_2b+s_3c)\geq s_1f(a)+s_2f(b)+s_3f(c)\text{\ \ for any $a,b,c\in [0,\pi]$,\ $s_i\in [0,1]$,\ $s_1+s_2+s_3=1$.}
	\end{equation}
	Define $g\colon T\to \R$ by $g(x,y,z)=f(x)+f(y)+f(z)$, where
	\begin{equation*}
		T=\set{(x,y,z)\in\R^3}{x+y+z=\pi,\ \?x,\? y,\? z\in \big[0,\tfrac{\pi}{2}\big]}.
	\end{equation*}
In other words, $T$ is the triangle with vertices $A=(0,\frac{\pi}{2},\frac{\pi}{2})$, $B=(\frac{\pi}{2},0,\frac{\pi}{2})$ and $C=(\frac{\pi}{2},\frac{\pi}{2},0)$. It follows from \eqref{E:concave} (applied three times) that
\begin{equation}\label{E:concaveg}
g(s_1u+s_2v+s_3w)\geq s_1g(u)+s_2g(v)+s_3g(w) \text{\ \ for any $u,v,w\in T,~s_i\in [0,1]$, $s_1+s_2+s_3=1$.}
\end{equation}
Moreover, a direct verification shows that 
\begin{equation*}
	g(A)=g(B)=g(C)=2\arcsin(\cos\rho_0)=\pi-2\rho_0.
\end{equation*}
If $p\in T$ then we can write
\begin{equation*}
	p=s_1A+s_2B+s_3C\text{ for some $s_1,\,s_2,\,s_3\in [0,1]$ with $s_1+s_2+s_3=1$.}
\end{equation*} Therefore, \eqref{E:concaveg} guarantees that
\[
g(p)\geq s_1g(A)+s_2g(B)+s_3g(C)=\pi-2\rho_0.\qedhere
\]
\end{proof}

\begin{proof}[Proof of (\ref{P:borderline})] If $\ga_s$ is condensed for all $s\in [0,1]$, then $s\mapsto \nu(\ga_s)$ is defined and constant, since it can only take on integral values. Thus, if the assertion is false, there must exist $s\in [0,1]$, say $s=1$, such that $\ga_{s}$ is not condensed. By (\ref{P:positivecondensed}), $\ga_0$ is homotopic to a circle traversed $\nu(\ga_0)$ times. Moreover, the set of non-condensed curves is open. Together with (\ref{L:C^2}), this shows that there exist $C^2$ curves $\ga_{-1},\ga_2$ such that:
\begin{enumerate}
	\item [(i)] There exist a path joining $\ga_{-1}$ to $\ga_0$ and a path joining $\ga_1$ to $\ga_2$ in $\sr L_{\ka_0}^{+\infty}(I)$;
	\item [(ii)] $\ga_{-1}$ is condensed and has rotation number $\nu(\ga_0)$; 
	\item [(iii)] $\ga_2$ is not condensed.
\end{enumerate}
Consider the map $f\colon \Ss^0\to \sr L_{\ka_0}^{+\infty}(I)$ given by $f(-1)=\ga_{-1}$, $f(1)=\ga_2$. The existence of the homotopy $\ga_s$ $(s\in [0,1]$) tells us that $f$ is nullhomotopic in $\sr L_{\ka_0}^{+\infty}(I)$. By (\ref{L:C^2}), $f$ must be nullhomotopic in $\sr C_{\ka_0}^{+\infty}(I)$. In other words, we may assume at the outset that each $\ga_s$ is of class $C^2$ ($s\in [0,1]$). 

With this assumption in force, let $s_0$ be the infimum of all $s\in [0,1]$ such that $\ga_s$ is not condensed, and let $\ga=\ga_{s_0}$. Then $\ga$ must be condensed by (\ref{L:closedhemisphere}), and it must be equatorial by our choice of $s_0$. In addition, $\nu(\ga_s)$ must be constant ($s\in [0,s_0]$), since it can only take on integral values. This contradicts (\ref{L:equatorial}).
\end{proof}


\section{Statement and Proof of the Main Theorems}\label{S:main}
We will now collect some of the results from the previous sections in order to prove the theorems stated in \S \ref{S:connected}. We repeat their statements here for convenience.

\setcounter{cthm}{2}
\begin{mthm}\label{T:allisround1}
		Let $-\infty\leq \ka_1<\ka_2\leq +\infty$. Every curve in $\sr L_{\ka_1}^{\ka_2}(I)$ (resp.~$\sr L_{\ka_1}^{\ka_2}$) lies in the same component as a circle traversed $k$ times, for some $k\in \N$ (depending on the curve).
\end{mthm}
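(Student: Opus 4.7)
The plan is to combine the main technical results of Sections 1--5 into a case analysis. First, Proposition (\ref{P:arbitrary}) and the connectedness of $\SO_3$ reduce the statement for $\sr L_{\ka_1}^{\ka_2}$ to the statement for $\sr L_{\ka_1}^{\ka_2}(I)$. Then Corollary (\ref{C:belowandabove}) together with Remark (\ref{R:circletocircle}) further reduces to spaces of the form $\sr L_{\ka_0}^{+\infty}(I)$ with $\ka_0\in [-\infty,+\infty)$, under a homeomorphism that sends a circle traversed $k$ times to a circle traversed $k$ times. When $\ka_0 = -\infty$, the Hirsch--Smale theorem (see the paragraph containing \eqref{E:Hirsch}) identifies $\sr L_{-\infty}^{+\infty}(I)$ up to weak equivalence with $\Om\Ss^3\du\Om\Ss^3$, which has two components, each containing a parametrized great circle.

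Assume therefore that $\ka_0\in \R$, set $\rho_0 = \arccot \ka_0$, and fix $\ga \in \sr L_{\ka_0}^{+\infty}(I)$. If $\ga$ is diffuse, Proposition (\ref{P:diffuseloose}) directly produces a homotopy in $\sr L_{\ka_0}^{+\infty}(I)$ from $\ga$ to a circle traversed some number of times. If $\ga$ is condensed, then applying (\ref{P:positivecondensed}) when $\ka_0 \geq 0$ or (\ref{P:negativecondensed}) when $\ka_0 < 0$ to the one-point family $\mathrm{pt}\mapsto \ga$ yields a homotopy to the circle traversed $\nu(\ga)$ times.

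The essential case is when $\ga$ is neither condensed nor diffuse, and the idea is to graft $\ga$ until it becomes one of the two. Reparametrize $\ga$ by curvature and let $\sr S$ be the set of $s \geq 0$ for which there exists a chain of grafts $(\ga_u)_{u\in [0,s]}$ with $\ga_0 = \ga$, every $\ga_u \in \sr L_{\ka_0}^{+\infty}(I)$, and $\tot(\ga_u) = \tot(\ga) + u$. By the very definition of a chain of grafts in (\ref{D:graft})(c) this is a homotopy, so $\ga_u \iso \ga$ for every $u$. Proposition (\ref{P:totting}) at $u=0$ shows $\sr S \supseteq [0,\eps)$, and Lemma (\ref{L:chainofgrafts}) shows that $\sr S$ is closed, so $\sig := \sup \sr S \in [0,+\infty]$ is attained whenever finite. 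If $\sig < +\infty$ and $\ga_\sig$ were still non-condensed, applying (\ref{P:totting}) to $\ga_\sig$ would extend the chain past $\sig$, contradicting maximality; hence $\ga_\sig$ is condensed, and we reduce to the condensed case.

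It remains to rule out $\sig = +\infty$ with $\ga_u$ non-diffuse for all $u$. In that scenario each $\ga_u$ has a rotation number $\nu(\ga_u) \in \N$ in the sense of Definition (\ref{D:rotationnumber}); since $\nu$ counts the sheets of the covering of (\ref{L:coveredband})(c) and the sets $C$, $D$, $\hat C$, $\hat D$, $B$ of (\ref{D:BCD}) depend continuously on $\ga_u$ on the open subspace of non-diffuse curves, the integer $\nu(\ga_u)$ is locally constant and hence constant in $u$. Proposition (\ref{P:totbound}) then gives $\tot(\ga_u) < \tfrac{4\pi}{\cos^2(\rho_0/2)}\?\nu(\ga)$ uniformly in $u$, which is incompatible with $\tot(\ga_u) = \tot(\ga) + u \to +\infty$. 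The step I expect to require the most care is exactly this local-constancy assertion: one must check that the planar regions $\hat C$, $\hat D$, and the covering degree $\nu$ really are stable under the $L^2$-small perturbations of $\ga_u$ produced by grafting, which amounts to a Hausdorff-continuity argument for $C_\ga$ and the complementary components $\Ss^2 \ssm (C \cup D)$, using the uniform lower bound $\rho < \rho_0$ available in $\sr L_{\ka_0}^{+\infty}$.
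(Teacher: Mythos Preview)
Your overall strategy coincides with the paper's: reduce to $\sr L_{\ka_0}^{+\infty}(I)$, dispose of diffuse curves by (\ref{P:diffuseloose}) and condensed curves by (\ref{C:contrensed}), and for the remaining case graft indefinitely, using (\ref{P:totbound}) to force the process to terminate.

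There is, however, a genuine gap in the way you organize the grafting step. Your set $\sr S$ consists of those $s$ for which \emph{some} chain of grafts on $[0,s]$ exists; distinct values of $s$ may be witnessed by entirely different, incompatible chains. Lemma (\ref{L:chainofgrafts}) takes as input a \emph{single} chain on a half-open interval $[0,\sig)$ and extends it to $[0,\sig]$; it says nothing about assembling a chain on $[0,\sig)$ from a family of unrelated chains on shorter intervals. So the inference ``$\sr S$ is closed by (\ref{L:chainofgrafts})'' does not go through as written. The paper handles this by applying Zorn's lemma to the poset of chains of grafts starting at $\ga$, ordered by extension: a maximal element is a single chain on some interval $J$, and then (\ref{L:chainofgrafts}) rules out $J=[0,\sig)$ while (\ref{P:totting}) rules out $J=[0,\sig]$ (the latter because the paper, unlike you, assumes at the outset that $\ga$ is homotopic to neither a condensed nor a diffuse curve, so every $\ga_s$ is non-condensed). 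Your argument is easily repaired by the same device.

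Regarding the point you flag as most delicate, local constancy of $\nu$: you are right that this deserves a sentence, and the paper's proof is equally terse here (``$\nu(\ga_s)$ is well-defined and independent of $s$''). Since non-diffuseness is an open condition and $\nu$ is the integer number of sheets of a covering that varies continuously with the curve, the argument you sketch is the intended one; it is not where the real difficulty lies.
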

\begin{proof}
By the homeomorphism $\sr L_{\ka_1}^{\ka_2}\home \SO_3\times \sr L_{\ka_1}^{\ka_2}(I)$ of (\ref{P:arbitrary}), it does not matter whether we prove the theorem for $\sr L_{\ka_1}^{\ka_2}$ or for $\sr L_{\ka_1}^{\ka_2}(I)$. Further, by (\ref{C:belowandabove}), it suffices to consider spaces of type $\sr L_{\ka_0}^{+\infty}$, for $\ka_0\in \R$. If $\ga\in \sr L_{\ka_0}^{+\infty}$ is diffuse, then it is homotopic to a circle by (\ref{P:diffuseloose}). If it is condensed, then the same conclusion holds by (\ref{C:contrensed}).
	
	Assume then that $\ga$ is neither homotopic to a condensed nor to a diffuse curve. Since $\ga$ itself is non-condensed by hypothesis, (\ref{P:totting}) guarantees that we may find $\eps>0$ and a chain of grafts $(\ga_s)$  with $\ga_0=\ga$ and $\ga_s\in \sr L_{\ka_0}^{+\infty}$ for all $s\in [0,\eps)$. Let $(\ga_s)$, $s\in J$, be a maximal chain of grafts starting at $\ga=\ga_0$, where $J$ is an interval of type $[0,\sig)$ or $[0,\sig]$. That such a chain exists follows by a straightforward  argument involving Zorn's lemma, since the grafting relation is a partial order, as proved in (\ref{L:eqrel}).\footnote{By reasoning more carefully it would be possible to avoid using Zorn's lemma.} By hypothesis, no curve $\ga_s$ is diffuse, hence $\nu(\ga_s)$ is well-defined and independent of $s$, and (\ref{P:totbound}) yields that $\sig<+\infty$. If the interval is of the first type, then we obtain a contradiction from (\ref{L:chainofgrafts}), and if the interval is closed, then we can apply (\ref{P:totting}) to $\ga_\sig$ to extend the chain, again contradicting the choice of $J$. We conclude that $\ga$ must be homotopic either to a condensed or to a diffuse curve. In any case, $\ga$ is homotopic in $\sr L_{\ka_0}^{+\infty}$ to a circle traversed a number of times, as claimed.
\end{proof}

\begin{mthm}\label{T:disconnects1}
	Let $-\infty\leq \ka_1<\ka_2\leq +\infty$ and let $\sig_k\in \sr L_{\ka_1}^{\ka_2}(I)$ (resp.~$\sr L_{\ka_1}^{\ka_2}$) denote any circle traversed $k\geq 1$ times. Then $\sig_k$,~$\sig_{k+2}$ lie in the same component of $\sr L_{\ka_1}^{\ka_2}(I)$ (resp.~$\sr L_{\ka_1}^{\ka_2}$) if and only if 
	\begin{equation*}
\qquad		k\geq \left\lfloor{\frac{\pi}{\rho_1-\rho_2}}\right\rfloor\quad (\rho_i=\arccot\ka_i,~i=1,2).
	\end{equation*}
\end{mthm}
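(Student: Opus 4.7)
By Proposition \ref{P:arbitrary}, $\sr L_{\ka_1}^{\ka_2}\home \SO_3\x \sr L_{\ka_1}^{\ka_2}(I)$, and $\SO_3$ is connected, so two circles lie in the same component of $\sr L_{\ka_1}^{\ka_2}$ if and only if the same holds for their counterparts in $\sr L_{\ka_1}^{\ka_2}(I)$. By Corollary \ref{C:belowandabove} combined with Remark \ref{R:circletocircle}, there is a homeomorphism $\sr L_{\ka_1}^{\ka_2}(I)\home \sr L_{\ka_0}^{+\infty}(I)$ (where $\ka_0=\cot(\rho_1-\rho_2)$, so $\rho_0=\rho_1-\rho_2$) which sends a circle traversed $j$ times to a circle traversed $j$ times. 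Setting $n=\fl{\pi/\rho_0}+1$, it therefore suffices to show that two circles $\sig_k,\sig_{k+2}\in \sr L_{\ka_0}^{+\infty}(I)$ lie in the same component if and only if $k\geq n-1$.

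The ``if'' direction is exactly Proposition \ref{P:borderline0}: when $k\geq \fl{\pi/\rho_0}$, the bending of the $k$-equator provides an explicit path in $\sr L_{\ka_0}^{+\infty}(I)$ from $\sig_k$ to $\sig_{k+2}$ (modulo the choice of parametrized representative, for which we invoke \ref{L:homocircles}).

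For the ``only if'' direction, assume $k\leq n-2$ and split on the sign of $\ka_0$. If $\ka_0<0$, then $\rho_0>\pi/2$ and hence $n=2$, so that the hypothesis $k\leq n-2$ would force $k\leq 0$, contradicting $k\geq 1$; in this regime the claim is therefore vacuous (and of course consistent with Corollary \ref{C:negacircles}). If $\ka_0\geq 0$, the circle $\sig_k$ has constant radius of curvature $\rho<\rho_0\leq \pi/2$, so its caustic band has image contained in a closed spherical cap around the center of $\sig_k$, in particular in an open hemisphere, making $\sig_k$ condensed. Its rotation number, in the sense of \S\ref{S:condensed}, is easily seen to be $k$ (e.g.\ by stereographic projection from the antipode of its center, or by Remark \ref{R:rotation}). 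Since $k\leq n-2$, Proposition \ref{P:borderline} applies: any path $s\mapsto \ga_s$ in $\sr L_{\ka_0}^{+\infty}(I)$ starting at $\ga_0=\sig_k$ consists of condensed curves with $\nu(\ga_s)=k$ for every $s$. As $\nu(\sig_{k+2})=k+2\neq k$, the curve $\sig_{k+2}$ lies in a different component, completing the proof.

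The only real obstacle is that both directions rely on substantial preliminary results: Proposition \ref{P:borderline0}, which packages the explicit bending construction and is the source of the sharp numerical threshold $\fl{\pi/\rho_0}$, and Proposition \ref{P:borderline}, which is the rigidity statement ensuring that small rotation numbers cannot jump along a homotopy. Once these are in hand, the present theorem is a direct bookkeeping argument that combines them with the reductions of \S 1.
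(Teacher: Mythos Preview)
Your proof is correct and follows exactly the same approach as the paper's own proof, which simply cites the combination of (\ref{L:homocircles}), (\ref{P:borderline0}), (\ref{P:borderline}), (\ref{P:arbitrary}) and (\ref{C:belowandabove}). You have just unpacked that one-line citation into explicit detail, including the (correct) observation that the ``only if'' direction is vacuous when $\ka_0<0$ because then $n=2$.
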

\begin{proof}
This follows from the combination of (\ref{L:homocircles}), (\ref{P:borderline0}) and (\ref{P:borderline}), if we use the homeomorphisms in (\ref{P:arbitrary}) and (\ref{C:belowandabove}).
\end{proof}

\begin{prop}\label{P:condensed}
	Let $\ka_0=\cot \rho_0\geq 0$, 
	\begin{equation*}
		n=\fl{\frac{\pi}{\rho_0}}+1.
	\end{equation*}
	Then the set $\sr O_\nu$ of all condensed curves $\ga\in \sr L_{\ka_0}^{+\infty}(I)$ having rotation number $\nu$, with $1\leq \nu\leq n-2$, is a contractible connected component of $\sr L_{\ka_0}^{+\infty}(I)$. Further, $\sr O_\nu\home \E$.
\end{prop}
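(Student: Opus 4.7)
The plan is to deduce this proposition as essentially a formal consequence of the results already assembled: the rigidity of rotation number for low-$\nu$ condensed curves established in (\ref{P:borderline}), the weak contractibility of $\sr O_\nu$ proved in (\ref{C:contrensed}), and the Hilbert-manifold dichotomy in (\ref{L:Hilbert}).

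First I would verify that $\sr O_\nu$ is a union of connected components of $\sr L_{\ka_0}^{+\infty}(I)$. Since this space is a Hilbert manifold, (\ref{L:Hilbert}\?(a)) lets us work with path components. Given a continuous path $s\mapsto \ga_s$ in $\sr L_{\ka_0}^{+\infty}(I)$ with $\ga_0\in\sr O_\nu$, the hypothesis $\nu=\nu(\ga_0)\leq n-2$ is precisely the range in which (\ref{P:borderline}) applies; hence every $\ga_s$ is condensed and $\nu(\ga_s)=\nu(\ga_0)=\nu$, so the entire path lies in $\sr O_\nu$. Thus $\sr O_\nu$ is saturated with respect to the path-component relation.

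Next, (\ref{C:contrensed}) states that $\sr O_\nu$ is weakly contractible, so in particular path-connected. Combined with the previous step, this forces $\sr O_\nu$ to be exactly one path component of $\sr L_{\ka_0}^{+\infty}(I)$, and therefore one connected component.

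Finally, since $\sr L_{\ka_0}^{+\infty}(I)$ is a closed submanifold of codimension $3$ in the separable Hilbert space $\E=L^2[0,1]\times L^2[0,1]$, it is itself a separable Hilbert manifold; as a connected component of it, $\sr O_\nu$ is an open-and-closed separable Hilbert submanifold. Being weakly contractible, $\sr O_\nu$ and $\E$ are weakly homotopy equivalent separable Hilbert manifolds, so (\ref{L:Hilbert}\?(b)) immediately yields the homeomorphism $\sr O_\nu\home \E$, and in particular $\sr O_\nu$ is contractible.

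Since all three inputs have already been proved in the preceding sections, there is no genuine obstacle at this stage; the only point requiring any attention is confirming that $\sr O_\nu$ inherits the structure of a separable Hilbert manifold (so that (\ref{L:Hilbert}\?(b)) can be invoked), which is automatic because it is open in $\sr L_{\ka_0}^{+\infty}(I)$.
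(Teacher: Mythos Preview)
Your proof is correct and follows essentially the same approach as the paper: weak contractibility of $\sr O_\nu$ (the paper cites (\ref{P:positivecondensed}) directly rather than (\ref{C:contrensed}), but since $\ka_0\geq 0$ here this is immaterial), together with (\ref{P:borderline}) to show $\sr O_\nu$ is a full connected component, and then (\ref{L:Hilbert}) to upgrade weak contractibility to $\sr O_\nu\home\E$. The only cosmetic difference is the order in which you invoke the two propositions.
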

\begin{proof}
	Proposition (\ref{P:positivecondensed}) guarantees that $\sr O_\nu$ is weakly contractible and, in particular, connected. Proposition (\ref{P:borderline}) then implies that $\sr O_\nu$ must be a connected component of $\sr L_{\ka_0}^{+\infty}(I)$. Using (\ref{L:Hilbert}\?(a)) we deduce that $\sr O_\nu$ is an open subset of this space. Hence $\sr O_\nu$ is also a Hilbert manifold, and it must be homeomorphic to $\E$ by (\ref{L:Hilbert}\?(b)).
\end{proof}
\begin{urem}
	Note that if $\ka_0<0$ (that is, if $\rho_0>\frac{\pi}{2}$), then it is a consequence of (\ref{T:allisround1}) and (\ref{T:disconnects1}) that $\sr L_{\ka_0}^{+\infty}(I)$ has only $n=2$ components, and the conclusion of (\ref{P:condensed}) does not make sense in this case (no curve $\ga$ satisfies $\nu(\ga)\leq 0$). Moreover, these two components are far from being contractible: Even for $\ka_0=-\infty$, the (co)homology groups of $\sr I=\sr L_{-\infty}^{+\infty}(I)\iso \Om\Ss^3\du \Om\Ss^3$ are non-trivial in infinitely many dimensions. 
\end{urem}

\setcounter{cthm}{1}
\begin{mthm}\label{T:components1}
	Let $-\infty\leq \ka_1<\ka_2\leq +\infty$, $\rho_i=\arccot \ka_i$ \tup{(}$i=1,2$\tup{)} and $\fl{x}$ denote the greatest integer smaller than or equal to $x$. Then $\sr L_{\ka_1}^{\ka_2}$ has exactly $n$ connected components $\sr L_1,\dots,\sr L_n$, where
	\begin{equation*}
\qquad		n=\fl{\frac{\pi}{\rho_1-\rho_2}}+1
	\end{equation*} 
and  $\sr L_j$ contains circles traversed $j$ times $(1\leq j\leq n)$. The component $\sr L_{n-1}$ also contains circles traversed $(n-1)+2k$ times, and $\sr L_n$ contains circles traversed $n+2k$ times, for $k\in \N$. Moreover, each of $\sr L_1,\dots,\sr L_{n-2}$ $(n\geq 3)$ is homeomorphic to $\E\times \SO_3$, where $\E$ is the separable Hilbert space.
\end{mthm}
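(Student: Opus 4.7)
The plan is to assemble the already established ingredients, namely Theorems \ref{T:allisround1} and \ref{T:disconnects1} together with Proposition \ref{P:condensed}, after first reducing to a standard model space.

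\smallskip

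\textbf{Reduction.} By Proposition \ref{P:arbitrary}, $\sr L_{\ka_1}^{\ka_2}\home \SO_3\times \sr L_{\ka_1}^{\ka_2}(I)$, and since $\SO_3$ is connected, it suffices to prove the analogous statement for $\sr L_{\ka_1}^{\ka_2}(I)$, replacing $\SO_3\times \E$ with $\E$ in the final clause. Then Corollary \ref{C:belowandabove} together with Remark \ref{R:circletocircle} yield a homeomorphism $\sr L_{\ka_1}^{\ka_2}(I)\home \sr L_{\ka_0}^{+\infty}(I)$, where $\rho_0=\rho_1-\rho_2$ and $\ka_0=\cot\rho_0$, carrying circles traversed $k$ times to circles traversed $k$ times. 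Under this reduction, the integer $\fl{\pi/(\rho_1-\rho_2)}+1$ becomes $\fl{\pi/\rho_0}+1=n$, so the entire theorem reduces to the corresponding assertion about $\sr L_{\ka_0}^{+\infty}(I)$.

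\smallskip

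\textbf{Counting and labeling the components.} By Theorem \ref{T:allisround1}, every connected component of $\sr L_{\ka_0}^{+\infty}(I)$ contains at least one parametrized circle $\sig_k$ ($k\geq 1$), and by Lemma \ref{L:homocircles} the component is independent of the specific parametrized circle with $k$ laps. Now apply Theorem \ref{T:disconnects1}: $\sig_k$ and $\sig_{k+2}$ lie in the same component if and only if $k\geq n-1$. On the other hand, circles traversed $k$ and $k+1$ times have final lifted frames differing by a sign in $\Ss^3$, so by Lemma \ref{L:bit} they always lie in distinct components. Combining these, the equivalence classes of circles under the relation ``same component'' are exactly
\[
\{\sig_1\},\ \{\sig_2\},\ \dots,\ \{\sig_{n-2}\},\ \{\sig_{n-1+2k}:k\in\N\},\ \{\sig_{n+2k}:k\in\N\},
\]
which gives $n$ classes. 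Labeling $\sr L_j$ to be the component containing $\sig_j$ ($1\leq j\leq n$), we obtain exactly $n$ components with the stated circle representatives.

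\smallskip

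\textbf{Topology of the small components.} Suppose $n\geq 3$, so that $\rho_0\leq \pi/2$ and hence $\ka_0\geq 0$. For $1\leq j\leq n-2$, Proposition \ref{P:condensed} asserts that the subspace $\sr O_j\subs \sr L_{\ka_0}^{+\infty}(I)$ of condensed curves of rotation number $j$ is a connected component of $\sr L_{\ka_0}^{+\infty}(I)$ homeomorphic to the separable Hilbert space $\E$. Since $\sig_j\in \sr O_j$, this component is precisely $\sr L_j$, and hence $\sr L_j\home\E$. Composing with the homeomorphism $\sr L_{\ka_1}^{\ka_2}\home \SO_3\times \sr L_{\ka_1}^{\ka_2}(I)$ we obtain $\sr L_j\home \SO_3\times \E$ in the original space, completing the proof.

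\smallskip

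There is no serious obstacle at this stage: the heavy lifting was done earlier. The only places to be careful are (i) verifying that different-parity circles cannot be joined, which is a one-line application of Lemma \ref{L:bit}, and (ii) noting that the hypothesis $n\geq 3$ used for the $\SO_3\times \E$ conclusion automatically forces $\ka_0\geq 0$ after reduction, which is exactly the regime in which Proposition \ref{P:condensed} applies.
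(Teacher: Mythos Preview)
Your proof is correct and follows essentially the same route as the paper's: reduce via (\ref{P:arbitrary}) and (\ref{C:belowandabove}) with (\ref{R:circletocircle}), invoke (\ref{T:allisround1}) and (\ref{T:disconnects1}) to count the components, and then apply (\ref{P:condensed}) for the topology of the first $n-2$ components. You are in fact slightly more explicit than the paper, which leaves implicit both the parity separation via (\ref{L:bit}) and the observation that $n\geq 3$ forces $\ka_0\geq 0$ after reduction.
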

\begin{proof}
	All of the assertions of the theorem but the last one follow from (\ref{T:allisround1}), (\ref{T:disconnects1}) and the homeomorphism $\sr L_{\ka_1}^{\ka_2}\home \SO_3\times \sr L_{\ka_1}^{\ka_2}(I)$ of (\ref{P:arbitrary}). 
	
	Assume that $n\geq 3$ and let $\sig_k\in \sr L_{\ka_1}^{\ka_2}(I)$ be a circle traversed $k\leq n-2$ times. In the notation of (\ref{P:condensed}), the connected component $\sr L_k(I)$ of $\sr L_{\ka_1}^{\ka_2}(I)$ containing $\sig_k$ is mapped to the component $\sr O_k$ under the homeomorphism $\sr L_{\ka_1}^{\ka_2}(I)\home \sr L_{\ka_0}^{+\infty}(I)$ of (\ref{C:belowandabove}), because $\sig_k$ is mapped to another circle traversed $k$ times (cf.~(\ref{R:circletocircle})). Therefore, $\sr L_k(I)$ is homeomorphic to $\E$ by (\ref{P:condensed}). The last assertion of the theorem is deduced from this and the homeomorphism $\sr L_{\ka_1}^{\ka_2}\home \SO_3\times \sr L_{\ka_1}^{\ka_2}(I)$.
\end{proof}

Theorem (\ref{T:components1}) characterizes the connected components of $\sr L_{\ka_1}^{\ka_2}$ in terms of the circles that they contain. This characterization is not very useful for actually deciding whether two curves in this space lie in the same component. However, a  more direct characterization in terms of the properties of a curve is also available; it is stated in (\ref{C:direct}).

\setcounter{cthm}{5}
\begin{prop}\label{T:direct}
	Let $\ka_0\in \R$ and let $\sr L_1,\dots,\sr L_n$ be the connected components of $\sr L_{\ka_0}^{+\infty}$, as described in (\ref{T:components1}). Then $\ga\in \sr L_{\ka_0}^{+\infty}$ lies in:
	\begin{enumerate}
	\item [(i)] $\sr L_j$ $(1\leq j\leq n-2)$ if and only if it is condensed and has rotation number $j$.
	\item [(ii)] $\sr L_{n-1}$ if and only if $\te{\Phi}_\ga(1)=(-1)^{n-1}\te{\Phi}_\ga(0)$ and either it is non-condensed or condensed with rotation number $\nu(\ga)\geq n-1$.
	\item [(iii)] $\sr L_{n}$ if and only if $\te{\Phi}_\ga(1)=(-1)^{n}\te{\Phi}_\ga(0)$ and either it is non-condensed or condensed with rotation number $\nu(\ga)\geq n-1$.
\end{enumerate}
\end{prop}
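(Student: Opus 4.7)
The plan is to deduce (\ref{T:direct}) by packaging together the characterization of the ``small'' components $\sr L_1,\dots,\sr L_{n-2}$ as spaces of condensed curves (from (\ref{P:condensed})) and the lifted-frame invariant from (\ref{L:bit}). All the geometric work is already in place; what remains is essentially bookkeeping plus one pigeonhole argument.

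The first step is to reduce to the subspace $\sr L_{\ka_0}^{+\infty}(I)$. The homeomorphism $F\colon \SO_3\times\sr L_{\ka_0}^{+\infty}(I)\to\sr L_{\ka_0}^{+\infty}$, $(Q,\ga)\mapsto Q\ga$, from (\ref{P:arbitrary}) maps $\SO_3\times \sr L_j(I)$ onto $\sr L_j$. Because left-multiplication by any $Q\in\SO_3$ is an isometry of $\Ss^2$, it preserves caustic bands, closed hemispheres, and geodesic curvatures, and hence both the condensed/non-condensed dichotomy and the rotation number. It also preserves the relation $\te\Phi_\ga(1)=\pm\te\Phi_\ga(0)$, since multiplying $\ga$ by $Q$ left-multiplies $\Phi_\ga$ by $Q$, and so $\te\Phi_\ga$ by a fixed element of $\Ss^3$. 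Thus it suffices to prove the proposition inside $\sr L_{\ka_0}^{+\infty}(I)$.

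Next, I would dispose of (i). When $\ka_0<0$ we have $n=2$, so $n-2=0$ and the statement is vacuous. When $\ka_0\geq 0$, (\ref{P:condensed}) asserts that for each $j$ with $1\leq j\leq n-2$, the subspace $\sr O_j$ of condensed curves with rotation number $j$ is already a connected component of $\sr L_{\ka_0}^{+\infty}(I)$. A circle traversed $j$ times is condensed (since $\ka_0\geq 0$ forces its caustic band to lie in an open hemisphere) with rotation number $j$, so it lies in $\sr O_j$; by the very definition of $\sr L_j$ as the component containing such a circle, $\sr L_j=\sr O_j$, giving (i).

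Finally I would handle (ii) and (iii). By (\ref{T:allisround1}) every $\ga\in\sr L_{\ka_0}^{+\infty}(I)$ lies in some $\sr L_k$ with $1\leq k\leq n$. The hypothesis ``$\ga$ is non-condensed, or condensed with $\nu(\ga)\geq n-1$'' rules out, by (i), the possibility $\ga\in\sr L_j$ for $1\leq j\leq n-2$, so $\ga\in\sr L_{n-1}\cup\sr L_n$. To separate the two, use (\ref{L:bit}): the sign $\eps_\ga\in\{\pm 1\}$ defined by $\te\Phi_\ga(1)=\eps_\ga\,\te\Phi_\ga(0)$ depends only on the component of $\ga$. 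A direct check using the explicit lifted logarithmic derivative $\te\La(u)=\tfrac12\bigl(\cos\rho\,\mbf i+\sin\rho\,\mbf k\bigr)$ of (\ref{L:parametrizedbycurvature}) (or equivalently the classical fact that the frame of a once-traversed circle is a non-contractible loop in $\SO_3\home UT\Ss^2$) shows that for a parametrized circle $\sig_k$ traversed $k$ times, $\te\Phi_{\sig_k}(1)=(-1)^k\te\Phi_{\sig_k}(0)$. Since $\sr L_{n-1}$ contains $\sig_{n-1}$ and $\sr L_n$ contains $\sig_n$, and the signs $(-1)^{n-1}$ and $(-1)^n$ are opposite, the invariant $\eps$ distinguishes $\sr L_{n-1}$ from $\sr L_n$ exactly as stated. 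The only conceptual obstacle is case (i), and it is absorbed by citing (\ref{P:condensed}); the heavy lifting there is the non-spreading result (\ref{P:borderline}) (to show $\sr O_j$ is closed) combined with (\ref{P:positivecondensed}) and (\ref{P:negativecondensed}) (to show it is weakly contractible, hence path-connected).
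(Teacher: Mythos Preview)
Your proposal is correct and follows essentially the same approach as the paper, whose proof is the single line ``This follows from (\ref{T:components1}) and (\ref{P:condensed}).'' You have simply unpacked what that citation means: (\ref{P:condensed}) identifies $\sr L_j$ with $\sr O_j$ for $1\leq j\leq n-2$, and the lifted-frame sign (already used in proving (\ref{T:components1}) via (\ref{L:bit})) separates the remaining two components.
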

\begin{proof}
	This follows from (\ref{T:components1}) and (\ref{P:condensed}).	
\end{proof}

Recall that $\te{\Phi}\colon [0,1]\to \Ss^3$ is the lift of the frame $\Phi_\ga\colon [0,1]\to \SO_3$ of $\ga$ to $\Ss^3$ (cf.~(\ref{D:liftedframe})). When $-\infty\leq \ka_0<0$ (resp.~$\rho_1-\rho_2>\frac{\pi}{2}$) we have $n=2$, and this characterization of the two components $\sr L_1$, $\sr L_2$ of $\sr L_{\ka_0}^{+\infty}$ (resp.~$\sr L_{\ka_1}^{\ka_2}$) may be simplified to: $\ga$ lies in  $\sr L_i$ if and only if $\te{\Phi}_\ga(1)=(-1)^{i}\te{\Phi}_\ga(0)$.

\begin{lemma}\label{L:direct}
	Let $-\infty\leq \ka_1<\ka_2\leq +\infty$, $\rho_i=\arccot \ka_i$ and $\ga_i\in \sr L_{\ka_1}^{\ka_2}$ $(i=1,2)$. Then $\ga_1$ lies in the same component of $\sr L_{\ka_1}^{\ka_2}$ as $\ga_2$ if and only if the corresponding translations $\bar \ga_i$ of $\ga_i$ by $\rho_2$,
\begin{equation*}
	\bar\ga_i(t)=\cos \rho_2\?\ga_i(t)+\sin\rho_2\?\no_i(t)\quad (t\in [0,1],~i=1,2),
\end{equation*}
lie in the same component of $\sr L_{\ka_0}^{+\infty}$, where $\ka_0=\cot(\rho_1-\rho_2)$ and $\no_i$ is the unit normal to $\ga_i$.\qed
\end{lemma}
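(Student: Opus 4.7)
The plan is to exhibit an explicit homeomorphism $F\colon \sr L_{\ka_1}^{\ka_2} \to \sr L_{\ka_0}^{+\infty}$ given by $F(\ga)=\bar\ga$, translation by $\theta=\rho_2$. Since a homeomorphism sends connected components to connected components, the equivalence claimed in the lemma will be immediate.

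First I would check that $F$ is well-defined. For $\ga\in \sr L_{\ka_1}^{\ka_2}$, the radius of curvature $\rho$ satisfies $\rho_2<\rho(t)<\rho_1$ a.e., so the condition $\rho_1-\pi\leq \theta\leq \rho_2$ of (\ref{L:essentialtranslation}) holds with $\theta=\rho_2$. Hence $\bar\ga=\ga_{\rho_2}$ is admissible and (\ref{C:radiusofcurvature}) gives its radius of curvature as $\bar\rho=\rho-\rho_2\in (0,\rho_1-\rho_2)$, i.e.\ its geodesic curvature exceeds $\cot(\rho_1-\rho_2)=\ka_0$. Moreover, from \eqref{E:rtheta} we have $\Phi_{\bar\ga}=\Phi_\ga R_{\rho_2}$, so the closed-frame condition $\Phi_\ga(0)=\Phi_\ga(1)$ of (\ref{D:arbitrary}) is equivalent to $\Phi_{\bar\ga}(0)=\Phi_{\bar\ga}(1)$. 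Thus $\bar\ga\in \sr L_{\ka_0}^{+\infty}$.

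Next I would verify continuity and invertibility. Inspection of the chart description of admissible curves in (\ref{D:Little0}) together with \eqref{E:Latranslation} shows that translation by a fixed angle acts on the defining pair $(\hat v,\hat w)\in \E$ by a continuous transformation (in fact, by a smooth self-map of $\E$ after composing with the diffeomorphisms $h$, $h_{\ka_1,\ka_2}$, $h_{\ka_0,+\infty}$). By (\ref{L:inversetranslation}), translation by $-\rho_2$ sends $\bar\ga$ back to $\ga$, and this candidate inverse is continuous by the same reasoning, provided it is well-defined on $\sr L_{\ka_0}^{+\infty}$; but an eta in $\sr L_{\ka_0}^{+\infty}$ has radius of curvature in $(0,\rho_1-\rho_2)$, so $-\rho_2$ satisfies the translation condition $-\rho_2\leq 0$ and $-\rho_2\geq (\rho_1-\rho_2)-\pi$ (the latter because $\rho_1\leq \pi$), and the translated curve has radius of curvature in $(\rho_2,\rho_1)$ by (\ref{C:radiusofcurvature}). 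Hence $F$ is a homeomorphism $\sr L_{\ka_1}^{\ka_2}\home \sr L_{\ka_0}^{+\infty}$.

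Since $F$ sends $\ga_i$ to $\bar\ga_i$, the curves $\ga_1,\ga_2$ lie in a common component of $\sr L_{\ka_1}^{\ka_2}$ if and only if $\bar\ga_1,\bar\ga_2$ lie in a common component of $\sr L_{\ka_0}^{+\infty}$, as required. There is essentially no obstacle here: the entire content of the lemma is a re-packaging of (\ref{T:size}) and (\ref{C:belowandabove}) for spaces of closed curves with unconstrained (but coincident) endpoint frames, and the only mild subtlety is the verification that translation preserves the closedness condition $\Phi(0)=\Phi(1)$, which is handled by the formula $\Phi_{\bar\ga}=\Phi_\ga R_{\rho_2}$.
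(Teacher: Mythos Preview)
Your proposal is correct and follows the same approach as the paper: the paper's proof is a one-liner observing that translation by $\rho_2$ is a homeomorphism $\sr L_{\ka_1}^{\ka_2}\to \sr L_{\ka_0}^{+\infty}$, citing (\ref{T:size}). You have simply unpacked that citation, including the minor point that the closed-frame condition $\Phi(0)=\Phi(1)$ is preserved under right-multiplication by $R_{\rho_2}$, which is the only detail not literally covered by (\ref{T:size}) since that theorem is stated for $\sr L_{\ka_1}^{\ka_2}(Q)$ rather than $\sr L_{\ka_1}^{\ka_2}$.
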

\begin{proof}
	The proof is immediate, since translation by $\rho_2$ is a homeomorphism from $\sr L_{\ka_1}^{\ka_2}$ onto $\sr L_{\ka_0}^{+\infty}$, as was seen in (\ref{T:size}).
\end{proof}


\begin{mthm}\label{C:direct}
	Let $-\infty\leq \ka_1<\ka_2\leq +\infty$, $\ga\in \sr L_{\ka_1}^{\ka_2}$ and $\bar{\ga}\in \sr L_{\ka_0}^{+\infty}$ be the curve given by:
	\begin{equation*}
		\bar \ga(t)=\cos \rho_2\?\ga(t)+\sin\rho_2\?\no(t)\quad (t\in [0,1]),
	\end{equation*}
	where $\ka_0=\cot(\rho_1-\rho_2)$, $\rho_i=\arccot \ka_i$ \tup($i=1,2$\tup). Then $\ga$ lies in the component $\sr L_j$ of $\sr L_{\ka_1}^{\ka_2}$ described in (\ref{T:components1}) if and only if $\bar \ga$ satisfies the corresponding condition among (i)--(iii) stated in (\ref{T:direct}).\qed
\end{mthm}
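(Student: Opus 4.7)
\medskip\noindent\textbf{Proof plan.} The plan is to reduce the theorem directly to the case $\rho_2 = 0$, $\ka_2 = +\infty$, which is exactly the content of Proposition (\ref{T:direct}). The reduction is implemented by the translation-by-$\rho_2$ map
\[
T_{\rho_2}\colon \sr L_{\ka_1}^{\ka_2}\to \sr L_{\ka_0}^{+\infty},\qquad \ga\mapsto \bar\ga,
\]
which by Lemma (\ref{L:direct}) (itself a consequence of Theorem (\ref{T:size}) applied in the ambient space $\sr L_{\ka_1}^{\ka_2}$ rather than in $\sr L_{\ka_1}^{\ka_2}(Q)$) is a homeomorphism. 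First I would verify that $T_{\rho_2}$ sends circles to circles with the same multiplicity: this is already recorded in Remark (\ref{R:circletocircle}), since translation by a fixed distance carries a parallel of radius of curvature $\rho$ to a parallel of radius of curvature $\rho-\rho_2$ traversed the same number of times.

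Once this is in place, the components $\sr L_j$ of $\sr L_{\ka_1}^{\ka_2}$ described in Theorem (\ref{T:components1}) are precisely the images under $T_{\rho_2}^{-1}$ of the analogously numbered components $\sr L_j'$ of $\sr L_{\ka_0}^{+\infty}$: indeed, both families of components are indexed by the circle (and its multiplicity) that they contain, and $T_{\rho_2}$ respects this labeling by the previous paragraph. The same identification applies to $n = \lfloor \pi/(\rho_1-\rho_2)\rfloor+1 = \lfloor \pi/\rho_0\rfloor+1$, so the trichotomy $1\leq j\leq n-2$, $j=n-1$, $j=n$ on the $\ka_1^{\ka_2}$-side corresponds one-to-one with that on the $\ka_0^{+\infty}$-side.

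Therefore, $\ga\in \sr L_j$ if and only if $\bar\ga = T_{\rho_2}(\ga) \in \sr L_j'$. By Proposition (\ref{T:direct}) applied to $\bar\ga\in \sr L_{\ka_0}^{+\infty}$, this membership is equivalent to $\bar\ga$ satisfying the corresponding one of the conditions (i)--(iii) of (\ref{T:direct}), which is what we wanted to prove.

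\medskip\noindent\textbf{Where the work sits.} There is essentially no obstacle: all the substance has already been distilled into Theorem (\ref{T:size}) (the homeomorphism under translation), Remark (\ref{R:circletocircle}) (circles go to circles of the same multiplicity), Theorem (\ref{T:components1}) (classification of components by multiplicity) and Proposition (\ref{T:direct}) (characterization in the normalized space $\sr L_{\ka_0}^{+\infty}$). The only mildly nontrivial point to check, if one wants to be scrupulous, is that the variant of Theorem (\ref{T:size}) used in Lemma (\ref{L:direct}) applies to the space $\sr L_{\ka_1}^{\ka_2}$ of closed curves with arbitrary (but matching) initial and final frames---but this is immediate from the proof of (\ref{T:size}), since the translation map commutes with the relation $\Phi_\ga(0)=\Phi_\ga(1)$ (both sides are multiplied on the right by $R_{\rho_2}$). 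No further estimates or constructions are required.
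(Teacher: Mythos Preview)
Your proposal is correct and matches the paper's approach exactly: the paper simply appends \qed to the statement, treating it as an immediate corollary of (\ref{L:direct}), (\ref{R:circletocircle}), (\ref{T:components1}), and (\ref{T:direct}), which is precisely the reduction you spell out.
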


The statement and proof of thms.~A and E may be found on p.~\pageref{T:size} and p.~\pageref{C:contrensed}, respectively. Theorems A--F are the main results of the paper. Replacing $\sr L$ by $\sr C$ (cf.~(\ref{D:Cittle})) in their statements, we obtain versions of these results for spaces of $C^r$ curves, with the $C^r$ topology (for any $r\geq 2$). These follow from the corresponding theorems for the spaces of type $\sr L$ and (\ref{L:C^2}). 

Finally, we remark that the results established here may be applied to the study of spaces of curves on $\RP^2$, with the usual Riemannian metric. Even though the latter is not orientable, we can still speak of the unsigned geodesic curvature of a curve. This allows us to define the spaces $\sr L_{-\ka_0}^{+\ka_0}$ and $\sr C_{-\ka_0}^{+\ka_0}$ of curves on $\RP^2$ $(\ka_0\in \R$), and to obtain information on their connected components using the theorems above and the fact that the natural projection $\Ss^2\to \RP^2$ is a 2-sheeted covering map which is also a local isometry (and hence preserves the absolute value of the geodesic curvature).


\begin{appendix}
\section{Basic Results on Convexity}

In this section we collect some results on convexity, none of which is new, that are used throughout the work.  Let $C\subs \R^{n+1}$. We say that $C$ is \tdef{convex} if it contains the line segment $[p,q]$ joining $p$ to $q$ whenever $p,q\in C$. The \tdef{convex hull} $\co{X}$ of a subset $X\subs \R^{n+1}$ is the intersection of all convex subsets of $\R^{n+1}$ which contain $X$. It may be characterized as the set of all points $q$ of the form
\begin{equation}\label{E:lc}
	q=\sum_{k=1}^m s_k p_k,\quad \text{where}\quad \sum_{k=1}^ms_k=1,\  s_k>0 \ \text{and}\  p_k\in X\text{ for each $k$}.
\end{equation}

\begin{lemma}\label{L:convexcompact}
	If $X\subs \R^{n}$ is compact, then $\co{X}$ is compact. In particular, if $X\subs \Ss^n$ is closed, then $\co{X}$ is compact.\qed
\end{lemma}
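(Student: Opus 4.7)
The plan is to reduce the statement to Carath\'eodory's theorem, which asserts that every point of $\co{X}$ for $X \subs \R^n$ can be written as a convex combination of at most $n+1$ points of $X$. Once this is in hand, compactness of $\co X$ follows by exhibiting it as the continuous image of a compact set.

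First I would establish Carath\'eodory's theorem. Given $q \in \co X$, write $q = \sum_{k=1}^m s_k p_k$ as in \eqref{E:lc}, and suppose $m \geq n+2$. Then the $m-1$ vectors $p_2 - p_1, \dots, p_m - p_1$ are linearly dependent in $\R^n$, so there exist scalars $\la_k$, not all zero, with $\sum_{k=1}^m \la_k p_k = 0$ and $\sum_{k=1}^m \la_k = 0$. Replacing the $\la_k$ by their negatives if necessary, at least one $\la_k$ is positive. Choose the index $k_0$ minimizing $s_k/\la_k$ over $\{k : \la_k > 0\}$ and set $t = s_{k_0}/\la_{k_0}$; then the coefficients $s_k' = s_k - t \la_k$ are non-negative, sum to $1$, satisfy $s_{k_0}' = 0$, and still give $q = \sum_k s_k' p_k$. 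This reduces the number of points used by at least one, and iterating yields a representation with $m \leq n+1$.

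Next, let $\De = \{(s_0, \dots, s_n) \in \R^{n+1} : s_i \geq 0,\ \sum_{i=0}^n s_i = 1\}$ and define
\begin{equation*}
	f \colon \De \times X^{n+1} \to \R^n, \qquad f\big((s_0,\dots,s_n),(p_0,\dots,p_n)\big) = \sum_{i=0}^n s_i p_i.
\end{equation*}
This map is continuous, and $\De \times X^{n+1}$ is compact as a finite product of compact sets (here I use the hypothesis that $X$ is compact). By Carath\'eodory's theorem, the image of $f$ coincides with $\co X$, and the continuous image of a compact set is compact. Hence $\co X$ is compact.

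For the final sentence: if $X \subs \Ss^n \subs \R^{n+1}$ is closed, then $X$ is a closed subset of the compact space $\Ss^n$ and hence compact, so the first part (applied in $\R^{n+1}$, with $n+2$ points rather than $n+1$) yields that $\co X$ is compact. I do not anticipate any real obstacle here; the only mildly delicate point is the reduction step in Carath\'eodory, where one must check that the non-negativity of the new coefficients is preserved, but this is immediate from the choice of $k_0$.
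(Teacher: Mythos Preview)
Your proof is correct and is the standard argument via Carath\'eodory's theorem. The paper itself omits the proof entirely (the statement ends with \qed), treating the result as well known; your write-up is precisely the classical proof one would expect, and there is nothing to compare.
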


\begin{lemma}\label{L:basicconvex}
	Let $X\subs \Ss^n$ and consider the conditions:
	\begin{enumerate}
		\item [(i)]	$0$ does not belong to the closure of $\hat{X}$.
		\item [(ii)] There exists an open hemisphere containing $X$.
		\item [(iii)] $0$ does not belong to $\hat{X}$.
		\item [(iv)] $X$ does not contain any pair of antipodal points.
	\end{enumerate}
	Then \tup{(i)\,$\Rar$\,(ii)\,$\Rar$\,(iii)\,$\Rar$\,(iv)}, but none of the implications is reversible. If $X$ is closed then (ii) and (iii) are equivalent.
\end{lemma}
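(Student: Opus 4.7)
The plan is to establish the chain $(i)\Rar(ii)\Rar(iii)\Rar(iv)$, then exhibit a counterexample to each of the three converses, and finally deduce the equivalence of $(ii)$ and $(iii)$ under the closedness hypothesis by invoking (\ref{L:convexcompact}).

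For the forward direction, I would argue as follows. For $(i)\Rar(ii)$, the set $\overline{\hat X}\subs \R^{n+1}$ is closed, convex, and does not contain $0$, so the hyperplane separation theorem yields a unit vector $h\in \R^{n+1}$ and a constant $c>0$ with $\gen{h,y}\geq c$ for every $y\in \overline{\hat X}$; in particular $\gen{h,x}>0$ for every $x\in X$, so $X$ lies in the open hemisphere determined by $h$. For $(ii)\Rar(iii)$, if $\gen{h,x}>0$ for all $x\in X$ then, by linearity, any convex combination $\sum s_k p_k$ with $p_k\in X$ satisfies $\gen{h,\sum s_kp_k}=\sum s_k\gen{h,p_k}>0$, so $0\nin \hat X$. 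For $(iii)\Rar(iv)$, the contrapositive is immediate: if $p,-p\in X$ then $0=\tfrac{1}{2}p+\tfrac{1}{2}(-p)\in \hat X$.

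For the non-reversibility I would produce three explicit counterexamples, all in $\Ss^1$. For $(iv)\not\Rar(iii)$, take $X=\se{1,\ze,\ze^2}$ with $\ze=\exp(2\pi i/3)$: no two points are antipodal, yet $0=\tfrac{1}{3}(1+\ze+\ze^2)\in \hat X$. For $(iii)\not\Rar(ii)$, take the half-open semicircle $X=\se{e^{i\theta}:\theta\in [0,\pi)}$; any convex combination has nonnegative imaginary part, with equality forcing all chosen $\theta_k=0$ and giving the point $1\neq 0$, so $0\nin \hat X$, but no $h\in \Ss^1$ can satisfy $\gen{h,e^{i\theta}}>0$ simultaneously for $\theta=0$ and $\theta\to \pi^-$. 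For $(ii)\not\Rar(i)$, take the open semicircle $X=\se{e^{i\theta}:\theta\in (0,\pi)}$, which lies in the open upper hemisphere, while $\overline{\hat X}$ is the closed upper half-disk and contains $0$.

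For the final assertion, I would note that if $X\subs \Ss^n$ is closed, then (\ref{L:convexcompact}) guarantees that $\hat X$ is compact, in particular closed, so $\overline{\hat X}=\hat X$, which makes $(i)$ and $(iii)$ literally the same statement. Combined with the already-established $(i)\Rar (ii)\Rar (iii)$, this yields the equivalence of $(ii)$ and $(iii)$. The only part requiring any genuine work is $(i)\Rar(ii)$, which rests on the standard finite-dimensional separation theorem; the rest is a matter of assembling convex combinations and verifying the counterexamples, both routine.
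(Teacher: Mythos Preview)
Your proof is correct and follows essentially the same route as the paper's: the same separation argument for $(i)\Rightarrow(ii)$, the same convexity/linearity observations for $(ii)\Rightarrow(iii)\Rightarrow(iv)$, the same three counterexamples (your $(ii)\not\Rightarrow(i)$ example is the $n=1$ case of the paper's), and the same use of (\ref{L:convexcompact}) to collapse $(i)$ and $(iii)$ when $X$ is closed.
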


\begin{proof}\  
\begin{enumerate}
	\item [(i) $\Rar$ (ii)] This is a special case of the Hahn-Banach theorem, since $\se{0}$ is a compact convex set and the closure of $\hat{X}$ is a closed convex set.
	\item [(ii) $\not \Rar$ (i)] For $X\subs \Ss^n$ the open upper hemisphere, we have
	\begin{equation*}
		\hat{X}=\set{(x_1,\dots,x_{n+1})\in \D^{n+1}}{x_{n+1}>0}.
	\end{equation*}
	Hence the closure of $\hat{X}$ contains the origin, even though $X$ is (contained in) an open hemisphere.
	\item [(ii) $\Rar$ (iii)] Let $H=\set{p\in \Ss^n}{\gen{p,h}>0}$ be an open hemisphere containing $X$ and $U=\set{p\in \R^{n+1}}{\gen{p,h}>0}$. Then $U$ is convex, $X\subs U$ and $0\nin U$. Thus, $0\nin \hat{X}$.
	\item [(iii) $\not \Rar$ (ii)] Let $X$ be the image of $[0,\pi)$ under $t\mapsto \exp(it)$.
	\item [(iii) $\Rar$ (iv)] If $p$ and $-p$ both belong to $X$, then $0\in [-p,p]\subs  \hat{X}$.
	\item [(iv) $\not \Rar$ (iii)] Let $X=\se{1,\ze,\ze^2}\subs \Ss^1$, where $\ze=\exp(\frac{2}{3}\pi i)$ is a primitive third root of unity. Then $X$ does not contain antipodal points, but $0=\frac{1}{3}\big(1+\ze+\ze^2\big)$. 
\end{enumerate}
The last assertion is the combination of (i)\,$\Rar$\,(ii) and (ii)\,$\Rar$\,(iii), together with (\ref{L:convexcompact}). 
\end{proof}

We refer the reader to the corresponding appendix to \cite{tese} for proofs of the results below and to \cite{Fen1} for further results of this type.

\begin{lemma}\label{L:closedhemisphere}
	Let $X\subs \Ss^n$. Then $0$ belongs to the interior of $\hat{X}$ if and only if $X$ is not contained in any closed hemisphere of $\Ss^n$. \qed
\end{lemma}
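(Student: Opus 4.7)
The plan is to prove the two implications separately, with the forward direction being a direct consequence of the definition of a closed hemisphere and the converse following from the supporting hyperplane theorem.

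For the forward direction, I will argue by contrapositive. Suppose $X$ is contained in a closed hemisphere of $\Ss^n$, say $X\subs \{p\in \Ss^n:\gen{p,h}\geq 0\}$ for some $h\in \Ss^n$. Since the half-space $H^+=\{p\in \R^{n+1}:\gen{p,h}\geq 0\}$ is convex and contains $X$, it contains $\hat X$ as well. But $\Int H^+=\{p\in \R^{n+1}:\gen{p,h}>0\}$ does not contain $0$, and $\Int \hat X\subs \Int H^+$, so $0\nin \Int \hat X$. This takes care of one direction with essentially no work.

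For the converse, I will again prove the contrapositive: if $0\nin \Int \hat X$, then $X$ lies in some closed hemisphere. The tool is a supporting/separating hyperplane theorem, which is the standard geometric consequence of Hahn-Banach in finite dimensions. Concretely, I will split into two subcases. If $0\nin \overline{\hat X}$, then since $\overline{\hat X}$ is a closed convex set and $\{0\}$ is a disjoint compact convex set, I invoke strict separation to find $h\in \R^{n+1}\ssm\{0\}$ with $\gen{h,p}>0$ for every $p\in \overline{\hat X}$. If $0\in \overline{\hat X}\ssm \Int \hat X$, so that $0$ lies on the boundary of the convex set $\overline{\hat X}$, the supporting hyperplane theorem furnishes $h\in \R^{n+1}\ssm\{0\}$ with $\gen{h,p}\geq 0$ for every $p\in \overline{\hat X}$. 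In either case, after normalizing $h$ to unit length we obtain
\[
X\subs \hat X\subs \{p\in \R^{n+1}:\gen{p,h}\geq 0\},
\]
and intersecting with $\Ss^n$ shows that $X$ is contained in the closed hemisphere determined by $h$.

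The argument is essentially a repackaging of Hahn-Banach, so no real obstacle is expected. The only subtle point is that $X$ is not assumed closed (so $\hat X$ need not be closed), which is why I pass to $\overline{\hat X}$ before applying the separation theorems; the key observation is that a closed hemisphere containing $\overline{\hat X}$ automatically contains $X$, and that $\Int \hat X$ and $\Int \overline{\hat X}$ coincide in the case when $\hat X$ has nonempty interior (and if $\hat X$ has empty interior, it already lies in a proper affine subspace of $\R^{n+1}$, making the conclusion immediate via any supporting hyperplane through that subspace). This is bookkeeping rather than a genuine difficulty.
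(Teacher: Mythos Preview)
Your argument is correct. The paper does not actually include a proof of this lemma: it is stated with a \qed\ and the reader is referred to the appendix of \cite{tese} for the details. Your approach via Hahn--Banach/supporting hyperplanes is the standard one and matches what the paper invokes elsewhere (cf.~the proof of (\ref{L:basicconvex}), where (i)$\Rightarrow$(ii) is attributed to Hahn--Banach).
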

%

\begin{lemma}\label{L:nonempty}
	A convex set $C\subs \R^n$ has empty interior if and only if it is contained in a hyperplane.\qed
\end{lemma}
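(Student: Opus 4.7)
The plan is to prove the two implications separately, via their contrapositives when convenient. The forward direction (``contained in a hyperplane $\Rar$ empty interior'') is immediate: any hyperplane $H\subs \R^n$ has empty interior in $\R^n$, and a subset of a set with empty interior has empty interior.

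For the converse, I will prove the contrapositive: if the convex set $C\subs \R^n$ is not contained in any hyperplane, then $\Int C\neq\emptyset$. The case $C=\emptyset$ being vacuous, I fix any $p_0\in C$ and consider the linear span $W=\text{span}\set{p-p_0}{p\in C}$. If $\dim W<n$, then $C\subs p_0+W$ would lie in a proper affine subspace, hence in a hyperplane, contrary to hypothesis; so $\dim W=n$.

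Therefore I can choose $p_1,\dots,p_n\in C$ for which the vectors $p_i-p_0$ $(i=1,\dots,n)$ form a basis of $\R^n$; equivalently, the points $p_0,p_1,\dots,p_n$ are affinely independent. By convexity of $C$, the closed $n$-simplex $\De=[p_0,p_1,\dots,p_n]$ is contained in $C$. Since $\De$ has non-empty interior in $\R^n$ (any convex combination $\sum_{i=0}^n s_ip_i$ with all $s_i>0$ and $\sum_i s_i=1$ lies in $\Int\De$), we conclude that $\Int C\sups \Int \De\neq \emptyset$, completing the proof.

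There is no real obstacle here; the only mild subtlety is keeping clear the distinction between linear span (which gives the affine hull only after translation by a base point $p_0$) and making sure to treat the possibility $C=\emptyset$ at the start. Once the simplex $\De$ is produced, the standard fact that an $n$-simplex in $\R^n$ has non-empty interior finishes the argument.
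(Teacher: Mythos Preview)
Your proof is correct and entirely standard. The paper does not actually prove this lemma (note the \qed\ immediately after the statement); it refers the reader to the appendix of \cite{tese} for the proofs of this and the neighboring results, so there is no proof in the paper to compare against.
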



\begin{lemma}\label{L:Steinitz}
	Let $X\subs \R^n$ be any set. If $p\in \hat{X}$, then there exists a $k$-dimensional simplex which has vertices in $X$ and contains $p$, for some $k\leq n$.\qed
\end{lemma}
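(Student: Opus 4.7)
The plan is to prove this as the classical theorem of Carath\'eodory: any point in the convex hull $\hat X\subseteq\R^n$ admits a representation as a convex combination of at most $n+1$ affinely independent points of $X$. The vertices of such a representation are precisely the vertices of the desired simplex.

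First I would start from the characterization of $\hat X$ in \eqref{E:lc}: by hypothesis, $p = \sum_{i=1}^m s_i p_i$ for some $m\geq 1$, some $p_i \in X$, and some $s_i > 0$ with $\sum_i s_i = 1$. Choose such a representation with $m$ as small as possible. I claim the points $p_1,\ldots,p_m$ are then affinely independent. Suppose for contradiction that they are not; then there exist real numbers $\lambda_1,\ldots,\lambda_m$, not all zero, with $\sum_i \lambda_i = 0$ and $\sum_i \lambda_i p_i = 0$. Since the $\lambda_i$ sum to zero and are not all zero, at least one is strictly positive, so the set $J = \{i : \lambda_i > 0\}$ is non-empty. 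Set
\[
t_0 = \min_{i \in J} \frac{s_i}{\lambda_i} > 0,
\]
and let $i_0$ attain this minimum. Then $s_i' := s_i - t_0 \lambda_i \geq 0$ for every $i$, with equality at $i_0$, and one verifies immediately that
\[
\sum_i s_i' = \sum_i s_i - t_0 \sum_i \lambda_i = 1, \qquad \sum_i s_i' p_i = \sum_i s_i p_i - t_0 \sum_i \lambda_i p_i = p.
\]
Dropping the index $i_0$, this exhibits $p$ as a convex combination of fewer than $m$ points of $X$, contradicting the minimality of $m$.

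Hence $p_1,\ldots,p_m$ are affinely independent, which forces $m-1 \leq n$ (affinely independent subsets of $\R^n$ have at most $n+1$ elements), and they are the vertices of a $(m-1)$-simplex contained in $\R^n$. Since $p = \sum_i s_i p_i$ with $s_i \geq 0$ summing to $1$, $p$ lies in this simplex. Setting $k = m-1 \leq n$ finishes the proof. The argument contains no real obstacle; the only point requiring a little care is the observation that the coefficient-reduction step preserves non-negativity and the sum-to-one condition, which is exactly what the choice of $t_0$ guarantees.
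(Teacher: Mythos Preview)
Your proof is correct; it is the standard Carath\'eodory argument, and there is nothing to fault in it. The paper itself does not supply a proof of this lemma: it is stated with a terminal \qed\ and the surrounding text refers the reader to the appendix of \cite{tese} for the details, so there is no in-paper argument to compare against.
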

Another way to formulate the previous result is the following: If $X\subs \R^n$ and $p\in \hat{X}$, then it is possible to write $p$ as a convex combination of $k+1$ points in $X$ which are in general position, where $k$ is at most equal to $n$.

\end{appendix}

\providecommand{\bysame}{\leavevmode\hbox to3em{\hrulefill}\thinspace}
\providecommand{\MR}{\relax\ifhmode\unskip\space\fi MR }
\providecommand{\MRhref}[2]{%
  \href{http://www.ams.org/mathscinet-getitem?mr=#1}{#2}
}
\providecommand{\href}[2]{#2}


\vspace{12pt}
\noindent{\small \tsc{Departamento de Matem\'atica, PUC-Rio  \\
R. Marqu\^es de S. Vicente 225, Rio de Janeiro, RJ 22453-900, Brazil}

\vspace{2pt}
\noindent{\small \ttt{nicolau@mat.puc-rio.br\\ pedro@mat.puc-rio.br}}

\end{document}